\theoremstyle{plain}
\newtheorem{theorem}{Theorem}[section]
\newtheorem{lemma}[theorem]{Lemma}
\newtheorem{corollary}[theorem]{Corollary}
\newtheorem{proposition}[theorem]{Proposition}
\newtheorem*{claim*}{Claim}
\newtheorem*{subclaim*}{Subclaim}
\theoremstyle{definition}
\newtheorem{definition}[theorem]{Definition}
\newtheorem{remark}[theorem]{Remark}
\newtheorem{question}[theorem]{Question}
\newcommand{\betrag}[1]{\vert{#1}\vert}
\newcommand{\dom}[1]{{{\rm{dom}}(#1)}}
\newcommand{\crit}[1]{{{\rm{crit}}\left({#1}\right)}}
\newcommand{\cof}[1]{{{\rm{cof}}(#1)}}
\newcommand{\otp}[1]{{{\rm{otp}}\left(#1\right)}}
\newcommand{\ran}[1]{{{\rm{ran}}(#1)}}
\newcommand{\length}[2]{{\rm{lh}_{#2}}({#1})}
\newcommand{\clo}[1]{{{\rm{Cl}}_{#1}}}
\newcommand{\suborder}[2]{{#1}{\downharpoonright} \hspace{1.0pt}{#2}}
\newcommand{\POT}[1]{{\mathcal{P}}({#1})}
\newcommand{\POTT}[2]{{\mathcal{P}^{{#2}}}({#1})}
\newcommand{\map}[3]{{#1}:{#2}\longrightarrow{#3}}
\newcommand{\Map}[5]{{#1}:{#2}\longrightarrow{#3};~{#4}\longmapsto{#5}}
\newcommand{\Set}[2]{\{{#1}~\vert~{#2}\}}
\newcommand{\seq}[2]{\langle{#1}~\vert~{#2}\rangle}
\newcommand{\goedel}[2]{{\prec}{#1},{#2}{\succ}}
\newcommand{\anf}[1]{{\text{``}\hspace{0.3ex}{#1}\hspace{0.1ex}\text{''}}}
\newcommand{\Poti}[2]{{\mathcal{P}}_{#2}(#1)}
\newcommand{\HH}[1]{{\rm{H}}(#1)}
\newcommand{\Ult}[2]{{\mathrm{Ult}}({#1},{#2})}
\newcommand{\Add}[2]{{\rm{Add}}({#1},{#2})}
\newcommand{\Col}[2]{{\rm{Col}}({#1},{#2})}
\newcommand{\ISP}{{\rm{ISP}}}
\newcommand{\SSP}{{\rm{SSP}}}
\newcommand{\PFA}{{\rm{PFA}}}
\newcommand{\id}{{\rm{id}}}
\newcommand{\Lim}{{\rm{Lim}}}
\newcommand{\Card}{{\rm{Card}}}
\newcommand{\LL}{{\rm{L}}}
\newcommand{\ZFC}{{\rm{ZFC}}}
\newcommand{\GCH}{{\rm{GCH}}}
\newcommand{\CCC}{{\mathbb{C}}}
\newcommand{\PPP}{{\mathbb{P}}}
\newcommand{\QQQ}{{\mathbb{Q}}}
\newcommand{\SSS}{{\mathbb{S}}}
\newcommand{\TTT}{{\mathbb{T}}}
\newcommand{\UUU}{{\mathbb{U}}}
\newcommand{\VV}{{\rm{V}}}
\newcommand{\calR}{\mathcal{R}}
\newcommand{\calS}{\mathcal{S}}
\newcommand{\calT}{\mathcal{T}}
\newcommand{\calU}{\mathcal{U}}
\title[Characterizing large cardinals]{Characterizing large cardinals through Neeman's pure side condition forcing}
\author{Peter Holy}
\address{Mathematisches Institut\\Rheinische Friedrich-Wilhelms-Universit\"at Bonn\\En\-de\-nicher Allee 60\\53115 Bonn\\Germany}
\email{pholy@math.uni-bonn.de}
\author{Philipp L\"ucke}
\address{Mathematisches Institut\\Rheinische Friedrich-Wilhelms-Universit\"at Bonn\\En\-de\-nicher Allee 60\\53115 Bonn\\Germany}
\email{pluecke@math.uni-bonn.de}
\author{Ana Njegomir}
\address{Mathematisches Institut\\Rheinische Friedrich-Wilhelms-Universit\"at Bonn\\En\-de\-nicher Allee 60\\53115 Bonn\\Germany}
\email{njegomir@math.uni-bonn.de}
\subjclass[2010]{03E55, 03E05, 03E35} 
\keywords{Large Cardinals, forcing, elementary embeddings, generalized tree properties}
\begin{document}

\begin{abstract}
We show that some of the most prominent large cardinal notions can be characterized through the validity of certain combinatorial principles at $\omega_2$ in forcing extensions by the \emph{pure side condition forcing} introduced by Neeman. 
The combinatorial properties that we make use of are natural principles, and in particular for inaccessible cardinals, these principles are equivalent to their corresponding large cardinal properties. 
Our characterizations make use of the concepts of \emph{internal large cardinals} introduced in this paper, and of the classical concept of \emph{generic elementary embeddings}. 
\end{abstract}

\maketitle



\section{Introduction}

The interplay between forcing and large cardinals is a major theme of contemporary set theory. 
A typical situation is that a large cardinal $\kappa$ is collapsed to become an accessible cardinal in a way that preserves certain combinatorial properties of $\kappa$.   
This approach establishes a deep connection between large cardinals and combinatorial principles for small cardinals. 
This connection is further emphasized  by various results from inner model theory showing that the validity of certain combinatorial principles causes small cardinals to be large in canonical inner models of set theory. 
In many important cases, these results are able to recover the type of large cardinal used to establish the consistency of the combinatorial principle in the first place, and therefore lead to equiconsistency results between large cardinal axioms and combinatorial principles for small cardinals. 
As an example, results of Solovay show that, if  $\theta$ is a Mahlo cardinal above an uncountable regular cardinal $\kappa$ and $G$ is a filter on the corresponding L\'{e}vy collapse $\Col{\kappa}{{<}\theta}$ that is generic over the ground model $\VV$, then \emph{Jensen's principle $\square_\kappa$} fails in $\VV[G]$. In the other direction, if $\kappa$ is an uncountable cardinal such that $\square_\kappa$ fails, then seminal results of Jensen in \cite{MR0309729} show that $\kappa^+$ is a Mahlo cardinal in G\"odel's constructible universe $\LL$.

 In this paper, we want to examine even stronger  connections between large cardinals and combinatorial principles established by forcing. 
That is, we want to study forcings that \emph{characterize} certain large cardinal properties through the validity of combinatorial principles in their generic extensions, in the sense that an uncountable regular cardinal possesses some large cardinal property if and only if a certain combinatorial principle holds for this cardinal in corresponding generic extensions. 
In addition, our characterizations will be \emph{strong}: They will make use of combinatorial principles that are equivalent to their corresponding large cardinal properties when conjuncted with inaccessibility.  
That is, one might be led to say that the principles we use to characterize large cardinals in the following are their combinatorial remainder after robbing them of their inaccessibility in a sufficiently nice way. 

It is easy to see that not all equiconsistency results necessarily lead to such characterizations. 
For example, in the case of the equiconsistency result for Mahlo cardinals described above, we can combine a  result of Todor\v cevi\'c showing that the \emph{Proper Forcing Axiom $\PFA$} implies that $\square_\kappa$ fails for all uncountable cardinals $\kappa$ (see \cite{MR763902}) with a result of Larson showing that $\PFA$ is preserved by ${<}\omega_2$-closed forcing (see \cite{MR1782117}) to see that, if $\PFA$ holds and $\kappa<\theta$ are regular cardinals greater than $\omega_1$, then $\square_\kappa$ fails in every $\Col{\kappa}{{<}\theta}$-generic extension.

The following definition makes the above-described approach more precise. We will be interested in the case when $\Phi(\theta)$ describes a large cardinal property of $\theta$, and when forcing with $\PPP(\theta)$ turns $\theta$ into a successor cardinal. We use $\Card$ to denote the class of all infinite cardinals.

\begin{definition}\label{definition:Characterization}
 Let $\vec{\PPP}=\seq{\PPP(\theta)}{\theta\in\Card}$ be a class-sequence of partial orders, and let $\Phi(v)$ and $\varphi(v)$ be parameter-free formulas in the language of set theory.  
 \begin{enumerate}[leftmargin=0.7cm]
  \item We say that \emph{$\vec{\PPP}$ characterizes $\Phi$ through $\varphi$} if $$\ZFC\vdash\forall\theta\in\Card ~ [\Phi(\theta) ~ \longleftrightarrow ~ \mathbbm{1}_{\PPP(\theta)}\Vdash\varphi(\check{\theta})].$$

 \item If $\vec{\PPP}$ characterizes $\Phi$ through $\varphi$, then we say that this characterization is \emph{strong} in case that $$\ZFC\vdash\textit{$\forall\theta$ inaccessible} ~ [\Phi(\theta) ~ \longleftrightarrow ~ \varphi(\theta)].$$
 \end{enumerate}
\end{definition}

The following observation shows that the most canonical choice of forcing to turn a large cardinal into a successor cardinal, the  \emph{L\'{e}vy collapse $\Col{\kappa}{{<}\theta}$}, is not suitable for characterizations of the above form. 

\begin{proposition}\label{proposition:ColDoesNotWork}
 Assume that the existence of an inaccessible cardinal is consistent with the axioms of $\ZFC$. If $n<\omega$, then no formula in the language of set theory characterizes the class of inaccessible cardinals through the sequence $\seq{\Col{\omega_n}{{<}\theta}}{\theta\in\Card}$. 
\end{proposition}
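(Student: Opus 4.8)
The plan is to argue by contradiction. Suppose some parameter-free formula $\varphi$ yields such a characterization, so that $\ZFC$ proves the biconditional of Definition~\ref{definition:Characterization}(i) for $\vec{\PPP}=\seq{\Col{\omega_n}{{<}\theta}}{\theta\in\Card}$ and $\Phi=$``inaccessible''. I will exhibit a single model of $\ZFC$ in which this biconditional fails, which is the desired contradiction. Using the consistency assumption, fix a countable model $M_0\models\ZFC$ with an inaccessible cardinal; replacing $M_0$ by $\LL^{M_0}$ (inaccessibility is downward absolute to $\LL$, and $\LL$ satisfies $\GCH$), fix a model $M\models\ZFC+\GCH$ together with an inaccessible $\theta$. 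Abbreviate by $\Psi(\kappa)$ the formula ``$\mathbbm{1}_{\Col{\omega_n}{{<}\kappa}}\Vdash\varphi(\check{\kappa})$'', which by the forcing theorem asserts precisely that $\varphi(\kappa)$ holds in every $\Col{\omega_n}{{<}\kappa}$-generic extension. By assumption $M$ satisfies $\forall\kappa\in\Card\,[\kappa\text{ inaccessible}\leftrightarrow\Psi(\kappa)]$, and since $\theta$ is inaccessible we get $M\models\Psi(\theta)$.

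The key idea is to destroy the inaccessibility of $\theta$ by a forcing that is \emph{absorbed} by the collapse, leaving the collapse extensions unchanged and thereby preserving $\varphi(\theta)$. Let $a$ be $\Add{\omega_n}{\theta}$-generic over $M$. As $\Add{\omega_n}{\theta}$ is ${<}\omega_n$-closed and $\theta$-c.c., it preserves all cardinals and cofinalities, so $\theta$ is still regular in $M[a]$; but it forces $2^{\omega_n}\ge\theta$, so $\theta$ is no longer a strong limit and hence is \emph{not} inaccessible in $M[a]$. I claim that nevertheless $M[a]\models\Psi(\theta)$. This rests on the absorption identity
\[
 \Add{\omega_n}{\theta}\times\Col{\omega_n}{{<}\theta}\;\cong\;\Col{\omega_n}{{<}\theta}
\]
as forcings over $M$. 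Granting it, note first that the collapse is computed identically in $M$ and in $M[a]$ (its conditions have size ${<}\omega_n$, and ${<}\omega_n$-closure adds no new such conditions). Hence for any $\Col{\omega_n}{{<}\theta}$-generic $G$ over $M[a]$ the model $M[a][G]=M[a\times G]$ is, by the identity, \emph{literally} a $\Col{\omega_n}{{<}\theta}$-generic extension $M[G']$ of $M$. Since $M\models\Psi(\theta)$, every such $M[G']$ satisfies $\varphi(\theta)$, so $M[a][G]\models\varphi(\theta)$. As $G$ was arbitrary, the forcing theorem gives $M[a]\models\Psi(\theta)$. Thus in $M[a]$ the cardinal $\theta$ is a non-inaccessible member of $\Card$ satisfying $\Psi(\theta)$, so the characterizing biconditional fails in $M[a]\models\ZFC$, contradicting that $\ZFC$ proves it.

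The main obstacle, and the only genuine content, is the absorption identity above. I would derive it from the standard uniqueness of the L\'{e}vy collapse: both sides are ${<}\omega_n$-closed posets of cardinality $\theta$ (using $\GCH$ and the inaccessibility of $\theta$ for the cardinality count), both are $\theta$-Knaster, and hence $\theta$-c.c., by a $\Delta$-system argument, and both collapse every cardinal in the interval $(\omega_n,\theta)$ onto $\omega_n$ while turning $\theta$ into $\omega_{n+1}$. Any two such notions of forcing are forcing equivalent, yielding the identity. This argument is uniform in $n<\omega$, the case $n=0$ being the one in which the closure requirement is vacuous and $\Add{\omega}{\theta}$ is merely c.c.c.

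I want to emphasise the decisive feature of this approach: the two generic extensions $M[a][G]$ and $M[G']$ coincide \emph{as models}, so that the truth of $\varphi(\theta)$ transfers between them with no assumption whatsoever on the logical complexity of $\varphi$. This is exactly what a naive reflection argument, passing $\Psi$ from an inaccessible down to a collapsed non-inaccessible cardinal, fails to achieve, since there the ambient models differ and an arbitrary $\varphi$ need not be absolute between them. It is precisely this coincidence of models that lets the conclusion hold for every formula $\varphi$ at once.
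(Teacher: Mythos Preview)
Your proof is correct. Both your argument and the paper's exploit absorption into the L\'{e}vy collapse to pass to an intermediate model in which $\theta$ fails to be inaccessible while $\mathbbm{1}_{\Col{\omega_n}{{<}\theta}}\Vdash\varphi(\check\theta)$ persists. The paper absorbs a second copy of the collapse itself (using $\Col{\omega_n}{{<}\theta}\times\Col{\omega_n}{{<}\theta}\cong\Col{\omega_n}{{<}\theta}$), landing in a model $\VV[H_0]$ in which $\theta=\omega_{n+1}$ is a successor cardinal, and then invokes weak homogeneity to turn the single instance $\VV[H_0][H_1]\models\varphi(\theta)$ into $\mathbbm{1}\Vdash\varphi(\check\theta)$ in $\VV[H_0]$. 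You instead absorb $\Add{\omega_n}{\theta}$, landing in a model $M[a]$ in which $\theta$ is still regular but no longer a strong limit, and you sidestep the homogeneity appeal by arranging that \emph{every} collapse extension of $M[a]$ is literally a collapse extension of $M$. The trade-off is minor: the paper's product identity is the more commonly cited one, whereas your version dispenses with homogeneity altogether. Your detour through $\LL$ to secure $\GCH$ is harmless but unnecessary---inaccessibility of $\theta$ alone already gives the cardinality bound $\theta$ on both factors and the $\theta$-chain condition you need for the absorption.
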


\begin{proof}
 Assume, towards a contradiction, that $\varphi(v)$ is a formula with this property, and that there is a model $\VV$ of $\ZFC$ that contains an inaccessible cardinal $\theta$. Let $G$ be $\Col{\omega_n}{{<}\theta}$-generic over $\VV$. Then, using that $\Col{\omega_n}{{<}\theta}\times\Col{\omega_n}{{<}\theta}$ and $\Col{\omega_n}{{<}\theta}$ are forcing equivalent, we may find $H_0,H_1\in\VV[G]$ with the property that $H_0\times H_1$ is $(\Col{\omega_n}{{<}\theta}\times\Col{\omega_n}{{<}\theta})$-generic over $\VV$ and $\VV[G]=\VV[H_0\times H_1]$. Since $\theta$ is inaccessible in $\VV$, our assumption implies that $\varphi(\theta)$ holds in $\VV[G]$ and, since the partial order $\Col{\omega_n}{{<}\theta}^\VV=\Col{\omega_n}{{<}\omega_{n+1}}^{\VV[H_0]}$ is weakly homogeneous in $\VV[H_0]$, we know that $\mathbbm{1}_{\Col{\omega_n}{{<}\theta}}\Vdash\varphi(\check{\theta})$ holds in $\VV[H_0]$. But, again by our assumption, this shows that $\theta=\omega_{n+1}^{\VV[H_0]}$ is inaccessible in $\VV[H_0]$, a contradiction. 
\end{proof}

In contrast, the results of this paper will show  that the \emph{pure side condition forcing} introduced by Neeman in \cite{MR3201836} allows a characterization of many important large cardinal notions through canonical combinatorial principles. 
Given an inaccessible cardinal $\theta$, this forcing notion\footnote{A formal definition of this partial order can be found in Section \ref{section:Neeman}.} uses finite sequences of elementary submodels of $\HH{\theta}$ to turn $\theta$ into $\omega_2$, while preserving many combinatorial properties of $\theta$, due to its strong structural properties. 
For example, Neeman's pure side condition forcing is strongly proper for a rich class of models, and its quotient forcings have the $\sigma$-approximation property. 
In \cite{MR3201836}, Neeman already used these properties to prove results that imply that an inaccessible cardinal is weakly compact if and only if the corresponding pure side condition forcing turns the cardinal into $\omega_2$ and the cardinal $\omega_2$ has the tree property in the corresponding generic extension (see {\cite[Section 5.1]{MR3201836}}). 
 %
In the first part of this paper, we will make use of the concept of \emph{internal large cardinals} that we will introduce in Section \ref{section:INternalAndMahlo}, in order to strongly characterize the following large cardinal properties with the help of Neeman's pure side condition forcing: 
 \begin{itemize}[leftmargin=0.7cm]
  \item Inaccessible and Mahlo cardinals.

  \item $\Pi^m_n$-indescribable cardinals for all $0<{m,n}<\omega$. 

  \item Subtle, ineffable and $\lambda$-ineffable cardinals for certain $\lambda$. 
  
  \item Supercompact cardinals. 
 \end{itemize}
In its second part, we will use the classical concept of \emph{generic elementary embeddings} to provide strong characterizations for the following large cardinal properties: 
 \begin{itemize}[leftmargin=0.7cm]
  \item Measurable cardinals.
  
  \item $\gamma$-supercompact cardinals for $\gamma\ge\kappa$.

  \item Supercompact cardinals. 

  \item Almost huge and super almost huge cardinals. 
 \end{itemize}
 In the final section of this paper, we will discuss possible variations of the results of this paper and state some open questions motivated by them. In particular, we will discuss the question whether similar characterizations are possible using other canonical sequences of collapse forcing notions.


\section{Neeman's pure side condition forcing}\label{section:Neeman}

Closely following \cite[Section 2]{MR3201836} for its most parts, this section is devoted to introducing (instances of) Neeman's model sequence poset. We will present a series of definitions and results from \cite{MR3201836}, which will be needed in the later sections of our paper, and we also make some additional observations that will be important for our later results.

\begin{definition}
 Given a transitive set $K$ and $\calS,\calT\subseteq K$, we let $\PPP_{K,\calS,\calT}$ denote the partial order defined by the following clauses: 
   \begin{enumerate}[leftmargin=0.7cm]
    \item A condition in $\PPP_{K,\calS,\calT}$ is a finite, $\in$-increasing sequence $\seq{M_i}{i<n}$ of elements of $\calS\cup\calT$ with the property that for all $i,j<n$, there is a $k<n$ with $M_k=M_i\cap M_j$. 
    
    \item Given conditions $p$ and $q$ in $\PPP_{K,\calS,\calT}$, we let $p\leq_{\PPP_{K,\calS,\calT}}q$ if and only if $\ran{p}\supseteq \ran{q}$. 
   \end{enumerate}
\end{definition}

In the setting of this definition, we will refer to elements of $\calS\cup\calT$ as \emph{nodes in $K$}.

\begin{remark}
Let $p=\seq{M_i}{i<n}$ be a condition in a partial order of the form $\PPP_{K,\calS,\calT}$. Then for all $j<k<n$, the node $M_j$ has a smaller rank than the node $M_{k}$. In particular, the ordering of $\ran{p}$ imposed by $p$ is uniquely determined by $\ran{p}$. Thus we can identify conditions in $\PPP_{K,\calS,\calT}$ with their range. We will use this identification tacitly throughout.
\end{remark}

In most parts of this paper, we will work with a special case of the above general definition, as provided by the following. We will however make use of the general definition in the second part of this paper, when we investigate (in $\VV$) forcing notions of the form $\PPP_{K,\calS,\calT}$ as defined in inner models. 

\begin{definition}
 Let $\theta$ be an infinite cardinal. 
 \begin{enumerate}[leftmargin=0.7cm]
  \item We let $\calS_\theta$ denote the set of all countable elementary submodels of $\HH{\theta}$ that are elements of $\HH{\theta}$. 
  
  \item We let $\calT_\theta$ denote the set of all transitive and countably closed elementary submodels of $\HH{\theta}$ that are elements of $\HH{\theta}$. 
  
  \item We set $\PPP_\theta=\PPP_{\HH{\theta},\hspace{0.9pt}{} \calS_\theta,\calT_\theta}$. 
 \end{enumerate} 
\end{definition}

The aim of this paper is to show that the sequence $\seq{\PPP_\theta}{\theta\in\Card}$ can be used to strongly characterize various classes of large cardinals in the sense of Definition \ref{definition:Characterization}. 
Towards this goal, we first review some of the strong structural properties of partial orders of the form $\PPP_\theta$ that were proven in \cite{MR3201836}. The assumptions on the sets $K$, $\calS$ and $\calT$ listed in the following definition are already sufficient in order to derive several such properties.

\begin{definition}\label{definition:SuitableAppropriate}
 Let $K$ be a transitive set and let $\calS,\calT\subseteq K$. 
 \begin{enumerate}[leftmargin=0.7cm]
  \item We say that $K$ is \emph{suitable} if $\omega_1\in K$ and the model $(K,\in)$ satisfies a sufficient fragment of $\ZFC$, in the sense that $K$ is closed under the operations of pairing, union, intersection, set difference, cartesian product, and transitive closure, closed under the range and restriction operations on functions, and such that for each $x\in K$, the closure of $x$ under intersections belongs to $K$, and there is a ordinal length sequence in $K$ consisting of the members of $x$ arranged in non-decreasing von Neumann rank. 
   
   \item If $K$ is suitable, then the pair $(\calS,\calT)$ is \emph{appropriate for $K$} if the following statements hold:
    \begin{enumerate}[leftmargin=0.7cm]
     \item Elements of $\calS$ are countable elementary submodels of $K$, and elements of $\calT$ are transitive and countably closed elementary submodels of $K$.  
     
     \item If $M\in\calS$ and $W\in M\cap\calT$, then $M\cap W \in W$ and $M\cap W\in\calS$. 
  \end{enumerate} 
   \end{enumerate}
\end{definition}

If a pair $(\calS,\calT)$ is appropriate for a suitable set $K$, then we will refer to elements of $\calS$ as \emph{small nodes in $K$}, and to elements of $\calT$ as \emph{transitive nodes in $K$}.

If an infinite cardinal $\theta$ satisfies certain cardinal arithmetic assumptions, then the corresponding partial order $\PPP_\theta$ fits into the above framework. Let us say that a cardinal $\theta$ is \emph{countably inaccessible} if and only if it is regular and $\delta^\omega <\theta$ holds for all $\delta < \theta$.

\begin{proposition}\label{proposition:appropriate}
 Let $\theta$ be an infinite cardinal. 
 \begin{enumerate}[leftmargin=0.7cm]
  \item If $\theta>\omega_1$, then $\HH{\theta}$ is suitable. 
  
  \item If $\theta$ is countably inaccessible, then $(\calS_\theta,\calT_\theta)$ is appropriate for $\HH{\theta}$.
 \end{enumerate}
\end{proposition}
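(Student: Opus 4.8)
The plan is to treat the two parts separately, since part (i) is a routine verification and essentially all of the content lies in part (ii).

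For part (i), I would first note that $\omega_1\in\HH{\theta}$, which holds exactly because $\theta>\omega_1$ (so $\betrag{\tc{\omega_1}}=\omega_1<\theta$), and then check each listed closure property by the standard computations for $\HH{\theta}$, using only the cardinal arithmetic fact that for infinite cardinals $\kappa,\lambda<\theta$ one has $\kappa+\lambda=\kappa\cdot\lambda=\max(\kappa,\lambda)<\theta$ and $\kappa^{<\omega}=\max(\kappa,\omega)<\theta$. Concretely, for $x,y\in\HH{\theta}$ the sets $\{x,y\}$, $\bigcup x$, $x\cap y$, $x\setminus y$, $x\times y$, $\tc{x}$, $\ran{f}$ and $f\restriction A$ all have transitive closure built boundedly from $\tc{x}$ and $\tc{y}$, hence of size $<\theta$; the closure of $x$ under intersections consists of the finite intersections of members of $x$, of which there are at most $\max(\betrag{x},\omega)<\theta$; and a non-decreasing-rank enumeration of $x$ is a function of some length $\gamma$ with $\betrag{\gamma}=\betrag{x}<\theta$, so $\gamma<\theta$ and the function again lies in $\HH{\theta}$. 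None of this uses regularity, so the hypothesis $\theta>\omega_1$ suffices.

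For part (ii), condition (i) of appropriateness is immediate from the definitions of $\calS_\theta$ and $\calT_\theta$, so the work is condition (ii): given $M\in\calS_\theta$ and $W\in M\cap\calT_\theta$, I must show $M\cap W\in W$ and $M\cap W\in\calS_\theta$. For the first, I would use that $M$ is countable, so that $M\cap W$ is a countable subset of the transitive node $W$; enumerating it as a function $\map{f}{\omega}{W}$ and invoking that $W$ is countably closed (${}^{\omega}W\subseteq W$) gives $f\in W$, and then $M\cap W=\ran{f}\in W$ by elementarity of $W$ in $\HH{\theta}$. Since $W\in\HH{\theta}$ and $\HH{\theta}$ is transitive, this already yields $M\cap W\in\HH{\theta}$, and $M\cap W$ is countable, so it only remains to see that $M\cap W$ is an elementary submodel of $\HH{\theta}$ in order to conclude $M\cap W\in\calS_\theta$.

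This elementarity is the main point, and I would establish it through the Tarski--Vaught criterion, the crucial ingredient being that $W$ itself is a parameter available to $M$ because $W\in M$. Fix a formula $\varphi$ and parameters $\bar{a}\in M\cap W$ with $\HH{\theta}\models\exists x\,\varphi(x,\bar{a})$. Since $\bar{a}\in W$ and $W$ is elementary in $\HH{\theta}$, there is a witness lying in $W$, so $\HH{\theta}\models\exists x\,(x\in W\wedge\varphi(x,\bar{a}))$. Now $W\in M$ and $\bar{a}\in M$, so applying elementarity of $M$ to this last statement produces $b\in M$ with $\HH{\theta}\models(b\in W\wedge\varphi(b,\bar{a}))$; then $b\in M\cap W$ is the desired witness, and condition (ii) follows. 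I expect this Tarski--Vaught step to be the only genuine obstacle, with the key realization being that the hypothesis $W\in M$ lets one relativize the witness-search to $W$ inside $M$. Finally, I would remark that the countable inaccessibility of $\theta$ does not enter the verification of (i) and (ii) beyond guaranteeing, via regularity (i.e. $\cof{\theta}>\omega$) together with $\delta^\omega<\theta$ for $\delta<\theta$, that $\calS_\theta$ and $\calT_\theta$ are the intended rich classes of nodes actually lying in $\HH{\theta}$.
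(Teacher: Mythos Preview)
Your proposal is correct and follows the same overall structure as the paper: part (i) is dismissed as routine, and in part (ii) the membership $M\cap W\in W$ comes from countable closure of $W$, while the elementarity of $M\cap W$ in $\HH{\theta}$ is the substantive step, hinging on the fact that $W\in M$.

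The one noteworthy difference is in how that elementarity is established. The paper observes that since $W\in M\prec\HH{\theta}$, there is a well-ordering of $W$ in $M$, so one can define Skolem functions for $W$ inside $M$; closure of $M$ under these functions then yields $M\cap W\prec W\prec\HH{\theta}$. You instead run the Tarski--Vaught test directly, using $W\in M$ as a parameter to relativize the existential witness to $W$ and then pull it into $M$ by elementarity. Both arguments exploit exactly the same key hypothesis ($W\in M$), and both are standard; your version is arguably more self-contained since it avoids invoking a well-ordering and the attendant Skolem machinery, while the paper's version makes the conclusion $M\cap W\prec W$ explicit as an intermediate step.
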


\begin{proof}
 Only (ii) is non-trivial. Pick $M\in\calS_\theta$ and $W\in M\cap\calT_\theta$. Since $W$ is countably closed and $M$ is countable, we know that $W\cap M\in W$. Moreover, since there is a well-ordering of $W$ in $M$, we may define Skolem functions for $W$ in $M$. By this and the fact that $M$ is an elementary submodel of $\HH{\theta}$, we may conclude that  $M\cap W $ is an elementary submodel of both $W$ and $\HH{\theta}$. Since $M$ is countable, we can conclude that $M\cap W$ is an element of $\calS_\theta$.
\end{proof}

 The following result will frequently be used in our computations.

 \begin{lemma}[{\cite[Corollary 2.32]{MR3201836}}]\label{coolone}
   Let $K$ be a suitable set, let $(\calS,\calT)$ be appropriate for $K$, let $M\in\calS\cup\calT$ and let $p\in\PPP_{K,\calS,\calT}\cap M$. Then there is a condition $q$ below $p$ in $\PPP_{K,\calS,\calT}$ with $M\in q$. Moreover, $q$ can be taken as the closure of $p\cup\{M\}$ under intersections.
\end{lemma}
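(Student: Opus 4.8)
The plan is to verify directly that the set $q$ obtained by closing $\ran{p}\cup\{M\}$ under pairwise intersections is the required condition, so that the ``Moreover'' clause and the main assertion are proved simultaneously. Two of the three requirements are immediate: since $\ran{p}\subseteq q$ we have $\ran{q}\supseteq\ran{p}$ and hence $q\le p$, and $M\in q$ by construction. Thus the entire content of the lemma is that $q$ is a \emph{legitimate} condition. Before attacking that, I would first record that $\ran{p}\subseteq M$: if $M\in\calT$ this is transitivity, while if $M\in\calS$ it follows from elementarity together with the finiteness of the sequence $p\in M$ (each entry of $p$ is definable from $p$, and both $p$ and each index lie in $M$). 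In particular every node occurring in $p$ has rank strictly below $\rank{M}$, so $M$ is forced to sit on top.

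Next I would show that the closure stabilizes after a single layer. Writing $A=\ran{p}$, which is already closed under intersection since $p$ is a condition, one checks that
$$q = A\cup\{M\}\cup\Set{N\cap M}{N\in A},$$
because every further intersection collapses to this shape: for instance $(N\cap M)\cap N'=(N\cap N')\cap M$ with $N\cap N'\in A$, and $(N\cap M)\cap(N'\cap M)=(N\cap N')\cap M$. Hence $q$ is finite, it lies in $K$ by suitability, and it is closed under intersection by construction. What remains is to see that every element of $q$ is a node and that $q$ is $\in$-increasing (that is, its members, listed in increasing rank, satisfy the requisite membership relations).

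For nodehood of the new elements $N\cap M$ I would split on the type of $M$. If $M\in\calT$, then every $N\in A$ satisfies $N\subseteq M$ — a transitive or a small node lying in the transitive set $M$ is a subset of $M$ — so $N\cap M=N$ and no new elements arise; then $q=A\cup\{M\}$, which is a condition since $M$ is a node above all of $A$ and $N\cap M=N\in A$ for each $N$. If $M\in\calS$, I would treat the two kinds of $N\in A$ separately. For transitive $N$ we have $N\in M\cap\calT$, so clause (ii)(b) of appropriateness yields both $N\cap M\in\calS$ and $N\cap M\in N$. For small $N$, the countability of $N\in\calS$ together with $N\in M$ and the elementarity of $M$ produces a surjection $\omega\to N$ inside $M$, whence $N\subseteq M$ and again $N\cap M=N$. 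Either way the members of $q$ are nodes.

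The step I expect to be the main obstacle is the verification that $q$ is genuinely $\in$-increasing once the meet-nodes $N\cap M$ have been inserted: one must confirm that each $N\cap M$ sits correctly below both $N$ and $M$ and relates properly to the remaining old and new nodes, so that the whole of $q$, ordered by rank, forms an $\in$-increasing sequence. The facts $N\cap M\in N$ and $N\cap M\in\calS$ supplied by appropriateness position these meets correctly relative to the transitive nodes, but assembling them into one $\in$-increasing, intersection-closed sequence is precisely the combinatorial core of the argument, and this is where the structural interaction between small and transitive nodes is really used; here I would expect to rely on the structural lemmas preceding \cite[Corollary 2.32]{MR3201836}.
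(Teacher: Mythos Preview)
The paper does not supply its own proof of this lemma; it is quoted as {\cite[Corollary 2.32]{MR3201836}} from Neeman, and the argument lives entirely there. Your outline correctly reconstructs the shape of Neeman's proof: reduce to showing that the intersection-closure $q$ of $\ran{p}\cup\{M\}$ is a condition, observe that the closure stabilizes after one round because $\ran{p}$ is already intersection-closed, and then split on the type of $M$. Your treatment of the two cases --- $M\in\calT$ producing nothing new, and $M\in\calS$ producing only the meets $W\cap M$ for transitive $W\in p$, via appropriateness together with the observation that small $N\in p$ satisfy $N\subseteq M$ --- is accurate.

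You are also right that the remaining work, namely checking that the resulting finite set is $\in$-linearly ordered with each new meet $W\cap M$ slotting in immediately below $W$, is where the real content lies; this is exactly what Neeman's preceding structural claims (roughly his Claims~2.17--2.31) establish, and you correctly flag that you would invoke them. So your proposal is not a different proof but a faithful sketch of Neeman's argument up to the point where one appeals to his earlier lemmas. There is no gap in the approach; you have simply stopped at the same place the paper does --- at the citation.
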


\begin{corollary}\label{corollary:aboutgeneric}
 If $\theta$ is a countably inaccessible cardinal  and $G$ is $\PPP_\theta$-generic over $\VV$, then $$\HH{\theta}^\VV ~ = ~ \bigcup\bigcup G ~ = ~ \bigcup(\calT_\theta^\VV\cap\bigcup G).$$  
\end{corollary}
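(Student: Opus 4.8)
The plan is to prove the displayed equality as a chain of three inclusions, two of which are routine. Since $\calT_\theta^\VV\cap\bigcup G\subseteq\bigcup G$, we trivially have $\bigcup(\calT_\theta^\VV\cap\bigcup G)\subseteq\bigcup\bigcup G$. Moreover, every node $M\in\bigcup G$ is an element of $\calS_\theta\cup\calT_\theta$ and hence an elementary submodel of $\HH{\theta}^\VV$ whose elements all lie in $\HH{\theta}^\VV$, so $\bigcup\bigcup G\subseteq\HH{\theta}^\VV$. The whole statement therefore reduces to establishing
\[\HH{\theta}^\VV ~\subseteq~ \bigcup(\calT_\theta^\VV\cap\bigcup G) ~\subseteq~ \bigcup\bigcup G ~\subseteq~ \HH{\theta}^\VV,\]
that is, to the first inclusion, which I would obtain by a genericity argument.

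For each $x\in\HH{\theta}^\VV$, let $D_x$ be the set of all conditions $p\in\PPP_\theta$ that contain a transitive node $W$ (meaning $W\in\ran{p}\cap\calT_\theta$) with $x\in W$. It suffices to show that each $D_x$ is dense in $\VV$: genericity of $G$ then produces $p\in G\cap D_x$, hence a transitive node $W\in\bigcup G$ with $x\in W$, witnessing $x\in\bigcup(\calT_\theta^\VV\cap\bigcup G)$. To prove density, fix an arbitrary $p\in\PPP_\theta$ and note that $p\in\HH{\theta}$, since $p$ is a finite set of elements of $\HH{\theta}$. The key step is to produce a transitive node $W\in\calT_\theta$ with $\{x,p\}\subseteq W$. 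Granting this, we have $p\in\PPP_\theta\cap W$, so Lemma \ref{coolone}—applicable because $\HH{\theta}$ is suitable and $(\calS_\theta,\calT_\theta)$ is appropriate for it by Proposition \ref{proposition:appropriate}—yields a condition $q\leq p$ with $W\in\ran{q}$; as $W\in\calT_\theta$ and $x\in W$, this $q$ lies in $D_x$, completing the density argument.

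The heart of the proof, and the step I expect to be the main obstacle, is thus the construction of a transitive, countably closed elementary submodel $W\prec\HH{\theta}$ with $W\in\HH{\theta}$ and a prescribed element $a=\{x,p\}\in W$. I would build $W$ as the union of a continuous, $\subseteq$-increasing elementary chain $\seq{N_\xi}{\xi<\omega_1}$ of elementary submodels of $\HH{\theta}$ of size ${<}\theta$, with $a\in N_0$, arranging at successor steps that $N_{\xi+1}$ contain, as subsets, both $\tc{N_\xi}$ and the set ${}^\omega N_\xi$ of all $\omega$-sequences of elements of $N_\xi$. Absorbing transitive closures makes $W=\bigcup_{\xi<\omega_1}N_\xi$ transitive, while absorbing $\omega$-sequences together with the uncountable cofinality of the chain makes $W$ countably closed: any $s\colon\omega\to W$ has range bounded in some $N_\xi$, whence $s\in{}^\omega N_\xi\subseteq N_{\xi+1}\subseteq W$. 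Countable inaccessibility of $\theta$ enters in exactly two places: it guarantees $\betrag{{}^\omega N_\xi}=\betrag{N_\xi}^\omega<\theta$, so the successor steps keep each $N_\xi$ of size ${<}\theta$; and it gives that $\theta$ is regular with $\theta>\omega_1$, so that $\betrag{W}=\sup_{\xi<\omega_1}\betrag{N_\xi}<\theta$ and the transitive set $W$ is an element of $\HH{\theta}$. The delicate points to verify are that the uncountable cofinality of the chain is genuinely necessary—a chain of length $\omega$ can never yield a countably closed union—and that $\omega_1<\theta$, which holds because $\omega_1$ itself fails to be countably inaccessible.
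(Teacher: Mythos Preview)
Your proposal is correct and follows essentially the same approach as the paper: reduce to a density argument via Lemma~\ref{coolone}, using that any element (indeed, any small subset) of $\HH{\theta}$ can be covered by a transitive node. The paper states this covering fact in one line and defers its justification to the chain construction already carried out in the proof of Proposition~\ref{stat_trans_nodes}; you instead spell out the $\omega_1$-chain construction directly, with the minor variation of absorbing transitive closures rather than intersecting with $\HH{\theta}$ at the end, but the content is the same.
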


\begin{proof}
  Since  our assumptions on $\theta$ imply that every subset of $\HH{\theta}^\VV$ of cardinality less than $\theta$ in $\VV$ is a subset of some $M\in\calT_\theta^\VV$, the above equalities follow directly from Lemma \ref{coolone}.  
\end{proof}

The first structural consequence of the assumptions listed in Definition \ref{definition:SuitableAppropriate}, which will be used throughout this paper is that of \emph{strong properness}, which was introduced by Mitchell in \cite{MR2162106}. Remember that, given a partial order $\PPP$ and a set $M$, a condition $p$ in $\PPP$ is a \emph{strong master condition for $M$} if $G\cap M$ is $(\PPP \cap M)$-generic over $\VV$, whenever $G$ is $\PPP$-generic over $\VV$ with $p\in G$. A partial order $\PPP$ is \emph{strongly proper} for some set $M$ if every condition in $\PPP\cap M$ can be extended to a strong master condition for $M$.  
Note that if $M$ is sufficiently elementary in some transitive set, then any strong master condition for $M$ is also a \emph{master condition for $M$}, i.e.\ it forces the generic object to intersect every dense set $D\in M$ of $\PPP$ inside $M$. The following result from \cite{MR3201836} shows that the above assumptions assure that partial orders of the form $\PPP_{K,\calS,\calT}$ are strongly proper for a large class of models.

\begin{lemma}[{\cite[Claim 4.1]{MR3201836}}]\label{strong properness}
 Let $K$ be a suitable set and let $(\calS,\calT)$ be appropriate for $K$. 
 \begin{enumerate}[leftmargin=0.7cm]
  \item If $p$ is a condition in $\PPP_{K,\calS,\calT}$ and $M$ is a node in $p$, then $p$ is a strong master condition for $M$. 
  
  \item The partial order $\PPP_{K,\calS,\calT}$ is strongly proper for every element of $\calS\cup\calT$. 
  
  \item If $W$ is a node in $K$, then the partial order $\PPP_{K,\calS,\calT}\cap W$ is strongly proper for every element of $(\calS\cup\calT)\cap W$. Moreover, if $p$ is a condition in $\PPP_{K,\calS,\calT}\cap W$ and $M$ is a node in $p$, then $p$ is a strong master condition for $M$ with respect to the partial order $\PPP_{K,\calS,\calT}\cap W$.
 \end{enumerate}
\end{lemma}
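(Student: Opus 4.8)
The plan is to reduce the entire statement to a single amalgamation fact about conditions and then read off \textup{(i)}, \textup{(ii)} and \textup{(iii)} from it. First I would record the standard reformulation of strong master conditions in terms of residues: a condition $p$ is a strong master condition for $M$ as soon as every $q\leq_{\PPP_{K,\calS,\calT}}p$ admits a \emph{residue} $q{\restriction}M\in\PPP_{K,\calS,\calT}\cap M$ with the property that every $r\leq_{\PPP_{K,\calS,\calT}}q{\restriction}M$ lying in $M$ is compatible with $q$. Granting this, for any dense set $D\in\VV$ of $\PPP_{K,\calS,\calT}\cap M$ the collection of conditions below $p$ that extend some element of $D$ is dense below $p$ — given $q\leq p$, use density of $D$ to pick $r\in D$ with $r\leq q{\restriction}M$ (so $r\in M$ automatically, as $D\subseteq\PPP_{K,\calS,\calT}\cap M\subseteq M$) and amalgamate $r$ with $q$ — so whenever $p\in G$ there is $q'\in G$ with $q'\leq r$ for some $r\in D$, whence $r\in G\cap M\cap D$. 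Since $G\cap M$ is visibly a filter, this yields its genericity, which is exactly the required property.

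Second, I would define the residue by $q{\restriction}M:=\Set{N\in\ran{q}}{N\in M}$, the set of nodes of $q$ that are elements of $M$, and check that this is a condition belonging to $M$. Closure under intersections is inherited from $q$ together with the fact that $M$, being an elementary submodel of the suitable set $K$, is closed under the intersection operation of Definition \ref{definition:SuitableAppropriate}; finiteness and $\in$-increasingness are immediate from those of $q$. That $q{\restriction}M\in M$ follows because it is a finite sequence of elements of $M$ and $M$ is closed under the formation of such sequences.

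Third — and this is the heart of the matter — I would establish the amalgamation: if $r\in\PPP_{K,\calS,\calT}\cap M$ with $r\leq q{\restriction}M$, then the closure $s$ of $\ran{r}\cup\ran{q}$ under intersections is a common lower bound of $r$ and $q$. That $s$, once known to be a condition, lies below both is clear from the definition of the ordering; the work is entirely in verifying that $s$ \emph{is} a condition. Since $r\in M$, all of its nodes are elements of $M$, while the nodes of $q$ split, relative to the node $M\in\ran{q}$, into those lying in $M$ — which $r$ already refines, as $\ran{r}\supseteq\ran{q{\restriction}M}$ — and those of rank at least that of $M$. Consequently the only genuinely new nodes appearing in $s$ are intersections $N\cap N'$ of a node $N$ of $r$ with a node $N'$ of $q$ above $M$, and here the appropriateness requirement from Definition \ref{definition:SuitableAppropriate} — that $N\cap W\in W$ and $N\cap W\in\calS$ whenever $N\in\calS$ and $W\in N\cap\calT$ — is precisely what guarantees that such intersections remain nodes in $\calS\cup\calT$ and are absorbed in a controlled way; the finiteness of the closure is the same finiteness phenomenon already underlying Lemma \ref{coolone}. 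I expect this bookkeeping — confirming that no iterated intersection escapes $\calS\cup\calT$ and that the closure process terminates — to be the main obstacle, and the point at which appropriateness is indispensable.

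Finally, I would assemble the three parts. Part \textup{(i)} is exactly the residue criterion verified above. For \textup{(ii)}, given any $M\in\calS\cup\calT$ and any $p\in\PPP_{K,\calS,\calT}\cap M$, Lemma \ref{coolone} produces a condition $q\leq p$ with $M\in\ran{q}$, and by \textup{(i)} this $q$ is a strong master condition for $M$; hence $\PPP_{K,\calS,\calT}$ is strongly proper for $M$. For \textup{(iii)} I would rerun the whole argument relativised to the node $W$: since $W$ is an elementary submodel of $K$ closed under the operations involved, the residue of any $q\in\PPP_{K,\calS,\calT}\cap W$ and the amalgamating closure of conditions lying in $W$ again lie in $W$, so they witness that nodes of conditions are strong master conditions, and that $\PPP_{K,\calS,\calT}\cap W$ is strongly proper, with respect to $\PPP_{K,\calS,\calT}\cap W$; the extension step again invokes Lemma \ref{coolone}, whose output lands in $W$ because its inputs do.
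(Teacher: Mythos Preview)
The paper does not supply its own proof of this lemma: it is stated with a citation to Neeman's {\cite[Claim 4.1]{MR3201836}} and no argument is given. Your proposal is therefore not competing with an in-paper proof but rather sketching the argument that the citation points to, and your outline --- residue characterization of strong master conditions, definition of $q{\restriction}M$ as the nodes of $q$ lying in $M$, amalgamation by closing $\ran r\cup\ran q$ under intersections, and derivation of (ii) and (iii) from (i) via Lemma~\ref{coolone} --- is precisely Neeman's approach.

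One remark on the part you correctly flag as the obstacle. The amalgamation step does not reduce simply to checking that each single intersection $N\cap N'$ (with $N$ a node of $r$ and $N'$ a node of $q$ above $M$) lands in $\calS\cup\calT$; when $N'\in\calS$ is small, $N\in M\in N'$ does not give $N\in N'$, so appropriateness alone, as stated in Definition~\ref{definition:SuitableAppropriate}, does not directly apply to such a pair. What makes the argument go through in Neeman's paper is a prior structural analysis of conditions --- the ``interleaving'' of small and transitive nodes and the fact that nodes of $q$ above $M$ restrict to $M$ in a controlled way already witnessed inside $q$ --- which guarantees that the closure terminates and stays inside $\calS\cup\calT$. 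Your sketch is accurate in spirit, but a complete write-up would need to import those structural lemmas rather than treat the closure step as a single application of appropriateness.
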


The following corollary follows directly from the above lemma and the observation that, given an uncountable regular cardinal $\theta$, a partial order $\PPP\subseteq\HH{\theta}$ that is strongly proper for every element of $\calS_\theta$ is also proper.

\begin{corollary}\label{corollary:Proper}
 If $\theta$ is a countably inaccessible cardinal, then the partial order $\PPP_\theta$ is proper 
 and therefore forcing with $\PPP_\theta$ preserves $\omega_1$. \qed
\end{corollary}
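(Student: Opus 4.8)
The plan is to combine the structural facts already established for $\PPP_\theta$ with the reflection argument hinted at in the paragraph preceding the corollary. First I would observe that, since $\theta$ is countably inaccessible, it is in particular an uncountable regular cardinal with $\theta>\omega_1$, so Proposition \ref{proposition:appropriate} applies and shows that $\HH{\theta}$ is suitable and that $(\calS_\theta,\calT_\theta)$ is appropriate for $\HH{\theta}$. Lemma \ref{strong properness}(ii) then yields that $\PPP_\theta=\PPP_{\HH{\theta},\calS_\theta,\calT_\theta}$ is strongly proper for every element of $\calS_\theta\cup\calT_\theta$, and in particular for every element of $\calS_\theta$. It therefore remains only to verify the cited observation, namely that for an uncountable regular cardinal $\theta$, a partial order $\PPP\subseteq\HH{\theta}$ that is strongly proper for every $M\in\calS_\theta$ is already proper; once this is in place, preservation of $\omega_1$ follows from the well-known fact that proper forcings preserve $\omega_1$.

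To prove the observation, I would fix a sufficiently large regular cardinal $\lambda$ with $\PPP,\theta\in\HH{\lambda}$ and an arbitrary countable elementary submodel $N\prec\HH{\lambda}$ with $\PPP,\theta\in N$, and set $M=N\cap\HH{\theta}$. The first (routine) step is to check that $M\in\calS_\theta$: standard arguments give $M\prec\HH{\theta}$, while $M$ is a countable subset of $\HH{\theta}$ each of whose elements has transitive closure of size less than $\theta$, so regularity and uncountability of $\theta$ force $M$ to have hereditary cardinality below $\theta$, whence $M\in\HH{\theta}$. Since $\PPP\subseteq\HH{\theta}$, I would then record the crucial identifications $\PPP\cap N=\PPP\cap M$ and $D\cap N=D\cap M$ for every $D\subseteq\PPP$ lying in $N$.

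Now I would take any $p\in\PPP\cap N=\PPP\cap M$ and use strong properness to extend it to a strong master condition $q$ for $M$, so that $q$ forces $\dot G\cap M$ to be $(\PPP\cap M)$-generic over $\VV$. The key claim is that $q$ is $(N,\PPP)$-generic. To see this, let $D\in N$ be dense in $\PPP$. The point is that, although $D$ itself need not belong to $M$, its trace $D\cap M=D\cap N$ is dense in $\PPP\cap M$: given $r\in\PPP\cap M=\PPP\cap N$, elementarity of $N$ applied to the parameters $D,\PPP,r\in N$ produces some $s\in D\cap N=D\cap M$ with $s\leq r$. Since $D\cap M\in\VV$ is dense in $\PPP\cap M$ and $q$ forces $\dot G\cap M$ to be $(\PPP\cap M)$-generic, we obtain $q\Vdash\dot G\cap D\cap N\neq\emptyset$, establishing properness. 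The main obstacle, and the only genuinely non-bookkeeping point, is precisely this reflection step: transferring strong properness for models $M\prec\HH{\theta}$ that lie in $\HH{\theta}$ to ordinary properness, which a priori concerns countable $N\prec\HH{\lambda}$ for $\lambda>\theta$. The trick that resolves it is to pass to $M=N\cap\HH{\theta}$ and exploit that every dense set relevant to $N$-genericity is captured by its trace on $M$, so that a strong master condition for $M$ automatically meets $D\cap N$ for each dense $D\in N$; the remaining ingredients are standard.
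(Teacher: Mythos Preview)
Your proposal is correct and follows precisely the approach the paper indicates: the paper states the corollary without proof, merely noting that it ``follows directly from the above lemma and the observation that, given an uncountable regular cardinal $\theta$, a partial order $\PPP\subseteq\HH{\theta}$ that is strongly proper for every element of $\calS_\theta$ is also proper.'' You have supplied a clean and accurate verification of exactly this observation, via the expected reflection $M=N\cap\HH{\theta}$ and the trace argument for dense sets.
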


Closely following \cite[Sections 3 \& 4]{MR3201836}, we list some standard consequences of properness and strong properness.

\begin{lemma}[{\cite[Claim 3.3]{MR3201836}}]\label{name}
 Let $\vartheta$ be a sufficiently large regular cardinal, let  $M$ be an elementary submodel of  $\HH{\vartheta}$ and let $\PPP$ be a partial order in $M$. If $G$ is $\PPP$-generic over $\VV$ and $G$ contains a master condition for $M$, then the following statements hold true: 
 \begin{enumerate}[leftmargin=0.7cm]
   \item $M[G]$ is an elementary submodel of $\HH{\vartheta}[G]$ with $M[G]\cap V=M$. 
   
   \item If $\dot{f}$ is a $\PPP$-name in $M$ with the property that $\dot{f}^{G}$ is a function with ordinal domain and $\tau=\dot{f}\cap M$, then $\tau^{G}=\dot{f}^{G}\restriction M$.
\end{enumerate}
\end{lemma}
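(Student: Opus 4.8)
The plan is to run the standard Tarski--Vaught and density arguments, using the defining property of a master condition for $M$ (namely, that $G$ meets every dense set $D\in M$ of $\PPP$ inside $M$) together with the elementarity of $M\prec\HH{\vartheta}$. Throughout I use that, since $\vartheta$ is sufficiently large, $\HH{\vartheta}$ computes the forcing relation for $\PPP$ correctly, $\HH{\vartheta}[G]=\HH{\vartheta}^{\VV[G]}$, and $\HH{\vartheta}^{\VV[G]}\cap\VV=\HH{\vartheta}$. All the relevant dense sets I build will be definable over $\HH{\vartheta}$ from $\PPP$ and names lying in $M$, hence will themselves be elements of $M$.

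For the elementarity in (i) I would verify the Tarski--Vaught criterion. Given names $\dot{a}_1,\dots,\dot{a}_k\in M$ and a formula $\varphi$, suppose $\HH{\vartheta}[G]\models\exists y\,\varphi(y,\dot{a}_1^G,\dots,\dot{a}_k^G)$. Let $D$ be the set of conditions $r$ that either force $\neg\exists y\,\varphi(y,\vec{\dot{a}})$ or, via fullness, force $\varphi(\dot{c},\vec{\dot{a}})$ for some explicitly chosen name $\dot{c}\in\HH{\vartheta}$. Then $D$ is dense and lies in $M$, so the master condition property yields some $r\in G\cap D\cap M$; since the existential holds in $G$, the first alternative fails, so $r\Vdash\varphi(\dot{c},\vec{\dot{a}})$ for some $\dot{c}$, and elementarity of $M$ (with parameters $r,\vec{\dot{a}},\PPP\in M$) lets me take $\dot{c}\in M$, so that $\dot{c}^G\in M[G]$ is the required witness. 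The inclusion $M\subseteq M[G]\cap\VV$ holds because $\check{a}\in M$ whenever $a\in M$. For the converse, given $x=\dot{a}^G\in\VV$ with $\dot{a}\in M$, I consider the dense set of conditions that either decide $\dot{a}$ to equal some $\check{z}$ or have no extension doing so; meeting it inside $M$ and using $x\in\VV$ (so that some condition of $G$ forces $\dot{a}=\check{x}$) to rule out the second alternative, elementarity produces $z\in M$ with $x=\check{z}^G=z\in M$.

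For (ii), write $\dot{f}^G$ as a function with domain an ordinal $\delta$. The inclusion $\tau^G\subseteq\dot{f}^G\restriction M$ follows from the definitions together with (i): if $\sigma^G\in\tau^G$ is witnessed by a pair $(\sigma,p)\in\dot{f}\cap M$ with $p\in G$, then $\sigma\in M$, and the first coordinate of $\sigma^G$, being an ordinal named by a name in $M$, lies in $M[G]\cap\VV=M$ by (i); hence $\sigma^G=(\alpha,\dot{f}^G(\alpha))\in\dot{f}^G\restriction M$. For the reverse inclusion, fix $(\alpha,y)\in\dot{f}^G$ with $\alpha\in M$. I would use the dense set $D_\alpha\in M$ of conditions $r$ that either force $\check\alpha\notin\dom{\dot{f}}$ or select a pair $(\sigma,s)\in\dot{f}$ with $r\leq s$ and such that $r$ forces $\sigma$ to name a pair with first coordinate $\check\alpha$. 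Meeting $D_\alpha$ inside $M$ and ruling out the first alternative via $\alpha\in\dom{\dot{f}^G}$, elementarity imports such a pair $(\sigma,s)\in\dot{f}\cap M$; then $s\in G$, so $\sigma^G\in\tau^G$, and since $\dot{f}^G$ is a function whose element $\sigma^G$ has first coordinate $\alpha$, we get $\sigma^G=(\alpha,y)$.

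I expect the main obstacle to be precisely this reverse inclusion in (ii): one must locate a name for the value $\dot{f}^G(\alpha)$ that genuinely lies in $M$, yet the value $y$ itself need not belong to $\VV$, so one cannot simply decide it as a check-name. The resolution is to phrase the dense set in terms of \emph{activating} a pair of $\dot{f}$ whose first coordinate is forced to be $\check\alpha$, rather than deciding the value outright, so that the elementarity of $M$ can import the witnessing pair while the functionality of $\dot{f}^G$ pins down its value.
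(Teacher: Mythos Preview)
The paper does not supply its own proof of this lemma; it is quoted verbatim as {\cite[Claim 3.3]{MR3201836}} and used as a black box. Your argument is correct and follows the standard route one would expect: Tarski--Vaught for the elementarity in (i), a density argument (with the dense set definable from $\dot{a}\in M$, hence in $M$) for $M[G]\cap\VV\subseteq M$, and in (ii) the crucial move of phrasing $D_\alpha$ so as to \emph{activate} a pair $(\sigma,s)\in\dot{f}$ with first coordinate forced to be $\check\alpha$, rather than attempting to decide the value $\dot{f}(\check\alpha)$ outright; this lets elementarity import $(\sigma,s)$ into $M$, after which functionality of $\dot{f}^G$ finishes. The density of $D_\alpha$ requires a short argument (pass to a generic containing a given condition to locate a suitable $(\sigma,s)$ and then a condition below both forcing the first coordinate), but this is routine and your sketch captures it.
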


Remember that, given a non-empty set $X$, we say that $S\subseteq\POT{X}$ is a \emph{stationary subset of $\POT{X}$} if it meets every set of the form $C_f=\Set{x\subseteq X}{f[[x]^{{<}\omega}]\subseteq x}$, where $\map{f}{[X]^{{<}\omega}}{X}$ is a function sending finite subsets of $X$ to elements of $X$.

\begin{lemma}[{\cite[Claim 3.5]{MR3201836}}]\label{notcoll}
 Let $K$ be suitable, let $\PPP\subseteq K$ be a partial order and let $\calR$ be a set with the property that $\PPP$ is strongly proper for every element of $\calR$. If $\kappa$ is an uncountable cardinal with the property that for all $\alpha<\kappa$, the set $\Set{M\in \calR}{\alpha\subseteq M, ~ \betrag{M}<\kappa}$ is a stationary subset of $\POT{K}$, then forcing with $\PPP$ does not collapse $\kappa$. 
\end{lemma}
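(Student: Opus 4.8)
The plan is to argue by contradiction. If $\kappa$ were collapsed, then there would be some $\alpha<\kappa$, a condition $p_0\in\PPP$ and a $\PPP$-name $\dot f$ with $p_0\Vdash\dot f\colon\check\alpha\to\check\kappa\text{ surjective}$; note that ruling out such surjections for all $\alpha<\kappa$ suffices to conclude that $\kappa$ stays a cardinal, and no regularity of $\kappa$ is needed. Fix a sufficiently large regular cardinal $\vartheta$ with $\vec p:=\{K,\PPP,\calR,\dot f,p_0,\alpha,\kappa\}\subseteq\HH{\vartheta}$. The goal is to produce an elementary submodel $M\prec\HH{\vartheta}$ with $\vec p\subseteq M$, $\alpha\subseteq M$ and $\betrag{M}<\kappa$, whose trace $N:=M\cap K$ is an element of $\calR$. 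The essential point to keep in mind throughout is that $\dot f$ will typically \emph{not} be an element of $K$, so we must exploit elementarity of the large model $M$ while applying strong properness only to $N=M\cap K\in\calR$; the two fit together because $\PPP\subseteq K$ gives $\PPP\cap M=\PPP\cap N$.

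First I would carry out the reflection step, which I expect to be the technical heart of the argument. The collection of traces $M\cap K$ of elementary submodels $M\prec\HH{\vartheta}$ with $\vec p\subseteq M$ contains a club in $\POT{K}$: closing a finite $s\in[K]^{{<}\omega}$ under the (countable) Skolem hull $\mathrm{Hull}^{\HH{\vartheta}}(s\cup\vec p)$ and intersecting with $K$ induces a function $F\colon[K]^{{<}\omega}\to K$ such that every $F$-closed $N\subseteq K$ satisfies $N=\mathrm{Hull}^{\HH{\vartheta}}(N\cup\vec p)\cap K$. Intersecting $C_F$ with the stationary set $\Set{M\in\calR}{\alpha\subseteq M,~\betrag{M}<\kappa}$ yields $N\in\calR$ with $\alpha\subseteq N$, $\betrag{N}<\kappa$ and $N=M\cap K$ for $M:=\mathrm{Hull}^{\HH{\vartheta}}(N\cup\vec p)\prec\HH{\vartheta}$; since $M$ is the hull of $N\cup\vec p$, we automatically obtain $\vec p\subseteq M$, $\alpha\subseteq N\subseteq M$ and $\betrag{M}=\max(\betrag{N},\aleph_0)<\kappa$.

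Next I would invoke strong properness. As $p_0\in\PPP\subseteq K$ and $p_0\in M$, we have $p_0\in\PPP\cap N$, so strong properness for $N\in\calR$ lets me extend $p_0$ to a strong master condition $q\leq p_0$ for $N$. A short density computation then shows that $q$ is a master condition for $M$ in the sense of Lemma \ref{name}: given any dense $D\in M$ of $\PPP$, elementarity of $M$ (with $\PPP\cap M=\PPP\cap N$) shows that $D\cap\PPP\cap N$ is dense in $\PPP\cap N$, so whenever $G\ni q$ is $\PPP$-generic, the $(\PPP\cap N)$-generic filter $G\cap N$ meets $D\cap\PPP\cap N\subseteq D$, whence $G$ meets $D$.

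Finally, let $G\ni q$ be $\PPP$-generic. Then $p_0\in G$, so $f:=\dot f^{G}$ is a surjection of $\alpha$ onto $\kappa$, and $f\in M[G]$ because $\dot f\in M$. By Lemma \ref{name} we have $M[G]\prec\HH{\vartheta}[G]$ and $M[G]\cap\VV=M$. Since $\alpha\subseteq M$, for each $\xi<\alpha$ the value $f(\xi)$ lies in $M[G]\cap\On=M\cap\On$, so $\ran{f}\subseteq M\cap\kappa$; surjectivity of $f$ then forces $\kappa\subseteq M$, contradicting $\betrag{M}<\kappa$ as computed in $\VV$. I expect the main obstacle to be the reflection step together with the bookkeeping that strong properness is available only for $N=M\cap K\in\calR$ while elementarity and the name $\dot f$ live in the larger model $M$; once the identity $\PPP\cap M=\PPP\cap N$ is used to bridge these two models, the remaining computations are routine.
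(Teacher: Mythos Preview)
The paper does not supply its own proof of this lemma; it simply cites \cite[Claim 3.5]{MR3201836}. Your argument is correct and is the standard one: reflect the situation to an elementary submodel $M\prec\HH{\vartheta}$ whose trace $N=M\cap K$ lies in $\calR$, use strong properness for $N$ together with the identity $\PPP\cap M=\PPP\cap N$ to obtain a master condition for $M$, and then apply Lemma~\ref{name} to trap the range of the purported surjection inside $M\cap\kappa$, contradicting $\betrag{M}<\kappa$. All the bookkeeping you flag (that $\dot f$ need not lie in $K$, that strong properness is only available for $N$, and that $\PPP\subseteq K$ bridges the two models) is handled correctly.
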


The next proposition (see {\cite[Section 5.1]{MR3201836}}) shows how the above lemma can be applied in our context. Since no proof of this statement was provided in \cite{MR3201836}, we will lay out the easy argument for the benefit of our readers.

\begin{proposition}\label{stat_trans_nodes}
 Let $\theta$ be a countably inaccessible cardinal. 
 \begin{enumerate}[leftmargin=0.7cm]
  \item If $\alpha<\theta$, then the set $\Set{M\in\calT_\theta}{\alpha\subseteq M}$ is a stationary subset of $\POT{\HH{\theta}}$. 
  
  \item Forcing with $\PPP_\theta$ preserves $\theta$. 
 \end{enumerate}
\end{proposition}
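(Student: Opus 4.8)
The plan is to deduce \textit{(ii)} from \textit{(i)} by a direct application of Lemma \ref{notcoll}, and to establish \textit{(i)} through an explicit construction of a suitable transitive node. Throughout I would use that countable inaccessibility, applied to $\delta=\omega$, gives $\omega^\omega<\theta$ and hence $\omega_1<\theta$.

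For \textit{(i)}, fix $\alpha<\theta$ and a function $\map{f}{[\HH{\theta}]^{{<}\omega}}{\HH{\theta}}$; I must produce $M\in\calT_\theta$ with $\alpha\subseteq M$ and $f[[M]^{{<}\omega}]\subseteq M$. I would build a continuous $\subseteq$-increasing chain $\seq{N_\xi}{\xi<\omega_1}$ of elementary submodels of $\HH{\theta}$, each of size less than $\theta$, as follows: let $N_0$ be an elementary submodel of $\HH{\theta}$ of size $<\theta$ with $\alpha\cup\{\alpha,f\}\subseteq N_0$, take unions at limits, and at successor steps let $N_{\xi+1}$ be the Skolem hull in $\HH{\theta}$ of the set $$N_\xi\cup\bigcup\Set{\tc{x}}{x\in N_\xi}\cup{}^\omega N_\xi\cup f[[N_\xi]^{{<}\omega}].$$ Each added piece has size $<\theta$: there are $\betrag{N_\xi}<\theta$ transitive closures, each of size $<\theta$; there are $\betrag{N_\xi}^\omega<\theta$ countable sequences, precisely by countable inaccessibility; and $\betrag{[N_\xi]^{{<}\omega}}=\betrag{N_\xi}<\theta$ finite subsets. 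By regularity of $\theta$ each $N_{\xi+1}$ again has size $<\theta$. Setting $M=\bigcup_{\xi<\omega_1}N_\xi$, regularity of $\theta$ together with $\omega_1<\theta$ gives $\betrag{M}<\theta$, and $M$ is an elementary submodel of $\HH{\theta}$ as the union of an elementary chain, with $\alpha\subseteq M$.

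It then remains to verify the defining properties of a transitive node closed under $f$, each of which follows from the successor clause together with the regularity of $\omega_1$. Any finite subset of $M$, and any $x\in M$, already lies in some single $N_\xi$, so the corresponding value of $f$ and the set $\tc{x}$ lie in $N_{\xi+1}\subseteq M$; in particular every element of an element of $M$ lies in $M$, so $M$ is transitive, and $M$ is closed under $f$. For countable closure, any $s\colon\omega\to M$ has range contained in a single $N_\xi$ (as $\omega_1$ is regular and the chain continuous), whence $s\in{}^\omega N_\xi\subseteq N_{\xi+1}\subseteq M$. Since $M$ is transitive of size $<\theta$, we have $\tc{M}=M$ and $M\in\HH{\theta}$, so $M\in\calT_\theta$. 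As $f$ was arbitrary, this shows $\Set{M\in\calT_\theta}{\alpha\subseteq M}$ is stationary in $\POT{\HH{\theta}}$. For \textit{(ii)}, I would apply Lemma \ref{notcoll} with $K=\HH{\theta}$, $\PPP=\PPP_\theta$, $\calR=\calT_\theta$ and $\kappa=\theta$: here $\HH{\theta}$ is suitable and $(\calS_\theta,\calT_\theta)$ is appropriate for it by Proposition \ref{proposition:appropriate}, so $\PPP_\theta$ is strongly proper for every element of $\calT_\theta$ by Lemma \ref{strong properness}~\textit{(ii)}. Every $M\in\calT_\theta$ is transitive and lies in $\HH{\theta}$, hence satisfies $\betrag{M}=\betrag{\tc{M}}<\theta$ automatically, so the size requirement in Lemma \ref{notcoll} is vacuous and its stationarity hypothesis is exactly what \textit{(i)} supplies. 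The lemma then yields that forcing with $\PPP_\theta$ does not collapse $\theta$, i.e.\ preserves it.

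The hard part is the construction in \textit{(i)}, where transitivity, countable closure and membership in $\HH{\theta}$ must be secured simultaneously. The key points are that including all transitive closures at successor steps forces transitivity of the union, that countable inaccessibility ($\delta^\omega<\theta$) is precisely what allows throwing in all countable sequences without exceeding $\theta$, and that the regularity of $\omega_1$ and of $\theta$ control, respectively, the closure behaviour at limits and the final cardinality.
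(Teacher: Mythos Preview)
Your proof is correct, modulo one small slip: the function $f$ has domain $[\HH{\theta}]^{{<}\omega}$, which has cardinality at least $\theta$, so $f\notin\HH{\theta}$ and you cannot place $f$ inside $N_0\prec\HH{\theta}$. This is harmless, since you close under $f$ explicitly at each successor step anyway; simply drop $f$ from the generating set of $N_0$.

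The overall strategy matches the paper's --- build an $\omega_1$-chain of small elementary submodels, absorb countable sequences at successors via countable inaccessibility, and invoke Lemma~\ref{notcoll} for part~\textit{(ii)} --- but the execution differs. The paper works in $\HH{\vartheta}$ for a large regular $\vartheta>\theta$, where $f$ and $\theta$ are genuine elements; it arranges $N_\gamma\cap\theta\in\theta$ and ${}^\omega N_\gamma\subseteq N_{\gamma+1}$ along the chain and then takes $M=N_{\omega_1}\cap\HH{\theta}$. Transitivity of $M$ and closure under $f$ then fall out of elementarity (using $f,\theta\in N_{\omega_1}$ and $N_{\omega_1}\cap\theta\in\theta$), rather than being forced by hand through transitive closures and explicit $f$-values as in your construction. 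Your direct approach inside $\HH{\theta}$ is more hands-on but equally valid; the paper's passage to a larger $\HH{\vartheta}$ is a standard device that lets elementarity do more of the bookkeeping and sidesteps precisely the issue of $f$ not living in $\HH{\theta}$.
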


\begin{proof}
 Fix a function $\map{f}{[\HH{\theta}]^{{<}\omega}}{\HH{\theta}}$ and let $\vartheta>\theta $ be a sufficiently large regular cardinal. 
Using the countable inaccessibility of $\theta$, we construct a continuous chain $\seq{N_{\gamma}}{\gamma\leq\omega_1}$ of elementary submodels of $\HH{\vartheta}$ of cardinality less than $\theta$ such that $\alpha+1 \cup \{f,\theta\}\subseteq N_0$, $N_{\gamma}\cap \theta\in \theta$ and ${}^{\omega} N_{\gamma} \subseteq N_{\gamma+1}$ for all $\gamma<\omega_1$. Then $N_{\omega_1}\cap \HH{\theta} \in\calT_\theta$ and $f[[N_{\omega_1}\cap \HH{\theta}]^{{<}\omega}]\subseteq N_{\omega_1}\cap \HH{\theta}$. The second statement now follows from a combination of Proposition \ref{proposition:appropriate}, Lemma \ref{strong properness},(ii) and Lemma \ref{notcoll}. 
\end{proof}

We will make heavy use of the following property introduced by Hamkins (see \cite{MR2063629}).

\begin{definition}\label{definition:appcover}
  Given transitive classes $M\subseteq N$, the pair $(M,N)$ satisfies the \emph{$\sigma$-approxi\-ma\-tion property} if $A\in M$ whenever $A\in N$ is such that $A\subseteq B$ for some $B\in M$, and $A\cap x\in M$ for every $x\in\Poti{B}{\omega_1}^M$.\footnote{In case $M$ and $N$ have the same ordinals and satisfy enough set theory, this definition is equivalent to the more common definition of the $\sigma$-approximation property where rather than requiring $A\subseteq B$ for some $B\in M$, one only requires that $A\subseteq M$.} Moreover, we say that a partial order $\PPP$ has the $\sigma$-approximation property if the pair $(\VV,\VV[G])$ satisfies the $\sigma$-approximation property whenever $G$ is $\PPP$-generic over $\VV$. 
\end{definition}

The next lemma follows from a slight modification of the proof of {\cite[Lemma 3.6]{MR3201836}}.  For the sake of completeness, we present a proof of this statement. 

\begin{lemma}\label{lemma:SigmaApproximation}
 Let $\PPP$ be a partial order and let $\vartheta$ be a sufficiently large regular cardinal. If there is an unbounded subset $U$ of $\Poti{\HH{\vartheta}}{\omega_1}$ consisting of elementary submodels of $\HH{\vartheta}$ with the property that $\PPP$ is strongly proper for all elements of $U$, then the partial order $\PPP$ satisfies the $\sigma$-approximation property.  
\end{lemma}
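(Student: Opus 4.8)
The plan is to establish the $\sigma$-approximation property directly. Fix a $\PPP$-name $\dot{A}$ and a condition $p$ forcing that $\dot{A}\subseteq\check{B}$ for some $B\in\VV$ and that $\dot{A}\cap\check{x}\in\VV$ for every $x\in\Poti{B}{\omega_1}^\VV$; I would show $p\Vdash\dot{A}\in\VV$. Suppose not, so some $p'\leq p$ forces $\dot{A}\notin\VV$. Using the unboundedness of $U$, choose $M\in U$ with $\{\PPP,\dot{A},B,p'\}\subseteq M$, and using strong properness of $\PPP$ for $M$, extend $p'$ to a strong master condition $q\leq p'$ for $M$. The feature that drives the whole argument is that $q$ forces $G\cap M$ to be $(\PPP\cap M)$-generic \emph{over $\VV$}—genericity over the full ground model, not merely over $M$—which is precisely what separates strong properness from ordinary properness.

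First I would record that the trace of $A$ on $M$ is read off from $G\cap M$. Since $q$ is in particular a master condition for $M$, Lemma~\ref{name} applies; for $b\in B\cap M$ the dense set of conditions deciding $\check{b}\in\dot{A}$ lies in $M$ and is met by $G\cap M$, so $A\cap M=\{b\in B\cap M\mid\exists s\in G\cap M,\ s\Vdash\check{b}\in\dot{A}\}$. Hence there is a $(\PPP\cap M)$-name $\tau\in\VV$ with $q\Vdash\dot{A}\cap\check{M}=\tau^{G\cap M}$. Applying the approximation hypothesis to the countable set $x=B\cap M\in\Poti{B}{\omega_1}^\VV$ gives $A\cap M=\tau^{G\cap M}\in\VV$, so $\tau$ is a $(\PPP\cap M)$-name that $G\cap M$ evaluates into $\VV$. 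In particular the naive witness $B\cap M$ will not exhibit a failure of approximation, which is what the next step must circumvent.

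The goal is now to manufacture a countable $Z\in\VV$ for which $A\cap Z\notin\VV$, contradicting the approximation hypothesis. The idea is to exploit that $G\cap M$, being $\VV$-generic for $\PPP\cap M$, carries genuinely new information, and to transcribe this information into the behaviour of $\dot{A}$ on a carefully chosen countable set. Working inside $M$, and using repeatedly that every condition below $p'$ forces $\dot{A}\notin\VV$ (so that $\dot{A}$ is never locally decided), I would build a tree of conditions in $\PPP\cap M$ together with a countable set $Z\subseteq B$ of marker points, arranged so that the branch of the tree followed by $G\cap M$ is forced to be new by $\VV$-genericity—for each ground-model branch $y$ the set of conditions that deviate from $y$ is dense in $\PPP\cap M$ and lies in $\VV$, hence is met by $G\cap M$—while simultaneously this branch is decoded from $A\cap Z$. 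Since $A\cap Z\in\VV$ by approximation, the branch would then lie in $\VV$, contradicting its newness, and this contradiction would complete the proof.

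The step I expect to be the main obstacle is exactly the simultaneous arrangement above: the branch must be tied to the conditions in $G\cap M$ (so that $\VV$-genericity forces it to be new) and yet be recoverable from the single countable approximation $A\cap Z$ (so that approximation forces it into $\VV$). These two demands pull against each other, since making the marker behaviour of $\dot{A}$ determine the branch tends to let a single condition pin the branch to a ground-model value—this is consistent with $\dot{A}\notin\VV$, because $A$ may well agree with $\VV$ on any fixed countable set. Overcoming this requires building the tree so that the splitting is at once predense below each node (keeping the generic on the tree, and thereby linking the branch to $\VV$-genericity) and separated by the forced values of $\dot{A}$ on $Z$ (making the branch decodable). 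As binary splittings need not be predense, one is led to $\omega$-branching nodes realised by maximal antichains whose members force distinct patterns on $\dot{A}\restriction Z$, with the case of atoms in $\PPP\cap M$ handled separately. It is this bookkeeping—coupling the $\VV$-generic object in $G\cap M$ to a ground-model-approximable trace—that carries the real weight of the argument.
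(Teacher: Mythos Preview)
Your setup is correct through the point where you observe that $\tau^{G\cap M}=A\cap M\in\VV$, but you then head off in an unnecessarily difficult direction and do not complete the argument. The tree construction you outline---coding the $(\PPP\cap M)$-generic branch into $A\cap Z$ for some countable $Z$---would require exactly the simultaneous bookkeeping you yourself flag as the main obstacle, and you have not carried it out; as it stands, your proposal is a sketch of a strategy rather than a proof.

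The paper's argument avoids this difficulty entirely by extracting one further consequence from the line you already reached. Since $G\cap M$ is $(\PPP\cap M)$-generic over $\VV$ and $\tau^{G\cap M}\in\VV$, there is a condition $s\in G\cap M$ below $p$ with $s\Vdash^{\VV}_{\PPP\cap M}\anf{\tau=\check{g}}$ for some fixed $g\in\VV$. Now one argues (phrasing $\dot{A}$ as a name $\dot{f}$ for a function $\alpha\to 2$, as the paper does) that for every $\beta\in M\cap\alpha$ the condition $s$ already forces $\dot{f}(\check{\beta})=g(\beta)$ in the full forcing $\PPP$: if not, some $t\leq_{\PPP}s$ in $M$ forces the opposite value, and extending $t$ to a \emph{second} strong master condition $u$ for $M$ produces a generic $H$ in which $s\in H\cap M$ gives $\tau^{H\cap M}(\beta)=g(\beta)$ while $t\in H$ gives $\dot{f}^{H}(\beta)\neq g(\beta)$, a contradiction. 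Elementarity of $M$ in $\HH{\vartheta}$ then lifts this to every $\beta<\alpha$, so $s$ forces $\dot{f}\in\VV$. No tree construction is needed: the single condition $s$ that pins down the countable trace turns out, via a second application of strong properness and elementarity, to pin down the whole set.
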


\begin{proof} 
 Let $\dot{f}$ be a nice $\PPP$-name  for a function from an ordinal $\alpha$ to $2$ and let $p$ be a condition  in $\PPP$ with the property that $p$ forces all 
restrictions of $\dot{f}$ to countable subsets of $\alpha$ from the ground model to be elements of the ground model. By our assumption, there is a countable $M\in U$ with  $\PPP,\dot{f}, p\in M$, and there is a strong master condition $q$ for $M$ below $p$. Let $G$ be a $\PPP$-generic filter over $\VV$ with $q\in G$. If we set $\tau=\dot{f}\cap M$, then Lemma \ref{name},(ii) and our assumptions on $\dot{f}$ imply that $\tau^G=\dot{f}^G\restriction M\in\VV$. Furthermore, $G\cap M $ is $(\PPP\cap M)$-generic over $\VV$ and $\tau$ is a $(\PPP\cap M)$-name with $\tau^{G\cap M}=\tau^G\in\VV$. Thus, there exists a condition $s$ in $G\cap M$ such that $s\leq_\PPP p$ and $s\Vdash^\VV_{\PPP\cap M}\anf{\tau=\check{g}}$ for some function $\map{g}{M\cap\alpha}{2}$ in $\VV$. 

\begin{claim*}
 If $\beta\in M\cap\alpha$, then  $s\Vdash^M_\PPP\anf{\dot{f}(\check{\beta})=g(\beta)}$. 
\end{claim*}

\begin{proof}[Proof of the Claim]
 If not, then we may find $\beta\in M\cap\alpha$ and $t\in\PPP\cap M$ with the property that  $t\leq_\PPP s$ and $t\Vdash^M_\PPP\anf{\dot{f}(\check{\beta})\neq g(\beta)}$. Pick a strong master condition $u$ for $M$ with $u\leq_\PPP t$ and let $H$ be $\PPP$-generic over $\VV$ with $u\in H$. Then $H\cap M$ is $(\PPP\cap M)$-generic over $\VV$ and $s\in H$ implies $(\beta,g(i))\in\tau^{H\cap M}\subseteq\dot{f}^H$. By elementarity, we also know that $t\in H$ implies that $\dot{f}^H(\beta)\neq g(\beta)$, a contradiction. 
\end{proof}

 By elementarity, the above claim shows that for every $\beta<\alpha$, there is an $i<2$ with $s\Vdash^\VV_\PPP\anf{\dot{f}(\check{\beta})=i}$. Hence $s$ is a condition in $\PPP$ below $p$ that forces $\dot{f}$ to be an element of the ground model. It is now easy to see that these computations yield the statement of the lemma. 
\end{proof}

\begin{corollary}\label{corollary:SideConditionForcingApproximationProperty}
  Let $K$ be a suitable set and let $(\calS,\calT)$ be  appropriate for $K$. If $\calS$ is a stationary subset of $\POT{K}$, then the partial order $\PPP_{K,\calS,\calT}$ has the $\sigma$-approximation property. In particular, if $\theta$ is a countably inaccessible cardinal, then the partial order $\PPP_\theta$ has the $\sigma$-approximation property. 
\end{corollary}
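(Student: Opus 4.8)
The plan is to deduce Corollary \ref{corollary:SideConditionForcingApproximationProperty} directly from Lemma \ref{lemma:SigmaApproximation}, so that the entire task reduces to manufacturing, from the hypothesis that $\calS$ is a stationary subset of $\POT{K}$, an unbounded family $U\subseteq\Poti{\HH{\vartheta}}{\omega_1}$ of countable elementary submodels of $\HH{\vartheta}$ for which $\PPP_{K,\calS,\calT}$ is strongly proper. I would first fix a sufficiently large regular $\vartheta$ with $K\in\HH{\vartheta}$, and recall that by Lemma \ref{strong properness},(ii) the partial order $\PPP_{K,\calS,\calT}$ is already strongly proper for every element of $\calS$. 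The issue is that the elements of $\calS$ are submodels of $K$, whereas Lemma \ref{lemma:SigmaApproximation} demands submodels of $\HH{\vartheta}$; the bridge between the two is exactly the stationarity of $\calS$ in $\POT{K}$.

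The key step is therefore the following. Given any $x\in\Poti{\HH{\vartheta}}{\omega_1}$, I want to produce some $N\supseteq x$ in $U$, which establishes unboundedness. To do this I would build a countable elementary submodel $N\prec\HH{\vartheta}$ with $x\cup\{K,\calS,\calT\}\subseteq N$, and consider $M=N\cap K$. Since $N$ is countable and elementary, $M$ is a countable elementary submodel of $K$, and the map $f\mapsto f[[\,\cdot\,]^{{<}\omega}]$ arising from $N$ witnesses that $M$ lies inside the relevant club; the content of stationarity of $\calS$ is that one may arrange $N$ so that $M=N\cap K\in\calS$. Concretely, one uses that $\calS$ meets every set of the form $C_g$ for $g\colon[K]^{{<}\omega}\to K$: taking $g$ to be a Skolem function for $(K,\in)$ together with the finitely-many extra functions coding the structure we need, any $M\in\calS\cap C_g$ closed under $g$ will be an elementary submodel of $K$ that can be absorbed into a countable elementary $N\prec\HH{\vartheta}$ containing $x$ with $N\cap K=M$. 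Then $N\in U$, where I set $U=\Set{N\in\Poti{\HH{\vartheta}}{\omega_1}}{N\prec\HH{\vartheta}, ~ N\cap K\in\calS}$.

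Finally I must check that $\PPP_{K,\calS,\calT}$ is strongly proper for each such $N$, not merely for the node $M=N\cap K\in\calS$. This is the step I expect to be the main obstacle, since strong properness is asserted in Lemma \ref{strong properness} for elements of $\calS\cup\calT$, i.e.\ for the small node $M$ rather than for the larger outer model $N$. The resolution is that $\PPP_{K,\calS,\calT}\subseteq K$, so that $\PPP_{K,\calS,\calT}\cap N=\PPP_{K,\calS,\calT}\cap M$ because every condition is an element of $K$ and hence lies in $N$ if and only if it lies in $N\cap K=M$; likewise a strong master condition for $M$ is automatically a strong master condition for $N$, as the generic object and its intersections with $N$ and with $M$ coincide. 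Thus strong properness for $M$ transfers verbatim to $N$, and $\PPP_{K,\calS,\calT}$ is strongly proper for every $N\in U$. Applying Lemma \ref{lemma:SigmaApproximation} gives the $\sigma$-approximation property for $\PPP_{K,\calS,\calT}$. For the final sentence, I would invoke Proposition \ref{proposition:appropriate},(ii) together with Proposition \ref{stat_trans_nodes},(i), which shows $\calT_\theta$, and hence $\calS_\theta$, is stationary in $\POT{\HH{\theta}}$ when $\theta$ is countably inaccessible, so the general result specializes to $\PPP_\theta$.
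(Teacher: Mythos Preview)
Your approach is essentially identical to the paper's: define $U$ as the set of countable $N\prec\HH{\vartheta}$ with $N\cap K\in\calS$, lift the stationarity of $\calS$ from $\POT{K}$ to $\POT{\HH{\vartheta}}$ to see that $U$ is unbounded in $\Poti{\HH{\vartheta}}{\omega_1}$, observe that $\PPP_{K,\calS,\calT}\subseteq K$ forces $\PPP_{K,\calS,\calT}\cap N=\PPP_{K,\calS,\calT}\cap(N\cap K)$ so that strong properness transfers from the node $N\cap K$ to $N$, and then apply Lemma \ref{lemma:SigmaApproximation}.

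There is, however, a genuine error in your justification of the final sentence. Proposition \ref{stat_trans_nodes},(i) concerns $\calT_\theta$, the set of transitive countably closed nodes; it says nothing about $\calS_\theta$, and stationarity of $\calT_\theta$ in $\POT{\HH{\theta}}$ does not imply stationarity of $\calS_\theta$ (the two families are in fact disjoint). What you actually need is the elementary observation that $\calS_\theta$, the collection of countable elementary submodels of $\HH{\theta}$, is a club in $\Poti{\HH{\theta}}{\omega_1}$ whenever $\theta>\omega_1$, and hence stationary in $\POT{\HH{\theta}}$. This is precisely what the paper invokes (together with Proposition \ref{proposition:appropriate} to confirm appropriateness of $(\calS_\theta,\calT_\theta)$), and it is all that is required here.
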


\begin{proof}
 Let $\vartheta$ be a sufficiently large regular cardinal and let $U$ denote the collection of all countable elementary submodels $M$ of $\HH{\vartheta}$ with $M\cap K\in\calS$. Then $U$ is a stationary subset of $\POT{\HH{\vartheta}}$ and, since it consists of countable sets, it is unbounded in $\Poti{\HH{\vartheta}}{\omega_1}$. Pick $M\in U$. Then Lemma \ref{strong properness},(ii) implies that $\PPP_{K,\calS,\calT}$ is strongly proper for $M\cap K$. By our assumptions on $K$, the domain of $\PPP_{K,\calS,\calT}$ is a subset of $K$, and hence $\PPP_{K,\calS,\calT}\cap M=\PPP_{K,\calS,\calT}\cap M\cap K$. This shows that $\PPP_{K,\calS,\calT}$ is also strongly proper for $M$. In this situation, Lemma \ref{lemma:SigmaApproximation} directly implies  the first conclusion of the corollary. The second conclusion follows directly from Proposition \ref{proposition:appropriate} together with the fact that $\calS_\theta$ is a club subset of $\Poti{\HH{\theta}}{\omega_1}$, and that therefore it is a stationary subset of $\POT{\HH{\theta}}$.  
\end{proof}

In the following, we present results showing that for countably inaccessible cardinals, the factor forcings of $\PPP_\theta$ induced by transitive nodes in $\HH{\theta}$ also possess the $\sigma$-approximation property.

\begin{definition}
 Let $K$ be a transitive set, let $\calS,\calT\subseteq K$, and let $M\in\calS\cup\calT$. We let $\dot{\QQQ}^M_{K,\calS,\calT}$ denote the canonical $(\PPP_{K,\calS,\calT}\cap M)$-nice name for a suborder of $\PPP_{K,\calS,\calT}$ with the property that whenever $G$ is $(\PPP_{K,\calS,\calT}\cap M)$-generic over $\VV$, then $(\dot{\QQQ}^M_{K,\calS,\calT})^G$ consists of all conditions $p$ in $\PPP_{K,\calS,\calT}$ with $M\in p$ and $p\cap M\in G$.   
\end{definition}

Given a partial order $\PPP$ and a condition $p$, we let $\suborder{\PPP}{p}$ denote the suborder of $\PPP$ consisting of all conditions below $p$. The first part of the following lemma is a consequence of {\cite[Corollary 2.31]{MR3201836}}. The second part then follows from the first part together with Lemma \ref{strong properness},(i).

\begin{lemma}\label{lemma:forcing_equivalent}
 Let $K$ be a suitable set, let $(\calS,\calT)$ be  appropriate for $K$, and let $M\in\calS\cup\calT$. Then the map $$\Map{D^M_{K,\calS,\calT}}{\suborder{\PPP_{K,\calS,\calT}}{\langle M\rangle}}{(\PPP_{K,\calS,\calT}\cap M)*\dot{\QQQ}^M_{K,\calS,\calT}}{p}{(p\cap M,\check{p})}$$ is a dense embedding.  Moreover, if $G$ is $\PPP_{K,\calS,\calT}$-generic over $\VV$ with  $M\in\bigcup G$, then $\VV[G\cap M]$ is a $(\PPP_{K,\calS,\calT}\cap M)$-generic extension of $\VV$ and $\VV[G]$ is a $(\dot{\QQQ}^M_{K,\calS,\calT})^{G\cap M}$-generic extension of $\VV[G\cap M]$. 
\end{lemma}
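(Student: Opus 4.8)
The plan is to verify directly that the map $D:=D^M_{K,\calS,\calT}$ is a dense embedding — an order-preserving map with dense range that preserves and reflects incompatibility — and then to read off the \anf{Moreover} clause from this first part together with the theory of strong master conditions. Write $\PPP=\PPP_{K,\calS,\calT}$ and $\dot{\QQQ}=\dot{\QQQ}^M_{K,\calS,\calT}$ for brevity, and recall that $p\in\suborder{\PPP}{\langle M\rangle}$ precisely when $M\in p$. First I would check that $D$ is well defined and order preserving. Given such a $p$, the residue $p\cap M$ consists of the finitely many nodes of $p$ lying in $M$; using the appropriateness of $(\calS,\calT)$, the closure of $K$ under pairing and intersection, and Neeman's analysis of residues, $p\cap M$ is again an $\in$-increasing, intersection-closed sequence, and it is an element of $M$, so $p\cap M\in\PPP\cap M$. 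Unwinding the definition of $\dot{\QQQ}$, the condition $p\cap M$ forces $p\cap M$ into the generic filter on $\PPP\cap M$ and hence forces $\check p\in\dot{\QQQ}$, so $D(p)=(p\cap M,\check p)$ is a genuine condition in $(\PPP\cap M)*\dot{\QQQ}$. If $p'\leq_\PPP p$, then $\ran{p'}\supseteq\ran{p}$ yields both $(p'\cap M)\supseteq(p\cap M)$ and $p'\leq_\PPP p$, which is exactly $D(p')\leq D(p)$ in the two-step iteration.

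The substantive part is preservation and reflection of incompatibility together with density of the range, and this is where {\cite[Corollary 2.31]{MR3201836}} does the real work. That corollary analyses how a condition below $\langle M\rangle$ decomposes into its residue below $M$ and a remainder above $M$, and how two such conditions amalgamate: two conditions $p,p'\in\suborder{\PPP}{\langle M\rangle}$ are compatible in $\PPP$ if and only if their residues $p\cap M$ and $p'\cap M$ are compatible in $\PPP\cap M$ and the corresponding amalgam can be realised as a single condition, which is precisely what compatibility of $D(p)$ and $D(p')$ in $(\PPP\cap M)*\dot{\QQQ}$ encodes. For density of the range, given $(q,\dot r)\in(\PPP\cap M)*\dot{\QQQ}$ I would pass below $q$ to a condition deciding $\dot r=\check r$ for some $r\in\PPP$ with $M\in r$ and $r\cap M$ in the generic, and then apply Lemma \ref{coolone} to the set $q\cup r\cup\{M\}$ to produce a single condition $p\leq\langle M\rangle$ with $D(p)\leq(q,\dot r)$.

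For the \anf{Moreover} clause, suppose $G$ is $\PPP$-generic over $\VV$ with $M\in\bigcup G$. Then some $p\in G$ has $M\in p$, so $\langle M\rangle\in G$ and $G_0:=\Set{p\in G}{M\in p}=G\cap\suborder{\PPP}{\langle M\rangle}$ is $\suborder{\PPP}{\langle M\rangle}$-generic over $\VV$ with $\VV[G]=\VV[G_0]$. Pushing $G_0$ through the dense embedding $D$ produces a $((\PPP\cap M)*\dot{\QQQ})$-generic filter $H_0*H_1$ with $\VV[G]=\VV[G_0]=\VV[H_0][H_1]$. It then remains to identify $H_0$ with $G\cap M$: any $p\in G$ with $M\in p$ is a strong master condition for $M$ by Lemma \ref{strong properness},(i), so the filter $G\cap M=\Set{q\in G}{q\in M}$ is $(\PPP\cap M)$-generic over $\VV$, and it is generated by $\Set{p\cap M}{p\in G,~M\in p}$, since $p\leq_\PPP p\cap M$ gives $p\cap M\in G\cap M$, while any $q\in G\cap M$ lies above some such residue by a routine compatibility argument inside $G$. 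As $H_0$ is generated by exactly the same set, $H_0=G\cap M$. Hence $\VV[G\cap M]=\VV[H_0]$ is a $(\PPP\cap M)$-generic extension of $\VV$, and $\VV[G]=\VV[H_0][H_1]$ exhibits $\VV[G]$ as a $\dot{\QQQ}^{G\cap M}$-generic extension of $\VV[G\cap M]$, as required.

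The step I expect to be the main obstacle is the verification of the incompatibility condition and the density of the range in the first part, namely matching compatibility of conditions below $\langle M\rangle$ in $\PPP$ with compatibility in the two-step iteration; this is precisely the amalgamation content of {\cite[Corollary 2.31]{MR3201836}}, and everything else is bookkeeping built on top of it. A secondary point requiring care is the identification $H_0=G\cap M$ between the first-coordinate generic extracted from the dense embedding and the generic on $\PPP\cap M$ induced by the strong master condition, which must be checked at the level of the generated filters rather than the generating sets.
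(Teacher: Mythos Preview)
Your proposal is correct and follows the same route as the paper: the paper simply records that the first part is a consequence of {\cite[Corollary 2.31]{MR3201836}} and that the second part follows from this together with Lemma \ref{strong properness},(i), and your sketch is a faithful unpacking of exactly these two ingredients. One minor imprecision: in your density argument, the amalgamation of $q'\in\PPP\cap M$ with $r\ni M$ satisfying $q'\leq_\PPP r\cap M$ is not literally an instance of Lemma \ref{coolone} (which only adds a single node $M$ to a condition inside $M$), but rather of the residue/amalgamation analysis surrounding {\cite[Corollary 2.31]{MR3201836}} that you already invoke; this does not affect the correctness of the outline.
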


\begin{lemma}[{\cite[Claim 4.3 \& 4.4]{MR3201836}}]\label{strproper}
  Let $K$ be a suitable set, let $(\calS,\calT)$ be  appropriate for $K$, let $W\in\calT$ and let $G$ be $(\PPP_{K,\calS,\calT}\cap W)$-generic over $\VV$. Define $$\hat{\calS} ~ = ~ \Set{M\in \calS} {W\in M, ~ M\cap W\in \bigcup G}.$$ If $\calS$ is a stationary subset of $\POT{K}$ in $\VV$, then $\hat{\calS}$ is a stationary subset of $\POT{K}$ in $\VV[G]$ and the partial order $(\dot{\QQQ}^W_{K,\calS,\calT})^G$ is strongly proper in $\VV[G]$ for every element of $\hat{\calS}$.   
\end{lemma}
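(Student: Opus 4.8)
The plan is to derive both conclusions by transporting the corresponding properties of the full poset $\PPP_{K,\calS,\calT}$ to the quotient $(\dot{\QQQ}^W_{K,\calS,\calT})^G$ through the dense embedding of Lemma \ref{lemma:forcing_equivalent}, which identifies $\suborder{\PPP_{K,\calS,\calT}}{\langle W\rangle}$ with $(\PPP_{K,\calS,\calT}\cap W)*\dot{\QQQ}^W_{K,\calS,\calT}$. The clause $M\cap W\in\bigcup G$ in the definition of $\hat{\calS}$ is the linchpin: whenever it holds, some condition in $G$ has $M\cap W$ as a node, so by Lemma \ref{strong properness},(iii) the restriction $G\cap(M\cap W)$ is $(\PPP_{K,\calS,\calT}\cap(M\cap W))$-generic over $\VV$, and this is what lets us transport facts about objects lying in $M$ from $\VV$ into $\VV[G]$.

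First I would treat the stationarity of $\hat{\calS}$. Fix a $(\PPP_{K,\calS,\calT}\cap W)$-name $\dot{f}$ for a function $[K]^{{<}\omega}\to K$ together with a condition $p$; it suffices to find $q\le p$ forcing that some element of $\hat{\calS}$ is closed under $\dot{f}$. Working in $\VV$ and using the standard fact that stationarity of $\calS$ in $\POT{K}$ lifts to stationarity of $\Set{N}{N\text{ countable},~ N\prec\HH{\vartheta},~ N\cap K\in\calS}$ for a sufficiently large regular $\vartheta$, I would choose such an $N$ with $p,\dot{f},W,K,\calS,\calT\in N$ and set $M=N\cap K\in\calS$. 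Then $W\in M$, and appropriateness (Definition \ref{definition:SuitableAppropriate},(ii)) yields $M\cap W=N\cap W\in\calS$ with $M\cap W\in W$. Since $p\in N\cap W=M\cap W$, Lemma \ref{coolone} produces a $q\le p$ in $\PPP_{K,\calS,\calT}\cap W$ having $M\cap W$ as a node, so $q$ forces $M\cap W\in\bigcup G$, that is, $M\in\hat{\calS}$. The key point is that $q$ is a master condition for $N$ with respect to $\PPP_{K,\calS,\calT}\cap W$: any dense $D\in N$ with $D\subseteq\PPP_{K,\calS,\calT}\cap W\subseteq W$ satisfies $D\cap(N\cap W)=D\cap N$, which is dense in $\PPP_{K,\calS,\calT}\cap(N\cap W)$ by elementarity of $N$ and is met by the strong genericity provided by Lemma \ref{strong properness},(iii). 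Lemma \ref{name} then yields $N[G]\cap\VV=N$, whence $M=N\cap K$ is closed under $\dot{f}^G$ because $\dot{f}\in N$ and $K\in\VV$. As $\dot{f}$ and $p$ were arbitrary, $\hat{\calS}$ meets every club of the form $C_f$ in $\VV[G]$, and is therefore stationary there.

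For the strong properness of the quotient, fix $M\in\hat{\calS}$ and a condition $r\in(\dot{\QQQ}^W_{K,\calS,\calT})^G\cap M$. Applying Lemma \ref{coolone} inside $\PPP_{K,\calS,\calT}$ to $r\in\PPP_{K,\calS,\calT}\cap M$, let $r'$ be the closure of $r\cup\{M\}$ under intersections; then $r'\le r$ and $M$ is a node of $r'$. I would first check that $r'\in(\dot{\QQQ}^W_{K,\calS,\calT})^G$, the substance being that $r'\cap W$ equals the closure of $(r\cap W)\cup\{M\cap W\}$ under intersections, which lies in $G$ because $r\cap W\in G$, $M\cap W\in\bigcup G$, and $G$ is a filter. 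Since $M$ is a node of $r'$, Lemma \ref{strong properness},(i) makes $r'$ a strong master condition for $M$ with respect to the full poset $\PPP_{K,\calS,\calT}$. It then remains to transfer this to the quotient, which I would do by verifying the reduction form of strong properness: for every $u\le r'$ in $(\dot{\QQQ}^W_{K,\calS,\calT})^G$, the restriction $u\cap M$ lies in $(\dot{\QQQ}^W_{K,\calS,\calT})^G\cap M$ and reduces $u$, in the sense that every extension of $u\cap M$ inside $(\dot{\QQQ}^W_{K,\calS,\calT})^G\cap M$ is compatible with $u$.

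The main obstacle is exactly this last transfer. Concretely, one must show that $u\cap M$ again lies in the quotient — which unwinds to the requirement $(u\cap M)\cap W\in G$ — and that it reduces $u$; both reduce to manipulations of the node structure (the closure of conditions under intersections, together with appropriateness), and both genuinely use $M\cap W\in\bigcup G$, so that the relevant $\VV$-level compatibilities furnished by the strong properness of $\PPP_{K,\calS,\calT}$ for $M$ survive into $\VV[G]$. Phrased model-theoretically, the difficulty is that the strong-master-condition property of $r'$ only immediately delivers genericity of the $M$-part of the generic over $\VV[G\cap(M\cap W)]$, and one must upgrade this to genericity over all of $\VV[G]$; this upgrade is precisely what the combinatorial reduction argument accomplishes, exploiting that the portion of $G$ above $M\cap W$ interacts with the countable poset $(\dot{\QQQ}^W_{K,\calS,\calT})^G\cap M$ only through $M\cap W$. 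By contrast, the parallel step in the stationarity argument is easy, since there the forcing $\PPP_{K,\calS,\calT}\cap W$ is literally an element of $W\in N$, so strong genericity for $N\cap W$ upgrades to the master-condition property for $N$ directly.
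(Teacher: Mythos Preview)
The paper gives no proof of this lemma; it simply cites \cite[Claims 4.3 and 4.4]{MR3201836}. Your outline is Neeman's argument. The stationarity half is complete and correct: the decisive computation---that a condition $q$ with $M\cap W=N\cap W$ as a node is already a master condition for $N$ with respect to $\PPP_{K,\calS,\calT}\cap W$, because every dense $D\in N$ is a subset of $W$ and hence $D\cap N=D\cap(N\cap W)$, which is dense in $\PPP_{K,\calS,\calT}\cap(N\cap W)$ by elementarity of $N$---is exactly right, and together with Lemma~\ref{name} yields $M\in C_{\dot f^G}\cap\hat\calS$.

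For the strong-properness half, your residue approach is again Neeman's, and the two points you single out as the obstacle are the substance of his Claim~4.4. The first is easy once one notices that $M,W\in u$ forces $M\cap W\in u$ by closure under intersections, so $(u\cap M)\cap W=u\cap(M\cap W)\geq u\cap W\in G$. For the reduction step, the key structural fact is that $M\cap W$ is a node of $u\cap W$, and $v\cap W$ (all of whose nodes lie in $M\cap W$) extends the residue $(u\cap W)\cap(M\cap W)$; hence Neeman's amalgamation applied \emph{inside} $\PPP_{K,\calS,\calT}\cap W$ produces the closure of $(u\cap W)\cup(v\cap W)$ under intersections as a condition, and one verifies that this coincides with $w\cap W$ for the full amalgam $w$ of $u$ and $v$. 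Because this inner amalgam is the meet of $u\cap W$ and $v\cap W$ (any common extension contains their union and is closed under intersections), and both lie in the filter $G$, it belongs to $G$. So there is no genuine gap---only the node-combinatorics you anticipated, which is precisely what Neeman's Claims~4.3--4.4 spell out.
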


Given an infinite cardinal $\theta$ and $M\in\calS_\theta\cup\calT_\theta$, we write $\dot{\QQQ}^M_\theta$ instead of $\dot{\QQQ}^M_{\HH{\theta},\calS_\theta,\calT_\theta}$ and $D^M_\theta$ instead of $D^M_{\HH{\theta},\calS_\theta,\calT_\theta}$. Using Lemma \ref{lemma:SigmaApproximation}, a small variation of the proof of Corollary \ref{corollary:SideConditionForcingApproximationProperty} yields the following result.

\begin{corollary}\label{corollary:sigma_approximation}
 Let $K$ be a suitable set, let the pair $(\calS,\calT)$ be  appropriate for $K$, let $W\in\calT$ and let $G$ be $(\PPP_{K,\calS,\calT}\cap W)$-generic over $\VV$. If $\calS$ is a stationary subset of $\POT{K}$ in $\VV$, then the partial order $(\dot{\QQQ}^W_{K,\calS,\calT})^G$ satisfies the $\sigma$-approximation property in $\VV[G]$. In particular, if $\theta$ is a countably inaccessible cardinal, $W\in\calT_\theta$ and $G$ is $(\PPP_\theta\cap W)$-generic over $\VV$, then the partial order $(\dot{\QQQ}^W_\theta)^G$ has the $\sigma$-approximation property in $\VV[G]$.  \qed
\end{corollary}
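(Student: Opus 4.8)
The plan is to run the argument of Corollary \ref{corollary:SideConditionForcingApproximationProperty} inside the generic extension $\VV[G]$, with the stationary family $\hat{\calS}$ supplied by Lemma \ref{strproper} playing the role that $\calS$ plays in that earlier proof. Write $\QQQ=(\dot{\QQQ}^W_{K,\calS,\calT})^G$. Since $\QQQ$ is, by definition, a suborder of $\PPP_{K,\calS,\calT}$, and since the conditions of $\PPP_{K,\calS,\calT}$ are, via the identification of a condition with its range, finite subsets of $\calS\cup\calT\subseteq K$, the closure properties of the suitable set $K$ guarantee that the domain of $\PPP_{K,\calS,\calT}$, and hence the domain of $\QQQ$, is a subset of $K$.

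First I would invoke Lemma \ref{strproper} to obtain, in $\VV[G]$, that $\hat{\calS}$ is a stationary subset of $\POT{K}$ and that $\QQQ$ is strongly proper for every element of $\hat{\calS}$. Fixing a sufficiently large regular cardinal $\vartheta$ in $\VV[G]$ with $K\in\HH{\vartheta}^{\VV[G]}$, I would then let $U$ be the collection of all countable elementary submodels $M$ of $\HH{\vartheta}^{\VV[G]}$ with $M\cap K\in\hat{\calS}$. By the standard fact that stationarity of a subset of $\POT{K}$ lifts to stationarity of the corresponding family of elementary submodels of $\HH{\vartheta}$, the set $U$ is stationary in $\POT{\HH{\vartheta}^{\VV[G]}}$ in $\VV[G]$, and hence, consisting of countable sets, is unbounded in $\Poti{\HH{\vartheta}^{\VV[G]}}{\omega_1}$.

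Next, for each $M\in U$ I would transfer strong properness from $M\cap K$ to $M$, exactly as in the proof of Corollary \ref{corollary:SideConditionForcingApproximationProperty}: since the domain of $\QQQ$ is a subset of $K$, one has $\QQQ\cap M=\QQQ\cap M\cap K=\QQQ\cap(M\cap K)$, and for any $\QQQ$-generic $H$ the intersection $H\cap M$ coincides with $H\cap(M\cap K)$; hence any strong master condition for $M\cap K\in\hat{\calS}$ is automatically a strong master condition for $M$, so $\QQQ$ is strongly proper for every element of $U$. Lemma \ref{lemma:SigmaApproximation}, applied with $\VV[G]$ as the ambient universe, then yields that $\QQQ$ satisfies the $\sigma$-approximation property in $\VV[G]$, which is the first conclusion. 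For the ``in particular'' clause I would specialize to $K=\HH{\theta}$, $\calS=\calS_\theta$, $\calT=\calT_\theta$: Proposition \ref{proposition:appropriate} guarantees that $\HH{\theta}$ is suitable and $(\calS_\theta,\calT_\theta)$ is appropriate, while $\calS_\theta$ is a club, hence stationary, subset of $\Poti{\HH{\theta}}{\omega_1}$, so the hypotheses of the general statement are met.

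The main point requiring care is not any single computation but the bookkeeping that every object must be evaluated in $\VV[G]$ rather than in $\VV$. The decisive input is the stationarity of $\hat{\calS}$ in the extension $\VV[G]$, which is precisely the content of Lemma \ref{strproper} and is the one place where the passage to the extension is genuinely used; the remaining ingredients --- the lifting of stationarity from $\POT{K}$ to $\POT{\HH{\vartheta}}$ and the identity $\QQQ\cap M=\QQQ\cap(M\cap K)$ --- are routine and mirror the corresponding steps in the proof of Corollary \ref{corollary:SideConditionForcingApproximationProperty}.
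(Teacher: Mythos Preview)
Your proposal is correct and follows essentially the same approach as the paper: the paper explicitly notes that a small variation of the proof of Corollary~\ref{corollary:SideConditionForcingApproximationProperty}, using Lemma~\ref{lemma:SigmaApproximation} together with the stationary family $\hat{\calS}$ provided by Lemma~\ref{strproper}, yields the result, and this is exactly what you carry out.
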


Our next goal is to show that for inaccessible cardinals $\theta$, partial orders of the form $\PPP_\theta$ satisfy the $\theta$-chain condition. This result is not mentioned in \cite{MR3201836}, and we will need it for the characterization of ineffable cardinals in Section \ref{section:ineffable}.  
The following result is due to Neeman (see {\cite[Claim 5.7] {MR3201836}}) in a slightly different setting, however with exactly the same proof also working in our setting.

\begin{lemma}\label{trans_in_generic}
 Let $\theta$ be an inaccessible cardinal 
  and let $\kappa<\theta$ be a cardinal with $\HH{\kappa}\in\calT_\theta$. Then the suborder $\suborder{\PPP_\theta}{\langle\HH{\kappa}\rangle}$ is dense in $\PPP_\theta$. 
\end{lemma}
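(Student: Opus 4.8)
The plan is to show directly that, given any condition $p\in\PPP_\theta$, the closure $q$ of $\ran{p}\cup\{\HH{\kappa}\}$ under intersections is again a condition. Since then $\ran{q}\supseteq\ran{p}$ and $\HH{\kappa}\in\ran{q}$, such a $q$ witnesses that $p$ has an extension lying in $\suborder{\PPP_\theta}{\langle\HH{\kappa}\rangle}$, which is exactly density. First I would compute $q$ explicitly. Writing $M\cap\HH{\kappa}$ for the intersection of a node $M$ of $p$ with $\HH{\kappa}$, and using that $\ran{p}$ is already closed under intersections together with the identity $(M\cap\HH{\kappa})\cap(M'\cap\HH{\kappa})=(M\cap M')\cap\HH{\kappa}$, one sees that
\[ \ran{q} ~=~ \ran{p}\cup\{\HH{\kappa}\}\cup\Set{M\cap\HH{\kappa}}{M\in\ran{p}}, \]
a finite set closed under intersections. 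Thus the whole problem reduces to verifying that each $M\cap\HH{\kappa}$ is a node, i.e.\ an element of $\calS_\theta\cup\calT_\theta$; the remaining requirements are then routine, exactly as in the construction of conditions via closure under intersections in Lemma \ref{coolone} (which we cannot invoke directly here precisely because $p\notin\HH{\kappa}$ in general).

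For a transitive node $W\in\calT_\theta$, I would first observe that $W$ and $\HH{\kappa}$ are $\subseteq$-comparable. Indeed, both are transitive, so $W\cap\On$ and $\HH{\kappa}\cap\On=\kappa$ are ordinals; assuming say $W\cap\On\le\kappa$, every $x\in W$ satisfies $\betrag{\tc{x}}<\kappa$, since $\betrag{\tc{x}}$ is definable from $x$ in $W\prec\HH{\theta}$ and hence lies in the ordinal $W\cap\On\le\kappa$, so that $x\in\HH{\kappa}$. This gives $W\subseteq\HH{\kappa}$, and symmetrically $\HH{\kappa}\subseteq W$ in the other case. Consequently $W\cap\HH{\kappa}\in\{W,\HH{\kappa}\}\subseteq\calT_\theta$, so transitive nodes contribute nothing beyond $\HH{\kappa}$ itself.

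The main point, and the step I expect to be the genuine obstacle, is the case of a small node $M\in\calS_\theta$, since here $\HH{\kappa}$ need not be an element of $M$, so clause (ii) of appropriateness in Definition \ref{definition:SuitableAppropriate} does not apply. I would instead prove the sharper statement that $M\cap\HH{\kappa}\prec\HH{\kappa}$ for \emph{every} $M\prec\HH{\theta}$, regardless of whether $\HH{\kappa}\in M$. Given parameters $\bar a\in M\cap\HH{\kappa}$ and a formula with $\HH{\kappa}\models\exists y\,\varphi(y,\bar a)$, elementarity of $\HH{\kappa}$ in $\HH{\theta}$ gives $\HH{\theta}\models\exists y\,\varphi(y,\bar a)$; let $\mu_0$ be the least cardinal admitting a witness $y$ with $\betrag{\tc{y}}\le\mu_0$. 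The crucial observation is that $\mu_0$ is definable in $\HH{\theta}$ from $\bar a$ \emph{alone}, with no reference to $\kappa$, so $\mu_0\in M$; meanwhile the existence of a witness inside $\HH{\kappa}$ forces $\mu_0<\kappa$. Hence $M$ contains a witness $b$ with $\betrag{\tc{b}}\le\mu_0<\kappa$, that is $b\in M\cap\HH{\kappa}$, and Tarski--Vaught yields $M\cap\HH{\kappa}\prec\HH{\kappa}\prec\HH{\theta}$. Since $M\cap\HH{\kappa}$ is countable and, by the countable closure of $\HH{\kappa}$, an element of $\HH{\kappa}\subseteq\HH{\theta}$, it belongs to $\calS_\theta$.

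Combining the two cases shows that every element of $\ran{q}$ is a node, so $q$ is a condition extending $p$ with $\HH{\kappa}\in\ran{q}$, completing the density argument. The essential insight isolating the obstacle is that the hereditary cardinality of a \emph{minimal} witness is definable without reference to $\kappa$; this is what keeps the intersection $M\cap\HH{\kappa}$ elementary even when $M$ does not ``see'' $\HH{\kappa}$, and it is the reason the naive closure under intersections succeeds despite $p$ not being an element of $\HH{\kappa}$.
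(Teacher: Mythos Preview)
Your verification that each $M\cap\HH{\kappa}$ lies in $\calS_\theta\cup\calT_\theta$ is correct and the arguments are clean. For transitive $W$ the ``symmetric'' direction does go through, because $\kappa\in W$ together with $W\prec\HH{\theta}$ gives $\HH{\kappa}\in W$ by elementarity, and then transitivity of $W$ yields $\HH{\kappa}\subseteq W$. The Tarski--Vaught argument for small $M$ via the minimal hereditary cardinality of a witness is exactly right and is a nice observation.

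The gap is in your final step: it is \emph{not} routine that your $q$ is $\in$-increasing, and in fact this can fail outright. Take $p=\langle M\rangle$ for a single small node $M$ with $M\not\subseteq\HH{\kappa}$ and $\kappa\notin M$; such $M$ exist whenever $\kappa$ is not parameter-free definable in $\HH{\theta}$, hence for all but countably many of the stationarily many admissible $\kappa$. Your $q$, listed in rank order, is then $\langle M\cap\HH{\kappa},\,\HH{\kappa},\,M\rangle$, and the $\in$-increasing requirement would force $\HH{\kappa}\in M$, equivalently $\kappa\in M$ --- a contradiction. So your $q$ is simply not a condition in this case. The hypothesis $p\in M$ in Lemma~\ref{coolone} is not a technical convenience; it is precisely what guarantees that the newly added node sits at the \emph{top} of the sequence, making the $\in$-chain automatic. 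When $\HH{\kappa}$ lands in the middle, one must separately arrange that its immediate rank-successor contains it as an element. Neeman's argument (which the paper cites without reproducing) handles this by first extending $p$: using elementarity of the offending small node $M$ together with the abundance of transitive nodes supplied by inaccessibility of $\theta$, one inserts a suitable transitive $W\in M$ with $\HH{\kappa}\in W$ before adding $\HH{\kappa}$ itself, so that the rank-successor of $\HH{\kappa}$ becomes the transitive $W$ rather than $M$.
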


\begin{proposition}\label{proposition:complete subforcing}
 Let $\theta$ be an inaccessible cardinal. If $\kappa<\theta$ is a cardinal with $\HH{\kappa}\in\calT_\theta$, then $\PPP_\theta\cap\HH{\kappa}$ is a complete subforcing of $\PPP_\theta$. 
\end{proposition}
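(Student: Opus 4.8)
The plan is to show that the inclusion $\PPP_\theta\cap\HH{\kappa}\subseteq\PPP_\theta$, equipped with the restricted ordering, is a complete embedding by verifying the two nontrivial requirements: that it preserves incompatibility, and that every maximal antichain of $\PPP_\theta\cap\HH{\kappa}$ remains predense in $\PPP_\theta$. Write $W=\HH{\kappa}\in\calT_\theta$. Since $W$ is transitive and closed under intersections, $\PPP_\theta\cap W$ is exactly the set of those $p\in\PPP_\theta$ all of whose nodes are elements of $W$; consequently, for an arbitrary $p\in\PPP_\theta$ the subsequence $p\cap W$ consisting of the nodes of $p$ lying in $W$ is again a condition (the intersections demanded by the definition are computed inside $p$ and land in $W$ because $W$ is closed under intersections) and is an element of $\PPP_\theta\cap W$. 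The two tools I will lean on are Lemma \ref{trans_in_generic}, asserting that $\suborder{\PPP_\theta}{\langle W\rangle}$ is dense in $\PPP_\theta$, and the dense embedding $D^W_\theta$ of Lemma \ref{lemma:forcing_equivalent} with its ``moreover'' clause, which identifies $\suborder{\PPP_\theta}{\langle W\rangle}$ with the iteration $(\PPP_\theta\cap W)*\dot{\QQQ}^W_\theta$ and describes the generic filters this identification induces.

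Preservation of incompatibility is the easy half and follows from density alone. If $p,q\in\PPP_\theta\cap W$ admit a common extension $r\in\PPP_\theta$, then by Lemma \ref{trans_in_generic} I extend $r$ to some $\bar r$ with $W\in\bar r$; the condition $\bar r\cap W$ then lies in $\PPP_\theta\cap W$ and extends both $p$ and $q$, because $\ran{p},\ran{q}\subseteq\ran{\bar r}$ and $\ran{p},\ran{q}\subseteq W$, so both are contained in $\ran{\bar r\cap W}$. Hence $p$ and $q$ are already compatible in $\PPP_\theta\cap W$, which is the contrapositive of what is required.

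For the predensity of maximal antichains I reduce to the preservation of genericity, namely the claim that for every $\PPP_\theta$-generic filter $G$ over $\VV$, the trace $G\cap(\PPP_\theta\cap W)$ is $(\PPP_\theta\cap W)$-generic over $\VV$. Indeed, by Lemma \ref{trans_in_generic} the filter $G$ meets $\suborder{\PPP_\theta}{\langle W\rangle}$, so $W\in\bigcup G$, and then Lemma \ref{lemma:forcing_equivalent} furnishes a $(\PPP_\theta\cap W)$-generic filter derived from $G$; a short computation, using density of $\suborder{\PPP_\theta}{\langle W\rangle}$ together with the observation that $p\leq r$ and $W\in p$ force $p\cap W\leq r$, identifies this derived filter with $G\cap(\PPP_\theta\cap W)$. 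Granting this, completeness follows by the standard argument: any maximal antichain $A$ of $\PPP_\theta\cap W$ is an antichain in $\PPP_\theta$ by the incompatibility clause above, and it is predense because an arbitrary $p\in\PPP_\theta$ lies in some generic $G$, whose trace $G\cap(\PPP_\theta\cap W)$ meets $A$, exhibiting an element of $A$ compatible with $p$.

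The genuine content sits in the predensity step, and the point I expect to require the most care is matching the filter denoted ``$G\cap W$'' in Lemma \ref{lemma:forcing_equivalent} with the literal trace $G\cap(\PPP_\theta\cap W)$; everything else is bookkeeping about ranges of conditions and the closure of $W=\HH{\kappa}$ under intersections. An alternative route passes to Boolean completions: since $D^W_\theta$ is a dense embedding, one transports the standard fact that the first factor of a two-step iteration completely embeds into it, but this forces one to check that the literal inclusion corresponds under the induced isomorphism to the map $r\mapsto(r,\mathbbm{1})$, which amounts to the same computation in different clothing, so I would present the genericity version as the more self-contained one.
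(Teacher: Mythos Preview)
Your proof is correct and follows essentially the same strategy as the paper: pick a $\PPP_\theta$-generic $G$, get $W=\HH{\kappa}\in\bigcup G$ by Lemma~\ref{trans_in_generic}, and then conclude that $G\cap W$ is $(\PPP_\theta\cap W)$-generic, so every maximal antichain of $\PPP_\theta\cap W$ is met by $G$ and hence is maximal in $\PPP_\theta$.

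The only difference is that the paper reaches the genericity of $G\cap W$ in one line by citing Lemma~\ref{strong properness}(i): the condition $\langle W\rangle\in G$ is a strong master condition for $W$, and the \emph{definition} of strong master condition says exactly that $G\cap W$ is $(\PPP_\theta\cap W)$-generic. You instead route through Lemma~\ref{lemma:forcing_equivalent}, whose ``moreover'' clause is itself proved from Lemma~\ref{strong properness}(i), so this is a detour. Also, your worry about matching ``$G\cap W$'' with ``$G\cap(\PPP_\theta\cap W)$'' is a non-issue: since $G\subseteq\PPP_\theta$, these two sets are literally equal. Finally, the separate incompatibility check is redundant once you know maximal antichains stay maximal (extend an incompatible pair to a maximal antichain in $\PPP_\theta\cap W$); the paper omits it for that reason.
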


\begin{proof}
Let $A$ be a maximal antichain of  $\PPP_\theta\cap\HH{\kappa}$. Let $G$ be a filter on $\PPP_\theta$ that is generic over $\VV$. Then Lemma \ref{trans_in_generic} implies that $\langle\HH{\kappa}\rangle\in G$. Since  Lemma \ref{strong properness}, (i) shows that $\langle\HH{\kappa}\rangle$ is a strong master condition for $\HH{\kappa}$, we know that $G\cap\HH{\kappa}$ is $(\PPP_\theta\cap\HH{\kappa})$-generic over $\VV$, and hence it intersects  $A$, showing that $A$ is a maximal antichain in $\PPP_\theta$, as desired. 
\end{proof}

\begin{lemma}\label{chain_condition}
 If $\theta$ is inaccessible, then $\PPP_\theta$ satisfies the $\theta$-chain condition. 
\end{lemma}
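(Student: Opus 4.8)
The plan is to prove that every \emph{maximal} antichain of $\PPP_\theta$ has size less than $\theta$; since any antichain extends to a maximal one, this suffices. I first record the cardinal arithmetic that inaccessibility provides: $\abs{\HH{\theta}}=\theta$, and, as $\theta$ is a strong limit, $\abs{\HH{\kappa}}=2^{<\kappa}<\theta$ for every cardinal $\kappa<\theta$, so in particular $\abs{\PPP_\theta\cap\HH{\kappa}}<\theta$ whenever $\kappa<\theta$. The whole strategy is a reflection argument: I will locate a cardinal $\kappa<\theta$ with $\HH{\kappa}\in\calT_\theta$ such that a given maximal antichain $A$ is entirely contained in $\HH{\kappa}$, which immediately yields $\abs{A}\le\abs{\PPP_\theta\cap\HH{\kappa}}<\theta$.

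To produce such a $\kappa$, fix a sufficiently large regular cardinal $\vartheta$ and build, by a standard continuous elementary chain, a submodel $N\prec\HH{\vartheta}$ of size less than $\theta$ with $A,\PPP_\theta,\theta\in N$, with $\kappa:=N\cap\theta\in\theta$ a cardinal satisfying $\cof{\kappa}=\omega_1$, and with $N\cap\HH{\theta}=\HH{\kappa}$. Here inaccessibility guarantees that such a $\kappa$ can be found as a cardinal with $\abs{\HH{\kappa}}<\theta$, while $\cof{\kappa}=\omega_1$ ensures that $\HH{\kappa}$ is countably closed; combined with $\HH{\kappa}\prec\HH{\theta}$ (from continuity of the chain) this gives $\HH{\kappa}\in\calT_\theta$. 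Consequently Proposition \ref{proposition:complete subforcing} applies and $\PPP_\theta\cap\HH{\kappa}$ is a complete subforcing of $\PPP_\theta$; moreover, since $\PPP_\theta\subseteq\HH{\theta}$, we have $\PPP_\theta\cap N=\PPP_\theta\cap\HH{\kappa}$ and $A\cap N=A\cap\HH{\kappa}$. I expect the main technical obstacle to be precisely this set-up: verifying, from inaccessibility, that the reflection point $\kappa$ can simultaneously be arranged to be a cardinal, to have uncountable cofinality, and to satisfy $N\cap\HH{\theta}=\HH{\kappa}$, so that all three hypotheses needed to invoke Proposition \ref{proposition:complete subforcing} hold.

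The heart of the argument is then to show that $A\cap\HH{\kappa}$ is predense in $\PPP_\theta\cap\HH{\kappa}$. Given $r\in\PPP_\theta\cap\HH{\kappa}=\PPP_\theta\cap N$, we have $r\in N$; since $A$ is a maximal, hence predense, antichain, $\HH{\vartheta}$ satisfies that some $p\in A$ has a common extension with $r$, and by elementarity such a $p$ together with a witnessing common extension can be found inside $N$. As the witness lies in $N\cap\HH{\theta}=\HH{\kappa}$, it follows that $r$ is compatible in $\PPP_\theta\cap\HH{\kappa}$ with some $p\in A\cap\HH{\kappa}$, establishing predensity. Being a predense subset of the antichain $A$, the set $A\cap\HH{\kappa}$ is a maximal antichain of $\PPP_\theta\cap\HH{\kappa}$, and hence, by the completeness of this subforcing (Proposition \ref{proposition:complete subforcing}), a maximal antichain of $\PPP_\theta$. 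Since $A\cap\HH{\kappa}\subseteq A$ and $A$ is itself an antichain, maximality forces $A=A\cap\HH{\kappa}\subseteq\HH{\kappa}$, so that $\abs{A}\le\abs{\PPP_\theta\cap\HH{\kappa}}<\theta$, as desired.
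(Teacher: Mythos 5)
Your proposal is correct and follows essentially the same route as the paper: reflect the maximal antichain $A$ into an elementary submodel $N\prec\HH{\vartheta}$ with $N\cap\HH{\theta}=\HH{\kappa}$ for a suitable $\kappa<\theta$ with $\HH{\kappa}\in\calT_\theta$, observe by elementarity that $A\cap\HH{\kappa}$ is a maximal antichain of $\PPP_\theta\cap\HH{\kappa}$, and invoke Proposition \ref{proposition:complete subforcing} to conclude $A=A\cap\HH{\kappa}\subseteq\HH{\kappa}$, which has size less than $\theta$ by inaccessibility. The only cosmetic difference is that you ask for $\cof{\kappa}=\omega_1$ where the paper takes $\kappa$ a strong limit of uncountable cofinality (in fact $\kappa=N\cap\theta$ is automatically a strong limit by elementarity), and you spell out via predensity the maximality step that the paper attributes to elementarity.
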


\begin{proof}
  Fix a maximal antichain $A$ in $\PPP_\theta$ and pick a sufficiently large regular cardinal $\vartheta>\theta$. Using the inaccessibility of $\theta$, we find an elementary submodel $M$ of $\HH{\vartheta}$ of cardinality less than $\theta$ with $A,\PPP_\theta\in M$ and  the property that $M\cap\HH{\theta}=\HH{\kappa}$ for some strong limit cardinal $\kappa$ of uncountable cofinality. Then, elementarity implies that $A\cap\HH{\kappa}$ is a maximal antichain in $\PPP_\theta\cap\HH{\kappa}$. Since the properties of $\kappa$ listed above ensure that $\HH{\kappa}\in\calT_\theta$, we can apply Proposition \ref{proposition:complete subforcing} to see that $A\cap\HH{\kappa}$ is a maximal antichain in $\PPP_\theta$ and hence $A=A\cap\HH{\kappa}\subseteq\HH{\kappa}$ has cardinality less than $\theta$.  
\end{proof}

We end this section by showing that the above results already yield a characterization of the class of all countably inaccessible  cardinals through Neeman's pure side condition forcing in the sense of Definition \ref{definition:Characterization}. The implication from (i) to (ii) in the proof of the following theorem is a direct adaptation of arguments contained in {\cite[Section 5.1]{MR3201836}}. 

\begin{theorem}\label{aleph2}
 The following are equivalent for every infinite cardinal $\theta$: 
 \begin{enumerate}[leftmargin=0.7cm]
  \item  $\theta$ is a countably inaccessible cardinal. 
  \item $\mathbbm{1}_{\PPP_\theta}\Vdash\anf{\check{\theta}=\omega_2}$. 

  \item $\mathbbm{1}_{\PPP_\theta}\Vdash\anf{\textit{$\check{\theta}$ is a regular cardinal greater than $\omega_1$}}$. 
 \end{enumerate}
\end{theorem}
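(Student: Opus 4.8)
The plan is to establish the cycle of implications $(i)\Rightarrow(ii)\Rightarrow(iii)\Rightarrow(i)$. The implication $(ii)\Rightarrow(iii)$ is immediate, since $\omega_2$ is always a regular cardinal greater than $\omega_1$. So the two substantial directions are $(i)\Rightarrow(ii)$ and $(iii)\Rightarrow(i)$, the latter of which I would prove by contraposition.

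For $(i)\Rightarrow(ii)$, assume $\theta$ is countably inaccessible and let $G$ be $\PPP_\theta$-generic over $\VV$. First I would collect the preservation facts already available: forcing with $\PPP_\theta$ preserves $\omega_1$ by Corollary \ref{corollary:Proper} and preserves $\theta$ by Proposition \ref{stat_trans_nodes},(ii); moreover countable inaccessibility applied at $\delta=\aleph_0$ gives $2^{\aleph_0}<\theta$, so $\theta>\omega_1$ and hence $\theta\ge\omega_2$ in $\VV[G]$. It then remains to show that every $\mu<\theta$ has cardinality at most $\aleph_1$ in $\VV[G]$. Fix such a $\mu$. By Corollary \ref{corollary:aboutgeneric} there is a transitive node $W\in\calT_\theta\cap\bigcup G$ with $\mu\in W$, and since $W$ is transitive it suffices to show $\betrag{W}\le\aleph_1$ in $\VV[G]$. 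Using the dense embedding of $\suborder{\PPP_\theta}{\langle W\rangle}$ into $(\PPP_\theta\cap W)*\dot{\QQQ}^W_\theta$ from Lemma \ref{lemma:forcing_equivalent} (note that $\langle W\rangle\in G$), I would show that the small nodes lying inside $W$ that are enumerated by the generic cover $W$ — a localized instance of the density provided by Lemma \ref{coolone} — and that, because $W$ is countably closed, these countable models can be organized into a continuous increasing chain $\seq{N_\xi}{\xi<\omega_1}$ with $\bigcup_{\xi<\omega_1}N_\xi=W$. This yields a surjection from $\omega_1$ onto $W$, so $\betrag{W}\le\aleph_1$, and therefore $\theta=\omega_2$ in $\VV[G]$.

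For $(iii)\Rightarrow(i)$ I would argue contrapositively: assuming $\theta$ is not countably inaccessible, I would show that $\mathbbm{1}_{\PPP_\theta}$ does not force $\check\theta$ to be a regular cardinal greater than $\omega_1$. Three cases arise. If $\theta\le\omega_1$, then $\check\theta\le\omega_1$ holds in any extension, so $(iii)$ fails trivially. If $\theta$ is singular in $\VV$, then any ground-model cofinal map of length $\cof{\theta}<\theta$ remains cofinal in $\VV[G]$, so $\theta$ is not a regular cardinal there. The remaining, and genuinely delicate, case is that $\theta$ is regular and uncountable but there is some $\delta<\theta$ with $\delta^\omega\ge\theta$; here I must show that $\PPP_\theta$ collapses $\theta$. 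The crucial structural observation is that no transitive node can reach $\delta$: if $W\in\calT_\theta$ had $\delta\subseteq W$, then countable closure of $W$ would force $[\delta]^\omega\subseteq W$ and hence $\betrag{W}\ge\delta^\omega\ge\theta$, contradicting $W\in\HH{\theta}$. Thus every transitive node has ordinal height strictly below $\delta$, the stationarity hypothesis used in Proposition \ref{stat_trans_nodes} via Lemma \ref{notcoll} fails for all $\alpha\ge\delta$, and the covering of $\HH{\theta}$ above $\delta$ must be carried entirely by the countable small nodes enumerated by the generic. I would exploit this to produce in $\VV[G]$ a cofinal map into $\theta$ of length at most $\omega_1$ (equivalently, to collapse $([\delta]^\omega)^\VV$ to $\aleph_1$), witnessing that $\theta$ is no longer a regular cardinal greater than $\omega_1$.

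The main obstacle is the collapsing computation, which appears in both nontrivial directions. In $(i)\Rightarrow(ii)$ it is the construction of the continuous $\omega_1$-chain of countable models exhausting a transitive node $W$; the subtlety is that conditions in $\PPP_\theta$ are intersection-closed families rather than $\in$-chains, so the chain must be extracted along the generic, using countable closure of $W$ to guarantee continuity at limit stages and the preservation of $\omega_1$ to bound its length. In $(iii)\Rightarrow(i)$ the same phenomenon must be turned around: one has to convert the failure of the transitive nodes to reach $\delta$ into an actual collapse of $\theta$, which is precisely the point at which the cardinal-arithmetic hypothesis $\delta^\omega\ge\theta$ is consumed.
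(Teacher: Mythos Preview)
Your overall plan matches the paper's, and the $(\mathrm{i})\Rightarrow(\mathrm{ii})$ half is right in spirit, though the references are off: the dense embedding of Lemma~\ref{lemma:forcing_equivalent} plays no role, and the countable closure of $W$ is not what bounds the length of the chain. The paper's argument is more direct. One takes two \emph{successive} transitive nodes $W_0\in W_1$ occurring in $\bigcup G$ and lets $C$ be the set of small nodes $M\in\bigcup G$ with $W_0\in M\in W_1$. Then $C$ is linearly ordered by both $\in$ and $\subsetneq$ (each such $M$ is countable and, being elementary, contains each of its predecessors as a subset), so $C$ has order type at most $\omega_1$; a density argument via Lemma~\ref{coolone} gives $\bigcup C=W_1$, whence $\betrag{W_1}\le\aleph_1$ in $\VV[G]$.

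The real gap is in $(\mathrm{iii})\Rightarrow(\mathrm{i})$. Your observation that no $W\in\calT_\theta$ can have $\delta\subseteq W$ is correct, but it only tells you that transitive nodes have small ordinal height; it does not by itself justify that ``the covering of $\HH{\theta}$ above $\delta$ is carried entirely by small nodes from the generic'', nor does it produce the cofinal $\omega_1$-map you promise. If transitive nodes were present, extending an arbitrary condition by a small node covering a prescribed $x\in\HH{\theta}$ would require appropriateness of $(\calS_\theta,\calT_\theta)$, which the paper only establishes for countably inaccessible $\theta$. The paper instead proves the sharper fact $\calT_\theta=\emptyset$: any $W\in\calT_\theta$ is elementary in $\HH{\theta}$, and $\HH{\theta}$ satisfies ``there is an ordinal $\gamma$ such that no set contains ${}^\omega\gamma$'' (witnessed by the minimal $\delta_0$ with $\delta_0^\omega\ge\theta$); reflecting this to $W$ yields such a $\gamma_0\in W$, minimality of $\delta_0$ forces $\gamma_0\ge\delta_0$, so $\delta_0\in W$ by transitivity, and now your closure argument finishes the contradiction. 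Once $\calT_\theta=\emptyset$, every condition consists solely of countable nodes, so any $p$ can be extended by placing a single countable Skolem hull $N\supseteq\ran{p}\cup\{x\}$ on top (no intersection-closure issues arise, since every node of $p$ is already a subset of $N$). Hence $\HH{\theta}^\VV=\bigcup\bigcup G$, while $\bigcup G$ is a $\subsetneq$-chain of countable sets with every proper initial segment countable; thus $\theta$ is a union of $\omega_1$-many countable sets in $\VV[G]$, contradicting $(\mathrm{iii})$.
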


\begin{proof}
 First, assume that (i) holds. Then Corollary \ref{corollary:Proper} and Proposition \ref{stat_trans_nodes}  imply that forcing with $\PPP_\theta$ preserves both $\omega_1$ and $\theta$. Therefore it suffices to prove the following claim. 

 \begin{claim*}
   Forcing with $\PPP_\theta$ collapses all cardinals between  $\omega_1$ and $\theta$.
 \end{claim*}

 \begin{proof}[Proof of the Claim]
  Let $G$ be $\PPP_\theta$-generic over $\VV$ and set $T=\calT_\theta^\VV\cap\bigcup G$. Then there is a canonical well-ordering of $T$ that is induced by both $\in$ and $\subsetneq$. 
Moreover, Corollary \ref{corollary:aboutgeneric} shows that $\bigcup T=\HH{\theta}$. 
 Let $W_0$ and $W_1$ be two successive nodes in the canonical well-ordering of $T$. Define $C$ to be the set of all $M\in\calS_\theta^\VV\cap\bigcup G$ with $W_0\in M\in W_1$. 
 Then there is also a canonical well-ordering of $C$ that is induced by both $\in$ and $\subsetneq$, and $C\subseteq\calS_\theta^\VV$ implies that this ordering has length at most $\omega_1$. Another application of Lemma \ref{coolone} together with the fact that conditions in $G$ are closed under intersections now shows that $\bigcup C=W_1$ and hence $W_1$ has cardinality at most $\omega_1$ in $\VV[G]$. 
 Since we already know that $\bigcup T=\HH{\theta}^\VV$,  this shows that every ordinal between $\omega_1$ and $\theta$  is collapsed to $\omega_1$ in $\VV[G]$. 
\end{proof}

 In the other direction, assume that there is an infinite cardinal $\theta$ with the property that (i) fails and (iii) holds. Then we know that $\theta$ is a regular cardinal greater than $\omega_1$ and there is a $\delta<\theta$ with $\delta^\omega\geq\theta$. Let $\delta_0$ be minimal with this property.
 
 \begin{claim*}
  $\calT_\theta=\emptyset$. 
 \end{claim*}
 
 \begin{proof}[Proof of the Claim]
  Assume, towards a contradiction, that there exists a $W$ in $\calT_\theta$. By elementarity, there is an ordinal $\gamma\in W$ with the property that for all $x\in W$, there is a function $\map{f}{\omega}{\gamma}$ in $W$ with $f\notin x$. Let $\gamma_0$ be minimal with this property. Then $\gamma_0\geq\delta_0$, because otherwise the minimality of $\delta_0$ would imply that ${}^\omega\gamma_0\in\HH{\theta}$ and elementarity would then allow us to show that ${}^\omega\gamma_0$ is contained in $W$, contradicting our assumptions on $\gamma_0$. But then $\delta_0\in W$ and the countable closure of $W$ implies that ${}^\omega\delta_0\subseteq W$. By our assumption, we can conclude that $\betrag{W}\geq\delta_0^\omega\geq\theta$ and hence $W\notin\HH{\theta}$, a contradiction. 
 \end{proof}

  \begin{claim*}
   Given $x\in\HH{\theta}$, the set $\Set{p\in\PPP_\theta}{x\in\bigcup p}$ is dense in $\PPP_\theta$. 
  \end{claim*}
  
  \begin{proof}[Proof of the Claim]
   Fix a condition $p$ in $\PPP_\theta$ and let $N$ denote the Skolem hull of $\{p,x\}$ in $\HH{\theta}$. Since $\theta$ is uncountable and regular, we have $N\in\calS_\theta$. Moreover, the above claim shows that $p\subseteq\calS_\theta$ and this implies that $M\subseteq N$ holds for all $M\in p$. In particular, the set $p\cup\{N\}$ is a condition in $\PPP_\theta$ below $p$. 
  \end{proof}

  Now, let $G$ be $\PPP_\theta$-generic over $\VV$. Then the above claims show that $\bigcup G\subseteq S_\theta^\VV$ and $\HH{\theta}^\VV=\bigcup\bigcup G$. The first statement directly implies that $\bigcup G$ is well-ordered by $\subsetneq$ in $\VV[G]$ and every proper initial segment of this well-order is a subset of an element of $\calS_\theta^\VV$. In combination with the second statement, this shows that $\theta$ is a union of $\omega_1$-many countable sets in $\VV[G]$, contradicting (iii). 
\end{proof}



\section{Small embeddings and inaccessible cardinals}\label{section:Inaccessibles}

In this section, we show that the sequence $\seq{\PPP_\theta}{\theta\in\Card}$ strongly characterizes the class of all inaccessible cardinals through the non-existence of certain trees in generic extensions. In the proof of this characterization, we will make use of \emph{small embedding characterizations} of large cardinals introduced by the authors in \cite{SECFLC}.\footnote{While it would not be necessary to make use of such characterizations in this very argument, and making use of them may even seem somewhat artificial at this point, this approach allows us to already introduce some of the techniques that will be necessary in many of our later arguments, when we consider stronger large cardinal notions.} These small embedding characterizations show that many important large cardinal properties are equivalent to the existence of certain elementary embeddings from a small transitive model into some $\HH{\vartheta}$ that send their critical point to the given cardinal. They are motivated by a classical result of Magidor that characterizes supercompact cardinals in this way (see {\cite[Theorem 1]{MR0295904}}). The following definition specifies the form of embeddings that we are interested in.

\begin{definition}\label{definition:SEC}
Let $\theta<\vartheta$ be cardinals. A \emph{small embedding for $\theta$} is a non-trivial elementary embedding $\map{j}{M}{\HH{\vartheta}}$ with $j(\crit{j})=\theta$ and $M\in \HH{\vartheta}$ transitive.
\end{definition}

Using this terminology, Magidor's result now says that a cardinal $\theta$ is supercompact if and only if for every $\vartheta>\theta$, there is a small embedding $\map{j}{M}{\HH{\vartheta}}$ with $M=\HH{\kappa}$ for some $\kappa<\theta$. The results of \cite{SECFLC} provide similar characterizations for many other large cardinal properties that all rely on certain correctness properties of the corresponding domain model $M$ of the embedding. For our characterization of inaccessible cardinals, we will use the following small embedding characterization of these cardinals, which is a minor variation of their small embedding characterization presented in \cite{SECFLC}. For the sake of completeness, we include the short proof of this statement.

\begin{lemma}\label{smallembcharinaccessible}
 The following statements are equivalent for every uncountable regular cardinal $\theta$: 
 \begin{enumerate}[leftmargin=0.7cm]
  \item $\theta$ is inaccessible. 

  \item For all sufficiently large cardinals $\vartheta$ and all $x\in\HH{\vartheta}$, there is a small embedding $\map{j}{M}{\HH{\vartheta}}$ for $\theta$ such that $x\in\ran{j}$ and $\crit{j}$ is a strong limit cardinal of uncountable cofinality. 
 \end{enumerate} 
\end{lemma}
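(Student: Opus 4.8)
The plan is to prove the two implications separately: the forward direction by a transitive collapse of a suitable elementary chain, and the backward direction by exploiting the behaviour of a small embedding below its critical point.

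For the implication (i) $\Rightarrow$ (ii), fix a sufficiently large regular cardinal $\vartheta>\theta$ and some $x\in\HH{\vartheta}$. The plan is to build a continuous $\subseteq$-increasing chain $\seq{N_\gamma}{\gamma\leq\omega_1}$ of elementary submodels of $\HH{\vartheta}$ of cardinality less than $\theta$, exactly along the lines of the construction in the proof of Proposition \ref{stat_trans_nodes}, arranging that $\{x,\theta\}\subseteq N_0$, that $N_\gamma\cap\theta\in\theta$ for all $\gamma$, and that $\kappa:=N_{\omega_1}\cap\theta$ is a strong limit cardinal. The latter can be secured using the inaccessibility of $\theta$: since the strong limit cardinals below $\theta$ form a club, I would ensure at successor stages that $N_{\gamma+1}$ contains a strong limit cardinal above $\sup(N_\gamma\cap\theta)$, so that $\kappa$ is a limit of strong limit cardinals and hence itself a strong limit cardinal, while $\cof{\kappa}=\omega_1$ follows automatically from $\kappa$ being the supremum of a strictly increasing $\omega_1$-sequence. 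Letting $\pi\colon N_{\omega_1}\to M$ be the transitive collapse and $j=\pi^{-1}$, a routine computation shows that $\crit{j}=N_{\omega_1}\cap\theta=\kappa$, that $j(\kappa)=\theta$, that $x\in N_{\omega_1}=\ran{j}$, and that $M\in\HH{\vartheta}$ is transitive; this is the desired small embedding.

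For the implication (ii) $\Rightarrow$ (i), I would fix a single small embedding $\map{j}{M}{\HH{\vartheta}}$ provided by (ii), with $\kappa=\crit{j}$ a strong limit cardinal (the uncountable cofinality of $\kappa$ is not needed here). Since $\theta$ is assumed to be regular, it suffices to show that $\theta$ is a strong limit cardinal, and for this I would verify that $M\models\anf{\kappa\text{ is a strong limit cardinal}}$ and then transfer this upward by elementarity: since $j(\kappa)=\theta$, it follows that $\HH{\vartheta}\models\anf{\theta\text{ is a strong limit cardinal}}$, and this statement reflects correctly down to $\VV$. To see that $M$ views $\kappa$ as a strong limit cardinal, I would invoke the standard fact that $\crit{j}=\kappa$ forces $j{\restriction}V_\kappa^M=\id$ and hence $V_\kappa^M=V_\kappa^{\HH{\vartheta}}=V_\kappa^\VV$. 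As $\kappa$ is a strong limit cardinal in $\VV$, for every $\lambda<\kappa$ both the set $\POT{\lambda}$ and a bijection between it and its $\VV$-cardinality (which is below $\kappa$) have rank below $\kappa$, hence lie in $V_\kappa^\VV=V_\kappa^M\subseteq M$; thus $M\models\anf{2^\lambda<\kappa}$ for all $\lambda<\kappa$, as required.

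The main obstacle I anticipate lies in the forward direction, namely in arranging simultaneously that each $N_\gamma\cap\theta$ is an ordinal and that the terminal ordinal $\kappa=N_{\omega_1}\cap\theta$ is a genuine strong limit cardinal; this bookkeeping is fiddly but is essentially already carried out in the construction behind Proposition \ref{stat_trans_nodes}, so I would lean on that argument. In the backward direction the only non-routine ingredient is the critical-point computation $V_\kappa^M=V_\kappa^\VV$, which I would treat as a standard property of elementary embeddings between transitive models.
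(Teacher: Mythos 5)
Your forward direction is correct and is essentially the paper's argument in a slightly different dress: the paper builds a continuous chain $\seq{X_\alpha}{\alpha<\theta}$ of length $\theta$ with $\alpha\subseteq X_\alpha\cap\theta\in\theta$ and then picks $\alpha=X_\alpha\cap\theta$ in the stationary set of strong limit cardinals of uncountable cofinality, whereas you run a chain of length $\omega_1$ and secure strong-limitness of $N_{\omega_1}\cap\theta$ by explicit bookkeeping at successor stages; both collapses produce the desired embedding. Your backward direction also follows a genuinely different route from the paper's: the paper uses the full strength of (ii) (an embedding with an arbitrary prescribed club $C$ in its range, forcing $\crit{j}\in C$) to conclude that the strong limit cardinals of uncountable cofinality are stationary below $\theta$, while you use a single embedding and try to push the statement \anf{$\crit{j}$ is a strong limit} upward through $j$. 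That strategy is viable, but as written it contains a false step.

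The problem is the appeal to the \anf{standard fact} that $\crit{j}=\kappa$ forces $V_\kappa^M=V_\kappa^\VV$. That identity is the standard fact for elementary embeddings $\map{j}{\VV}{M}$ whose \emph{domain} is the full universe; for a small embedding the domain $M$ is a set, and $\crit{j}=\kappa$ only guarantees $\kappa\subseteq M$, not $V_\kappa\subseteq M$. Indeed $|V_\kappa|=\beth_\kappa$, which strictly exceeds $\kappa$ unless $\kappa$ is a $\beth$-fixed point, while the witnessing models produced by your own forward direction have cardinality $\kappa$ (they must contain $\kappa$ as a subset and need be no larger), and the critical points they produce — e.g.\ $\kappa=\beth_{\omega_1}$ — are typically not $\beth$-fixed points. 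So $V_\kappa\not\subseteq M$ and $V_\kappa^M\subsetneq V_\kappa$ for exactly the embeddings the lemma is about. Fortunately the conclusion you want from this step, namely $M\models\anf{2^\lambda<\kappa}$ for all $\lambda<\kappa$, is true and has a direct proof by elementarity that avoids the false claim: if $M\models\anf{|\POT{\lambda}|=\mu}$, then applying $j$ (and using $j(\lambda)=\lambda$ and the correctness of $\HH{\vartheta}$ about $\POT{\lambda}$ for sufficiently large $\vartheta$) gives $2^\lambda=j(\mu)$; since $\kappa$ is a strong limit in $\VV$ we have $j(\mu)=2^\lambda<\kappa=\crit{j}$, hence $\mu=j(\mu)=2^\lambda<\kappa$. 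Alternatively, one can first show $\HH{\kappa}\subseteq M$ (as the paper does in the proof of Proposition \ref{proposition:LiftingSmallEmbeddings}, pulling back a surjection $\map{e}{\kappa}{\HH{\kappa}}$ through $j$), which suffices because $\POT{\lambda}$ and a bijection onto $2^\lambda$ lie in $\HH{\kappa}$ for every $\lambda<\kappa$; note that it is $\HH{\kappa}$, not $V_\kappa$, that one can place inside $M$. With either repair, the rest of your argument ($M\models\anf{\textit{$\kappa$ is a strong limit cardinal}}$, hence $\HH{\vartheta}\models\anf{\textit{$\theta$ is a strong limit cardinal}}$, hence $\theta$ is inaccessible) goes through.
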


\begin{proof} 
 First, note that an uncountable regular cardinal $\theta$ is inaccessible if and only if the strong limit cardinals of uncountable cofinality below $\theta$ form a stationary subset of $\theta$. Now, assume that $\theta$ is inaccessible, let $\vartheta>\theta$ be a cardinal, and pick $x\in\HH{\vartheta}$. Let $\seq{X_\alpha}{\alpha<\theta}$ be a continuous and increasing sequence of elementary substructures of $\HH{\vartheta}$ of cardinality less than $\theta$ with $x\in X_0$ and $\alpha\subseteq X_\alpha\cap\theta\in\theta$ for all $\alpha<\theta$. By the above remark, there is a strong limit cardinal $\alpha<\theta$ of uncountable cofinality such that $\alpha=X_\alpha\cap\theta$. Let $\map{\pi}{X_\alpha}{M}$ denote the corresponding transitive collapse. Then $\map{\pi^{{-}1}}{M}{\HH{\vartheta}}$ is a small embedding for $\theta$ with $x\in\ran{j}$, and  $\crit{j}=\alpha$ is a strong limit cardinal of uncountable cofinality. 

 In the other direction, assume that (ii) holds, let $C$ be a club in $\theta$ and pick a small embedding $\map{j}{M}{\HH{\vartheta}}$ for $\theta$ such that $C\in\ran{j}$ and $\crit{j}$ is a strong limit cardinal of uncountable cofinality. Then elementarity implies that $\crit{j}\in C$. This shows that the strong limit cardinals of uncountable cofinality are stationary in $\theta$, and hence that $\theta$ is inaccessible. 
\end{proof}

The following proposition establishes some connections between small embeddings and Neeman's pure side condition forcing.

\begin{proposition}\label{proposition:LiftingSmallEmbeddings}
 Let $\theta$ be an inaccessible cardinal and let $\map{j}{M}{\HH{\vartheta}}$ be a small embedding for $\theta$ with the property that $\vartheta$ is regular and $\crit{j}$ is a strong limit cardinal of uncountable cofinality. If $G$ is $\PPP_\theta$-generic over $\VV$, then the following statements hold: 
 \begin{enumerate}[leftmargin=0.7cm]
  \item $\HH{\crit{j}}\in M\cap\calT_\theta$ and $\PPP_{\crit{j}}=\PPP_\theta\cap\HH{\crit{j}}\in M$ is a complete suborder of $\PPP_\theta$ with $j(\PPP_{\crit{j}})=\PPP_\theta$. 

  \item $G_j:=G\cap\HH{\crit{j}}$ is $\PPP_{\crit{j}}$-generic over $\VV$ and $\VV[G]$ is a $(\dot{\QQQ}^{\HH{\crit{j}}}_\theta)^{G_j}$-generic extension of $\VV[G_j]$. 

  \item The pair $(\HH{\vartheta}^{\VV[G_j]},\HH{\vartheta}^{\VV[G]})$  has the $\sigma$-approximation property. 

  \item There is an elementary embedding $\map{j_G}{M[G_j]}{\HH{\vartheta}^{\VV[G]}}$  with the property that $j_G(\dot{x}^{G_j})=j(\dot{x})^G$ holds for every $\PPP_{\crit{j}}$-name $\dot{x}$ in $M$. 
 \end{enumerate}
\end{proposition}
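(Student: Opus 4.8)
The plan is to verify the four parts in order, using the structural results established earlier for Neeman's forcing together with the correctness properties of the small embedding $j$.

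For part (i), I would first argue that $\HH{\crit{j}} \in \calT_\theta$. Since $\crit{j}$ is a strong limit cardinal of uncountable cofinality below the inaccessible $\theta$, the set $\HH{\crit{j}}$ is transitive, countably closed, an element of $\HH{\theta}$, and an elementary submodel of $\HH{\theta}$; hence it belongs to $\calT_\theta$. That $\HH{\crit{j}} \in M$ follows from the elementarity of $j$ and the fact that $\crit{j} \in M$. Next, since $j$ is elementary with $j(\crit{j}) = \theta$, and since $M$ correctly computes $\PPP_{\crit{j}} = \PPP_{\HH{\crit{j}}, \calS_{\crit{j}}, \calT_{\crit{j}}}$ as $\PPP_\theta \cap \HH{\crit{j}}$, elementarity gives $j(\PPP_{\crit{j}}) = \PPP_\theta$. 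The completeness of $\PPP_{\crit{j}} = \PPP_\theta \cap \HH{\crit{j}}$ as a subforcing of $\PPP_\theta$ is then immediate from Proposition \ref{proposition:complete subforcing}, applied with $\kappa = \crit{j}$.

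For parts (ii) and (iii), I would invoke the machinery already developed. Given that $\PPP_{\crit{j}} = \PPP_\theta \cap \HH{\crit{j}}$ is a complete subforcing, setting $G_j = G \cap \HH{\crit{j}}$ yields a $\PPP_{\crit{j}}$-generic filter over $\VV$. By Lemma \ref{trans_in_generic}, the node $\langle \HH{\crit{j}} \rangle$ lies in $G$, so Lemma \ref{lemma:forcing_equivalent} (applied with $M = \HH{\crit{j}} \in \calT$) shows that $\VV[G]$ is a $(\dot{\QQQ}^{\HH{\crit{j}}}_\theta)^{G_j}$-generic extension of $\VV[G_j]$, giving (ii). Part (iii) then follows from Corollary \ref{corollary:sigma_approximation}: since $\calS_\theta$ is stationary in $\POT{\HH{\theta}}$ and $\HH{\crit{j}} \in \calT_\theta$, the factor forcing $(\dot{\QQQ}^{\HH{\crit{j}}}_\theta)^{G_j}$ has the $\sigma$-approximation property in $\VV[G_j]$, which is exactly the statement that the pair $(\HH{\vartheta}^{\VV[G_j]}, \HH{\vartheta}^{\VV[G]})$ has the $\sigma$-approximation property.

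For part (iv), the plan is the standard lifting argument. Since $\crit{j} \in M$ and $\PPP_{\crit{j}} = (\PPP_\theta)^M \cap \HH{\crit{j}}$ is computed correctly inside $M$, and since $G_j$ is $\PPP_{\crit{j}}$-generic over $\VV$ and hence over $M$, one forms the generic extension $M[G_j]$. I would define $\map{j_G}{M[G_j]}{\HH{\vartheta}^{\VV[G]}}$ by setting $j_G(\dot{x}^{G_j}) = j(\dot{x})^G$ for each $\PPP_{\crit{j}}$-name $\dot{x} \in M$. To see this is well-defined and elementary, the key point is that $j$ maps $G_j$ into $G$: because $j(\PPP_{\crit{j}}) = \PPP_\theta$ and $j$ is the identity below $\crit{j}$, each condition $p \in G_j \subseteq \PPP_{\crit{j}} \cap \HH{\crit{j}}$ satisfies $j(p) = p \in G_j \subseteq G$, so $j[G_j] \subseteq G$. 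The standard lifting criterion then guarantees that $j_G$ is a well-defined elementary embedding extending $j$, and the defining equation $j_G(\dot{x}^{G_j}) = j(\dot{x})^G$ holds by construction. The main obstacle to watch is confirming that $j$ fixes the conditions of $\PPP_{\crit{j}}$ pointwise — this relies on the conditions being elements of $\HH{\crit{j}}$ and hence on the (by-induction-on-rank) transitivity argument that $j$ restricts to the identity on $\HH{\crit{j}}$, which follows since $\crit{j}$ is a strong limit and every element of $\HH{\crit{j}}$ has transitive closure of size below $\crit{j}$.
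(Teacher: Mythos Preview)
Your overall approach matches the paper's proof closely: the same decomposition into (i)--(iv), the same appeals to Proposition~\ref{proposition:complete subforcing}, Lemma~\ref{trans_in_generic}, Lemma~\ref{lemma:forcing_equivalent}, Corollary~\ref{corollary:sigma_approximation}, and the standard Silver lifting criterion. Parts (ii), (iii), and (iv) are handled essentially as in the paper.

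There is, however, one point in part (i) that you gloss over and that the paper treats with care: the claim that $\HH{\crit{j}}\in M$, or equivalently that $\HH{\crit{j}}^M=\HH{\crit{j}}$. You write that this ``follows from the elementarity of $j$ and the fact that $\crit{j}\in M$,'' and later that ``$M$ correctly computes $\PPP_{\crit{j}}$,'' but elementarity alone only gives you $\HH{\crit{j}}^M\in M$; it does not immediately yield $\HH{\crit{j}}\subseteq M$. Your remark in part (iv) about an ``induction-on-rank'' argument is also not sufficient as stated: a naive induction shows $x\subseteq j(x)$ for $x\in M\cap V_{\crit{j}}$, but the reverse inclusion requires knowing that elements of $j(x)$ already lie in $M$, which is exactly what is at issue. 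The paper closes this gap by choosing, via elementarity and the inaccessibility of $\theta$, a surjection $e\colon\crit{j}\to\HH{\crit{j}}^M$ in $M$ whose image $j(e)$ satisfies $j(e)[\kappa]=\HH{\kappa}$ for every strong limit $\kappa\le\theta$; applying this at $\kappa=\crit{j}$ gives $\HH{\crit{j}}=j(e)[\crit{j}]=j[\HH{\crit{j}}^M]$, and a Mostowski-collapse uniqueness argument then forces $j\restriction\HH{\crit{j}}^M=\id$ and $\HH{\crit{j}}^M=\HH{\crit{j}}$. Once this is established, the rest of your argument goes through as written.
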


\begin{proof}
 Using the inaccessibility of $\theta$ and elementarity, we can find a surjection $\map{e}{\crit{j}}{\HH{\crit{j}}^M}$ with the property that $j(e)[\kappa]=\HH{\kappa}$ holds for every strong limit cardinal $\kappa\le\theta$. But this implies that $$\HH{\crit{j}} ~ = ~ j(e)[\crit{j}] ~ = ~ e[\crit{j}] ~ = ~ \HH{\crit{j}}^M.$$ In addition, our assumptions directly imply that the set $\HH{\crit{j}}$ is a transitive, countably closed elementary substructure of $\HH{\theta}$, $\calS_{\crit{j}}=\calS_\theta\cap\HH{\crit{j}}$, $\calT_{\crit{j}}=\calT_\theta\cap\HH{\crit{j}}$,  $\PPP_{\crit{j}}=\PPP_\theta\cap\HH{\crit{j}}=\PPP_{\crit{j}}^M$, and Proposition \ref{proposition:complete subforcing} implies that $\PPP_{\crit{j}}$ is a complete suborder of $\PPP_\theta$. Since the partial order $\PPP_\theta$ is uniformly definable from the parameter $\theta$, we also obtain that $j(\PPP_{\crit{j}})=\PPP_\theta$. This proves (i). Moreover, by Lemma \ref{trans_in_generic}, this argument also shows that $\langle\HH{\crit{j}}\rangle\in G$, and  we can therefore apply Proposition \ref{proposition:appropriate} and Lemma \ref{lemma:forcing_equivalent} to show that (ii) holds. In addition, we can combine Corollary \ref{corollary:sigma_approximation} with the first two statements to derive (iii).  Finally,  since $j(\PPP_{\crit{j}})=\PPP_\theta$ and $j[G_j]=G_j\subseteq G$, a standard argument (see, for example, {\cite[Proposition 9.1]{MR2768691}}) shows that there is an embedding $\map{j_G}{M[G_j]}{\HH{\vartheta}[G]}$ with $j_G(\dot{x}^{G_j})=j(\dot{x})^G$ for every $\PPP_{\crit{j}}$-name $\dot{x}$ in $M$. But then $\HH{\vartheta}^{\VV[G]}=\HH{\vartheta}[G]$ shows that (iv) also holds. 
\end{proof}

The next definition introduces the combinatorial concept that relates to inaccessible cardinals via Neeman's pure side condition forcing.

\begin{definition}
 A tree of height $\omega_1$ and cardinality  $\aleph_1$ is a \emph{weak Kurepa tree} if it has at least $\aleph_2$-many cofinal branches. 
 \end{definition}

\begin{theorem}\label{theorem:CharInaccessible}
 The following statements are equivalent for every countably inaccessible cardinal $\theta$: 
 \begin{enumerate}[leftmargin=0.7cm]
   \item $\theta$ is an inaccessibe cardinal. 

   \item $\mathbbm{1}_{\PPP_\theta}\Vdash\anf{\textit{There are no weak Kurepa trees}}$.  
 \end{enumerate}
\end{theorem}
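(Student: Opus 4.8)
The plan is to prove the two implications separately, using throughout that by Theorem \ref{aleph2} the forcing $\PPP_\theta$ makes $\theta$ equal to $\omega_2$ in every generic extension. Throughout, write $G$ for a $\PPP_\theta$-generic filter over $\VV$. Since a weak Kurepa tree has cardinality $\aleph_1$, I may always assume its underlying set is $\omega_1$ and identify it with its tree order, a subset of $\omega_1\times\omega_1$.

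For the direction (i)$\Rightarrow$(ii), assume $\theta$ is inaccessible and suppose towards a contradiction that some condition forces a nice $\PPP_\theta$-name $\dot T$ to be a weak Kurepa tree. Using Lemma \ref{smallembcharinaccessible} I would fix, for a sufficiently large regular $\vartheta$, a small embedding $\map{j}{M}{\HH{\vartheta}}$ for $\theta$ with $\dot T\in\ran{j}$ and $\kappa=\crit{j}$ a strong limit cardinal of uncountable cofinality, and feed it into Proposition \ref{proposition:LiftingSmallEmbeddings} to obtain the generic $G_j=G\cap\HH{\kappa}$ on the complete suborder $\PPP_\kappa=\PPP_\theta\cap\HH{\kappa}$, the lifted embedding $\map{j_G}{M[G_j]}{\HH{\vartheta}^{\VV[G]}}$ with $\crit{j_G}=\kappa$ and $j_G(\kappa)=\theta$, and the crucial fact that $(\HH{\vartheta}^{\VV[G_j]},\HH{\vartheta}^{\VV[G]})$ has the $\sigma$-approximation property. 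The first key step is to observe that $T$ already lives in $\VV[G_j]$: setting $S=(j^{-1}(\dot T))^{G_j}\in M[G_j]$, I have $j_G(S)=j(j^{-1}(\dot T))^G=\dot T^G=T$, and since $\crit{j_G}=\kappa>\omega_1$ the map $j_G$ fixes $\omega_1\times\omega_1$ pointwise, so that, both $S$ and $T$ being trees on $\omega_1$, elementarity gives $S=T\in\VV[G_j]$. The second key step is to show, via the $\sigma$-approximation property, that \emph{every} cofinal branch $b$ of $T$ in $\VV[G]$ already belongs to $\VV[G_j]$.

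For this second step I would verify the approximation hypothesis directly: given a countable $x\in\VV[G_j]$, let $\eta<\omega_1$ bound the $T$-levels of the nodes in $x$; since $b$ is cofinal it has a node $t$ at level $\eta$, this $t$ is an ordinal below $\omega_1$ and hence already lies in $\VV$, and one computes $b\cap x = x\cap\Set{m}{m<_T t}$ inside $\VV[G_j]$ from $T$ and $t$. Thus $b\cap x\in\VV[G_j]$ for every such $x$, and the $\sigma$-approximation property (applied with $B=\omega_1$) yields $b\in\VV[G_j]$. Finally I would count: since $\betrag{\PPP_\kappa}\le\kappa<\theta$ and $\theta$ is a strong limit, there are at most $2^\kappa<\theta$ nice $\PPP_\kappa$-names for subsets of $\omega_1$, so $\VV[G_j]$ contains fewer than $\theta$ subsets of $\omega_1$. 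As $T$ has at least $\theta$ distinct cofinal branches in $\VV[G]$, all of which lie in $\VV[G_j]$, and $\theta$ is a cardinal in $\VV[G]$, this is the desired contradiction. I expect the branch-approximation computation, together with the bookkeeping establishing $T\in\VV[G_j]$, to be the main obstacle here, since everything else reduces to a cardinality count.

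For the direction (ii)$\Rightarrow$(i) I would argue by contraposition. If $\theta$ is countably inaccessible but not inaccessible, then $\theta$ is regular and fails to be a strong limit, so there is a least $\delta<\theta$ with $2^\delta\ge\theta$, and countable inaccessibility forces $\delta\ge\omega_1$. Working in $\VV[G]$, where $\theta=\omega_2$, I would use Corollary \ref{corollary:aboutgeneric} to find a transitive node $W\in\bigcup G$ with $\delta\in W$ (hence $\delta\subseteq W$), and then use the chain of small nodes of $\bigcup G$ below $W$ (which, as analysed in the proof of Theorem \ref{aleph2}, is $\subsetneq$-increasing of length at most $\omega_1$ with union $W$, and here has length exactly $\omega_1$ as it must cover $\delta\supseteq\omega_1$ by countable sets) to extract an increasing sequence $\seq{D_i}{i<\omega_1}$ of sets that are countable in $\VV$ with $\bigcup_{i<\omega_1}D_i=\delta$. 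The tree $T=\Set{A\cap D_i}{A\in\POT{\delta}^\VV, ~ i<\omega_1}$, ordered by end-extension of traces and with $A\cap D_i$ placed on level $i$, then has height $\omega_1$; its $i$-th level is contained in $\POT{D_i}^\VV$, whose size is at most $(2^{\aleph_0})^\VV<\theta$ and is therefore collapsed to at most $\aleph_1$, so $T$ has cardinality $\aleph_1$; and the $(2^\delta)^\VV\ge\theta=\aleph_2$ distinct subsets of $\delta$ in $\VV$ yield $\aleph_2$ distinct cofinal branches, since they already differ on some $D_i$. Thus $T$ is a weak Kurepa tree in $\VV[G]$, contradicting (ii). The only delicate point in this direction is extracting the countable-in-$\VV$ filtration $\seq{D_i}{i<\omega_1}$ of $\delta$ from the generic object, which is exactly what the small-node analysis underlying Theorem \ref{aleph2} provides.
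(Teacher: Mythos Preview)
Your forward direction (i)$\Rightarrow$(ii) is essentially the paper's argument: lift a small embedding via Proposition~\ref{proposition:LiftingSmallEmbeddings}, observe that the tree lives in the intermediate model $\VV[G_j]$, use the $\sigma$-approximation property of the quotient to pull every cofinal branch down to $\VV[G_j]$, and then count. Your route to $T\in\VV[G_j]$ via $j_G(S)=S$ for $S\subseteq\omega_1\times\omega_1$ is a harmless variant of the paper's use of the $\theta$-chain condition to place the name in $\HH{\crit{j}}$.

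For (ii)$\Rightarrow$(i) you take a genuinely different path. The paper observes that the minimal $\delta$ with $2^\delta\ge\theta$ has \emph{uncountable cofinality} (this is where countable inaccessibility enters), so in $\VV[G]$ one has $\cof{\delta}=\omega_1$; picking a cofinal $U\subseteq\delta$ of order-type $\omega_1$, the tree of ground-model functions $\map{t}{\alpha}{2}$ with $\alpha\in U$ is immediately a weak Kurepa tree, with level sizes bounded by $2^{|\alpha|}<\theta$ via minimality of $\delta$. Your construction instead filters $\delta$ by an $\omega_1$-chain $\seq{D_i}{i<\omega_1}$ of countable-in-$\VV$ sets extracted from the generic, and builds the tree of traces $A\cap D_i$; this also works and has the mild advantage of bounding level sizes by $(2^{\aleph_0})^\VV$ rather than $2^{<\delta}$. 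The paper's approach is shorter because it avoids any analysis of the generic chain.

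There is one imprecision in your extraction that you should fix. You invoke ``the chain of small nodes of $\bigcup G$ below $W$'' and cite the proof of Theorem~\ref{aleph2}, but that proof only shows that the small nodes \emph{between two successive transitive nodes} $W_0\in W_1$ of $\bigcup G$ form a $\subsetneq$-chain with union $W_1$. If further transitive nodes lie below your $W$, the small nodes in different intervals are not $\subseteq$-comparable (a small node $M$ with $W_0\in M$ need not contain a small node $M'\in W_0$). The fix is straightforward: take $W$ to be any transitive node of $\bigcup G$ with $\delta\in W$, let $W_0$ be its immediate predecessor among the transitive nodes of $\bigcup G$ (or drop the condition $W_0\in M$ if $W$ is the first transitive node), and use the small nodes $M$ with $W_0\in M\in W$; their union is $W\supseteq\delta$, and the sets $D_i=M_i\cap\delta$ give your filtration.
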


\begin{proof}
 First, assume that $\theta$ is inaccessible, let $\dot{T}$ be a $\PPP_\theta$-name for a tree of height $\omega_1$ and cardinality  $\aleph_1$ and let $\dot{x}$ be a nice $\PPP_\theta$-name for a subset of $\omega_1$ coding $\dot{T}$. Use Lemma \ref{smallembcharinaccessible} to find a small embedding $\map{j}{M}{\HH{\vartheta}}$ for $\theta$ such that  $\dot{x}\in\ran{j}$ and $\crit{j}$ is a strong limit cardinal of uncountable cofinality. Since Lemma \ref{chain_condition} implies that $\dot{x}\in\HH{\theta}$ and Proposition \ref{proposition:LiftingSmallEmbeddings} shows that $\PPP_{\crit{j}}=\PPP_\theta\cap\HH{\crit{j}}$, elementarity implies that $\dot{x}$ is actually a $\PPP_{\crit{j}}$-name contained in $M$. 
 Let $G$ be $\PPP_\theta$-generic over $\VV$ and let $\map{j_G}{M[G_j]}{\HH{\vartheta}^{\VV[G]}}$ be the embedding given by Proposition \ref{proposition:LiftingSmallEmbeddings}. Since $\dot{x}^G=\dot{x}^{G_j}\in M[G_j]\subseteq\HH{\vartheta}^{\VV[G_j]}$, we know that $\dot{T}^G$ is an element of $\HH{\vartheta}^{\VV[G_j]}$. Moreover, since the pair $(\HH{\vartheta}^{\VV[G_j]},\HH{\vartheta}^{\VV[G]})$  has the $\sigma$-approximation property, and Corollary \ref{corollary:Proper} implies  $\omega_1^\VV=\omega_1^{\VV[G_j]}=\omega_1^{\VV[G]}$, we know that every cofinal branch through $\dot{T}^G$ in $\VV[G]$ is an element of $\VV[G_j]$. Since $\PPP_{\crit{j}}$  has size less than $\theta$, we know that $\theta $ is still inaccessible in $\VV[G_j]$, and therefore that $\dot{T}^G$ has less than $\theta$-many cofinal branches in $\VV[G]$. But Theorem \ref{aleph2} shows that $\theta=\omega_2^{\VV[G]}$, which allows us to conclude that $\dot{T}^G$ is not a weak Kurepa tree in $\VV[G]$. 

For the reverse implication, assume that $\theta$ is countably inaccessible, however not inaccessible. Let $\kappa<\theta$ be minimal with $2^\kappa\geq\theta$. Since $\theta$ is countably inaccessible, we know that $\cof{\kappa}$ is uncountable. Let $G$ be $\PPP_\theta$-generic over $\VV$. Corollary \ref{corollary:Proper} implies that $\kappa$ has uncountable cofinality in $\VV[G]$. Since Theorem \ref{aleph2} shows that $\theta=\omega_2^{\VV[G]}$ , we therefore know that $\kappa$ has cofinality $\omega_1$ in $\VV[G]$.  In $\VV[G]$, pick a cofinal subset $U$ of $\kappa$ of order-type $\omega_1$, and define $\TTT$ to be the tree consisting of functions $\map{t}{\alpha}{2}$ in $\VV$ with $\alpha\in U$, ordered by inclusion. Then the minimality of $\lambda$ implies that the levels of $\TTT$ all have size at most $\aleph_1$ in $\VV[G]$. However, since each function from $\kappa$ to $2$ induces a unique cofinal branch through $\TTT$ and $2^\kappa\geq\theta=\omega_2^{\VV[G]}$, we know that $\TTT$ has at least $\aleph_2$-many branches in $\VV[G]$. This shows that $\TTT$ is a weak Kurepa tree in $\VV[G]$.
\end{proof}

A combination of the above result with Theorem \ref{aleph2} now shows that the sequence $\seq{\PPP_\theta}{\theta\in\Card}$ characterizes the class of  inaccessible cardinals through the statement 
 \begin{quote}
  \anf{\emph{$\theta$ is a regular cardinal greater than $\omega_1$ with the property that for every uncountable cardinal $\kappa<\theta$, every tree of cardinality and height $\kappa$ has less than $\theta$-many cofinal branches}}.
 \end{quote}

Since this statement is obviously a consequence of the inaccessibility of $\theta$, the provided characterization is strong.


\section{Internal large cardinals and Mahlo cardinals}\label{section:INternalAndMahlo}

In the case of small embedding characterizations of large cardinal properties that imply the Mahloness of the given cardinal, the  combinatorics obtained by lifting the witnessing embeddings to a suitable collapse extension can be phrased as meaningful combinatorial principles that we call \emph{internal large cardinals}. These principles describe strong fragments of large cardinal properties that were characterized by small embeddings, which however can also hold at small cardinals $\theta$. They postulate the existence of small embeddings $\map{j}{M}{\HH{\vartheta}}$ for $\theta$ together with the existence of transitive models $N$ of $\ZFC^{-}$ with the property that $M\in N\subseteq\HH{\vartheta}$, for which (an appropriate variant of) the correctness property that held between $M$ and $\VV$ in the original small embedding characterization now holds between $M$ and $N$, and some correctness property induced by the properties of the tails of the collapse forcing used holds between $N$ and $\HH{\vartheta}$. The guiding idea of this setup is that it should resemble the situation after lifting a given small embedding, with the inner model $N$ resembling the collapse extension of the original universe in which only the critical point of the small embedding, rather than the actual large cardinal, has been collapsed.
For many important consistency proofs, the principles defined in this way turn out to  capture the crucial combinatorial properties of small cardinals established in these arguments.  
Moreover, in several cases, these principles turn out to be reformulations of existing combinatorial properties. 
For example, if we combine Magidor's small embedding characterization of supercompactness given by the results of \cite{MR0295904} with collapse forcings whose tails possess the $\sigma$-approximation property, then we end up with the following internal version of supercompactness, that turns out to be equivalent to a generalized tree property introduced by  Wei{\ss} in \cite{weissthesis} and \cite{MR2959668} (see Proposition \ref{proposition:InternalSupercompactISP}):

\begin{definition}\label{definition:InteralAPSupercompact}
 A cardinal $\theta$ is \emph{internally AP supercompact} if for all sufficiently large regular cardinals $\vartheta$ and all $x\in\HH{\vartheta}$, there is a small embedding $\map{j}{M}{\HH{\vartheta}}$ for $\theta$, and a transitive model $N$ of $\ZFC^-$ such that $x\in\ran{j}$, and such that the following statements hold: 
 \begin{enumerate}[leftmargin=0.7cm]
  \item $N\subseteq\HH{\vartheta}$ and the pair $(N,\HH{\vartheta})$ satisfies the $\sigma$-approximation property. 
 
  \item $M=\HH{\kappa}^N$ for some $N$-cardinal $\kappa<\theta$.
 \end{enumerate}
\end{definition}

We will later use this principle to characterize supercompactness through Neeman's pure side condition forcing (see Corollary \ref{corollary:CharSupercompactISP}).  
Moreover, all internal large cardinal principles studied in this paper will turn out to be consequences of the above principle and, in combination with results of Viale and Wei{\ss} from \cite{MR2838054}, this fact implies that $\PFA$ causes $\omega_2$ to possess all internal large cardinal properties discussed in this paper.

We will make use of the concept of internal large cardinals in many places throughout this paper. While the general setup will be postponed to the forthcoming \cite{internal}, we will only introduce and make use of internal large cardinals with respect to the $\sigma$-approximation property in this paper (\anf{\emph{internally AP}} large cardinals). In the following, we present the definition of internal Mahloness and show how this concept can be used to characterize Mahlo cardinals through Neeman's pure side condition forcing. This is motivated by the following small embedding characterization of Mahlo cardinals from \cite{SECFLC} (see {\cite[Corollary 2.2 \& Lemma 3.4]{SECFLC}}). Its proof is a small modification of the proof of Lemma \ref{smallembcharinaccessible}.

\begin{lemma} \label{lemma:SECMahlo}
 The following statements are equivalent for every uncountable regular cardinal $\theta$: 
 \begin{enumerate}[leftmargin=0.7cm]
  \item $\theta$ is a Mahlo cardinal. 

  \item For every sufficiently large cardinal $\vartheta$ and all $x\in\HH{\vartheta}$, there is a small embedding $\map{j}{ M}{\HH{\vartheta}}$ for $\theta$ such that $x\in\ran{j}$ and $\crit{j}$ is an inaccessible cardinal.
 \end{enumerate} 
\end{lemma}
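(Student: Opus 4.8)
The plan is to imitate the proof of Lemma \ref{smallembcharinaccessible}, replacing everywhere the role played there by strong limit cardinals of uncountable cofinality with that of inaccessible cardinals. The only structural input needed is the standard characterization of Mahloness: an uncountable regular cardinal $\theta$ is Mahlo if and only if the set of inaccessible cardinals below $\theta$ is a stationary subset of $\theta$. Everything else is then a matter of transporting the earlier argument through this substitution.

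For the implication from (i) to (ii), I would fix a sufficiently large cardinal $\vartheta$ and some $x\in\HH{\vartheta}$, and build a continuous increasing chain $\seq{X_\alpha}{\alpha<\theta}$ of elementary substructures of $\HH{\vartheta}$, each of cardinality less than $\theta$, with $x,\theta\in X_0$ and $\alpha\subseteq X_\alpha\cap\theta\in\theta$ for all $\alpha<\theta$. The set of $\alpha<\theta$ with $X_\alpha\cap\theta=\alpha$ is then a club in $\theta$, so by Mahloness it contains an inaccessible cardinal $\alpha$. Letting $\map{\pi}{X_\alpha}{M}$ be the transitive collapse, I would set $j=\pi^{{-}1}$. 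Since $\alpha\subseteq X_\alpha$, the collapse fixes every ordinal below $\alpha$, while $\pi(\theta)=\otp{X_\alpha\cap\theta}=\alpha$; hence $\crit{j}=\alpha$ is inaccessible and $j(\crit{j})=\theta$, so $j$ is a small embedding for $\theta$, and $x\in X_\alpha$ yields $x\in\ran{j}$, as required.

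For the converse, I would show that (ii) forces the inaccessibles to be stationary in $\theta$. Given an arbitrary club $C\subseteq\theta$, apply (ii) with $x=C$ to obtain a small embedding $\map{j}{M}{\HH{\vartheta}}$ for $\theta$ with $C\in\ran{j}$ and $\crit{j}$ inaccessible. Writing $C=j(\bar C)$ with $\bar C\in M$, the fact that $j$ fixes all ordinals below $\crit{j}$ gives $\bar C=C\cap\crit{j}$; since $\bar C$ is genuinely unbounded in $\crit{j}$ (by elementarity $M$ thinks $\bar C$ is club in $\crit{j}$, and $M$ is transitive and contains $\crit{j}$) and $C$ is closed, it follows that $\crit{j}\in C$. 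As $\crit{j}$ is inaccessible, every club in $\theta$ meets the inaccessible cardinals, so $\theta$ is Mahlo.

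I expect no genuine obstacle here beyond careful bookkeeping, consistent with the paper's remark that this is a small modification of Lemma \ref{smallembcharinaccessible}. The one point deserving attention is the verification in the forward direction that the critical point of $j=\pi^{{-}1}$ is exactly the chosen inaccessible $\alpha$, and not some smaller ordinal; this is precisely where the hypotheses $\alpha\subseteq X_\alpha$ and $X_\alpha\cap\theta=\alpha$ are used, guaranteeing that $\pi$ fixes everything below $\alpha$ while sending $\theta$ down to $\alpha$.
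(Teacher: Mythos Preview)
Your proposal is correct and follows exactly the approach the paper indicates: the paper does not write out a proof of this lemma but simply remarks that it is a small modification of the proof of Lemma~\ref{smallembcharinaccessible}, and your argument is precisely that modification, replacing the stationary set of strong limit cardinals of uncountable cofinality by the stationary set of inaccessible cardinals via the standard characterization of Mahloness. Your added detail in both directions (the explicit club of fixed points in the forward direction, and the unpacking of why $\crit{j}\in C$ in the backward direction) is accurate and merely elaborates what the paper leaves implicit.
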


This leads us to the following:

\begin{definition}\label{internallymahlo}
A cardinal $\theta$ is \emph{internally AP Mahlo} if for all sufficiently large regular cardinals $\vartheta$ and all $x\in\HH{\vartheta}$, there is a small embedding $\map{j}{M}{\HH{\vartheta}}$ for $\theta$, and a transitive model $N$ of $\ZFC^-$ such that $x\in\ran j$, and the following statements hold:  
 \begin{enumerate}[leftmargin=0.7cm]
   \item $N\subseteq\HH{\vartheta}$, and the pair $(N,\HH{\vartheta})$ satisfies the $\sigma$-approximation property. 

   \item $M\in N$, and $\Poti{\crit{j}}{\omega_1}^N\subseteq M$.\footnote{In the situation of Lemma \ref{lemma:SECMahlo}, (ii), we also obtain that $\HH{\crit j}\subseteq M$ by \cite[Lemma 3.1]{SECFLC}, and it would thus also be reasonable to require that $\HH{\crit j}^N\subseteq M$ here. However, in the light of generalized such assumptions for example in Definition \ref{definition:internallyineffable}, it seems more suitable to use our present assumption. In any case, we will not make use of this assumption in the present section (but we will make use of our generalized assumptions in our later sections).}

  \item $\crit j$ is a regular cardinal in $N$.\footnote{Since $N$ is supposed to resemble an intermediate collapse forcing extension of some original universe of set theory, in which $\crit j$ has been collapsed, we only ask for it to be regular, rather than inaccessible, in $N$.} 
  \end{enumerate} 
\end{definition}

For a tree $\TTT$ of height $\theta$ and $S\subseteq\theta$, let $\TTT\restriction S$ denote the tree consisting of all nodes of $\TTT$ on levels in $S$, with the ordering inherited from $\TTT$. The following notion introduced by Todor{\v{c}}evi{\'c} will allow us to study important consequences of the above definition.

\begin{definition}\label{definition:SpecialTree}
 Let $\theta$ be an uncountable regular cardinal, let $S$ be a subset of $\theta$ and let $\TTT$ be a tree of height $\theta$. 
 \begin{enumerate}[leftmargin=0.7cm]
   \item A map $\map{r}{\TTT\restriction S}{\TTT}$ is \emph{regressive} if $r(t)<_\TTT t$ holds for every $t\in\TTT\restriction S$ that is not minimal in $\TTT$. 
  \item The set \emph{$S$ is non-stationary with respect to $\TTT$} if there is a regressive map $\map{r}{\TTT\restriction S}{\TTT}$ with the property that for every $t\in\TTT$ there is a function $\map{c_t}{r^{{-}1}\{t\}}{\theta_t}$ such that $\theta_t$ is a cardinal smaller than $\theta$ and $c_t$ is injective on $\leq_\TTT$-chains. 

   \item The tree $\TTT$ is \emph{special} if the set $\theta$ is non-stationary with respect to the tree $\TTT$. 
 \end{enumerate}
\end{definition}

It is easy to see that $\ZFC^-$ proves special trees to not have cofinal branches. Moreover, note that the statement \anf{\emph{$\TTT$ is special}} is upwards-absolute between transitive models of $\ZFC^-$ in which the height of $\TTT$ remains regular. A result of Todor{\v{c}}evi{\'c} (see {\cite[Theorem 14]{MR793235}}) shows that, given an infinite cardinal $\kappa$, this definition generalizes the classical notion of a  \emph{special $\kappa^+$-tree}, i.e.\ a tree of height $\kappa^+$ that is a union of $\kappa$-many antichains. In addition, Todor{\v{c}}evi{\'c} showed that an inaccessible cardinal $\theta$ is Mahlo if and only if there are no special $\theta$-Aronszajn trees (see {\cite[Theorem 6.1.4]{MR2355670}}). 

\begin{proposition}\label{proposition:InternalAPMahloSTP}
 Let $\theta$ be an internally AP Mahlo cardinal. Then $\theta$ is uncountable and regular, and there are no special $\theta$-Aronszajn trees. 
\end{proposition}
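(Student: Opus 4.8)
The plan is to establish the two conclusions separately, each from a single application of the hypothesis. Fix a sufficiently large regular $\vartheta$ and apply the internally AP Mahlo property (say with $x=\emptyset$) to obtain a small embedding $\map{j}{M}{\HH{\vartheta}}$ for $\theta$ together with a witnessing transitive model $N\models\ZFC^-$; write $\kappa=\crit{j}$, so that $j(\kappa)=\theta$. Since $j$ fixes $\omega$, we have $\kappa>\omega$, and hence $\theta=j(\kappa)>\kappa>\omega$ is uncountable. For regularity, I would first argue that $M\models\anf{\kappa\text{ is regular}}$: as $M\in N$ is transitive we have $M\subseteq N$, so any cofinal map $\map{\bar f}{\mu}{\kappa}$ with $\mu<\kappa$ lying in $M$ would also lie in $N$ and remain cofinal there (unboundedness of the range being absolute between transitive models containing $\bar f$ and $\kappa$), contradicting the regularity of $\kappa$ in $N$ guaranteed by Definition \ref{internallymahlo},(iii). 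Elementarity of $j$ then upgrades this to $\HH{\vartheta}\models\anf{\theta\text{ is regular}}$, and since $\vartheta>\theta$ this means $\theta$ is regular.

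For the second conclusion, suppose towards a contradiction that $\TTT$ is a special $\theta$-Aronszajn tree. I would first fix a coding of $\TTT$ as a structure on $\theta$ for which, by the regularity of $\theta$, the restriction of $\TTT$ to its first $\alpha$ levels has field a bounded subset of $\theta$ for every $\alpha<\theta$. Applying the internally AP Mahlo property with $x=\TTT$ produces a small embedding $\map{j}{M}{\HH{\vartheta}}$ for $\theta$ and a witness $N$ with $\TTT=j(\bar\TTT)$ for some $\bar\TTT\in M$. Because $\kappa=\crit{j}$ fixes all bounded subsets of $\kappa$ and $j(\kappa)=\theta$, the tree $\bar\TTT$ is precisely $\TTT$ restricted to its levels below $\kappa$, a tree of height $\kappa$ all of whose levels are nonempty. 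Moreover, since $\HH{\vartheta}\models\anf{\TTT\text{ is special}}$, elementarity yields $M\models\anf{\bar\TTT\text{ is special}}$, and as $M\subseteq N$ are transitive models of $\ZFC^-$ in which $\kappa$ is regular, the quoted upward-absoluteness of specialness gives $N\models\anf{\bar\TTT\text{ is special}}$; in particular $N$ believes $\bar\TTT$ has no cofinal branch.

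The heart of the argument is to exhibit such a branch inside $N$. Since $\TTT$ has height $\theta>\kappa$, I would pick a node $t$ on level $\kappa$ of $\TTT$ and let $b_t=\Set{s\in\TTT}{s<_\TTT t}$ be the chain of its predecessors; this is a cofinal branch through $\bar\TTT$, it has size $\le\kappa<\vartheta$, and hence lies in $\HH{\vartheta}$. The plan is to feed $b_t$ into the $\sigma$-approximation property of the pair $(N,\HH{\vartheta})$ with $B=\bar\TTT\in N$. Given $x\in\Poti{\bar\TTT}{\omega_1}^N$, the set $x$ is countable in $N$ while, by the first part, $\kappa$ is a regular uncountable cardinal of $N$; hence the levels of the elements of $x$ are bounded by some $\gamma<\kappa$. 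Writing $t_\gamma$ for the unique predecessor of $t$ on level $\gamma$ (which exists because the predecessors of $t$ form a chain of order type $\kappa$), the node $t_\gamma$ belongs to $\bar\TTT\in N$ and hence $t_\gamma\in N$, and one checks that $b_t\cap x=x\cap\Set{s\in\bar\TTT}{s\le_\TTT t_\gamma}\in N$. Thus $b_t\cap x\in N$ for every relevant $x$, and the $\sigma$-approximation property forces $b_t\in N$. But then $N$ contains a cofinal branch through $\bar\TTT$, contradicting that $N$ regards $\bar\TTT$ as special.

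I expect the main obstacle to be this $\sigma$-approximation step, i.e.\ verifying that each approximation $b_t\cap x$ already belongs to $N$; the decisive point is that the regularity of $\kappa$ inside $N$ confines any $N$-countable piece of $\bar\TTT$ below some level $\gamma<\kappa$, so that $b_t\cap x$ is computed from the single node $t_\gamma\in N$ rather than from the whole branch. Notably, this argument uses only clauses (i) and (iii) of Definition \ref{internallymahlo} together with $M\subseteq N$, in accordance with the remark that clause (ii) is not needed in the present section.
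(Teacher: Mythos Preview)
Your proof is correct and follows essentially the same route as the paper's. The only differences are cosmetic: for regularity of $\theta$ you pass through $N$ via clause~(iii), whereas the paper invokes the standard fact that the critical point of any elementary embedding of transitive models is regular in the domain; and in the tree argument you verify directly that $b_t\in N$ via the $\sigma$-approximation property, while the paper assumes $b\notin N$ and derives the same contradiction from the failed approximation. Both versions hinge on the identical key observation that any $N$-countable subset of $\bar\TTT$ lies below some level $\gamma<\kappa$ (by regularity of $\kappa$ in $N$), so that its intersection with the branch is computed from a single node $t_\gamma\in N$.
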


\begin{proof}
 First, if $\map{j}{M}{\HH{\vartheta}}$ is any small embedding 
 for $\theta$, then elementarity implies that $\crit{j}$ is uncountable and regular in $M$, and hence $\theta$ has the same properties in $\HH{\vartheta}$. Next, assume that there is a special $\theta$-Aronszajn tree $\TTT$ with domain $\theta$ and let $\map{j}{M}{\HH{\vartheta}}$ be a small embedding for $\theta$ such that the properties listed in Definition \ref{internallymahlo} hold, and such that there is a tree $\SSS$ of height $\crit{j}$ with domain $\crit{j}$ in $M$ with $j(\SSS)=\TTT$. Then elementarity implies that $\SSS$ is special in $M$ and, by the above remarks, it is also special in $N$. Since $j\restriction\SSS=\id_\SSS$ and $\TTT$ is a $\theta$-Aronszajn tree, we know that $\SSS$ is a proper initial segment of $\TTT$ and hence $\HH{\vartheta}$ contains a cofinal branch $b$ through $\SSS$. But then, together with the above remarks, the regularity of $\crit{j}$ in $N$ implies that $b$ is not an element of $N$. By our assumptions, this implies that there is $x\in\Poti{\crit{j}}{\omega_1}^N$ with $b\cap x\notin N$. However, since $\crit{j}$ is regular and uncountable in $N$, there is some node $s\in b\cap\SSS$ such that $b\cap x$ is contained in the set of all predecessors of $s$ in $\TTT$ and hence $b\cap x$ is an element of $N$, a contradiction.  
\end{proof}

\begin{corollary}\label{corollary:InternalMahloAtInaccessibles}
 The following statements are equivalent for every inaccessible cardinal $\theta$:  
 \begin{enumerate}[leftmargin=0.7cm]
  \item $\theta$ is a Mahlo cardinal. 

  \item $\theta$ is internally AP Mahlo. 
 \end{enumerate}
\end{corollary}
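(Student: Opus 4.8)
The goal is to prove the equivalence of Mahloness and internal AP Mahloness for inaccessible $\theta$. The forward direction (i)$\Rightarrow$(ii) should follow from lifting the small embedding characterization of Mahlo cardinals (Lemma \ref{lemma:SECMahlo}) through $\PPP_\theta$ and using the $\sigma$-approximation property of the factor forcing, while the reverse direction (ii)$\Rightarrow$(i) should follow by combining Proposition \ref{proposition:InternalAPMahloSTP} with Todor\v{c}evi\'c's characterization of Mahloness via special Aronszajn trees. I expect the reverse direction to be the easier of the two, since most of the work is already packaged in the preceding proposition.

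\textbf{Reverse direction.} Assume $\theta$ is inaccessible and internally AP Mahlo. By Proposition \ref{proposition:InternalAPMahloSTP}, $\theta$ is regular and uncountable and there are no special $\theta$-Aronszajn trees. Since $\theta$ is assumed inaccessible, I would invoke Todor\v{c}evi\'c's result (cited above as {\cite[Theorem 6.1.4]{MR2355670}}) stating that an inaccessible cardinal is Mahlo if and only if there are no special $\theta$-Aronszajn trees. The absence of such trees then immediately yields that $\theta$ is Mahlo, giving (ii)$\Rightarrow$(i).

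\textbf{Forward direction.} Assume $\theta$ is Mahlo (hence in particular inaccessible). I would fix a sufficiently large regular $\vartheta$ and some $x\in\HH{\vartheta}$, and aim to produce the small embedding $j$ and the model $N$ required by Definition \ref{internallymahlo}. The plan is to start from the small embedding characterization of Mahloness in Lemma \ref{lemma:SECMahlo}: choose a small embedding $\map{j}{M}{\HH{\vartheta}}$ for $\theta$ with $x\in\ran{j}$ and with $\crit{j}=:\kappa$ inaccessible, arranging as in the proof of Lemma \ref{smallembcharinaccessible} that $M$ is the transitive collapse of an elementary substructure $X$ of $\HH{\vartheta}$ and that $\HH{\kappa}^M=\HH{\kappa}$. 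Then I would force with $\PPP_\theta$: taking $G$ generic, I set $G_j=G\cap\HH{\kappa}$ and define $N=\HH{\vartheta}^{\VV[G_j]}$ (intersected or recomputed so as to sit inside $\HH{\vartheta}^{\VV[G]}$). By the machinery of Proposition \ref{proposition:LiftingSmallEmbeddings}—applied with $\kappa$ in place of $\crit{j}$—the lifted embedding $\map{j_G}{M[G_j]}{\HH{\vartheta}^{\VV[G]}}$ is a small embedding for $\theta$ inside $\VV[G]$ (where $\theta=\omega_2$ by Theorem \ref{aleph2}), the pair $(N,\HH{\vartheta}^{\VV[G]})$ has the $\sigma$-approximation property, and $\crit{j_G}=\kappa$. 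The domain model is $M[G_j]=\HH{\kappa}^{N}$ since $\kappa$ remains a cardinal in $N$ and $\HH{\kappa}^M=\HH{\kappa}$; moreover $\kappa$ is regular (indeed inaccessible) in $N=\VV[G_j]$ because $G_j$ is $\PPP_\kappa$-generic and $\PPP_\kappa$ has size $\kappa$ and the $\kappa$-chain condition by Lemma \ref{chain_condition}, so it preserves the regularity of $\kappa$. This verifies clause (iii); clause (i) is the $\sigma$-approximation property just noted; and clause (ii), namely $M[G_j]\in N$ and $\Poti{\kappa}{\omega_1}^N\subseteq M[G_j]$, should follow because $\Poti{\kappa}{\omega_1}^{\VV[G_j]}=\HH{\kappa}^{\VV[G_j]}\cap\Poti{\kappa}{\omega_1}$ is contained in $M[G_j]=\HH{\kappa}^{N}$.

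\textbf{Main obstacle.} The delicate point in the forward direction is the bookkeeping needed to run Proposition \ref{proposition:LiftingSmallEmbeddings} with the genuinely inaccessible critical point $\kappa$ and to verify that the resulting $N$ and $M[G_j]$ satisfy exactly the requirements of Definition \ref{internallymahlo}—in particular confirming that $\kappa$ stays regular in $N$ and that the containment $\Poti{\kappa}{\omega_1}^N\subseteq M[G_j]$ holds, which hinges on $\HH{\kappa}$ being correctly computed in both $M$ and the generic extension. I would carry out the verification of these two clauses carefully, since everything else is either a direct citation of the lifting proposition or an appeal to the small embedding characterization. The equivalence then follows by combining the two directions.
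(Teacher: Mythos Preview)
Your reverse direction is correct and matches the paper's argument exactly.

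Your forward direction, however, has a conceptual error: you are proving the wrong statement. The corollary asks you to show that $\theta$ is internally AP Mahlo \emph{in the ground model $\VV$}, but your argument---forcing with $\PPP_\theta$, setting $N=\HH{\vartheta}^{\VV[G_j]}$, and invoking the lifted embedding $j_G$---produces witnesses for internal AP Mahloness of $\theta$ \emph{in $\VV[G]$}. That is precisely the content of the implication (i)$\Rightarrow$(ii) of Theorem~\ref{MahloChar}, not of the present corollary. Your model $N=\HH{\vartheta}^{\VV[G_j]}$ is not even a subset of $\HH{\vartheta}^\VV$, so it cannot serve as the model required by Definition~\ref{internallymahlo} inside $\VV$.

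The paper's proof of the forward direction is much simpler and involves no forcing at all: given the small embedding $\map{j}{M}{\HH{\vartheta}}$ from Lemma~\ref{lemma:SECMahlo} with $\crit{j}$ inaccessible, one simply sets $N=\HH{\vartheta}$. Then the pair $(N,\HH{\vartheta})$ trivially has the $\sigma$-approximation property; $M\in N$ holds by the definition of small embedding; $\Poti{\crit{j}}{\omega_1}^N=\Poti{\crit{j}}{\omega_1}\subseteq\HH{\crit{j}}\subseteq M$ (the last inclusion being the content of the footnote to Definition~\ref{internallymahlo}); and $\crit{j}$ is regular in $N$ since it is inaccessible in $\VV$. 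No generic extension is needed.
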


\begin{proof}
 Assume that (i) holds and $\vartheta$ is a sufficiently large regular cardinal in the sense of Lemma \ref{lemma:SECMahlo}. Given $x\in\HH{\vartheta}$, let $\map{j}{M}{\HH{\vartheta}}$ be a small embedding for $\vartheta$ with $x\in\ran{j}$ and with the properties listed in Lemma \ref{lemma:SECMahlo}, and set $N=\HH{\vartheta}$. Then $j$ and $N$ witness that (ii) holds. In the other direction, if (ii) holds, then Proposition \ref{proposition:InternalAPMahloSTP} implies that $\theta$ is an inaccessible cardinal with the property that there are no special $\theta$-Aronszajn trees, and {\cite[Theorem 6.1.4]{MR2355670}} then shows that $\theta$ is a Mahlo cardinal. 
\end{proof}

Using the above, we are now ready to prove the following result that extends Proposition \ref{proposition:InternalAPMahloSTP} and directly yields the desired characterization of Mahloness.

\begin{theorem}\label{MahloChar}
  The following statements are equivalent for every inaccessible cardinal $\theta$:  
 \begin{enumerate}[leftmargin=0.7cm]
  \item $\theta$ is a Mahlo cardinal. 

  \item $\mathbbm{1}_{\PPP_\theta}\Vdash\anf{\textit{$\omega_2$ is internally AP Mahlo}}$. 

\item $\mathbbm{1}_{\PPP_\theta}\Vdash\anf{\textit{There are no special $\omega_2$-Aronszajn trees}}$.
\end{enumerate}
\end{theorem}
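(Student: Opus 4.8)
The plan is to prove the equivalence of three statements about an inaccessible $\theta$ by establishing the chain $(i)\Rightarrow(ii)\Rightarrow(iii)\Rightarrow(i)$.

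\medskip

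\noindent\textbf{The implication $(i)\Rightarrow(ii)$.} The core idea is to force-lift a small embedding witnessing Mahloness of $\theta$ through $\PPP_\theta$ and verify that the lifted embedding, together with an appropriate intermediate model, witnesses internal AP Mahloness of $\omega_2=\theta$ in $\VV[G]$. Concretely, fix $G$ that is $\PPP_\theta$-generic over $\VV$, and let $\vartheta$ be sufficiently large and $x\in\HH{\vartheta}^{\VV[G]}$. Choose a $\PPP_\theta$-name $\dot x$ for $x$ and, using Lemma \ref{lemma:SECMahlo}, take a small embedding $\map{j}{M}{\HH{\vartheta}}$ for $\theta$ in $\VV$ with $\dot x\in\ran j$ and $\crit{j}$ inaccessible. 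Apply Proposition \ref{proposition:LiftingSmallEmbeddings}: this yields $G_j=G\cap\HH{\crit j}$ generic for $\PPP_{\crit j}$, the lifted embedding $\map{j_G}{M[G_j]}{\HH{\vartheta}^{\VV[G]}}$, and the $\sigma$-approximation property for the pair $(\HH{\vartheta}^{\VV[G_j]},\HH{\vartheta}^{\VV[G]})$. I would then set $M'=M[G_j]$ and $N=\HH{\vartheta}^{\VV[G_j]}$. The verification then splits into checking the clauses of Definition \ref{internallymahlo}: clause (i) is exactly the $\sigma$-approximation property just obtained; for clause (iii), $\crit{j_G}=\crit j$ is inaccessible, hence regular, in $\VV$, and since $\PPP_{\crit j}$ has size below $\crit j$ one checks it remains regular in $N=\VV[G_j]$. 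The subtle point is clause (ii), that $M'\in N$ and $\Poti{\crit j}{\omega_1}^N\subseteq M'$: here one uses that by Lemma \ref{lemma:SECMahlo} and the cited {\cite[Lemma 3.1]{SECFLC}} we have $\HH{\crit j}\subseteq M$ in $\VV$, so $M'=M[G_j]$ computes $\Poti{\crit j}{\omega_1}$ the same way as $N=\VV[G_j]$ does, using that $\PPP_{\crit j}\in M$ and $G_j\in N$.

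\medskip

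\noindent\textbf{The implication $(ii)\Rightarrow(iii)$.} This is essentially immediate from Proposition \ref{proposition:InternalAPMahloSTP} applied inside $\VV[G]$: if $\mathbbm{1}_{\PPP_\theta}\Vdash\anf{\textit{$\omega_2$ is internally AP Mahlo}}$, then in any generic extension $\VV[G]$ the cardinal $\omega_2$ is internally AP Mahlo, and Proposition \ref{proposition:InternalAPMahloSTP} then directly yields that there are no special $\omega_2$-Aronszajn trees in $\VV[G]$. Since $G$ was arbitrary, this gives $\mathbbm{1}_{\PPP_\theta}\Vdash\anf{\textit{There are no special $\omega_2$-Aronszajn trees}}$.

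\medskip

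\noindent\textbf{The implication $(iii)\Rightarrow(i)$.} I would argue contrapositively: suppose $\theta$ is inaccessible but not Mahlo. By {\cite[Theorem 6.1.4]{MR2355670}} there is then a special $\theta$-Aronszajn tree $\TTT$ in $\VV$, which I may take with domain $\theta$. The plan is to show $\TTT$ survives as a special $\omega_2$-Aronszajn tree in any $\PPP_\theta$-generic extension $\VV[G]$. By Theorem \ref{aleph2}, $\theta=\omega_2^{\VV[G]}$, so the height is correct. Speciality is witnessed by a regressive map with the injectivity-on-chains property of Definition \ref{definition:SpecialTree}; since $\theta$ remains regular in $\VV[G]$ (Proposition \ref{stat_trans_nodes}) and the witnessing data lives in $\VV$, the same witness shows $\TTT$ is special in $\VV[G]$, using the noted upwards-absoluteness of speciality between models in which the height stays regular. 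The one thing requiring care is that $\TTT$ remains Aronszajn, i.e.\ acquires no new cofinal branch: but a special tree provably has no cofinal branch in any model of $\ZFC^-$ in which its height is regular, so this is automatic. Thus $\VV[G]$ contains a special $\omega_2$-Aronszajn tree, contradicting (iii).

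\medskip

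\noindent\textbf{Main obstacle.} The genuinely delicate step is the verification of clause (ii) of Definition \ref{internallymahlo} in the implication $(i)\Rightarrow(ii)$, namely matching the computation of $\Poti{\crit j}{\omega_1}$ between the intermediate forcing extension $M[G_j]$ of the small domain model and the candidate model $N=\VV[G_j]$, and confirming $M[G_j]\in\VV[G_j]$. This requires carefully tracking which objects lie in $M$ versus $\VV$ before forcing and invoking the correctness $\HH{\crit j}^M=\HH{\crit j}$ established in the proof of Proposition \ref{proposition:LiftingSmallEmbeddings}.
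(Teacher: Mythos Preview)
Your overall strategy matches the paper's proof: establish $(i)\Rightarrow(ii)$ by lifting a small embedding via Proposition \ref{proposition:LiftingSmallEmbeddings} and setting $N=\HH{\vartheta}^{\VV[G_j]}$, derive $(ii)\Rightarrow(iii)$ from Proposition \ref{proposition:InternalAPMahloSTP}, and show $\neg(i)\Rightarrow\neg(iii)$ by pushing a special $\theta$-Aronszajn tree from $\VV$ into $\VV[G]$ using {\cite[Theorem 6.1.4]{MR2355670}} and the upwards absoluteness of speciality.

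One small correction and one clarification. First, your claim that ``$\PPP_{\crit j}$ has size below $\crit j$'' is false: $\PPP_{\crit j}\subseteq\HH{\crit j}$ has cardinality exactly $\crit j$. The conclusion that $\crit j$ remains regular in $\VV[G_j]$ is still correct, but the paper obtains it directly from Theorem \ref{aleph2} (which gives $\crit j=\omega_2^{\VV[G_j]}$), or equivalently from the $\crit j$-chain condition of Lemma \ref{chain_condition}. Second, for clause (ii) of Definition \ref{internallymahlo} the paper is slightly more explicit than your sketch: it combines Lemma \ref{chain_condition} with Proposition \ref{proposition:LiftingSmallEmbeddings} to argue $\Poti{\crit j}{\omega_1}^N\subseteq\HH{\crit j}^{\VV[G_j]}\subseteq M[G_j]$, the point being that the $\crit j$-chain condition guarantees every element of $\HH{\crit j}^{\VV[G_j]}$ has a nice $\PPP_{\crit j}$-name in $\HH{\crit j}^\VV\subseteq M$. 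Your phrasing ``computes $\Poti{\crit j}{\omega_1}$ the same way'' is on the right track but leaves this step implicit. Also note that $N$ should be $\HH{\vartheta}^{\VV[G_j]}$, not $\VV[G_j]$ itself, though you write it correctly in some places.
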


\begin{proof}
 First, assume that (i) holds, let $\vartheta$ be a sufficiently large regular cardinal in the sense of Lemma \ref{lemma:SECMahlo}, let $G$ be $\PPP_\theta$-generic over $\VV$ and pick $x\in\HH{\vartheta}^{\VV[G]}$. By Lemma \ref{chain_condition}, we can find a $\PPP_\theta$-name $\dot{x}$ in $\HH{\vartheta}^\VV$ with $x=\dot{x}^G$. 
Pick a small embedding $\map{j}{M}{\HH{\vartheta}}$ with $\dot{x}\in\ran{j}$, and with the properties listed in Lemma \ref{lemma:SECMahlo}. Next, let $\map{j_G}{M[G_j]}{\HH{\vartheta}^{\VV[G]}}$ be the embedding given by Proposition \ref{proposition:LiftingSmallEmbeddings}, and set $N=\HH{\vartheta}^{\VV[G_j]}$. Then $M[G_j]\in N\subseteq\HH{\vartheta}^{\VV[G]}$, and the pair $(N,\HH{\vartheta}^{\VV[G]})$ satisfies the $\sigma$-approximation property. Moreover, since $\crit{j}$ is inaccessible in $\VV$ and $\PPP_{\crit{j}}=\PPP_\theta\cap\HH{\crit{j}}$, Theorem \ref{aleph2} implies that $\crit{j}$ is regular in $N$. Finally, a combination of Lemma \ref{chain_condition} with Proposition \ref{proposition:LiftingSmallEmbeddings} shows that $\Poti{\crit{j}}{\omega_1}^N\subseteq\HH{\crit{j}}^N\subseteq M[G_j]$. In summary, this shows that $j_G$ and $N$ witness that $\theta$ is internally AP Mahlo with respect to $x$ in $\VV[G]$. In particular, we can conlude that (ii) holds in this case.

 Now, assume that (i) fails. By {\cite[Theorem 6.1.4]{MR2355670}}, this implies that there exists a special $\theta$-Aronszajn tree $\TTT$. Let $G$ be $\PPP_\theta$-generic over $\VV$. Then, Theorem \ref{aleph2}  shows that $\theta=\omega_2^{\VV[G]}$ and thus, by the above remarks, $\TTT$ is a special $\omega_2$-Aronszajn tree in $\VV[G]$. This shows that (iii) fails. 
\end{proof}

By combining the results of Section \ref{section:Inaccessibles} with Corollary \ref{corollary:InternalMahloAtInaccessibles} and the above theorem, we now directly conclude that  the sequence $\seq{\PPP_\theta}{\theta\in\Card}$ provides a strong characterization of the class of all Mahlo cardinals.


\section{Indescribable cardinals}\label{section:Indescribable}

In this section, we present strong characterizations of indescribable cardinals through Neeman's pure side condition forcing. A combination of results from \cite{MR3201836} with Theorem \ref{theorem:CharInaccessible} already provides such a characterization for $\Pi^1_1$-indescribable cardinals (i.e.\ weakly compact cardinals) with the help of the \emph{tree property}. In the case where either $m$ or $n$ is greater than $1$, $\Pi^m_n$-indescribable cardinals seem to be lacking such a canonical combinatorial essence. This motivates viewing the internal large cardinal principles used in the following characterizations as properties that capture the combinatorial essence of the higher degrees of indescribability.

As we will have to work a lot with higher order objects in this section, let us indicate the order of free variables by a superscript attached to them, letting $v^0$ denote a standard first order free variable, letting $v^1$ denote a free variable that is to be interpreted by an element of the powerset of the domain, and so on. In the same way, we will also label higher order quantifiers. Remember that, given $0<{m,n}<\omega$, an uncountable cardinal $\kappa$ is $\Pi^m_n$-indescribable if for every $\Pi^m_n$-formula $\Phi(v^1)$ and every $A\subseteq\VV_\kappa$ such that $\VV_\kappa\models\Phi(A)$, there is a $\delta<\kappa$ with $\VV_\delta\models\Phi(A\cap\VV_\delta)$.

In \cite{SECFLC}, we used results of Hauser from \cite{MR1133077} to obtain the following small embedding characterization for indescribable cardinals (see {\cite[Lemma 3.4 \& 4.2]{SECFLC}}), which will be the basis of an internal concept of indescribability in the following.

\begin{lemma}\label{indescribablesmallembeddingchar}
 Given $0<{m,n}<\omega$, the following statements are equivalent for every cardinal $\theta$: 
 \begin{enumerate}[leftmargin=0.7cm]
  \item $\theta$ is a $\Pi^m_n$-indescribable cardinal. 

  \item For every sufficiently large cardinal $\vartheta$ and all $x\in\HH{\vartheta}$, there is a small embedding $\map{j}{M}{\HH{\vartheta}}$ for $\theta$ such that $x\in\ran{j}$,  ${}^{{<}\crit{j}}M\subseteq M$ and $$(\VV_{\crit j}\models\varphi(A))^M ~ \Longrightarrow ~  \VV_{\crit j}\models\varphi(A)$$ for every $\Pi^m_n$-formula $\varphi(v^1)$ with parameter $A\in M\cap\VV_{\crit j+1}$.
  \end{enumerate}
\end{lemma}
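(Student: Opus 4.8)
The plan is to prove the two implications separately, with (ii) $\Rightarrow$ (i) being routine and (i) $\Rightarrow$ (ii) carrying all the content.

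For (ii) $\Rightarrow$ (i), I would fix a $\Pi^m_n$-formula $\Phi(v^1)$ and a set $A\subseteq\VV_\theta$ with $\VV_\theta\models\Phi(A)$, and produce a $\delta<\theta$ reflecting this. Choosing $\vartheta$ large enough that $\VV_\theta,A\in\HH{\vartheta}$ and that (ii) applies, I would invoke (ii) with $x=A$ to obtain a small embedding $\map{j}{M}{\HH{\vartheta}}$ for $\theta$ with $A\in\ran{j}$, with ${}^{<\crit{j}}M\subseteq M$, and with the correctness clause. Writing $\kappa=\crit{j}$ and $\bar A=j^{-1}(A)$, the closure of $M$ guarantees $\VV_\kappa^M=\VV_\kappa$, and since $j(\kappa)=\theta$ and $j(\bar A)=A$, elementarity of $j$ yields $(\VV_\kappa\models\Phi(\bar A))^M$. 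The correctness clause transfers this to $\VV_\kappa\models\Phi(\bar A)$ in $\VV$, and since $j\restriction\VV_\kappa=\id$ gives $\bar A=A\cap\VV_\kappa$, I obtain $\VV_\kappa\models\Phi(A\cap\VV_\kappa)$ with $\kappa<\theta$, as required.

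For (i) $\Rightarrow$ (ii), the strategy is to realise the desired small embedding as the inverse of a transitive collapse of a suitable elementary submodel of $\HH{\vartheta}$. Given a large $\vartheta$ and $x\in\HH{\vartheta}$, I would look for an $X\prec\HH{\vartheta}$ with $x,\VV_\theta\in X$, with $\kappa:=X\cap\theta\in\theta$, and with ${}^{<\kappa}X\subseteq X$; letting $\map{\pi}{X}{M}$ be the transitive collapse and $j=\pi^{-1}$, the closure of $X$ immediately delivers a small embedding for $\theta$ with $x\in\ran{j}$ and ${}^{<\crit{j}}M\subseteq M$. The crucial observation is that, for a $\Pi^m_n$-formula $\varphi$ and a parameter $A\in M\cap\VV_{\kappa+1}$, the statement $(\VV_\kappa\models\varphi(A))^M$ is equivalent, by elementarity of $j$, to $\VV_\theta\models\varphi(j(A))$; since $A=j(A)\cap\VV_\kappa$, the correctness clause required in (ii) is exactly the assertion that $\kappa$ reflects every true $\Pi^m_n$-statement about $\VV_\theta$ with parameter in $X$ down to $\VV_\kappa$. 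Thus (i) $\Rightarrow$ (ii) reduces to producing, for each $x$, an elementary submodel $X$ that is simultaneously $<\kappa$-closed at $\kappa=X\cap\theta$ and a $\Pi^m_n$-reflection point for all of its parameters.

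To secure such an $X$, I would first use that $\Pi^m_n$-indescribability (with $m,n\geq 1$) forces $\theta$ to be inaccessible, so that $<\kappa$-closure can be arranged at points $\kappa$ that are themselves suitably large, and I would then invoke Hauser's analysis of $\Pi^m_n$-indescribability through elementary embeddings from \cite{MR1133077} to guarantee a stationary supply of such simultaneous reflection points, selecting one inside a model $X$ containing $x$. The main obstacle is precisely this step: a naive reflection argument carried out one formula at a time does not control quantifier complexity, and one must reflect an entire $X$-indexed family of $\Pi^m_n$-statements at the single ordinal $X\cap\theta$ while keeping the degree $\Pi^m_n$ intact and the domain model $<\crit{j}$-closed. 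Matching the order $(m,n)$ of quantifiers on the two sides of the correctness clause — which is the heart of why Hauser's embedding characterization is needed, rather than a bare application of the definition of indescribability — is the delicate bookkeeping on which the argument turns.
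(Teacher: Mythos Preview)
The paper does not give a proof of this lemma at all; it simply cites it from the authors' companion paper \cite{SECFLC} (specifically Lemmas 3.4 and 4.2 there), noting that the argument relies on Hauser's embedding characterization of indescribability from \cite{MR1133077}. Your proposal is therefore not being compared against an in-paper proof but against a citation.

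That said, your sketch is sound and aligns with the cited approach. Your (ii) $\Rightarrow$ (i) argument is the expected one; the only point to tidy is the claim $\VV_\kappa^M=\VV_\kappa$, which follows from ${}^{<\kappa}M\subseteq M$ once one observes that this closure forces $\kappa$ to be a strong limit (and in fact inaccessible), so every $x\in\VV_\kappa$ can be coded by a ${<}\kappa$-sequence from $M$. For (i) $\Rightarrow$ (ii) you correctly isolate the real content: one needs an $X\prec\HH{\vartheta}$ with $x\in X$, $\kappa:=X\cap\theta\in\theta$, ${}^{<\kappa}X\subseteq X$, and with $\kappa$ simultaneously reflecting every $\Pi^m_n$-statement over $\VV_\theta$ with parameter in $X$. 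Your reduction of the correctness clause via elementarity to this simultaneous reflection requirement is exactly right, and your deferral to Hauser for producing such $X$ is precisely what \cite{SECFLC} does. So your plan coincides with the paper's (cited) proof.
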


Note that, although  the statement of {\cite[Lemma 4.2]{SECFLC}} does not mention the above closure assumption  ${}^{{<}\crit{j}}M\subseteq M$, the proof presented there yields domain models $M$ with this property. Moreover, note that $\crit{j}$ is inaccessible in $\VV$ whenever $j$ is an embedding witnessing (ii).

\begin{definition}\label{definition:InternApprIndescrCard}
 Given $0<{m,n}<\omega$, we say that a cardinal $\theta$ is \emph{internally AP $\Pi^m_n$-indescribable} 
if for all sufficiently large regular cardinals $\vartheta$ and all $x\in\HH{\vartheta}$, 
 there is a small embedding $\map{j}{M}{\HH{\vartheta}}$ for $\theta$, and a transitive model $N$ of $\ZFC^-$ such that $x\in\ran j$, and the following statements hold: 
\begin{enumerate}[leftmargin=0.7cm]
   \item $N\subseteq\HH{\vartheta}$, and the pair $(N,\HH{\vartheta})$ satisfies the $\sigma$-approximation property.  

 \item $M\in N$, and $\Poti{\crit{j}}{\omega_1}^N\subseteq M$. 

  \item $\crit j$ is regular in $N$ and $$(\HH{\crit{j}}\models\Phi(A))^M ~ \Longrightarrow ~ (\HH{\crit{j}}\models\Phi(A))^N$$ for every $\Pi^m_n$-formula $\Phi(v^1)$ with parameter $A\in\POT{\HH{\crit{j}}}^M$.   
 \end{enumerate}
\end{definition}

Note that the above definition directly implies that internally AP indescribable cardinals are internally AP Mahlo. As mentioned earlier, this principle may be viewed as a strong substitute for the tree property with respect to higher levels of indescribability. For the basic case of $\Pi^1_1$-indescribability, we easily obtain the following result.

\begin{proposition}\label{proposition:IntAPwcTP}
 Let $\theta$ be an internally AP $\Pi^1_1$-indescribable cardinal. Then $\theta$ is an uncountable regular cardinal with the tree property.  
\end{proposition}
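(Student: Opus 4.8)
The plan is to mirror the proof of Proposition \ref{proposition:InternalAPMahloSTP}, replacing the use of specialness by a direct reflection of the assertion ``has no cofinal branch'', which is $\Pi^1_1$. First, $\theta$ is uncountable and regular for exactly the reason given there: any small embedding $\map{j}{M}{\HH{\vartheta}}$ for $\theta$ has $\crit{j}$ uncountable and regular in $M$, and elementarity transfers these properties to $\theta$. For the tree property I would argue by contradiction, assuming $\TTT$ is a $\theta$-Aronszajn tree with underlying set $\theta$, so that its order relation lies in $\POT{\HH{\theta}}$. Fixing a sufficiently large regular $\vartheta$ with $\TTT\in\HH{\vartheta}$ and applying internal AP $\Pi^1_1$-indescribability with $x=\TTT$ produces a small embedding $\map{j}{M}{\HH{\vartheta}}$ and a transitive model $N$ of $\ZFC^-$ as in Definition \ref{definition:InternApprIndescrCard}, together with an $\SSS\in M$ satisfying $j(\SSS)=\TTT$. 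By elementarity, $\SSS$ is a $\crit{j}$-Aronszajn tree with underlying set $\crit{j}$ in $M$, and in particular $\SSS\in\POT{\HH{\crit{j}}}^M$.

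The crucial observation is that ``$\SSS$ has no cofinal branch'' is expressible by a $\Pi^1_1$-formula $\Phi(v^1)$ over $\HH{\crit{j}}$: the statement that $v^1$ codes a tree is first order, while ``every chain is bounded in height'' is a single universal second-order quantifier over a first-order matrix, and a cofinal branch of $\SSS$ is precisely an unbounded chain contained in $\crit{j}\subseteq\HH{\crit{j}}$. Since $M\models$``$\SSS$ is $\crit{j}$-Aronszajn'', we have $(\HH{\crit{j}}\models\Phi(\SSS))^M$, and the reflection clause (iii) of Definition \ref{definition:InternApprIndescrCard}, applied with parameter $\SSS\in\POT{\HH{\crit{j}}}^M$, then yields $(\HH{\crit{j}}\models\Phi(\SSS))^N$. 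Because $\crit{j}$ is regular in $N$, this says exactly that $N$ contains no cofinal branch through $\SSS$.

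The contradiction is now obtained essentially verbatim as in Proposition \ref{proposition:InternalAPMahloSTP}. Since each ordinal below $\crit{j}$ is fixed by $j$, the tree $\SSS$ is a proper initial segment of $\TTT$, and as $\TTT$ has height $\theta>\crit{j}$, any node of $\TTT$ on level $\crit{j}$ induces a cofinal branch $b$ through $\SSS$ with $b\in\HH{\vartheta}$. By the previous paragraph together with the regularity of $\crit{j}$ in $N$ we get $b\notin N$. Applying the $\sigma$-approximation property of $(N,\HH{\vartheta})$ to $b\subseteq\crit{j}\in N$, there is some $x\in\Poti{\crit{j}}{\omega_1}^N$ with $b\cap x\notin N$; but regularity and uncountability of $\crit{j}$ in $N$ provide a node $s\in b$ whose level exceeds the heights of all members of $x$, whence $b\cap x=x\cap\Set{t}{t<_\SSS s}$ is an element of $N$, a contradiction.

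I expect the only genuinely new ingredient relative to Proposition \ref{proposition:InternalAPMahloSTP} to be checking that ``having no cofinal branch'' is $\Pi^1_1$ over $\HH{\crit{j}}$ and that the reflection clause is invoked in the usable direction, namely from $M$ to $N$. Once $N$ is known to omit a cofinal branch through $\SSS$, the remainder is identical to the Mahlo case, with the reflected $\Pi^1_1$-statement playing the role that specialness played there.
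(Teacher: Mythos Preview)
Your proposal is correct and follows essentially the same approach as the paper's proof: both reflect the $\Pi^1_1$-statement that $\SSS$ has no cofinal branch from $M$ to $N$ via clause (iii), and then derive a contradiction from the $\sigma$-approximation property exactly as in the Mahlo case. The paper is somewhat terser—it simply asserts that ``$\SSS$ is a $\crit{j}$-Aronszajn tree'' is $\Pi^1_1$ and then refers back to Proposition~\ref{proposition:InternalAPMahloSTP} for the approximation argument—whereas you spell out both the complexity computation and the final contradiction in detail, but the underlying argument is the same.
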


\begin{proof}
 By Proposition \ref{proposition:InternalAPMahloSTP}, we know that $\theta$ is uncountable and regular. Assume for a contradiction that there exists a $\theta$-Aronszajn tree $\TTT$ with domain $\theta$ and pick a small embedding $\map{j}{M}{\HH{\vartheta}}$ for $\theta$ such that  the properties listed in Definition \ref{definition:InternApprIndescrCard} hold for $m=n=1$ and $j(\SSS)=\TTT$ holds for some tree $\SSS\in M$ of height $\crit{j}$. Then $\SSS$ is a $\crit{j}$-Aronszajn tree in $M$ and, since this statement can be formulated over $\HH{\crit{j}}^M$ by a $\Pi^1_1$-formula with parameter $\SSS\in\POT{\HH{\crit{j}}}^M$, we can conclude that $\SSS$ is a $\crit{j}$-Aronszajn tree in $N$. As in the proof of  Proposition \ref{proposition:InternalAPMahloSTP}, elementarity implies that $\SSS$ is an initial segment of $\TTT$, and the $\sigma$-approximation property implies that $N$ contains a cofinal branch through $\SSS$, a contradiction. 
\end{proof}

Before we may commence with the main results of this section, we need to make a few technical observations. Namely, we will want to identify countable subsets of $\VV_{\kappa+k}$ with certain elements of $\VV_{\kappa+k}$, and also view forcing statements about $\Pi^m_n$-formulas themselves as $\Pi^m_n$-formulas. The basic problem about this is that the forming of (standard) ordered pairs is rank-increasing. 
For example, names for elements of $\VV_{\kappa+k}$ are usually not elements of $\VV_{\kappa+k}$ when $k>0$, even if $\kappa$ is regular and the forcing satisfies the $\kappa$-chain condition. 
However, there are well-known alternative definitions of ordered pairs (see, for example, \cite{MR0269496}) that possess all the nice properties of the usual ordered pairs that we will need, and which are, in addition, not rank-increasing. 
While it would be tedious to do so, it is completely straightforward to verify that one can base all set theory (like the definition of finite tuples) and forcing theory (starting with the definition of forcing names) on these modified ordered pairs, and preserve all of their standard properties, while additionally obtaining our desired properties. We will assume that we work with the modified ordered pairs for the remainder of this section. The following lemma shows how this approach allows us to formulate $\Pi^m_n$-statements in the forcing language in a $\Pi^m_n$-way.

\begin{lemma}\label{opconsequences}
  Work in $\ZFC^-$. Let $m\in\omega$, and let $\kappa$ be a cardinal such that $\POTT{\HH{\kappa}}{m}$ exists. Assume that $\PPP\subseteq\HH{\kappa}$ is a partial order such that forcing with $\PPP$ preserves $\kappa$ and such that for every $\PPP$-name $\tau$ for an element of $\HH{\kappa}$, there is a $\PPP$-name $\sigma$ in $\HH{\kappa}$ with $\mathbbm{1}_{\PPP}\Vdash\anf{\sigma=\tau}$. 
 \begin{enumerate}[leftmargin=0.7cm]
  \item If $\tau$ is a $\PPP$-name for an element of $\POTT{\HH{\kappa}}{m+1}$, then there is a $\PPP$-name $\sigma$ in $\POTT{\HH{\kappa}}{m+1}$ with the property that $\mathbbm{1}_{\PPP}\Vdash\anf{\sigma=\tau}$. 

  \item If $\sigma_0,\ldots,\sigma_k$ is a finite sequence of $\PPP$-names in $\POTT{\HH{\kappa}}{m+1}$, and $\Phi$ is a $\Pi^{m+1}_n$-formula for some $n\in\omega$, then the statement $p\Vdash\Phi(\sigma_0,\ldots,\sigma_k)$ is  equivalent to a $\Pi^{m+1}_n$-formula. 
 \end{enumerate}
\end{lemma}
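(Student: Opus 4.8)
The plan is to deduce (ii) from (i), and to prove (i) by an outer induction on $m$, exploiting throughout the two features of the modified ordered pairs: the pair of two elements of $\POTT{\HH{\kappa}}{i}$ again lies in $\POTT{\HH{\kappa}}{i}$, so that any $\PPP$-name built hereditarily from conditions in $\PPP\subseteq\HH{\kappa}$ and from names in $\POTT{\HH{\kappa}}{i}$ is itself a member of $\POTT{\HH{\kappa}}{i}$. For (i), the base case $m=0$ is exactly the hypothesis of the lemma, since $\POTT{\HH{\kappa}}{0}=\HH{\kappa}$. Assuming the statement below $m$, and given a name $\tau$ with $\mathbbm{1}_{\PPP}\Vdash\anf{\tau\in\POTT{\HH{\kappa}}{m+1}}$, I would pass to the canonical nice name
\[
\sigma \ = \ \Set{(\rho,p)}{\rho\in\POTT{\HH{\kappa}}{m}\text{ a }\PPP\text{-name},\ p\in\PPP,\ p\Vdash\rho\in\tau}.
\]
The inductive hypothesis guarantees that every element of $\POTT{\HH{\kappa}}{m}$ in the extension is realised by some name $\rho\in\POTT{\HH{\kappa}}{m}$, which gives $\mathbbm{1}_{\PPP}\Vdash\anf{\sigma=\tau}$, while the pair property gives $\sigma\subseteq\POTT{\HH{\kappa}}{m}$, hence $\sigma\in\POTT{\HH{\kappa}}{m+1}$. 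Note that this argument simultaneously yields the analogous reduction of names at every order $i\le m+1$, which is what the proof of (ii) will consume.

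For (ii) I would prove the uniform statement that, for every formula $\Phi$ of the higher-order language over $\HH{\kappa}$ all of whose quantifiers have order at most $m+1$, the relation $p\Vdash\Phi(\sigma_0,\dots,\sigma_k)$ is equivalent over $\HH{\kappa}$, with $p$, $\PPP$ and the $\sigma_i$ as parameters, to a formula of the same class, $\Sigma^{m+1}_\ell$ to $\Sigma^{m+1}_\ell$ and $\Pi^{m+1}_\ell$ to $\Pi^{m+1}_\ell$; specialising to $\Pi^{m+1}_n$ then gives the lemma. This is an induction on $\Phi$ following the classical translation of the forcing relation. For atomic $\Phi$ the forcing relations for $=$ and $\in$ unwind by their usual recursion, whose quantifiers range only over conditions and over the names occurring in $\tc{\sigma_i}$; since the pair property places $\tc{\sigma_i}$ inside $\POTT{\HH{\kappa}}{m+1}$, this is bounded quantification in the given parameters and so contributes no alternations. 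Negation is handled by $p\Vdash\neg\chi$ iff $\neg\exists q\leq_\PPP p\,(q\Vdash\chi)$, where the quantifier over $q$ is a first-order bounded one, so that the class correctly flips between $\Sigma$ and $\Pi$. An order-$i$ quantifier of $\Phi$, for $i\le m+1$, is translated using the Mixing Lemma and (i): $p\Vdash\forall X^i\,\chi$ becomes \anf{for every $\PPP$-name $\rho\in\POTT{\HH{\kappa}}{i}$, $p\Vdash\chi(\rho)$}, and $p\Vdash\exists X^i\,\chi$ becomes \anf{there is a $\PPP$-name $\rho\in\POTT{\HH{\kappa}}{i}$ with $p\Vdash\chi(\rho)$}. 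By (i) these name-quantifiers may genuinely be confined to $\POTT{\HH{\kappa}}{i}$, so each is an order-$i$ quantifier of the same type as the original, and the translation preserves both the order and the number of alternations.

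The one point that makes the alternation count stable --- and where (i) is indispensable --- is the equivalence $p\Vdash\exists X^i\,\chi \iff \exists\rho\in\POTT{\HH{\kappa}}{i}\,(p\Vdash\chi(\rho))$: below $p$ one selects a maximal antichain of conditions each forcing $\chi$ of some witness name, mixes these witnesses into a single name, and then applies (i) to return that mixed name to $\POTT{\HH{\kappa}}{i}$, so that a forced existential is witnessed by a single name at the correct order rather than by a condition-indexed family. Here the preservation of $\kappa$ is used to match the extension's $\POTT{\HH{\kappa}}{i}$ with the names available in the ground model. The main obstacle I anticipate is exactly the order bookkeeping in these last two cases: one must verify that no name-quantifier introduced by the translation secretly ranges over names of rank above the $i$-th iterated power set, and that the recursive forcing relation for atomic formulas between members of $\POTT{\HH{\kappa}}{m+1}$ never escapes bounded quantification over these sets. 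Both rest entirely on the rank-preserving behaviour of the modified ordered pairs --- which is precisely why they were introduced --- and verifying them carefully, rather than merely asserting them, is the technical heart of the proof.
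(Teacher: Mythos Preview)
The paper does not actually supply a proof of this lemma: it is stated immediately after the discussion of the modified (rank-preserving) ordered pairs and is treated as a routine consequence of that convention, with the text passing directly to ``We are now ready to prove the main results of this section.'' Your proposal is therefore not competing against a written argument but filling in what the authors left implicit.

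Your sketch is correct and is exactly the standard way one would spell this out. The induction for (i) via nice names, together with the fact that the rank-preserving pairing keeps such names inside $\POTT{\HH{\kappa}}{m+1}$, is precisely what the authors have in mind when they say the modified pairs make names for elements of $\VV_{\kappa+k}$ again lie in $\VV_{\kappa+k}$. For (ii), your formula induction with the Mixing Lemma handling the existential step is the expected argument; the only point worth making explicit in a final write-up is that the mixed witness is forced to lie in $\POTT{\HH{\kappa}}{i}$ (since the original quantifier ranges there in the extension), so that (i) at level $i$ genuinely applies to push the mixed name back into $\POTT{\HH{\kappa}}{i}$. You already flag this as the technical heart, and it is.
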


We are now ready to prove the main results of this section. 
The following result will show that our characterization of indescribability through internal AP indescribability presented below is strong.

\begin{lemma}\label{lemma:InaccInternalApproxIndescr}
 Given $0<{m,n}<\omega$, the following statements are equivalent for every inaccessible cardinal $\theta$: 
 \begin{enumerate}[leftmargin=0.7cm]
  \item $\theta$ is a $\Pi^m_n$-indescribable cardinal.  
  
  \item $\theta$ is an internally AP $\Pi^m_n$-indescribable cardinal. 
 \end{enumerate} 
\end{lemma}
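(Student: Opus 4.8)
The strategy is to establish both implications by relating $\Pi^m_n$-indescribability of the inaccessible cardinal $\theta$ to its internal AP version, following the template already used for Mahlo cardinals in Corollary \ref{corollary:InternalMahloAtInaccessibles}. For the direction (i)$\Rightarrow$(ii), I would mimic the easy half of that earlier argument: given a sufficiently large regular $\vartheta$ and $x\in\HH{\vartheta}$, apply the small embedding characterization from Lemma \ref{indescribablesmallembeddingchar} to obtain a small embedding $\map{j}{M}{\HH{\vartheta}}$ for $\theta$ with $x\in\ran{j}$, with ${}^{{<}\crit{j}}M\subseteq M$, and with the $\Pi^m_n$-correctness between $M$ and $\VV$ (equivalently, $\HH{\vartheta}$). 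Then I would simply set $N=\HH{\vartheta}$. With this choice, clause (i) of Definition \ref{definition:InternApprIndescrCard} is trivial (the $\sigma$-approximation property holds vacuously since $N=\HH{\vartheta}$), clause (ii) follows because $M\in N$ and the closure ${}^{{<}\crit{j}}M\subseteq M$ gives $\Poti{\crit{j}}{\omega_1}^N\subseteq M$, and clause (iii) holds because $\crit{j}$ is inaccessible (hence regular) in $\VV=N$ and the correctness condition $(\HH{\crit{j}}\models\Phi(A))^M \Rightarrow (\HH{\crit{j}}\models\Phi(A))^N$ is exactly the correctness provided by the embedding (after translating between $\VV_{\crit{j}}$ and $\HH{\crit{j}}$, which coincide for inaccessible $\crit{j}$).

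For the reverse direction (ii)$\Rightarrow$(i), I would argue that internal AP $\Pi^m_n$-indescribability forces the small-embedding characterization of Lemma \ref{indescribablesmallembeddingchar}, (ii) to hold in $\VV$. The key move is to combine the two correctness properties furnished by Definition \ref{definition:InternApprIndescrCard}: the correctness between $M$ and $N$ in clause (iii), and the $\sigma$-approximation property between $N$ and $\HH{\vartheta}$ in clause (i). Given a $\Pi^m_n$-formula $\varphi(v^1)$ and a parameter $A\in M\cap\VV_{\crit{j}+1}$ with $(\VV_{\crit{j}}\models\varphi(A))^M$, clause (iii) transfers this to $(\HH{\crit{j}}\models\varphi(A))^N$, and I would then need to transfer it further up to $\HH{\vartheta}$ (equivalently $\VV$). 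This second transfer is where the $\sigma$-approximation property must do the work: it guarantees that the relevant witnesses to the $\Pi^m_n$-statement computed in $N$ cannot be ``ruined'' by passing to $\HH{\vartheta}$, because any new subset of $\HH{\crit{j}}$ appearing in $\HH{\vartheta}$ but not in $N$ would be $\sigma$-approximated and hence already in $N$, using that $\crit{j}$ is regular in $N$ so that countable fragments suffice to reconstruct objects. Once the full small-embedding characterization is verified in $\VV$, Lemma \ref{indescribablesmallembeddingchar} directly yields that $\theta$ is $\Pi^m_n$-indescribable.

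\textbf{Main obstacle.} The hard part will be the upward transfer of $\Pi^m_n$-correctness from $N$ to $\HH{\vartheta}$ via the $\sigma$-approximation property, and in particular controlling the higher-order quantifiers. A $\Pi^m_n$-statement over $\HH{\crit{j}}$ quantifies over objects in $\POTT{\HH{\crit{j}}}{m}$, and the $\sigma$-approximation property only directly controls subsets of sets in $N$; I expect that handling the alternation of higher-order quantifiers requires an inductive analysis on the complexity of $\varphi$, showing at each quantifier level that the relevant $m$-th order objects witnessing an existential (or refuting a universal) can be taken in $N$ rather than merely in $\HH{\vartheta}$. This is precisely the point at which the technical machinery of Lemma \ref{opconsequences} should be invoked: by working with the modified non-rank-increasing ordered pairs, $m$-th order objects over $\HH{\crit{j}}$ remain low enough in rank to be coded as subsets of $\HH{\crit{j}}$ and thereby fall under the scope of the $\sigma$-approximation property. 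I would therefore structure the transfer argument as an induction on the logical form of $\Phi$, using the regularity of $\crit{j}$ in $N$ to reconstruct objects from their countable restrictions at the base case and propagating through the higher-order quantifiers, with Lemma \ref{opconsequences} ensuring that all intermediate objects live at the correct rank.
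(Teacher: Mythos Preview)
Your plan for (i)$\Rightarrow$(ii) is correct and matches the paper. For (ii)$\Rightarrow$(i), your high-level strategy is sound and close to the paper's, but there are two misdirections worth noting.

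First, the paper does not route through the small-embedding characterization of Lemma~\ref{indescribablesmallembeddingchar}; it verifies the \emph{definition} of $\Pi^m_n$-indescribability directly. Given $A\subseteq\VV_\theta$ and $\Phi$ with $\VV_\theta\models\Phi(A)$, one chooses the internal AP embedding with $A$ (and $\VV_{\theta+\omega}$) in its range, so elementarity gives $(\VV_{\crit{j}}\models\Phi(A\cap\VV_{\crit{j}}))^M$, clause~(iii) pushes this to $N$, and the remaining task is absoluteness from $N$ to $\VV$. This avoids having to worry about the closure hypothesis ${}^{{<}\crit{j}}M\subseteq M$ in Lemma~\ref{indescribablesmallembeddingchar}, which the internal AP embedding need not satisfy.

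Second, and more importantly, the $N\to\VV$ transfer is not handled by an induction on the logical complexity of $\Phi$, nor by Lemma~\ref{opconsequences} (that lemma is about expressing forcing statements as $\Pi^m_n$-formulas and plays no role here). Instead, the paper proves a single structural claim: $\VV_{\crit{j}+\omega}\subseteq N$. The argument is an induction on $k<\omega$: assuming $\VV_{\crit{j}+k}\subseteq N$, any $x\subseteq\VV_{\crit{j}+k}$ has all its countable approximations coded (via the non-rank-increasing pairs) as elements of $\VV_{\crit{j}+k}\subseteq N$, so the $\sigma$-approximation property forces $x\in N$. Once $\VV_{\crit{j}+\omega}\subseteq N$, the iterated power sets $\POTT{\VV_{\crit{j}}}{k}$ agree in $N$ and in $\VV$ for all $k\le m$, and absoluteness of $\Pi^m_n$-statements over $\VV_{\crit{j}}$ is immediate---no formula-by-formula analysis is needed. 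Your instinct that the non-rank-increasing pairs are the key ingredient is correct; the point is just that they are used to run this level-by-level containment argument rather than to invoke Lemma~\ref{opconsequences}.
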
 

 \begin{proof}  
  First, assume that $(i)$ holds, let $\vartheta$ be sufficiently large in the sense of Lemma \ref{indescribablesmallembeddingchar},(ii), fix $x\in\HH{\vartheta}$ and pick a small embedding $\map{j}{M}{\HH{\vartheta}}$ satisfying $x\in\ran{j}$, and satisfying the properties listed in Lemma \ref{indescribablesmallembeddingchar}. By the above remarks, $\crit{j}$ is an inaccessible cardinal and hence $\HH{\crit{j}}=\VV_\crit{j}$. Therefore, if we set $N=\HH{\vartheta}$, then $j$ and $N$ witness that $\theta$ is internally AP $\Pi^m_n$-indescribable with respect to $x$.

 In the other direction, assume that the inaccessible cardinal $\theta$ is internally AP $\Pi^m_n$-indescribable. Fix $A\subseteq\VV_\theta=\HH{\theta}$ and a $\Pi^m_n$-formula $\Phi(v^1)$ with $\VV_\theta\models\Phi(A)$. Let $\map{j}{M}{\HH{\vartheta}}$ and $N$ witness that $\theta$ is internally AP $\Pi^m_n$-indescribable with respect to $\{A,\VV_{\theta+\omega}\}$. Since $\theta$ is inaccessible, elementarity implies that $\crit{j}$ is a strong limit cardinal in $\VV$.

\begin{claim*}
 $\VV_{\crit j}\subseteq M$ and $\VV_{\crit{j}+\omega}\subseteq N$.
\end{claim*}

\begin{proof}[Proof of the Claim]
 The proof of Proposition \ref{proposition:LiftingSmallEmbeddings} contains an argument that proves the first statement. Next, assume that the second statement fails. Then there is some $k<\omega$ and $x\subseteq\VV_{\crit{j}+k}$ with $\VV_{\crit{j}+k}\subseteq N$ and $x\notin N$. By the above remarks, we can identify countable subsets of $\VV_{\crit{j}+k}$ with elements of $\VV_{\crit{j}+k}$ in a canonical way. This shows that $\Poti{x}{\omega_1}\subseteq N$, and hence the $\sigma$-approximation property implies that $x\in N$, a contradiction. 
\end{proof}

Since our assumptions imply that $\crit{j}$ is regular in $N$, the above claim directly shows that $\crit{j}$ is an inaccessible cardinal in $\VV$. Moreover, by the above claim and our assumptions, we have  $A\cap\VV_{\crit{j}}\in M$ and $j(A\cap\VV_{\crit{j}})=A$.
In addition, the above choices ensure that   $(\VV_\kappa\models\Phi(A))^{\HH{\vartheta}}$ holds and hence elementarity implies that $(\VV_{\crit{j}}\models\Phi(A\cap\VV_{\crit{j}}))^M$.  By Clause (iii) of Definition \ref{definition:InternApprIndescrCard}, we therefore know that  $(\VV_{\crit{j}}\models\Phi(A\cap\VV_{\crit{j}}))^N$ holds, and, since the above claim shows that $\Pi^m_n$-formulas over $\VV_{\crit{j}}$ are absolute between $N$ and $\VV$, this allows us to conclude that $\VV_{\crit{j}}\models\Phi(A\cap\VV_{\crit{j}})$.  
\end{proof}

We may now show that indescribability can be characterized by internal AP indescribability via Neeman's pure side condition forcing. 
Note that the equivalence of (i) and (iii) in the case $m=n=1$ already follows from the results of {\cite[Section 5.1]{MR3201836}}. 

\begin{theorem}
 Given $0<{m,n}<\omega$, the following statements are equivalent for every inaccessible cardinal $\theta$: 
 \begin{enumerate}[leftmargin=0.7cm]
  \item $\theta$ is a $\Pi^m_n$-indescribable cardinal. 

  \item $\mathbbm{1}_{\PPP_\theta}\Vdash$\anf{$\omega_2$ is internally AP $\Pi^m_n$-indescribable}.
 \end{enumerate} 
 Moreover, if $m=n=1$, then the above statements are also equivalent to the following statement: 
  \begin{enumerate}[leftmargin=0.7cm]
  \setcounter{enumi}{2}
  \item $\mathbbm{1}_{\PPP_\theta}\Vdash$\anf{$\omega_2$ has the tree property}. 
 \end{enumerate}
\end{theorem}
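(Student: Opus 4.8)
The plan is to follow the template of Theorem~\ref{MahloChar}, replacing the small embedding characterization of Mahloness by the one for $\Pi^m_n$-indescribability in Lemma~\ref{indescribablesmallembeddingchar}, and to use Lemma~\ref{lemma:InaccInternalApproxIndescr} to pass between genuine and internal AP $\Pi^m_n$-indescribability at inaccessibles. For the implication from (i) to (ii), I would assume that $\theta$ is $\Pi^m_n$-indescribable, fix a $\PPP_\theta$-generic filter $G$, a sufficiently large regular cardinal $\vartheta$ and some $x\in\HH{\vartheta}^{\VV[G]}$. Using Lemma~\ref{chain_condition} I would find a $\PPP_\theta$-name $\dot{x}\in\HH{\vartheta}^\VV$ with $\dot{x}^G=x$, and then apply Lemma~\ref{indescribablesmallembeddingchar} to obtain a small embedding $\map{j}{M}{\HH{\vartheta}^\VV}$ for $\theta$ with $\dot{x}\in\ran{j}$, ${}^{{<}\crit{j}}M\subseteq M$, and the $\Pi^m_n$-correctness of $j$. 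Lifting $j$ through $\PPP_\theta$ via Proposition~\ref{proposition:LiftingSmallEmbeddings} produces $\map{j_G}{M[G_j]}{\HH{\vartheta}^{\VV[G]}}$, and I would set $N=\HH{\vartheta}^{\VV[G_j]}$. Exactly as in Theorem~\ref{MahloChar}, clauses (i) and (ii) of Definition~\ref{definition:InternApprIndescrCard} then hold: the $\sigma$-approximation property of $(N,\HH{\vartheta}^{\VV[G]})$ comes from Proposition~\ref{proposition:LiftingSmallEmbeddings}(iii), the regularity of $\crit{j}$ in $N$ from Theorem~\ref{aleph2} (as $\crit{j}$ is inaccessible and $\PPP_{\crit{j}}=\PPP_\theta\cap\HH{\crit{j}}$), and $\Poti{\crit{j}}{\omega_1}^N\subseteq\HH{\crit{j}}^N\subseteq M[G_j]$ from Lemma~\ref{chain_condition} together with Proposition~\ref{proposition:LiftingSmallEmbeddings}.

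The heart of this direction is clause (iii), the transfer of $\Pi^m_n$-truth over $\HH{\crit{j}}$ from $M[G_j]$ to $N$. Here I would invoke Lemma~\ref{opconsequences}: given $A\in\POT{\HH{\crit{j}}}^{M[G_j]}$ and a $\Pi^m_n$-formula $\Phi$ with $(\HH{\crit{j}}\models\Phi(A))^{M[G_j]}$, I would write $A=\tau^{G_j}$ for a name $\tau\in M$ and note that some condition in $G_j$ forces $\HH{\crit{j}}\models\Phi(\tau)$ over $\PPP_{\crit{j}}$. Working with the modified ordered pairs, Lemma~\ref{opconsequences}(ii) shows that this forcing statement is itself equivalent to a $\Pi^m_n$-formula over $\HH{\crit{j}}=\VV_{\crit{j}}$ with parameters in $M$, so the $\Pi^m_n$-correctness of $j$ between $M$ and $\VV$ lifts it to the corresponding forcing statement over $\HH{\crit{j}}$ in $\VV$; evaluating at $G_j$ inside $\VV[G_j]$ then yields $(\HH{\crit{j}}\models\Phi(A))^N$. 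Together with Theorem~\ref{aleph2}, this gives (ii).

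For the implication from (ii) to (i) I would argue contrapositively and reduce to Lemma~\ref{lemma:InaccInternalApproxIndescr}. Assuming that $\theta$ is inaccessible but not $\Pi^m_n$-indescribable, I would fix a non-reflecting pair consisting of $A\subseteq\VV_\theta$ and a $\Pi^m_n$-formula $\Phi$, force with $\PPP_\theta$, and apply the internal AP $\Pi^m_n$-indescribability of $\omega_2$ to a parameter coding $A$ and $\VV_{\theta+\omega}^\VV$, obtaining a witnessing embedding $j_G$ and model $N'$. Running the reflection argument of Lemma~\ref{lemma:InaccInternalApproxIndescr} inside $\VV[G]$, but now keeping track of the ground model, the $\sigma$-approximation property and the coded parameter should yield $\VV_{\crit{j_G}+\omega}^\VV\subseteq N'$ and the inaccessibility of $\crit{j_G}$ in $\VV$ (which is compatible with $\betrag{\crit{j_G}}^{\VV[G]}=\omega_1$), and the same forcing translation as above, used now in the downward direction of clause (iii), reflects $\Phi(A)$ to $\VV_{\crit{j_G}}^\VV$, contradicting the choice of $A$. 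This symmetric use of Lemma~\ref{opconsequences}, matching $\Pi^m_n$-reflection through the $\PPP_\theta$-forcing relation in both the upward and the downward direction, is the step I expect to be the main obstacle.

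Finally, for the additional equivalence when $m=n=1$, the equivalence of (i) and (ii) is already in hand, (ii) implies (iii) follows by applying Proposition~\ref{proposition:IntAPwcTP} inside $\VV[G]$ (internal AP $\Pi^1_1$-indescribability implies the tree property), and for the implication from (iii) to (i) I would argue contrapositively: if $\theta$ is inaccessible but not weakly compact, then a $\theta$-Aronszajn tree $\TTT$ persists as an $\omega_2$-Aronszajn tree in $\VV[G]$, since the $\sigma$-approximation property of $\PPP_\theta$ (Corollary~\ref{corollary:SideConditionForcingApproximationProperty}) together with the preservation of $\omega_1$ (Corollary~\ref{corollary:Proper}) prevents the addition of a cofinal branch, just as in the proof of Theorem~\ref{theorem:CharInaccessible}; hence the tree property fails at $\omega_2$.
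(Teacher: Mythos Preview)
Your outline for (i)$\Rightarrow$(ii) is essentially the paper's proof: lift the small embedding via Proposition~\ref{proposition:LiftingSmallEmbeddings}, set $N=\HH{\vartheta}^{\VV[G_j]}$, and use Lemma~\ref{opconsequences} to see that the $\PPP_{\crit j}$-forcing statement ``$\HH{\crit j}\models\Phi(\tau)$'' is itself $\Pi^m_n$ over $\HH{\crit j}$, so that the $\Pi^m_n$-correctness of $j$ between $M$ and $\VV$ transfers it and clause (iii) of Definition~\ref{definition:InternApprIndescrCard} follows. Your treatment of the case $m=n=1$ via Proposition~\ref{proposition:IntAPwcTP} and the $\sigma$-approximation property also matches the paper.

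The gap is in (ii)$\Rightarrow$(i). You propose to reuse Lemma~\ref{opconsequences} ``in the downward direction'', but there is no forcing relation available: the embedding $j$ and the model $N$ supplied by internal AP $\Pi^m_n$-indescribability in $\VV[G]$ are arbitrary objects of $\VV[G]$ satisfying Definition~\ref{definition:InternApprIndescrCard}, not lifts of ground-model embeddings, and clause (iii) there speaks only of $\Pi^m_n$-transfer from $M$ to $N$ over their respective versions of $\HH{\crit j}$. The real difficulty, which you have not addressed, is to express the \emph{ground-model} statement $\VV_\theta^\VV\models\Phi(A)$ as a $\Pi^m_n$-statement over $\HH{\theta}^{\VV[G]}$ with a first-order parameter, so that it can be pushed through elementarity of $j$ and through clause (iii), and then read off again as $\VV_{\crit j}^\VV\models\Phi(A\cap\VV_{\crit j})$. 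The higher-order quantifiers in $\Phi$ range over $\POTT{\VV_\theta}{k}^\VV$, not over $\POTT{\HH{\theta}}{k}^{\VV[G]}$, and Lemma~\ref{opconsequences} does not supply a definition of the former inside the latter.

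The paper's device here is quite different from a forcing translation. Using the $\sigma$-approximation property of both $(\VV,\VV[G])$ and $(N,\HH{\vartheta}^{\VV[G]})$, together with properness of $\PPP_\theta$, one builds by induction on $k$ a $\Pi^k_0$-formula $\Phi_k(v^1,w^k)$ such that, with parameter $\HH{\theta}^\VV$ (respectively $\HH{\crit j}^\VV$, which lies in $\ran{j}$ via an auxiliary bijection $\map{b}{\theta}{\HH{\theta}}$), it defines exactly $\POTT{\HH{\theta}}{k}^\VV$ inside $\HH{\theta}^{\VV[G]}$ (respectively $\POTT{\HH{\crit j}}{k}^\VV$ inside $N$); the inductive step encodes ``$B$ lies in $\VV$'' as ``every countable approximation of $B$ by a ground-model set is coded in the ground model'', and one simultaneously shows $\POTT{\HH{\crit j}}{k}^\VV\subseteq N$. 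One then relativizes $\Phi$ to a $\Pi^m_n$-formula $\Phi_*$ using the $\Phi_k$, so that $\HH{\theta}^{\VV[G]}\models\Phi_*(\HH{\theta}^\VV,A)$, applies elementarity of $j$ and clause (iii) to get $(\HH{\crit j}\models\Phi_*(\HH{\crit j}^\VV,\bar A))^N$, and unwinds this as $\VV_{\crit j}^\VV\models\Phi(\bar A)$. This relativization via the $\sigma$-approximation property is the substantive content of the implication, and it is not supplied by Lemma~\ref{opconsequences}.
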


\begin{proof}
   Assume first that (i) holds. Pick a regular cardinal $\vartheta>\theta$ that is sufficiently large in the sense of Lemma \ref{indescribablesmallembeddingchar}, let $G$ be $\PPP_\theta$-generic over $\VV$, and pick $x\in\HH{\vartheta}^{\VV[G]}$. By Lemma \ref{chain_condition}, there is a $\PPP_\theta$-name $\dot{x}\in\HH{\vartheta}^\VV$ with $x=\dot{x}^G$. Pick a small embedding $\map{j}{M}{\HH{\vartheta}}$ witnessing that $\theta$ is $\Pi^m_n$-indescribable with respect to $\dot{x}$ as in  Lemma \ref{indescribablesmallembeddingchar}, and let $\map{j_G}{M[G_j]}{\HH{\vartheta}^{\VV[G]}}$ be the embedding given by Proposition \ref{proposition:LiftingSmallEmbeddings}. Define  $N=\HH{\vartheta}^{\VV[G_j]}$  and pick a $\Pi^m_n$-formula $\Phi(v^1)$ and $A\in\POT{\HH{\crit j}}^{M[G_j]}$ with the property  that $(\HH{\crit j}\models\Phi(A))^{M[G_j]}$.  Our assumption $({}^{{<}\crit{j}}M)^\VV\subseteq M$ implies that $\crit{j}$ is an inaccessible cardinal in $\VV$, Theorem \ref{aleph2} shows that $\crit{j}=\omega_2^{\VV[G_j]}$, and therefore Lemma \ref{chain_condition} shows that $$\Poti{\crit{j}}{\omega_1}^N ~ \subseteq ~ \HH{\crit{j}}^{\VV[G_j]} ~ \subseteq ~ M[G_j].$$ 
 Using Lemma \ref{opconsequences}, there is a $\PPP_{\crit{j}}$-name $\tau\in\POT{\HH{\crit j}}^M$ for an element of $\POT{\HH{\crit j}}$, and a condition $r\in G_j$ such that $A=\tau^{G_j}$, and such that $$r\Vdash_{\PPP_{\crit{j}}}^M\anf{\HH{\crit j}\models\Phi(\tau)}.$$ 
Again, by Lemma \ref{opconsequences}, the above forcing statement is equivalent to a $\Pi^m_n$-statement over $\HH{\crit j}$ and this statement holds true in $M$. By our assumptions, this statement holds in $\VV$, and therefore  $$r\Vdash^\VV_{\PPP_{\crit{j}}}\anf{\HH{\crit j}\models\Phi(\tau)}.$$ This allows us to conclude that  $(\HH{\crit j}\models\Phi(A))^N$ holds. Hence $j_G$ and $N$ witness that $\omega_2$ is internally AP $\Pi^m_n$-indescribable with respect to $x$ in $\VV[G]$.

   Now, assume that (ii) holds. Fix a subset $A$ of $\HH{\theta}$, and assume that $\Phi(v^1)$ is a $\Pi^m_n$-formula with $\VV_\theta\models\Phi(A)$. Let $C$ denote the club of strong limit cardinals below $\theta$ and fix a bijection $\map{b}{\theta}{\HH{\theta}}$ with $b[\kappa]=\HH{\kappa}$ for all $\kappa\in C$. 

Let $G$ be $\PPP_\theta$-generic over $\VV$, and work in $\VV[G]$. By our assumption, we can find a small embedding $\map{j}{M}{\HH{\vartheta}}$ and a transitive $\ZFC^-$-model $N$ witnessing the internal AP $\Pi^m_n$-in\-de\-scri\-ba\-bi\-li\-ty of $\theta$ with respect to $\{A,b,C,\VV_{\kappa+\omega}\}$. By elementarity, we have $\crit{j}\in C$, $\HH{\crit j}^{\VV}\in M\subseteq N$, $j(\HH{\crit{j}}^\VV)=\HH{\theta}^\VV$, $\bar A=A\cap\HH{\crit j}^\VV\in M$ and $A=j(\bar A)$. Moreover, since $\crit{j}$ is regular in $N$, the $\sigma$-approximation property between $N$ and $\HH{\vartheta}$ implies that $\cof{\crit{j}}>\omega$. Another application of the $\sigma$-approximation property then yields $\POT{\crit{j}}^\VV\subseteq N$, and therefore $\crit j$ is a regular cardinal in $\VV$.

\begin{claim*}\label{tripleformula}
  Given $k<\omega$, we have $\POTT{\HH{\crit j}}{k}^\VV\subseteq N$, and there is a $\Pi^k_0$-formula $\Phi_k(v^1,w^k)$ satisfying $$\POTT{\HH{\crit j}}{k}^\VV =  \Set{B\in\POTT{\HH{\crit j}}{k}^N}{(\HH{\crit j}\models\Phi_k(\HH{\crit j}^\VV,B))^N}$$ and $$\POTT{\HH{\theta}}{k}^\VV ~ = ~ \Set{B\in\POTT{\HH{\theta}}{k}}{\HH{\theta}\models\Phi_k(\HH{\theta}^\VV,B)}.$$ 
\end{claim*}

\begin{proof}[Proof of the Claim]
  Using induction, we will simultaneously define the formulas $\Phi_k(v^1,w^k)$, show that they satisfy the above statements, and also verify that $\POTT{\HH{\crit j}}{k}^\VV\subseteq N$. In order to start, set $\Phi_0(v^1,w^0)\equiv w^0\in v^1$. Then $\Phi_0$ is clearly as desired, and we already argued above that $\HH{\crit j}^\VV\subseteq N$.
  
  Now, assume that we arrived at stage $k+1$ of our induction. Assume, for a contradiction, that there is a subset  $B$ of $\POTT{\HH{\crit{j}}}{k+1}^\VV$ with $B\notin N$. Since the pair $(N,\VV[G])$ satisfies the $\sigma$-approximation property, we can find $b\in\Poti{\POTT{\HH{\crit{j}}}{k}^\VV}{\omega_1}^N$ with $B\cap b\notin N$. Then Corollary \ref{corollary:Proper} shows that the partial order $\PPP_\theta$ is proper in $\VV$, and we therefore find $c\in\Poti{\POTT{\HH{\crit{j}}}{k}}{\omega_1}^\VV$ with $b\subseteq c$. By identifying elements of $\Poti{\POTT{\HH{\crit{j}}}{k}}{\omega_1}^\VV$ with elements of $\POTT{\HH{\crit{j}}}{k}$, we can conclude that $B\cap c\in N$ and hence $B\cap b\in N$, a contradiction. This shows that $\POTT{\HH{\crit j}}{k+1}^\VV\subseteq N$. Moreover, since Corollary \ref{corollary:SideConditionForcingApproximationProperty} implies  that the pair $(\VV,\VV[G])$ also satisfies the $\sigma$-approximation property, it follows that $\POTT{\HH{\crit j}}{k+1}^\VV$ exactly consists of all $B\in\POTT{\HH{\crit j}}{k+1}^N$ with the property that for all $D\in\POTT{\HH{\crit j}}{k}^\VV$ that code a countable subset $d$ of $\POTT{\HH{\crit j}}{k}^\VV$, there is an element of $\POTT{\HH{\crit j}}{k}^\VV$  coding the  subset $B\cap d$. Furthermore, it also follows from the $\sigma$-approximation property for the pair $(\VV,\VV[G])$ that $\POTT{\HH{\theta}}{k+1}^\VV$ exactly consists of all $B\in\POTT{\HH{\theta}}{k+1}$ with the property that for all $D\in\POTT{\HH{\theta}}{k}$ that code a countable subset $d$ of $\POTT{\HH{\theta}}{k}^\VV$, there is an element of $\POTT{\HH{\theta}}{k}^\VV$ coding $B\cap d$.   
  Now, let $\Phi_{k+1}(v^1,w^{k+1})$ denote the canonical $\Sigma^{k+1}_0$-formula stating that for every $D\in\POTT{v^1}{k}$ such that $\Phi_k(v^1,D)$ holds and $D$ codes a countable subset $d$ of $\POTT{v^1}{k}$, there is $E\in\POTT{v^1}{k}$ such that $\Phi_k(v^1,E)$ holds and $E$ codes $d\cap w^{k+1}$. 
Then, the above remarks show that the two equalities stated in the above claim also hold at stage $k+1$. 
\end{proof}

Let $\Phi_*(u^1,v^1)$ denote the relativisation of $\Phi(v^1)$ using the formulas $\Phi_k(v^1,w^k)$, i.e.\ we obtain $\Phi_*$ from $\Phi$ by replacing each subformula of the form $\exists^k x ~ \psi$ by $\exists^k x ~ \left[\psi ~ \wedge ~ \Phi_k(u^1,x)\right]$. Then $\Phi_*$ is again a $\Pi^m_n$-formula and, by the above claim and our assumptions, we know that $\HH{\theta}\models\Phi_*(\HH{\theta}^\VV,A)$. Therefore  we can use elementarity to conclude that $$(\HH{\crit{j}}\models\Phi_*(\HH{\crit{j}}^\VV,\bar A))^M$$ and, since $j$ and $N$ witness the internal AP $\Pi^m_n$-indescribability of $\theta$, we know that $$(\HH{\crit j}\models\Phi_*(\HH{\crit j}^\VV,\bar A))^N.$$ But then the above claim shows that $(\HH{\crit j}\models\Phi(\bar A))^\VV$. These computations show that $\theta$ is $\Pi^m_n$-indescribable in $\VV$.

 Finally, assume that $m=n=1$. Then the above computations and  Proposition \ref{proposition:IntAPwcTP} directly show that (i) implies (iii). In the other direction, if (i) fails, then there is a $\theta$-Aronszajn tree $T$ and a combination of Corollary \ref{corollary:SideConditionForcingApproximationProperty} with Theorem \ref{aleph2} shows that $\mathbbm{1}_{\PPP_\theta}\Vdash\anf{\textit{$\check{T}$ is an $\omega_2$-Aronszajn tree}}$. 
\end{proof}


\section[Ineffable cardinals]{Subtle, ineffable and $\lambda$-ineffable cardinals}\label{section:ineffable}

In this section, we will present characterizations of subtle, of ineffable, and of $\lambda$-ineffable cardinals for a proper class of cardinals $\lambda$. Since the latter form a hierarchy that leads up to supercompactness (see \cite{MR0327518}), these results will also yield a characterization of supercompactness. 
The forward directions of the two main theorems of this section (Theorem \ref{subtletheorem} and Theorem \ref{ineffabletheorem}) are based on the proofs of {\cite[Theorem 7.4 \& 7.5]{SECFLC}}. 
These results are closely connected to work of Wei\ss\ from his \cite{weissthesis} and \cite{MR2959668} (see \cite[Section 7]{SECFLC}).  
Most definitions and results in this section  deal with the following concept. 

\begin{definition}
 Given a set $A$, a sequence $\seq{d_{a}}{a\in A}$ is an \emph{$A$-list} if $d_a \subseteq a$ for all $a\in A$.
\end{definition}

Following \cite{JensenKunen1969:Ineffable}, we can now define an uncountable regular cardinal $\theta$ to be \emph{subtle} if for every  $\theta$-list  $\seq{d_\alpha}{\alpha<\theta}$ and  every club $C$ in $\theta$, there are $\alpha,\beta\in C$ with $\alpha<\beta$ and $d_{\alpha}=d_{\beta}\cap \alpha$. The results of \cite{SECFLC} yield the following small embedding characterization of subtlety (see {\cite[Lemma 5.2, 5.4 \& 3.4]{SECFLC}}).

\begin{lemma}\label{lemma:SubtleSmallChar}
 The following statements are equivalent for every cardinal $\theta$: 
 \begin{enumerate}[leftmargin=0.7cm]
  \item $\theta$ is subtle.

  \item For all sufficiently large cardinals $\vartheta$, all $x\in\HH{\vartheta}$, every $\theta$-list $\vec{d}=\seq{d_\alpha}{\alpha<\theta}$, and every club $C$ in $\theta$, there is a small embedding $\map{j}{M}{\HH{\vartheta}}$ for $\theta$, such that $\crit j$ is inaccessible, $C,\vec{d},x\in \ran{j}$ and $d_\alpha=d_{\crit{j}}\cap\alpha$ for some $\alpha\in C\cap\crit{j}$.
 \end{enumerate} 
\end{lemma}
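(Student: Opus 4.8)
The plan is to prove the two implications separately; the implication (ii)$\Rightarrow$(i) is routine, while (i)$\Rightarrow$(ii) carries all the content.

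\emph{The implication (ii)$\Rightarrow$(i).} Fix a $\theta$-list $\vec{d}=\seq{d_\alpha}{\alpha<\theta}$ and a club $C$ in $\theta$. Picking $\vartheta$ sufficiently large and applying (ii) to $\vec d$ and $C$, I obtain a small embedding $\map{j}{M}{\HH{\vartheta}}$ for $\theta$ with $\crit{j}$ inaccessible, $C,\vec d\in\ran{j}$, and $d_\alpha=d_{\crit{j}}\cap\alpha$ for some $\alpha\in C\cap\crit{j}$. Writing $C=j(\bar C)$ with $\bar C\in M$, elementarity yields that $\bar C$ is club in $\crit{j}$; since $j$ fixes all ordinals below $\crit{j}$ we have $j(\bar C)\cap\crit{j}=\bar C$, so $\crit{j}=\sup\bar C$ is a limit point of the closed set $C$ and hence $\crit{j}\in C$. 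Thus $\alpha<\crit{j}$ are two points of $C$ with $d_\alpha=d_{\crit{j}}\cap\alpha$, which is exactly the coherence required by subtlety. That $\theta$ is inaccessible, and in particular uncountable and regular, follows by transporting the inaccessibility of $\crit{j}$ through any witnessing embedding, using that such embeddings may be taken to satisfy $\HH{\crit{j}}\subseteq M$. Hence $\theta$ is subtle.

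\emph{The implication (i)$\Rightarrow$(ii).} Since subtle cardinals are Mahlo, I fix a large regular $\vartheta$, some $x\in\HH{\vartheta}$, a $\theta$-list $\vec d$ and a club $C$, and build a continuous $\in$-chain $\seq{X_\alpha}{\alpha<\theta}$ of elementary submodels of $\HH{\vartheta}$ of size less than $\theta$ with $\{x,\vec d,C\}\subseteq X_0$, $\alpha+1\subseteq X_\alpha$ and $X_\alpha\cap\theta\in\theta$. The set $D=\Set{\alpha<\theta}{X_\alpha\cap\theta=\alpha}$ is a club, and for each $\alpha\in D$ the inverse $j_\alpha$ of the transitive collapse of $X_\alpha$ is a small embedding for $\theta$ with $\crit{j_\alpha}=\alpha$ and $x,\vec d,C\in\ran{j_\alpha}$. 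It therefore suffices to produce $\beta\in C\cap D$ that is inaccessible and is a \emph{coherence top}, meaning that $d_\alpha=d_\beta\cap\alpha$ for some $\alpha\in C\cap\beta$; the embedding $j:=j_\beta$ then witnesses (ii).

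\emph{The combinatorial core, and the main obstacle.} Producing an \emph{inaccessible} coherence top is the crux. First note that coherence can never constrain the top point itself, since $d_\alpha=d_\beta\cap\alpha$ only compares data strictly below $\beta$; hence inaccessibility of $\beta$ cannot be coded into the list and must instead follow from a concentration property of subtlety. I would phrase this through the subtle ideal. For any $\theta$-list $\vec e$ and club $C'$, the set of coherence tops relative to $(\vec e,C')$ belongs to the subtle filter, since its complement contains no coherent pair inside $C'$ and is thus witnessed to be non-subtle by $(\vec e,C')$ itself. Since $\theta$ is subtle, the subtle ideal is a proper normal ideal extending the nonstationary ideal, so the club $D$ and the coherence tops of $(\vec d,C)$ both lie in the dual filter. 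The main obstacle is then to show that this filter concentrates on inaccessibles, i.e. that the set of non-inaccessible ordinals below $\theta$ is non-subtle; this refines the classical argument that subtle cardinals are Mahlo and is proved by constructing an auxiliary list that diagonalizes against coherence separately at singular ordinals $\alpha$ (where $\cof{\alpha}<\alpha$, coding a cofinal $\map{g_\alpha}{\cof{\alpha}}{\alpha}$) and at regular non-strong-limit cardinals (coding an injection witnessing $2^\mu\geq\alpha$ for some $\mu<\alpha$). Granting this, the intersection of the coherence tops of $(\vec d,C)$ with $D$ and with the inaccessibles is a non-empty member of the subtle filter, and any element of it is the desired $\beta$.
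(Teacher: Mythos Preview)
The paper does not prove this lemma in-line; it imports it from \cite{SECFLC} via the citation ``see [Lemma 5.2, 5.4 \& 3.4]''. Judging from the pattern of citations elsewhere in the paper (the same Lemma~3.4 is invoked for inaccessibility, Mahloness, indescribability, and ineffability), the intended argument is modular: Lemmas~5.2 and~5.4 give the basic small-embedding characterization of subtlety without any constraint on $\crit{j}$, and Lemma~3.4 is a general upgrade lemma saying that for Mahlo $\theta$ one may always arrange $\crit{j}$ to be inaccessible (by intersecting the club of good collapse points with the stationary set of inaccessibles and re-running the construction).

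Your argument is correct and takes a genuinely different route for the forward direction. Rather than separating the ``subtlety'' content from the ``inaccessible critical point'' content, you work directly with the subtle filter: the club $D$ of good collapse points, the set of coherence tops of $(\vec d,C)$, and the set of inaccessibles below $\theta$ are each shown to lie in the dual of the subtle ideal, and normality (Baumgartner) gives a point in the triple intersection. This is more self-contained and conceptually clean, since it isolates exactly the combinatorial fact needed (non-inaccessibles form a non-subtle set) rather than appealing to a black-box upgrade lemma. The modular approach of \cite{SECFLC}, on the other hand, has the advantage of reusability: the same Lemma~3.4 handles the inaccessibility of $\crit{j}$ uniformly across all the small-embedding characterizations in the paper.

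One minor point in your (ii)$\Rightarrow$(i): the parenthetical about ``embeddings may be taken to satisfy $\HH{\crit{j}}\subseteq M$'' is not part of the hypothesis and is unnecessary. The uncountable regularity of $\theta$ already follows from the existence of any small embedding for $\theta$ (as in the proof of Proposition~\ref{proposition:InternalAPMahloSTP}), since $\crit{j}$ is automatically regular and uncountable in $M$ and elementarity transfers this to $\theta$ in $\HH{\vartheta}$.
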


The above characterization motivates the following principle of internal subtlety:

\begin{definition}
A cardinal $\theta$ is \emph{internally AP subtle} if for all sufficiently large regular cardinals $\vartheta$, all $x\in \HH{\vartheta}$, every club $C$ in $\theta$, and every $\theta$-list $\vec{d}=\seq{d_{\alpha}}{\alpha<\theta}$, there is a small embedding $\map{j}{M}{\HH{\vartheta}}$ for $\theta$ and a transitive model $N$ of ZFC$^-$ such that $\vec{d},x,C\in \ran{j}$ and the following statements hold:
 \begin{enumerate}[leftmargin=0.7cm]
  \item$N \subseteq \HH{\vartheta}$ and the pair $(N,\HH{\vartheta})$ satisfies the $\sigma$-approximation property. 

  \item $M\in N$ and $\Poti{\crit{j}}{\omega_1}^N\subseteq M$. 

  \item If $d_{\crit{j}}\in N$, then there is  $\alpha\in C\cap\crit{j}$ with $d_{\alpha}=d_{\crit{j}}\cap \alpha$.
\end{enumerate}
\end{definition}

The consequences of internal subtlety can now be analyzed with the help of the following principle introduced by Wei\ss\ in \cite{weissthesis}.

\begin{definition}
Let $\theta$ be an uncountable regular cardinal.
 \begin{enumerate}[leftmargin=0.7cm]
  \item A $\theta$-list $\seq{d_\alpha}{\alpha<\theta}$ is \emph{slender} if there is a club $C$ in $\theta$ such that for every $\gamma\in C$ and for every $\alpha<\gamma$, there is a $\beta<\gamma$ with $d_{\gamma}\cap \alpha=d_{\beta}\cap\alpha$. 

  \item $\SSP(\theta)$ is the statement that for every slender $\theta$-list $\seq{d_{\alpha}}{\alpha<\theta}$ and every club $C$ in $\theta$, there are $\alpha,\beta\in C$ with $\alpha<\beta$ and $d_{\alpha}=d_{\beta}\cap\alpha$.
\end{enumerate}
\end{definition}

\begin{lemma}\label{lemma:InternalSubtleSSP}
 If $\theta$ is an internally AP subtle cardinal, then $\SSP(\theta)$ holds. 
\end{lemma}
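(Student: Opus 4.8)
The plan is to fix a slender $\theta$-list $\vec{d}=\seq{d_\alpha}{\alpha<\theta}$ together with a witnessing club $E$ for its slenderness, and an arbitrary club $C$ in $\theta$, and to produce $\alpha<\beta$ in $C$ with $d_\alpha=d_\beta\cap\alpha$. First I would apply internal AP subtlety to $\vec{d}$ and $C$, choosing the parameter so that $E\in\ran{j}$ as well, obtaining a small embedding $\map{j}{M}{\HH{\vartheta}}$ with $\crit{j}=:\kappa$ and $j(\kappa)=\theta$, together with a transitive $\ZFC^-$-model $N$ satisfying clauses (i)--(iii). Writing $\vec{d}=j(\vec{\bar{d}})$, $C=j(\bar{C})$ and $E=j(\bar{E})$, elementarity gives that $\bar{C},\bar{E}$ are clubs in $\kappa$ with $\bar{C}=C\cap\kappa$ and $\bar{E}=E\cap\kappa$, so that $\kappa\in C\cap E$; moreover $d_\beta=\bar{d}_\beta\in M$ for every $\beta<\kappa$, since $j$ fixes the bounded sets of ordinals $\bar{d}_\beta\subseteq\beta<\kappa$. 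A standard critical-point argument shows that $\kappa$ is regular and uncountable in $M$, whence $\theta$ is regular and uncountable in $\HH{\vartheta}$, as required for $\SSP(\theta)$ to be meaningful.

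The heart of the argument is to take $\beta:=\kappa$ and to verify the hypothesis $d_\kappa\in N$ of clause (iii). Since $\kappa\in N$ and $d_\kappa\subseteq\kappa$, the $\sigma$-approximation property of the pair $(N,\HH{\vartheta})$ reduces this to showing that $d_\kappa\cap x\in N$ for every $x\in\Poti{\kappa}{\omega_1}^N$. The key preliminary observation is that $\cof{\kappa}>\omega$ holds in $N$: otherwise a cofinal map $\map{f}{\omega}{\kappa}$ in $N$ would be countable in $N$ and hence, coded as a subset of $\kappa$, would belong to $\Poti{\kappa}{\omega_1}^N$ and thus to $M$ by clause (ii); but then $M$ would contain a cofinal $\omega$-sequence in $\kappa$, contradicting the regularity of $\kappa$ in $M$. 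Consequently every $x\in\Poti{\kappa}{\omega_1}^N\subseteq M$ is bounded in $\kappa$, say $x\subseteq\alpha<\kappa$. Now slenderness, applied at the point $\kappa\in E$ to the ordinal $\alpha$, furnishes a single $\beta<\kappa$ with $d_\kappa\cap\alpha=d_\beta\cap\alpha$, so that $d_\kappa\cap x=d_\beta\cap x=\bar{d}_\beta\cap x\in N$, using $\bar{d}_\beta\in M\subseteq N$ and $x\in N$. This verifies the $\sigma$-approximation hypothesis and yields $d_\kappa\in N$.

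With $d_\kappa\in N$ established, clause (iii) provides $\alpha\in C\cap\kappa$ with $d_\alpha=d_\kappa\cap\alpha$; since $\kappa\in C$ as well, the pair $\alpha<\kappa=\beta$ lies in $C$ and satisfies $d_\alpha=d_\beta\cap\alpha$, witnessing the conclusion of $\SSP(\theta)$ for $\vec{d}$ and $C$. As $\vec{d}$ and $C$ were arbitrary, $\SSP(\theta)$ follows. The step I expect to be the main obstacle is precisely the verification that $d_\kappa\in N$: slenderness only guarantees that \emph{initial segments} $d_\kappa\cap\alpha$ are guessed below $\kappa$, so it can only control bounded restrictions of $d_\kappa$; the argument therefore hinges on first ruling out countable cofinality of $\kappa$ in $N$ (via clause (ii) and the regularity of $\kappa$ in $M$), so that the countable approximations relevant to the $\sigma$-approximation property are automatically bounded and thus amenable to slenderness.
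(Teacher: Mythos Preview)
Your proof is correct and follows essentially the same approach as the paper's. The only presentational differences are: (a) the paper feeds $C\cap E$ (the intersection of the given club with the slenderness club) directly as the club in the application of internal AP subtlety, rather than carrying $E$ separately via the parameter $x$; and (b) to see that the countable set $x\in\Poti{\kappa}{\omega_1}^N$ is bounded in $\kappa$, the paper simply notes $x\in M$ and invokes the regularity of $\kappa$ in $M$ (implicitly using that the $N$-surjection $\omega\to x$ itself codes as an element of $\Poti{\kappa}{\omega_1}^N\subseteq M$, so that $x$ is countable in $M$), whereas you first establish $\cof{\kappa}^N>\omega$ and deduce boundedness inside $N$. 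Both arguments hinge on exactly the same use of clause~(ii), and the overall structure is identical.
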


 \begin{proof}
  Fix a slender $\theta$-list $\vec{d}=\seq{d_\alpha}{\alpha<\theta}$, a club $C_0$ in $\theta$ and a club $C\subseteq C_0$  witnessing the slenderness of $\vec{d}$. 
 Let $\vartheta$ be a sufficiently large regular cardinal such that there is a small embedding $\map{j}{M}{\HH{\vartheta}}$ and a transitive $\ZFC^-$-model $N$ witnessing the internal AP subtlety of $\theta$ with respect to $\vec{d}$ and $C$. Then elementarity implies that $\crit{j}\in C\subseteq C_0$.  
  Assume for a contradiction that $d_\crit{j}\notin N$. Then the $\sigma$-approximation property yields an $x\in\Poti{\crit{j}}{\omega_1}^N$ with $d_\crit{j}\cap x\notin N$. 
 Then $x\in M$ and, since $\crit{j}$ is a regular cardinal in $M$, there is an $\alpha<\crit{j}$ with $x\subseteq \alpha$. In this situation, the slenderness of $\vec{d}$ yields a $\beta<\crit{j}$ with $d_{\crit{j}}\cap\alpha=d_\beta\cap\alpha$. But then we have $$d_\crit{j} \cap x ~ =  ~ d_\crit{j} \cap x\cap\alpha ~ = ~ d_\beta\cap x\cap\alpha.$$ Since $\vec{d}\in\ran{j}$, we have $d_\beta\in M\subseteq N$ and hence $d_\crit{j} \cap x\in N$, a contradiction. 
  These computations show that $d_\crit{j}\in N$ and therefore our assumptions yield an $\alpha<\crit{j}$ with the property that $\alpha\in C\subseteq C_0$ and $d_\alpha=d_{\crit{j}}\cap\alpha$. 
 \end{proof}

\begin{corollary}
 The following statements are equivalent for every inaccessible cardinal $\theta$: 
 \begin{enumerate}[leftmargin=0.7cm]
  \item $\theta$ is a subtle cardinal. 

  \item $\theta$ is an internally AP subtle cardinal. 
 \end{enumerate}
\end{corollary}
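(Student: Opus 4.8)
The plan is to prove the two implications separately, following the pattern of Corollary~\ref{corollary:InternalMahloAtInaccessibles} and Lemma~\ref{lemma:InaccInternalApproxIndescr}. For the implication from (i) to (ii), I would argue as in the easy halves of those results, using $N=\HH{\vartheta}$. So fix a sufficiently large regular $\vartheta$, an $x\in\HH{\vartheta}$, a club $C$ in $\theta$ and a $\theta$-list $\vec{d}=\seq{d_\alpha}{\alpha<\theta}$. Since $\theta$ is subtle, Lemma~\ref{lemma:SubtleSmallChar} provides a small embedding $\map{j}{M}{\HH{\vartheta}}$ for $\theta$ such that $\crit{j}$ is inaccessible, $C,\vec{d},x\in\ran{j}$, and $d_\alpha=d_{\crit{j}}\cap\alpha$ for some $\alpha\in C\cap\crit{j}$. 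Setting $N=\HH{\vartheta}$, clause (i) of the definition of internal AP subtlety holds trivially, because $(\HH{\vartheta},\HH{\vartheta})$ satisfies the $\sigma$-approximation property; clause (ii) holds because $M\in\HH{\vartheta}$ is transitive and, since $\crit{j}$ is inaccessible, $\Poti{\crit{j}}{\omega_1}^N=\Poti{\crit{j}}{\omega_1}\subseteq\HH{\crit{j}}\subseteq M$ (see the proof of Proposition~\ref{proposition:LiftingSmallEmbeddings}, or \cite[Lemma 3.1]{SECFLC}); and clause (iii) holds vacuously, as its conclusion is already guaranteed by the choice of $j$. Hence $j$ and $N$ witness internal AP subtlety for this data.

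For the implication from (ii) to (i), I would route through $\SSP(\theta)$. Lemma~\ref{lemma:InternalSubtleSSP} shows that internal AP subtlety implies $\SSP(\theta)$, so it suffices to prove that at an inaccessible cardinal $\SSP(\theta)$ already implies full subtlety. The key observation is that, because $\theta$ is a strong limit, every $\theta$-list is slender. Indeed, given a $\theta$-list $\vec{d}$, for each $\alpha<\theta$ the set $R_\alpha=\Set{d_\delta\cap\alpha}{\delta<\theta}\subseteq\POT{\alpha}$ has size at most $2^{\betrag{\alpha}}<\theta$, so by the regularity of $\theta$ there is some $\gamma_\alpha<\theta$ such that every element of $R_\alpha$ is realized as $d_\beta\cap\alpha$ for some $\beta<\gamma_\alpha$. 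The club $C$ of all strong limit cardinals below $\theta$ that are closed under the map $\alpha\mapsto\gamma_\alpha$ then witnesses slenderness: for $\gamma\in C$ and $\alpha<\gamma$ we have $d_\gamma\cap\alpha\in R_\alpha$, hence there is $\beta<\gamma_\alpha<\gamma$ with $d_\beta\cap\alpha=d_\gamma\cap\alpha$. With this in hand, any $\theta$-list $\vec{d}$ together with any club $C$ falls under the scope of $\SSP(\theta)$, which directly yields $\alpha<\beta$ in $C$ with $d_\alpha=d_\beta\cap\alpha$. Thus $\theta$ is subtle.

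I expect the forward direction to be entirely routine, mirroring the corresponding halves of Corollary~\ref{corollary:InternalMahloAtInaccessibles} and Lemma~\ref{lemma:InaccInternalApproxIndescr}, with the only mild point being the verification of clause (ii) through the closure of $M$. The genuine content lies in the reverse direction, that is, in recovering full subtlety from $\SSP(\theta)$; the crucial ingredient is the observation that every $\theta$-list is slender at an inaccessible cardinal, which is precisely where strong-limitness together with regularity enters, and which is what makes $\SSP(\theta)$ strong enough to recover subtlety.
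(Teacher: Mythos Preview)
Your proposal is correct and follows essentially the same route as the paper: the forward direction uses the small embedding from Lemma~\ref{lemma:SubtleSmallChar} with $N=\HH{\vartheta}$, and the reverse direction passes through Lemma~\ref{lemma:InternalSubtleSSP} to obtain $\SSP(\theta)$ and then uses that at an inaccessible cardinal every $\theta$-list is slender. The only difference is cosmetic: where the paper cites \cite[Section~1.2]{weissthesis} for the slenderness of all $\theta$-lists, you supply a short direct argument, which is fine and in fact makes the proof self-contained.
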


\begin{proof}
 The forward direction is a direct consequence of Lemma \ref{lemma:SubtleSmallChar}. In the other direction, the results of {\cite[Section 1.2]{weissthesis}} show that the inaccessibility of $\theta$ implies that every $\theta$-list is slender and we can apply Lemma \ref{lemma:InternalSubtleSSP} to conclude that (i) is a consequence of (ii). 
\end{proof}

The following result provides a characterization of the class of all  subtle cardinals using Neeman's pure side condition forcing. The above corollary already shows that this characterization of subtlety is strong.

\begin{theorem}\label{subtletheorem}
  The following statements are equivalent for every inaccessible cardinal $\theta$:
  \begin{enumerate}[leftmargin=0.7cm]
    \item $\theta$ is a subtle cardinal.
    \item $\mathbbm{1}_{\PPP_\theta}\Vdash\anf{\textit{$\omega_2$ is internally AP subtle}}$.

   \item $\mathbbm{1}_{\PPP_\theta}\Vdash\SSP(\omega_2)$. 
  \end{enumerate}
\end{theorem}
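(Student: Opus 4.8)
The plan is to run the cycle $(i)\Rightarrow(ii)\Rightarrow(iii)\Rightarrow(i)$. The implication $(ii)\Rightarrow(iii)$ is immediate: if $G$ is $\PPP_\theta$-generic over $\VV$, then $(ii)$ makes $\omega_2$ internally AP subtle in $\VV[G]$, and Lemma \ref{lemma:InternalSubtleSSP}, applied inside $\VV[G]$, yields $\SSP(\omega_2)$ there; as $G$ was arbitrary, $\mathbbm{1}_{\PPP_\theta}\Vdash\SSP(\omega_2)$. For $(iii)\Rightarrow(i)$ I would argue contrapositively. If the inaccessible cardinal $\theta$ is not subtle, fix in $\VV$ a $\theta$-list $\vec{d}=\seq{d_\alpha}{\alpha<\theta}$ and a club $C\subseteq\theta$ admitting no $\alpha<\beta$ in $C$ with $d_\alpha=d_\beta\cap\alpha$, and let $C^*$ be the set of limit points of $C$ below $\theta$, so that $C^*\subseteq C$ by closedness of $C$. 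Since $\theta$ is inaccessible, the results of \cite{weissthesis} show that $\vec{d}$ is slender, witnessed by some club whose limit points I again pass to. All relevant properties then survive to any $\PPP_\theta$-generic extension $\VV[G]$: the matching relation $d_\alpha=d_\beta\cap\alpha$ and, for a fixed witnessing club, the slenderness condition are absolute in parameters from $\VV$, while a set of limit points remains closed in $\VV[G]$ and $C^*$ remains unbounded because $\theta=\omega_2^{\VV[G]}$ stays regular by Theorem \ref{aleph2}. Thus $\vec{d}$ and $C^*$ witness $\neg\SSP(\omega_2)$ in $\VV[G]$, so $(iii)$ fails.

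The substantial direction is $(i)\Rightarrow(ii)$, whose soft part closely follows the proof of Theorem \ref{MahloChar}. Fixing a sufficiently large regular $\vartheta$, a $\PPP_\theta$-generic $G$, and data $x$, a club $C$ in $\omega_2$, and an $\omega_2$-list $\vec{d}$ in $\VV[G]$, I would use the $\theta$-chain condition (Lemma \ref{chain_condition}) to pull these back to names $\dot{x},\dot{C},\dot{\vec{d}}\in\HH{\vartheta}^\VV$. I would then produce in $\VV$ a small embedding $\map{j}{M}{\HH{\vartheta}}$ for $\theta$ with $\crit{j}$ inaccessible and $\dot{x},\dot{C},\dot{\vec{d}}\in\ran{j}$, lift it by Proposition \ref{proposition:LiftingSmallEmbeddings} to $\map{j_G}{M[G_j]}{\HH{\vartheta}^{\VV[G]}}$, and set $N=\HH{\vartheta}^{\VV[G_j]}$. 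Exactly as in the Mahlo case, Proposition \ref{proposition:LiftingSmallEmbeddings} gives the $\sigma$-approximation property of $(N,\HH{\vartheta}^{\VV[G]})$, the inaccessibility of $\crit{j}$ together with Theorem \ref{aleph2} makes $\crit{j}=\omega_2^{\VV[G_j]}$ regular in $N$, and a combination of Lemma \ref{chain_condition} with Proposition \ref{proposition:LiftingSmallEmbeddings} yields $\Poti{\crit{j}}{\omega_1}^N\subseteq\HH{\crit{j}}^{\VV[G_j]}\subseteq M[G_j]$, so that clauses $(i)$ and $(ii)$ of internal AP subtlety hold; moreover $\vec{d},C\in\ran{j_G}$, with preimages $\vec{d}\restriction\crit{j}$ and $C\cap\crit{j}$.

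The crux is clause $(iii)$: assuming $d_{\crit{j}}\in N$, I must find $\alpha\in C\cap\crit{j}$ with $d_\alpha=d_{\crit{j}}\cap\alpha$. The first move is to observe, using the naturality relation $j_G(\dot{z}^{G_j})=j(\dot{z})^G$ from Proposition \ref{proposition:LiftingSmallEmbeddings},(iv) together with the fact that $j_G$ fixes ordinals below $\crit{j}$, that if $\dot{\vec{a}},\dot{D}\in M$ are the $\PPP_{\crit{j}}$-names with $j(\dot{\vec{a}})=\dot{\vec{d}}$ and $j(\dot{D})=\dot{C}$, then $\vec{d}\restriction\crit{j}=\dot{\vec{a}}^{G_j}$ and $C\cap\crit{j}=\dot{D}^{G_j}$ both lie in $M[G_j]\subseteq N$; hence the desired matching is entirely a statement about objects of $\VV[G_j]$. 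To \emph{secure} this matching rather than merely phrase it, I would not choose $j$ arbitrarily but extract it from the subtlety of $\theta$ in $\VV$ via Lemma \ref{lemma:SubtleSmallChar}, applied to an auxiliary $\theta$-list $\vec{B}$ and club $E$. Here $B_\kappa$ is to code, for inaccessible $\kappa$ with $\HH{\kappa}\in\calT_\theta$ (so that $\PPP_\kappa=\PPP_\theta\cap\HH{\kappa}$ is a complete subforcing by Proposition \ref{proposition:complete subforcing}), canonical $\PPP_\kappa$-names for the reflections of $\dot{\vec{d}}$ and $\dot{C}$ and, via the $\sigma$-approximation property of the quotient forcing (Corollary \ref{corollary:sigma_approximation}), a name for the top value $d_\kappa$ on the event that it lands in $\VV[G_\kappa]$. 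The matching $B_\alpha=B_{\crit{j}}\cap\alpha$ granted by Lemma \ref{lemma:SubtleSmallChar} should then decode, after evaluation at $G_j$, precisely to $\alpha\in C\cap\crit{j}$ with $d_\alpha=d_{\crit{j}}\cap\alpha$.

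I expect the main obstacle to lie exactly in this decoding. The coordinate $d_{\crit{j}}$ is the value of $\dot{\vec{d}}$ at the critical point $\crit{j}\notin\ran{j}$, so a priori it depends on all of $G$ and not merely on $G_j$; the role of the hypothesis $d_{\crit{j}}\in N=\HH{\vartheta}^{\VV[G_j]}$ is to confine it to the intermediate extension, and the role of the $\sigma$-approximation property of the pair $(\VV[G_j],\VV[G])$ is to let one recover $d_{\crit{j}}$ from a reflected $\PPP_{\crit{j}}$-name. Making the coding of $\vec{B}$ precise enough that the purely name-theoretic equality $B_\alpha=B_{\crit{j}}\cap\alpha$ becomes equivalent, under this conditional hypothesis, to the generic equality $d_\alpha=d_{\crit{j}}\cap\alpha$, while keeping $\vec{B}$ an honest $\theta$-list, is the delicate point, and this is where the argument must follow the template of \cite[Theorem 7.4]{SECFLC} most carefully.
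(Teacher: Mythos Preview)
Your implications $(ii)\Rightarrow(iii)$ and $(iii)\Rightarrow(i)$ are correct and match the paper (which does the latter directly rather than contrapositively). For $(i)\Rightarrow(ii)$ you have the right ingredients, but the framing has a structural gap. You fix the generic $G$ at the outset and then propose to build an auxiliary list $\vec{B}$ \emph{in $\VV$} whose entry $B_\kappa$ encodes ``a name for $d_\kappa$ on the event that it lands in $\VV[G_\kappa]$''. For the ground-model matching $B_\alpha=B_{\crit{j}}\cap\alpha$ to decode to the generic equality $d_\alpha=d_{\crit{j}}\cap\alpha$, the chosen $\PPP_\kappa$-name $\dot{e}_\kappa$ and the condition witnessing $\dot{d}_\kappa=\dot{e}_\kappa$ must be compatible with the \emph{given} $G$; but if those choices depend on $G$, then $\vec{B}\notin\VV$ and Lemma~\ref{lemma:SubtleSmallChar} does not apply to it.

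The paper's resolution is a case split followed by a density argument. If some admissible $j$ satisfies $\dot{d}_{\crit{j}}^G\notin\VV[G_j]$, then clause~(iii) of internal AP subtlety is vacuous and the lift of that $j$ works directly. Otherwise some $p\in G$ forces $\dot{d}_\kappa\in\VV[\dot{G}_j]$ for every admissible $j$, and one runs a density argument below $p$: given any $q\leq_{\PPP_\theta}p$, one picks \emph{in $\VV$} (using only that $q\leq p$, not $G$) conditions $q_\kappa\leq q$ in $\PPP_\kappa$, names $\dot{r}_\kappa$ for quotient conditions, and $\PPP_\kappa$-names $\dot{e}_\kappa$ with $\langle q_\kappa,\dot{r}_\kappa\rangle\Vdash\dot{d}_\kappa=\dot{e}_\kappa$. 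These choices define $\vec{B}\in\VV$; subtlety then produces $j$ together with the coherence $q_\alpha=q_{\crit{j}}$ and $E_\alpha\subseteq E_{\crit{j}}$, and one manufactures a specific condition $v\leq q$ so that in any generic $H\ni v$ the decoding succeeds. The generic actually analysed is $H$, not $G$; density below $p$ is what transfers the conclusion back to $\VV[G]$. You correctly flag the decoding as delicate, but the missing structural idea is precisely this two-case split and the passage from a fixed generic to a density argument in $\VV$.
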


\begin{proof}
First, assume that (iii) holds. Let $\vec d=\seq{d_\alpha}{\alpha<\theta}$ be a $\theta$-list, and let $C$ be a club subset of $\theta$. Since $\theta$ is inaccessible, we know that $\vec d$ is slender. Let $G$ be $\PPP_\theta$-generic over $\VV$. Since slenderness of $\theta$-lists is clearly upwards absolute to models that preserve the regularity of $\theta$, our assumption  implies that there are $\alpha,\beta\in C$ with $\alpha<\beta$ and $d_\alpha=d_\beta\cap\alpha$. These computations show that (i) holds.

Now, assume that (i) holds. Let $\dot{d}$ be a $\PPP_\theta$-name for a $\theta$-list, let $\dot{C}$ be a $\PPP_\theta$-name for a club in $\theta$, and let $\dot{x}$ be any $\PPP_\theta$-name. By Lemma \ref{chain_condition}, we can find a club $C$ in $\theta$ consisting only of limit ordinals that are closed under the G\"odel pairing function $\goedel{\cdot}{\cdot}$, with  $\mathbbm{1}_{\PPP_\theta}\Vdash$\anf{$\check{C}\subseteq\dot{C}$}. For every $\alpha<\theta$, let $\dot{d}_{\alpha}$ be a nice $\PPP_\theta$-name for the $\alpha$-th element of $\dot{d}$.  Let  $\vartheta>2^{\theta}$ be a regular cardinal that is sufficiently large with respect to Lemma \ref{lemma:SubtleSmallChar}, and which satisfies $\dot{d},\dot{x}\in \HH{\vartheta}$. Define $A$ to be the set of all inaccessible cardinals $\kappa$ less than $\theta$ for which there exists a small embedding $\map{j}{M}{\HH{\vartheta}}$ for $\theta$ with  critical point $\kappa$ and $\dot{d},\dot{x},C,\dot{C}\in \ran{j}$. Finally, let $G$ be $\PPP_\theta$-generic  over $\VV$.

Assume first that there is $\kappa<\theta$ and a small embedding $\map{j}{M}{\HH{\vartheta}}$ for $\theta$ in $\VV$ so that $j$ witnesses that $\kappa$ is an element of $A$, and $\dot{d}^{G}_\kappa\notin\VV[G_j]$, with $G_j=G \cap \HH{\kappa}$ defined as in Proposition \ref{proposition:LiftingSmallEmbeddings}.  Let $\map{j_G}{M[G_j]}{\HH{\vartheta}^{\VV[G]}}$ be the lifting of $j$ provided by Proposition \ref{proposition:LiftingSmallEmbeddings}, and set $N=\HH{\vartheta}^{\VV[G_j]}$. Then $\dot{d}^G,\dot{x}^G, \dot{C}^G\in\ran{j_G}$, the pair $(N,\HH{\vartheta}^{\VV[G]})$ satisfies the $\sigma$-approximation property, Theorem \ref{aleph2} implies that $\kappa=\omega_2^N$, and another application of Lemma \ref{chain_condition} yields $\Poti{\kappa}{\omega_1}^N\subseteq\HH{\kappa}^N\subseteq M[G_j]$. Since $\dot{d}_\kappa^G\notin N$, we can conclude that $j_G$ and $N$ witness that $\theta$ is internally AP subtle in $\VV[G]$ with respect to $\dot{d}^G, \dot{x}^G$ and $\dot{C}^G$.  

Otherwise, assume that whenever $\map{j}{M}{\HH{\vartheta}}$ is a small embedding for $\theta$ in $\VV$ that witnesses that some $\kappa<\theta$ is an element of $A$, then $\dot{d}^G_{\kappa}\in \VV[G_j]$.  Fix a condition $p\in G$ that forces this statement,  pick some $q\leq_{\PPP_\theta}p$, and work in $\VV$. Let $B$ denote the set of all $\kappa \in A$ such that $q$ is a condition in $\PPP_\kappa$.  Since  $\PPP_\theta$ satisfies the $\theta$-chain condition, we can find a function  $\map{g}{B}{\theta}$  and sequences $\seq{q_\kappa}{\kappa\in B}, \seq{\dot{r}_\kappa}{\kappa \in B}$ and $\vec{e}=\seq{\dot{e}_\kappa}{\kappa\in B}$ such that the following statements hold for all $\kappa\in B$:
 \begin{enumerate}[leftmargin=0.7cm]
   \item[(a)] $g(\kappa)>\kappa$ is inaccessible and $\dot{d}_\kappa$ is a $\PPP_{g(\kappa)}$-name. 
   
   \item[(b)] $q_\kappa$ is a condition in $\PPP_\kappa$ below $q$. 

 \item[(c)] $\dot{r}_{\kappa}$ is a $\PPP_\kappa$-name for a condition in $\dot{\QQQ}^{\HH{\kappa}}_\theta$ that is an element of $\HH{g(\kappa)}$. 
 
  \item[(d)] $\dot{e}_\kappa$ is a $\PPP_\kappa$-name for a subset of $\kappa$ with $\langle q_{\kappa},\dot{r}_\kappa\rangle\Vdash_{\PPP_\kappa*\dot{\QQQ}_\theta^{\HH{\kappa}}}\anf{\dot{d}_\kappa=\dot{e}_{\kappa}}$.
 \end{enumerate}
Given $\kappa\in B$, let $E_\kappa$ denote the set of all triples $\langle s,\beta, i\rangle\in \PPP_\kappa\times\kappa\times2\subseteq\HH{\kappa}$ with $$s\Vdash_{\PPP_\kappa}{\anf{\check{\beta}\in\dot{e}_{\kappa} ~ \longleftrightarrow ~ i=1}}.$$ Fix a bijection $\map{b}{\theta}{\HH{\theta}}$ with  $b[\kappa]=\HH{\kappa}$ for every inaccessible cardinal $\kappa\le\theta$, and define $\vec{d}=\seq{d_{\alpha}}{\alpha<\theta}$ to be the unique $\theta$-list with $$d_\alpha ~ = ~ \{\goedel{0}{0}\} ~ \cup ~ \{\goedel{b^{{-}1}(q_\alpha)}{1}\} ~ \cup ~ \Set{\goedel{b^{{-}1}(e)}{2}}{e\in E_\alpha}~ \subseteq ~ \alpha$$ for all $\alpha\in B$, and with $d_{\alpha}=\emptyset$ for all $\alpha\in\theta\setminus B$.  Next, let $\map{j}{M}{\HH{\vartheta}}$ be a small embedding for $\theta$ which witnesses the subtlety of $\theta$ with respect to $C$, $\vec{d}$ and $\{b,\dot{d},g,q,\dot{C}\}$, and let $\kappa$ denote the critical point of $j$. Then there is an $\alpha\in C\cap\kappa$ with $d_{\alpha}=d_{\kappa}\cap \alpha$. In this situation, the embedding $j$ witnesses that $\kappa$ is an element of $A$ and, by elementarity, $q\in\ran{j}$ implies that $q\in\HH{\kappa}$ and $\kappa\in B$. But then, $\goedel{0}{0}\in d_\kappa\cap\alpha$, and therefore $\alpha\in B$.  By Proposition \ref{proposition:LiftingSmallEmbeddings}, this shows that $\PPP_\kappa$ is a complete suborder of $\PPP_\theta$, $\PPP_\alpha$ is a complete suborder of $\PPP_\kappa$ and $\HH{\alpha}\in\calT_\kappa\subseteq\calT_\theta$. Moreover,  the above coherence implies that $q_{\alpha}=q_{\kappa}\in \PPP_\alpha$ and $E_{\alpha}\subseteq E_{\kappa}$. By elementarity, we have $g(\alpha)<\kappa$, and therefore the above remarks show that $\dot{r}_\alpha$ is also a $\PPP_\alpha$-name for a condition in $\dot{\QQQ}_\kappa^{\HH{\alpha}}$. Hence, there is a condition $u$ in $\PPP_\kappa$ satisfying  $$D^{\HH{\alpha}}_\kappa(u) ~ \leq_{\PPP_\alpha*\dot{\QQQ}_\kappa^{\HH{\alpha}}} ~ (q_\alpha,\dot{r}_\alpha).$$ Then $u\leq_{\PPP_\kappa}u\cap\HH{\alpha}\leq_{\PPP_\kappa}q_\alpha=q_\kappa$,  and we may find a condition $v$ in $\PPP_\theta$ with $$D^{\HH{\kappa}}_\theta(v) ~ \leq_{\PPP_\kappa*\dot{\QQQ}_\theta^{\HH{\kappa}}} ~ (u,\dot{r}_\kappa).$$  Let $H$ be $\PPP_\theta$-generic over $\VV$ with $v\in H$. Then $q\in H$, $u\in H_j=H\cap\HH{\kappa}$ and the above choices ensure that $\dot{d}_\kappa^H=\dot{e}_\kappa^{H_j}$ and $\dot{d}_\alpha^H=\dot{e}_\alpha^{H\cap\HH{\alpha}}$. 

 \begin{claim*}
  $\dot{d}_\alpha^{H}=\dot{d}^H_{\kappa}\cap \alpha$. 
 \end{claim*}

 \begin{proof}[Proof of the Claim]
  Pick $\beta\in\dot{d}_\alpha^{H} $. By the above computations, we have $\beta\in\dot{e}_\alpha^{H\cap\HH{\alpha}}$, and hence there is an $s\in H\cap\HH{\alpha}\subseteq H_j$ with $\langle s,\beta, 1\rangle \in E_\alpha\subseteq E_\kappa$. But then, $\beta\in \dot{e}_\kappa^{H_j}=\dot{d}^H_{\kappa}$. In the other direction, pick $\beta \in \alpha \setminus \dot{d}_\alpha^{H}$. Then $\beta \in \alpha \setminus \dot{e}_\alpha^{H\cap\HH{\alpha}}$, and there is an $s\in H\cap\HH{\alpha}$ with $\langle s,\beta,0\rangle \in E_{\alpha}\subseteq E_\kappa$.  Thus $\beta\notin\dot{e}^{H_j}_\kappa=\dot{e}_\kappa^H$.
 \end{proof}

Let $\map{j_H}{M[H_j]}{\HH{\vartheta}^{\VV[H]}}$ be the small embedding for $\theta$ provided by an application of Proposition \ref{proposition:LiftingSmallEmbeddings}, and set $N=\HH{\vartheta}^{\VV[H_j]}$. As in the first case, we know that $\dot{d}^H,\dot{x}^H, \dot{C}^H\in\ran{j_H}$, the pair $(N,\HH{\vartheta}^{\VV[H]})$ satisfies the $\sigma$-approximation property, $\kappa=\omega_2^N$ and  $\Poti{\kappa}{\omega_1}^N\subseteq\HH{\kappa}^N\subseteq M[H_j]$. By the above claim, this shows that $j_H$ and $N$ witness that $\theta$ is internally AP subtle in $\VV[H]$ with respect to $\dot{d}^H, \dot{x}^H$ and $\dot{C}^H$.  In particular, there is a condition in $H$ below $q$ that forces this statement.

This density argument shows that, in $\VV[G]$, we can find a small embedding $\map{j}{M}{\HH{\vartheta}}$ for $\theta$ that  witnesses the internal AP subtlety of $\theta$ with respect to $\dot{d}^G,\dot{x}^G,\dot{C}^G$. In particular, these computations show that (ii) holds. 
\end{proof}

Next, we turn our attention towards the hierarchy of ineffable cardinals. Remember that, given a regular uncountable cardinal $\theta$ and a cardinal $\lambda\geq\theta$, the cardinal $\theta$ is \emph{$\lambda$-ineffable} if for every $\Poti{\lambda}{\theta}$-list $\vec{d}=\seq{d_a}{a\in\Poti{\lambda}{\theta}}$, there exists a subset $D$ of $\lambda$ such that the set $\Set{a\in\Poti{\lambda}{\theta}}{d_a=D\cap a}$ is stationary in $\Poti{\lambda}{\theta}$. This large cardinal property has the following small embedding characterization (see {\cite[Lemma 5.5, 5.9 \& 3.4]{SECFLC}}).

\begin{lemma}\label{lemma:LambdaIneffableSmallChar}
The following statements are equivalent for all cardinals $\theta\leq\lambda$ satisfying $\lambda=\lambda^{{<}\theta}$: 
  \begin{enumerate}[leftmargin=0.7cm]
    \item $\theta$ is $\lambda$-ineffable.
   
    \item For all sufficiently large cardinals $\vartheta$, every $\Poti{\lambda}{\theta}$-list $\vec{d}=\seq{d_a}{a\in\Poti{\lambda}{\theta}}$ and all $x\in\HH{\vartheta}$, there is a small embedding $\map{j}{M}{\HH{\vartheta}}$ for $\theta$ and $\delta\in M\cap\theta$ such that 
$\crit j$ is inaccessible, $\Poti{\delta}{\crit j}\subseteq M$, $\vec{d},x\in\ran{j}$, $j(\delta)=\lambda$ and $j^{{-}1}[d_{j[\delta]}]\in M$.
\end{enumerate}
\end{lemma}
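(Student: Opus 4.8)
The plan is to prove both implications by the standard small embedding method: in the forward direction I would manufacture the embedding as the inverse of a transitive collapse of a carefully chosen elementary submodel, and in the backward direction I would read off a coherent guess from a given embedding and contradict a witness to non-ineffability.

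For (i) $\Rightarrow$ (ii), fix a $\Poti{\lambda}{\theta}$-list $\vec{d}$ and a parameter $x\in\HH{\vartheta}$. Using that $\theta$ is $\lambda$-ineffable, I first obtain a set $D\subseteq\lambda$ for which $S=\Set{a\in\Poti{\lambda}{\theta}}{d_a=D\cap a}$ is stationary. I would then build an elementary submodel $X\prec\HH{\vartheta}$ of size less than $\theta$ with $\vec{d},x,D,\lambda,\theta\in X$, with $X\cap\theta\in\theta$ inaccessible, with the internal closure $\Poti{X\cap\lambda}{X\cap\theta}\subseteq X$, and, crucially, with $X\cap\lambda\in S$. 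Letting $\map{\pi}{X}{M}$ be the transitive collapse, $\map{j}{M}{\HH{\vartheta}}$ its inverse and $\delta=\pi(\lambda)$, one checks that $\crit{j}=X\cap\theta$, that $j(\delta)=\lambda$, that $\delta\in M\cap\theta$, that $j[\delta]=X\cap\lambda$, and that the required correctness $\Poti{\delta}{\crit{j}}\subseteq M$ is just the un-collapsed form of the internal closure of $X$. Finally, since $X\cap\lambda\in S$ we get $d_{j[\delta]}=D\cap(X\cap\lambda)=D\cap j[\delta]$, and hence $j^{-1}[d_{j[\delta]}]=\pi[D\cap X]=\pi(D)\in M$, which is precisely the coherence clause demanded in (ii).

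For (ii) $\Rightarrow$ (i), I would argue by contradiction: assume $\theta$ is not $\lambda$-ineffable and fix a $\Poti{\lambda}{\theta}$-list $\vec{d}$ witnessing this, so that $\Set{a}{d_a=E\cap a}$ is non-stationary for every $E\subseteq\lambda$. Applying (ii) to $\vec{d}$, with $x$ chosen to contain $\lambda$ and $\theta$, yields $\map{j}{M}{\HH{\vartheta}}$ and $\delta$ as described; set $\bar{D}=j^{-1}[d_{j[\delta]}]\in M$ and $D=j(\bar{D})\subseteq\lambda$. Elementarity gives $d_{j[\delta]}=D\cap j[\delta]$, so the point $a^\ast=j[\delta]=X\cap\lambda$ lies in $\Set{a}{d_a=D\cap a}$. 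On the other hand, the statement that $\vec{d}$ witnesses non-$\lambda$-ineffability is true in $\HH{\vartheta}$ with all its parameters in $\ran{j}$, so it reflects to $M$; applying the reflected statement to $\bar{D}$ produces inside $M$ a club in $\Poti{\delta}{\crit{j}}$ avoiding the coherence set of $\bar{D}$, whose $j$-image is a club $C$ in $\Poti{\lambda}{\theta}$ disjoint from $\Set{a}{d_a=D\cap a}$. The remaining point is to show $a^\ast\in C$: writing $C$ as generated by a function $g=j(\bar{g})$ and using $\Poti{\delta}{\crit{j}}\subseteq M$ together with the fact that $j(w)=j[w]$ for every $w\subseteq\delta$ of size less than $\crit{j}$, one verifies that $a^\ast=j[\delta]$ is closed under $g$, hence $a^\ast\in C$. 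This contradicts $a^\ast\in\Set{a}{d_a=D\cap a}$, completing the argument.

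The hard part will be the submodel construction in the forward direction: arranging simultaneously that $X\cap\theta$ is inaccessible, that $X$ carries the internal closure $\Poti{X\cap\lambda}{X\cap\theta}\subseteq X$ needed for the correctness $\Poti{\delta}{\crit{j}}\subseteq M$, and that $X\cap\lambda$ nonetheless lands in the stationary set $S$. Here the hypothesis $\lambda=\lambda^{{<}\theta}$ is used to keep $\Poti{\lambda}{\theta}$ of size $\lambda$ and to run an internally approachable chain of length $\crit{j}$ securing the closure, while the Mahloness of $\theta$ (a consequence of $\lambda$-ineffability) guarantees that the inaccessibility of $X\cap\theta$ can be met on a stationary set of submodels; this is exactly the content that the general small embedding construction underlying these characterizations supplies, and the same correctness ingredients are what make the $j(w)=j[w]$ step in the backward direction go through.
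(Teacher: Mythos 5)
The paper does not actually prove this lemma; it quotes it from {\cite[Lemmas 5.5, 5.9 \& 3.4]{SECFLC}}, so your argument has to stand on its own. Your direction (ii)~$\Rightarrow$~(i) is correct as outlined: with $\bar D=j^{{-}1}[d_{j[\delta]}]\in M$ and $D=j(\bar D)$ one gets $d_{j[\delta]}=D\cap j[\delta]$, and reflecting a putative witness $\bar g$ to non-stationarity into $M$ and checking that $j[\delta]$ is closed under $j(\bar g)$ gives the contradiction. (Since the paper's clubs are generated by finitary functions $\map{f}{[\lambda]^{{<}\omega}}{\lambda}$, you only need $j(\bar s)=j[\bar s]$ for finite $\bar s$; the closure $\Poti{\delta}{\crit j}\subseteq M$ and the inaccessibility of $\crit j$ are not actually used in this direction.)

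The forward direction, however, has a genuine gap at exactly the point you defer to ``the general small embedding construction''. You first extract $D$ and the stationary coherence set $S=\Set{a\in\Poti{\lambda}{\theta}}{d_a=D\cap a}$ by applying $\lambda$-ineffability to $\vec d$ alone, and then try to find $X\prec\HH{\vartheta}$ with $X\cap\lambda\in S$ which \emph{in addition} satisfies $X\cap\theta\in\theta$ inaccessible and $\Poti{X\cap\lambda}{X\cap\theta}\subseteq X$. The set of $a\in\Poti{\lambda}{\theta}$ of the form $X\cap\lambda$ for such an $X$ is only stationary, not club: with the paper's finitary club filter, every $C_f$ contains countable closure points $a$, for which $a\cap\theta$ is either not an ordinal or a countable ordinal, so already $\Set{a}{a\cap\theta\in\theta\textit{ inaccessible}}$ has stationary complement. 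Two stationary subsets of $\Poti{\lambda}{\theta}$ need not intersect, so the Mahloness of $\theta$ producing ``stationarily many'' good submodels does not place any of them inside $S$. The standard repair is to \emph{not} fix $D$ first: one shows that the $\lambda$-ineffability ideal is a normal ideal and that the set of bad points (those $a$ with $a\cap\theta\notin\theta$, with $a\cap\theta$ not inaccessible, or with $a\neq X_a\cap\lambda$ for a fixed internally approachable chain of submodels) lies in that ideal --- each such claim is proved by applying ineffability to an auxiliary list coding a witness to badness and deriving a contradiction from coherence --- and only then applies ineffability to $\vec d$ on the remaining positive set, so that the coherence point automatically carries the required structure. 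That is the actual content of the cited lemmas, and it is where the work of the forward direction lies.
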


Note that the assumption $\lambda=\lambda^{{<}\theta}$ is only needed to ensure that $\Poti{\delta}{\crit{j}}$ is a subset of the domain of the given small embedding. Other degrees of ineffability can also be characterized by small embeddings by removing this assumption from the above equivalence.

The above characterization again gives rise to an internal large cardinal principle:

\begin{definition}\label{definition:internallyineffable}
 Given cardinals $\theta\leq\lambda$, the cardinal $\theta$ is \emph{internally AP $\lambda$-ineffable} if for all sufficiently large regular cardinals $\vartheta$, all $x\in\HH{\vartheta}$, and every $\Poti{\lambda}{\theta}$-list $\vec{d}=\seq{d_a}{a\in\Poti{\lambda}{\theta}}$, there is a small embedding $\map{j}{M}{\HH{\vartheta}}$ for $\theta$, an ordinal $\delta\in M\cap\theta$ and a transitive model $N$ of $\ZFC^-$ such that $j(\delta)=\lambda$, $\vec{d},x\in\ran j$, and the following statements hold: 
 \begin{enumerate}[leftmargin=0.7cm]
   \item $N\subseteq\HH{\vartheta}$ and the pair $(N,\HH{\vartheta})$ satisfies the $\sigma$-approximation property. 

   \item $M\in N$ and $\Poti{\delta}{\omega_1}^N\subseteq M$. 

  \item If $j^{{-}1}[d_{j[\delta]}]\in N$, then $j^{{-}1}[d_{j[\delta]}]\in M$. 
  \end{enumerate} 
\end{definition}

Analogous to the above study of internal subtlety, the consequences of this principle can be studied through combinatorial concepts introduced by Wei{\ss} in \cite{weissthesis} and \cite{MR2959668}.

\begin{definition}\label{definition:ISP}
 Let $\theta$ be an uncountable regular cardinal and let $\lambda\geq\theta$ be a cardinal. 
 \begin{enumerate}[leftmargin=0.7cm]
  \item A $\Poti{\lambda}{\theta}$-list $\seq{d_a}{a\in\Poti{\lambda}{\theta}}$ is \emph{slender} if for every sufficiently large cardinal $\vartheta$, there is a club $C$ in $\Poti{\HH{\vartheta}}{\theta}$ with $b\cap d_{X\cap\lambda}\in X$ for all $X\in C$ and all $b\in X\cap\Poti{\lambda}{\omega_1}$. 

  \item $\ISP(\theta,\lambda)$ is the statement that for every slender $\Poti{\lambda}{\theta}$-list $\seq{d_a}{a\in\Poti{\lambda}{\theta}}$, there exists $D\subseteq\lambda$ such that the set $\Set{a\in\Poti{\lambda}{\theta}}{d_a=D\cap a}$ is stationary in $\Poti{\lambda}{\theta}$. 
 \end{enumerate}
\end{definition}

\begin{lemma}\label{lemma:InternalIneffableISP}
  If $\theta$ is an internally AP $\lambda$-ineffable cardinal, then $\ISP(\theta,\lambda)$ holds. 
\end{lemma}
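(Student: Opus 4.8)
The plan is to argue by contradiction, in close analogy with the proof of Lemma \ref{lemma:InternalSubtleSSP}. So I would assume that $\theta$ is internally AP $\lambda$-ineffable while $\ISP(\theta,\lambda)$ fails, as witnessed by a slender $\Poti{\lambda}{\theta}$-list $\vec{d}=\seq{d_a}{a\in\Poti{\lambda}{\theta}}$ with the property that for every $D\subseteq\lambda$, the set $S_D=\Set{a\in\Poti{\lambda}{\theta}}{d_a=D\cap a}$ is non-stationary in $\Poti{\lambda}{\theta}$. For each such $D$, I would fix a function $g_D\colon[\lambda]^{<\omega}\to\lambda$ whose set of closure points $\Set{a\in\Poti{\lambda}{\theta}}{g_D[[a]^{<\omega}]\subseteq a}$ is disjoint from $S_D$ (possible since the sets of this form generate the club filter on $\Poti{\lambda}{\theta}$), and collect these into a single object $\vec{g}=\seq{g_D}{D\subseteq\lambda}$.

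Next I would choose a regular cardinal $\vartheta$ large enough both for the slenderness of $\vec{d}$ and for internal AP $\lambda$-ineffability, fix a club $C\subseteq\Poti{\HH{\vartheta}}{\theta}$ witnessing the slenderness of $\vec{d}$ together with a function $f$ generating a sub-club of $C$, and then apply internal AP $\lambda$-ineffability to $\vec{d}$ with the parameter $\{\vec{g},f\}$. This yields a small embedding $\map{j}{M}{\HH{\vartheta}}$ for $\theta$, an ordinal $\delta\in M\cap\theta$ with $j(\delta)=\lambda$, and a transitive $\ZFC^{-}$-model $N$ satisfying clauses (i)--(iii) of Definition \ref{definition:internallyineffable}, with $\vec{g},f,\vec{d}\in\ran{j}$. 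Note here that $\crit{j}\le\delta$, so $\crit{j}=j[\crit{j}]\subseteq j[\delta]$, and that $\ran{j}\cap\lambda=j[\delta]$ and $\ran{j}\cap\theta=\crit{j}$.

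The heart of the argument, and the step I expect to be the main obstacle, is to show that $e:=j^{{-}1}[d_{j[\delta]}]$ is an element of $N$; clause (iii) then immediately upgrades this to $e\in M$. I would first observe that $\Poti{\delta}{\omega_1}^N=\Poti{\delta}{\omega_1}^M$, using that countable-in-$N$ sequences of ordinals below $\delta$ can be coded as subsets of $\delta$ and hence land in $M$ by clause (ii), and that $j(x)=j[x]\subseteq j[\delta]$ for every $x\in\Poti{\delta}{\omega_1}^M$, since $j$ fixes $\omega$ and commutes with a countable enumeration of $x$. Given such an $x$, I would form a Skolem hull $X$ of $j[\delta]\cup\{f,j(x)\}$ inside $\HH{\vartheta}$; as all generators lie in $\ran{j}$, elementarity of $\ran{j}$ gives $X\subseteq\ran{j}$, hence $X\cap\lambda=j[\delta]$ and $X\cap\theta=\crit{j}\in\theta$, while $f\in X$ gives closure under $f$, so that $X\in C$ and the slenderness property applies to $X$. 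Since $j(x)\in X\cap\Poti{\lambda}{\omega_1}$ and $d_{X\cap\lambda}=d_{j[\delta]}$, this yields $j(x)\cap d_{j[\delta]}\in X\subseteq\ran{j}$; as $j(x)\cap d_{j[\delta]}=j[e\cap x]$, pulling back along $j$ produces $e\cap x\in M\subseteq N$. Since $x\in\Poti{\delta}{\omega_1}^N$ was arbitrary and $e\subseteq\delta\in N$, the $\sigma$-approximation property of the pair $(N,\HH{\vartheta})$ forces $e\in N$. The delicate point throughout is that Definition \ref{definition:internallyineffable} places no bound on the size of $M$, so one cannot simply take $X=\ran{j}$ (which may fail to lie in $\Poti{\HH{\vartheta}}{\theta}$) and must instead work with the hulls $X$ of size less than $\theta$ described above.

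Finally, I would set $D=j(e)\subseteq\lambda$ and derive a contradiction by showing that $j[\delta]$ lies in both $S_D$ and the closure points of $g_D$. For membership in $S_D$, a direct computation with elementarity gives $D\cap j[\delta]=j(e)\cap j[\delta]=j[e]=d_{j[\delta]}$. For the second, I would write $g_D=j(\bar{g})$ for the appropriate $\bar{g}\in M$ mapping $[\delta]^{<\omega}$ into $\delta$, so that for any finite $s=j[\bar{s}]\subseteq j[\delta]$ one has $g_D(s)=j(\bar{g}(\bar{s}))\in j[\delta]$. Since moreover $\betrag{j[\delta]}=\betrag{\delta}<\theta$ witnesses $j[\delta]\in\Poti{\lambda}{\theta}$, this exhibits $j[\delta]$ as an element of $S_D$ that is a closure point of $g_D$, contradicting the choice of $g_D$ and thereby establishing $\ISP(\theta,\lambda)$.
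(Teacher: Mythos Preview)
Your overall strategy coincides with the paper's: use slenderness together with the $\sigma$-approximation property to show $e=j^{-1}[d_{j[\delta]}]\in N$ (hence $e\in M$ by clause (iii)), set $D=j(e)$, and then verify that $j[\delta]$ lies in $S_D$ and in every club, yielding the desired stationarity. Your final step, fixing the family $\vec{g}$ of non-stationarity witnesses in advance rather than invoking elementarity after the fact, is a harmless variation.

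There is, however, a genuine gap in your setup. You work with a \emph{single} cardinal $\vartheta$, taking the slenderness club $C\subseteq\Poti{\HH{\vartheta}}{\theta}$ and the small embedding $\map{j}{M}{\HH{\vartheta}}$ at the same level, and you need the generating function $f$ for $C$ to lie in $\ran{j}\subseteq\HH{\vartheta}$. But any such $f$ has domain $[\HH{\vartheta}]^{{<}\omega}$ and hence cardinality $\betrag{\HH{\vartheta}}\geq\vartheta$, so $f\notin\HH{\vartheta}$. Consequently you cannot place $f$ in $\ran{j}$, and your Skolem hull of $j[\delta]\cup\{f,j(x)\}$ inside $\HH{\vartheta}$ is not well-defined. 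The paper handles this by using \emph{two} cardinals: $\vartheta$ for the slenderness witness $f$, and a strictly larger regular $\vartheta'$ as the target of the embedding $\map{j}{M}{\HH{\vartheta'}}$, so that $f\in\HH{\vartheta'}$ and one may legitimately require $f\in\ran{j}$. One then picks $\varepsilon\in M$ with $j(\varepsilon)=\vartheta$ and works with $X=j[\HH{\varepsilon}^M]$; since $f\in\ran{j}$, elementarity gives that $X$ is $f$-closed, $X\cap\lambda=j[\delta]$, and $j(x)\in X$ for every $x\in\Poti{\delta}{\omega_1}^M$, after which slenderness applies exactly as you intend. With this two-level adjustment your argument goes through; without it, the crucial step fails.
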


\begin{proof}
   Fix a slender $\Poti{\lambda}{\theta}$-list $\vec{d}=\seq{d_a}{a\in\Poti{\lambda}{\theta}}$. 
 Then we can find a sufficiently large cardinal $\vartheta$ such that there is a function $\map{f}{\Poti{\HH{\vartheta}}{\omega}}{\Poti{\HH{\vartheta}}{\theta}}$ with the property that $\clo{f}$ is a club in $\Poti{\HH{\vartheta}}{\theta}$ witnessing the slenderness of $\vec{d}$. 
Let $\vartheta^\prime$ be a sufficiently large regular cardinal such that there is a small embedding $\map{j}{M}{\HH{\vartheta^\prime}}$ with $f\in\ran{j}$, $\delta\in M$ and a transitive $\ZFC^-$-model $N$ witnessing the internal AP $\lambda$-ineffability of $\theta$ with respect to $\vec{d}$. 
Pick $\varepsilon\in M$ with $j(\varepsilon)=\vartheta$. We then have $X=j[\HH{\varepsilon}^M]\in\clo{f}$.

 Assume for a contradiction that $j^{{-}1}[d_{j[\delta]}]\notin N$. 
 Then the $\omega_1$-approximation property yields an element $x$ of $\Poti{\delta}{\omega_1}^N$  with $x\cap j^{{-}1}[d_{j[\delta]}]\notin N$. 
 Then our assumptions imply that $x$ is an element of $\Poti{\delta}{\omega_1}^M$. But then $j(x)\in X\cap\Poti{\lambda}{\omega_1}$ and the slenderness of $\vec{d}$ implies that $j(x)\cap d_{j[\delta]}\in X$. Then we can conclude that $$x\cap j^{{-}1}[d_{j[\delta]}] ~ = ~ j^{{-1}}[j(x)\cap d_{j[\delta]}] ~ = ~ j^{{-}1}(j(x)\cap d_{j[\delta]}) ~ \in ~  M ~ \subseteq ~ N,$$ a contradiction. 
 These computations show that $j^{{-}1}[d_{j[\delta]}]\in N$ and our assumptions imply that this set is also an element of $M$. 
 Define $D=j(j^{{-}1}[d_{j[\delta]}])$ and $S=\Set{a\in\Poti{\lambda}{\theta}}{d_a=D\cap a}$. 
 Assume, towards a contradiction, that $S$ is not stationary in $\Poti{\lambda}{\theta}$. By elementarity, there is a function $\map{f_0}{\Poti{\delta}{\omega}}{\Poti{\delta}{\crit{j}}}$ in $M$ such that  $\clo{j(f_0)}\cap S=\emptyset$. But then $j[\delta]\in\clo{j(f_0)}\cap S$, a contradiction.  
\end{proof}

\begin{corollary}\label{corollary:InaccIneffableInternalIneffable}
 The following statements are equivalent for every inaccessible cardinal $\theta$ and every cardinal $\lambda\geq\theta$ satisfying $\lambda=\lambda^{{<}\theta}$: 
 \begin{enumerate}[leftmargin=0.7cm]
  \item $\theta$ is a $\lambda$-ineffable cardinal. 

  \item $\theta$ is an internally AP $\lambda$-ineffable cardinal. 
 \end{enumerate}
\end{corollary}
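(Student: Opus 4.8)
The plan is to prove both implications by reducing directly to results already in hand, in close analogy with the corresponding corollary for subtle cardinals. Throughout I fix an inaccessible cardinal $\theta$ and a cardinal $\lambda\geq\theta$ with $\lambda=\lambda^{{<}\theta}$, which is exactly the hypothesis under which the small embedding characterization of Lemma \ref{lemma:LambdaIneffableSmallChar} is available.

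For the forward direction (i)\,$\to$\,(ii), I would feed the small embedding characterization of $\lambda$-ineffability straight into Definition \ref{definition:internallyineffable}, taking $N=\HH{\vartheta}$. Given a sufficiently large regular $\vartheta$, an $x\in\HH{\vartheta}$ and a $\Poti{\lambda}{\theta}$-list $\vec{d}$, Lemma \ref{lemma:LambdaIneffableSmallChar} provides a small embedding $\map{j}{M}{\HH{\vartheta}}$ and $\delta\in M\cap\theta$ with $\crit{j}$ inaccessible, $\Poti{\delta}{\crit j}\subseteq M$, $\vec{d},x\in\ran{j}$, $j(\delta)=\lambda$ and $j^{{-}1}[d_{j[\delta]}]\in M$. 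With $N=\HH{\vartheta}$, clause (i) of Definition \ref{definition:internallyineffable} is immediate, since the pair $(\HH{\vartheta},\HH{\vartheta})$ trivially has the $\sigma$-approximation property; clause (ii) follows because transitivity gives $M\in\HH{\vartheta}=N$, and because $\crit{j}$ inaccessible forces $\crit{j}>\omega_1$, whence $\Poti{\delta}{\omega_1}^N=\Poti{\delta}{\omega_1}\subseteq\Poti{\delta}{\crit j}\subseteq M$; and clause (iii) holds vacuously, as its conclusion $j^{{-}1}[d_{j[\delta]}]\in M$ is guaranteed unconditionally by the lemma.

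For the reverse direction (ii)\,$\to$\,(i), the plan is to exploit that, for inaccessible $\theta$, slenderness is automatic, so that the slender form of ineffability coincides with the full form. Concretely, the results of Wei{\ss} in \cite{weissthesis} show that the inaccessibility of $\theta$ implies that every $\Poti{\lambda}{\theta}$-list is slender. Granting this, Lemma \ref{lemma:InternalIneffableISP} yields that internal AP $\lambda$-ineffability implies $\ISP(\theta,\lambda)$, which asserts precisely the ineffability conclusion for slender lists; combining the two facts shows that \emph{every} $\Poti{\lambda}{\theta}$-list admits a $D\subseteq\lambda$ with $\Set{a\in\Poti{\lambda}{\theta}}{d_a=D\cap a}$ stationary, i.e.\ that $\theta$ is $\lambda$-ineffable.

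The only point that is not pure bookkeeping is the invocation that inaccessibility forces every $\Poti{\lambda}{\theta}$-list to be slender; unwinding the club witness in Definition \ref{definition:ISP},(i), this amounts to checking that for club-many $X\prec\HH{\vartheta}$ with $\omega_1\subseteq X$ and every countable $b\in X$, the trace $b\cap d_{X\cap\lambda}$ lies in $X$, which is exactly the content of the cited work of Wei{\ss}. Everything else reduces to matching the clauses of Definition \ref{definition:internallyineffable} against the already-established small embedding characterization and the implication to $\ISP$.
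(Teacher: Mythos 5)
Your proposal is correct and follows essentially the same route as the paper: the forward direction plugs Lemma \ref{lemma:LambdaIneffableSmallChar} into Definition \ref{definition:internallyineffable} with $N=\HH{\vartheta}$, and the reverse direction combines Lemma \ref{lemma:InternalIneffableISP} with the equivalence of $\ISP(\theta,\lambda)$ and $\lambda$-ineffability at inaccessible $\theta$ (which the paper cites from Magidor and Wei{\ss}, and which you merely unwind into the observation that every $\Poti{\lambda}{\theta}$-list is slender when $\theta$ is inaccessible).
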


\begin{proof}
 Lemma \ref{lemma:LambdaIneffableSmallChar} directly shows that (i) implies (ii). In the other direction, the results of \cite{MR0327518} and {\cite[Section 2]{MR2959668}} show that an inaccessible cardinal $\theta$ is $\lambda$-ineffable if and only if $\ISP(\theta,\lambda)$ holds. Therefore, Lemma \ref{lemma:InternalIneffableISP} shows that (ii) implies (i). 
\end{proof}

The next theorem yields a characterization of the set of all $\lambda$-ineffable cardinals $\theta$ with  $\lambda=\lambda^{{<}\theta}$. In particular, it shows that Neeman's pure side condition forcing can be used to  characterize  the class of all ineffable cardinals. 
The above corollary already shows that these characterizations are strong.

\begin{theorem}\label{ineffabletheorem}
  The following statements are equivalent for every inaccessible cardinal $\theta$ and every cardinal $\lambda$ with $\lambda=\lambda^{{<}\theta}$:
\begin{enumerate}[leftmargin=0.7cm]
  \item $\theta $ is a $\lambda$-ineffable cardinal. 

  \item $\mathbbm{1}_{\PPP_\theta}\Vdash\anf{\textit{$\omega_2$ is interally AP $\check{\lambda}$-ineffable}}$.  

 \item $\mathbbm{1}_{\PPP_\theta}\Vdash\ISP(\omega_2,\check{\lambda})$.  
\end{enumerate}
\end{theorem}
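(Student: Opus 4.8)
The plan is to establish the cycle of implications (i) $\Rightarrow$ (ii) $\Rightarrow$ (iii) $\Rightarrow$ (i), following the structure of Theorem \ref{subtletheorem}, with the passage from the internal principle (ii) to the combinatorial principle (iii) handled by Lemma \ref{lemma:InternalIneffableISP}. The implication (ii) $\Rightarrow$ (iii) is immediate: if $G$ is $\PPP_\theta$-generic over $\VV$, then by (ii) and Theorem \ref{aleph2} the cardinal $\omega_2=\theta$ is internally AP $\lambda$-ineffable in $\VV[G]$, so an application of Lemma \ref{lemma:InternalIneffableISP} inside $\VV[G]$ yields $\ISP(\omega_2,\check\lambda)$ there; since $G$ is arbitrary, $\mathbbm{1}_{\PPP_\theta}\Vdash\ISP(\omega_2,\check\lambda)$.

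For (iii) $\Rightarrow$ (i) I would argue by reduction to the ground model. Since $\theta$ is inaccessible and $\lambda=\lambda^{{<}\theta}$, Corollary \ref{corollary:InaccIneffableInternalIneffable} (via the Wei{\ss}--Magidor equivalence cited there) reduces $\lambda$-ineffability of $\theta$ to $\ISP(\theta,\lambda)$ holding in $\VV$. So I would take a slender $\Poti{\lambda}{\theta}$-list $\vec d=\seq{d_a}{a\in\Poti{\lambda}{\theta}}$ in $\VV$, fix a $\PPP_\theta$-generic $G$, and transport $\vec d$ to a $\Poti{\lambda}{\omega_2}^{\VV[G]}$-list $\vec e$ by extending it (by a fixed non-flooding choice) over the newly added small sets, checking that slenderness persists; then $\ISP(\omega_2,\check\lambda)$ in $\VV[G]$ supplies $D$ with $\Set{a}{e_a=D\cap a}$ stationary in $\VV[G]$. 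The $\sigma$-approximation property of the pair $(\VV,\VV[G])$ from Corollary \ref{corollary:SideConditionForcingApproximationProperty} then forces $D\in\VV$: for any countable $x\in\VV$ one locates a stationary-set member $a\supseteq x$ and reads off $D\cap x$ from $e_a$, so all countable approximations of $D$ lie in $\VV$. The delicate point here, which I would handle by a careful choice of the extension $\vec e$ together with properness of $\PPP_\theta$ (Corollary \ref{corollary:Proper}), is to guarantee that the reflected set $\Set{a\in\Poti{\lambda}{\theta}^\VV}{d_a=D\cap a}$ is genuinely stationary in $\VV$, i.e.\ that the new small sets do not carry the stationarity of the diagonal on their own.

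The substantial direction is (i) $\Rightarrow$ (ii), which adapts the forward direction of Theorem \ref{subtletheorem} (itself based on the proofs in \cite{SECFLC}) from ordinal-indexed $\theta$-lists to $\Poti{\lambda}{\theta}$-lists. Given a $\PPP_\theta$-name $\dot d$ for a $\Poti{\lambda}{\theta}$-list and an arbitrary name $\dot x$, I would first use the $\theta$-chain condition (Lemma \ref{chain_condition}) to reduce the relevant names into $\HH{\vartheta}^\VV$ and to thin out the ambient data, and then work with the small-embedding characterization of $\lambda$-ineffability from Lemma \ref{lemma:LambdaIneffableSmallChar}. After fixing a generic $G$, the argument splits according to whether the pullback $j_G^{{-}1}[\dot d^G_{j_G[\delta]}]$ lands in $N=\HH{\vartheta}^{\VV[G_j]}$, where $j_G$ is the lift provided by Proposition \ref{proposition:LiftingSmallEmbeddings}: if it escapes $N$, then clause (iii) of Definition \ref{definition:internallyineffable} is vacuous and any such lift witnesses internal AP $\lambda$-ineffability directly; if it is captured by $N$, then one must produce an embedding whose pullback descends into the domain model $M[G_j]$.

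The main obstacle is exactly this second case. Mirroring the coding in Theorem \ref{subtletheorem}, where conditions $q_\kappa$ and sets $E_\kappa$ of triples $\goedel{s}{\goedel{\beta}{i}}$ encoding the deciding names were recorded into an ordinal-indexed list, I would, working in $\VV$ below an arbitrary condition, encode for each relevant inaccessible $\kappa$ the forcing data determining the pullbacks into a single $\Poti{\lambda}{\theta}$-list $\vec D$ (using the $\theta$-chain condition and the factorizations $\PPP_{\crit{j}}=\PPP_\theta\cap\HH{\crit{j}}$ from Proposition \ref{proposition:LiftingSmallEmbeddings}). Applying $\lambda$-ineffability of $\theta$ to $\vec D$ via Lemma \ref{lemma:LambdaIneffableSmallChar} yields a small embedding $\map{j}{M}{\HH{\vartheta}}$, an ordinal $\delta$ with $j(\delta)=\lambda$, and the crucial containment $j^{{-}1}[D_{j[\delta]}]\in M$; the remaining work is a density argument producing a generic $H$ along which the coherence furnished by the stationary diagonal can be decoded so that the actual pullback $j_H^{{-}1}[\dot d^H_{j_H[\delta]}]$ lands in $M[H_j]$, after which $j_H$ and $\HH{\vartheta}^{\VV[H_j]}$ witness the internal principle. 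The chief difficulty, compared with the subtle case, is that the diagonal guarantee replaces the two-point coherence $d_\alpha=d_\kappa\cap\alpha$, so the coding and the subsequent fusion of the relevant conditions in $\PPP_\theta$ must be organized around a single reflecting set $j[\delta]$ rather than a pair $\alpha<\kappa$, and the bookkeeping of names for elements of $\Poti{\lambda}{\theta}$ must be compatible with the factor forcings $\dot{\QQQ}^{\HH{\kappa}}_\theta$.
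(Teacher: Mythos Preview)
Your proposal is correct and follows the same architecture as the paper, including the case split and the coding-into-a-$\Poti{\lambda}{\theta}$-list argument for (i)~$\Rightarrow$~(ii). For (iii)~$\Rightarrow$~(i), the paper simply invokes {\cite[Theorem 6.3]{MR2838054}} together with {\cite[Proposition 2.2]{MR2959668}} (using that $\PPP_\theta$ has the $\sigma$-approximation property and the $\theta$-chain condition) rather than carrying out the reflection by hand as you outline; the delicate points you flag are precisely the content of that Viale--Wei{\ss} result, though note that it is the $\theta$-cover property (a consequence of the $\theta$-cc), rather than properness, that drives the passage of stationarity in $\Poti{\lambda}{\theta}$ back to $\VV$.
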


\begin{proof}
  First, assume that (iii) holds. Since Corollary \ref{corollary:SideConditionForcingApproximationProperty} shows that $\PPP_\theta$ satisfies the $\sigma$-approximation property, and Lemma \ref{chain_condition} implies that $\PPP_\theta$ satisfies the $\theta$-chain condition, we can combine our assumptions with {\cite[Theorem 6.3]{MR2838054}} and {\cite[Proposition 2.2]{MR2959668}} to conclude that $\theta$ is $\lambda$-ineffable. 

 Now, assume that (i) holds. Let $\dot{d}$ be a $\PPP_\theta$-name for a $\Poti{\lambda}{\theta}$-list, and let $\dot{x}$ be an arbitrary $\PPP_\theta$-name. For every $a\in\Poti{\lambda}{\theta}$, let $\dot{d}_a$ be a nice $\PPP_\theta$-name for the component of $\dot{d}$ that is indexed by $\check a$. Fix a bijection  $\map{b}{\theta}{\HH{\theta}}$ such that $b[\kappa]=\HH{\kappa}$ holds for every inaccessible cardinal $\kappa \le \theta$. Pick a regular cardinal $\vartheta>2^\lambda$ that is sufficiently large with respect to Lemma \ref{lemma:LambdaIneffableSmallChar}, and which satisfies $\dot{d},\dot{x}\in \HH{\vartheta}$. Define $A$ to be the set of all  $a\in \Poti{\lambda}{\theta}$ for which there exists a small embedding $\map{j}{M}{\HH{\vartheta}}$ for $\theta$ and $\delta\in M\cap \theta$ with $j(\delta)=\lambda$, $a=j[\delta]$, $\kappa_a =\crit{j}=a\cap \theta$ inaccessible, $\Poti{\delta}{\kappa_a}\subseteq M$ and $b,\dot{d},\dot{x}\in \ran{j}$. Let $G$ be $\PPP_\theta$-generic over $\VV$.

Assume first that there exists $a\in\Poti{\lambda}{\theta}^\VV$, a small embedding $\map{j}{M}{\HH{\vartheta}^\VV}$ for $\theta$ in $\VV$, and  $\delta\in M\cap \theta$ such that $j$ and $\delta$ witness that $a$ is an element of $A$, and such that $\dot{d}^G_{a}\notin \VV[G_j]$. Let $\map{j_G}{M[G_j]}{\HH{\vartheta}^{\VV[G]}}$ be the lifting of $j$ provided by Proposition \ref{proposition:LiftingSmallEmbeddings}, and set $N=\HH{\vartheta}^{\VV[G_j]}$. Then $\dot{d}^G, \dot{x}^G\in \ran{j_G}$, Corollary \ref{corollary:sigma_approximation} shows that the pair $(N,\HH{\vartheta}^{\VV[G]})$ satisfies the $\sigma$-approximation property and, by Lemma \ref{chain_condition}, $\Poti{\delta}{\kappa_a}^{\VV} \subseteq M$ implies that $\Poti{\delta}{\kappa_a}^N \subseteq M[G_j]$. Since $\dot{d}^G_{a}\notin N$, we can conclude that $j_G$, $\delta$ and $N$ witness the internal AP $\lambda$-ineffability of $\theta$ with respect to $\dot{d}^G$ and $\dot{x}^G$ in $\VV[G]$.

Otherwise, assume that whenever $\map{j}{M}{\HH{\vartheta}}^\VV$ is a small embedding for $\theta$ in $\VV$ that witnesses that some $a\in\Poti{\lambda}{\theta}^\VV$ is an element of $A$, then $\dot{d}^G_a\in \VV[{G}_j]$. Pick a condition $p$ in $G$ which forces this statement. Work in $\VV$, fix a condition  $q$ below $p$ in $\PPP_\theta$, and define $B$ to be the set of all $a\in A$ such that $q\in \PPP_{\kappa_a}$. By our assumption and by Lemma \ref{chain_condition}, we can find sequences $\seq{q_a}{a\in B}$, $\seq{\dot{r}_a}{a\in B}$ and $\seq{\dot{e}_a}{a\in B}$, such that the following statements hold for all $a\in B$: 
 \begin{enumerate}[leftmargin=0.7cm]
  \item[(a)] $q_a$ is a condition in $\PPP_{\kappa_a}$	below $q$. 

  \item[(b)] $\dot{r}_a$ is a $\PPP_{\kappa_a}$-name for a condition in $\dot{\QQQ}_\theta^{\HH{\kappa_a}}$. 

  \item[(c)] $\dot{e}_a$ is a $\PPP_{\kappa_a}$-nice name for a subset of $a$ with $\langle q_a,\dot{r}_a\rangle\Vdash_{\PPP_{\kappa_a}*\dot{\QQQ}_\theta^{\HH{\kappa_a}}} \anf{\dot{d}_a=\dot{e}_a}$. 
 \end{enumerate}
Given $a\in B$, we have $b^{{-1}}[\PPP_{\kappa_a}]\subseteq b^{{-}1}[\HH{\kappa_a}]=\kappa_a\subseteq a$, and elementarity implies that the set $a$ is closed under $\goedel{\cdot}{\cdot}$. This shows that there is a unique $\Poti{\lambda}{\theta}$-list $\vec{d}=\seq{d_a}{a\in \Poti{\lambda}{\theta}}$ with $$d_a ~ = ~ \Set{\goedel{b^{{-}1}(s)} {\beta} }{\langle\check{\beta}, s \rangle \in \dot{e}_a} ~ \subseteq ~ a$$ for all  $a\in B$, and with $d_a=\emptyset$  for all $a\in \Poti{\lambda}{\theta}\setminus B$.   Fix a small embedding $\map {j}{M}{\HH{\vartheta}}$ for $\theta$ and $\delta \in M \cap \theta$ that witness the $\lambda$-ineffability of $\theta$ with respect to $\vec{d}$ and $\{b,\dot{d},q,\dot{x}\}$, as in Lemma \ref{lemma:LambdaIneffableSmallChar}. Then $j$ and $\delta$ witness that $j[\delta]\in B$. Pick $u$ in $\PPP_\theta$ with 
$$D_\theta^{\HH{\kappa_{j[\delta]}}}(u) ~ \leq_{\PPP_{\kappa_{j[\delta]}}*\dot{\QQQ}_\theta^{\HH{\kappa_{j[\delta]}}}} ~ \langle q_{j[\delta]},\dot{r}_{j[\delta]}\rangle,$$ and let $H$ be  $\PPP_\theta$-generic over $\VV$ with $u\in H$. Since $q_{j[\delta]}\in H_j$, we have  $\dot{d}^H_{j[\delta]}=\dot{e}^{H_j}_{j[\delta]}$.  Note that this implies that for all $\gamma<\delta$, we have  $\gamma\in j^{{-}1}[\dot{d}^H_{j[\delta]}]$ if and only if there is an $s\in H_j$ with $\goedel{b^{{-}1}(s)}{j(\gamma)}\in d_{j[\delta]}$.  Observe that $b\restriction\kappa_{j[\delta]}\in M$,   $j(b\restriction\kappa_{j[\delta]})=b$ and $j\restriction H_j=\id_{H_j}$. Hence, $j^{{-}1}[\dot{d}_{j[\delta]}^H]$ is equal to the set of all $\gamma<\delta$ with the property that there is an $s\in H_j$ with
 $\goedel{(b\restriction\kappa_{j[\delta]})^{{-}1}(s)}{\gamma}\in j^{{-}1}[d_{j[\delta]}]$.  Since the above choices ensure that $j^{{-}1}[d_{j[\delta]}]\in M$, we can conclude that $j^{{-}1}[\dot{d}_{j[\delta]}^H]$ is an element of $M[H_j]$. Let $\map{j_H}{M[H_j]}{\HH{\vartheta}^{\VV[H]}}$ denote the lifting of $j$ provided by Proposition \ref{proposition:LiftingSmallEmbeddings}, and set $N=\HH{\vartheta}^{\VV[H_j]}$. As above, we have $\dot{d}^H,\dot{x}^H\in\ran{j_H}$, the pair $(N,\HH{\vartheta}^{\VV[H]})$ satisfies the $\sigma$-approximation property, and $\Poti{\delta}{\kappa_{j[\delta]}}^N\subseteq M[H_j]$. Since $j^{{-}1}[\dot{d}_{j[\delta]}^H]\in M[H_j]$, we can conclude that $j_H$ and $\delta$ witness that $\theta$ is internally AP $\lambda$-ineffable with respect to $\dot{d}^H$ and $\dot{x}^H$ in $\VV[H]$.

Using a standard density argument, the above computations allow us to conclude that $\theta$ is internally AP $\lambda$-ineffable in $\VV[G]$. In particular, these arguments show that (i) implies (ii).  
\end{proof}

In the remainder of this section, we will use the above results to strongly characterize supercompactness through the validity of internal supercompactness in generic extension. 
This characterization is based on the following equivalence provided by the results of \cite{MR2838054}.

\begin{proposition}\label{proposition:InternalSupercompactISP}
 Given an uncountable regular cardinal $\theta$, the cardinal $\theta$ is   
 internally AP supercompact if and only if $\ISP(\theta,\lambda)$ holds for all cardinals $\lambda\geq\theta$. 
\end{proposition}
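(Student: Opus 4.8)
The plan is to prove both implications by passing through internal AP $\lambda$-ineffability and the guessing model analysis of Wei{\ss} and Viale--Wei{\ss}, rather than by arguing directly from the two definitions.

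For the forward implication, I would first show that an internally AP supercompact cardinal $\theta$ is internally AP $\lambda$-ineffable for every cardinal $\lambda\geq\theta$, and then invoke Lemma \ref{lemma:InternalIneffableISP}. So fix a sufficiently large regular cardinal $\vartheta$, a parameter $x\in\HH{\vartheta}$, and a $\Poti{\lambda}{\theta}$-list $\vec{d}$. Applying Definition \ref{definition:InteralAPSupercompact} to a parameter coding $\{x,\vec{d},\lambda\}$ yields a small embedding $\map{j}{M}{\HH{\vartheta}}$ and a transitive $\ZFC^-$-model $N$ with $M=\HH{\kappa}^N$ for some $N$-cardinal $\kappa<\theta$, with $M\in N$, and with $(N,\HH{\vartheta})$ satisfying the $\sigma$-approximation property. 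Since $\lambda\in\ran{j}$, there is $\delta=j^{{-}1}(\lambda)\in M$, and because $\kappa=\On\cap M$ we have $\crit{j}\leq\delta<\kappa<\theta$, so $\delta\in M\cap\theta$ and $j(\delta)=\lambda$. The key observation is that, as $\delta<\kappa$, every subset of $\delta$ lying in $N$ is a bounded subset of $\kappa$ of hereditary size less than $\kappa$, hence automatically an element of $\HH{\kappa}^N=M$. This makes Clause (ii) of Definition \ref{definition:internallyineffable} (namely $\Poti{\delta}{\omega_1}^N\subseteq M$) and Clause (iii) (the guessing requirement on $j^{{-}1}[d_{j[\delta]}]$) hold trivially, while Clause (i) and $M\in N$ are inherited directly. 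Thus $j$, $\delta$ and $N$ witness internal AP $\lambda$-ineffability, and Lemma \ref{lemma:InternalIneffableISP} gives $\ISP(\theta,\lambda)$ for every $\lambda\geq\theta$.

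For the reverse implication, I would assume $\ISP(\theta,\lambda)$ for all $\lambda\geq\theta$ and appeal to the characterization of this principle through guessing models due to Viale--Wei{\ss} (see {\cite[Theorem 6.3]{MR2838054}} together with {\cite[Section 2]{MR2959668}}): this hypothesis is equivalent to the statement that for all sufficiently large $\vartheta$ there are stationarily many $\theta$-guessing models $X\prec\HH{\vartheta}$ of cardinality less than $\theta$ with $X\cap\theta\in\theta$. Given such a $\vartheta$ and a parameter $x\in\HH{\vartheta}$, I would pick a guessing model $X$ containing $x$ and $\theta$ that is moreover internally approachable in an appropriate sense, let $M$ be its transitive collapse and $\map{j}{M}{\HH{\vartheta}}$ the inverse collapse, which is a small embedding for $\theta$ with $j(\crit{j})=\theta$. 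The remaining task is to manufacture the intermediate model $N$: setting $\kappa=\On\cap M$, one defines $N$ from $M$ together with the subsets of $X$ that the guessing property provides, in such a way that $M=\HH{\kappa}^N$ and, after transporting the guessing property through the collapse, the pair $(N,\HH{\vartheta})$ satisfies the $\sigma$-approximation property; this then witnesses internal AP supercompactness with respect to $x$.

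I expect the construction of $N$ in this reverse direction to be the main obstacle. One must convert the combinatorial guessing property of the uncollapsed submodel $X$ into the $\sigma$-approximation property of the pair $(N,\HH{\vartheta})$ while simultaneously arranging that $M$ is computed as $\HH{\kappa}^N$; in particular, one has to match $\omega_1$-approximation as seen by $X$ with $\omega_1$-approximation computed inside $N$ under the transitive collapse. This is precisely the correspondence between guessing models and the $\sigma$-approximation configuration that is developed in full generality in the forthcoming \cite{internal}, and once it is available, the two implications above reduce to the bookkeeping around Lemma \ref{lemma:InternalIneffableISP} carried out in the forward direction.
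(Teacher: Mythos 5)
Your first implication (internally AP supercompact $\Rightarrow$ $\ISP(\theta,\lambda)$ for all $\lambda\geq\theta$) is correct and is exactly the paper's argument: factor through internal AP $\lambda$-ineffability by reading off $\delta=j^{-1}(\lambda)$ and using $\POT{\delta}^N\subseteq\HH{\kappa}^N=M$, then apply Lemma \ref{lemma:InternalIneffableISP}.

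The other implication, however, has a genuine gap, and you flag it yourself: you never construct the model $N$, deferring the "conversion of the guessing property into the $\sigma$-approximation configuration" to a forthcoming reference. No such conversion is needed, and the reason you perceive an obstacle is that your setup is inverted. You propose to collapse a guessing model $X\prec\HH{\vartheta}$ to obtain $M$ and then to \emph{build} $N\supseteq M$ afterwards. The paper instead applies the hypothesis one level up: fix $\vartheta'>\betrag{\HH{\vartheta}}$ and use $\ISP(\theta,\betrag{\HH{\vartheta'}})$ together with {\cite[Proposition 3.2]{MR2838054}}, which already delivers stationarily many $X\prec\HH{\vartheta'}$ of size less than $\theta$ with $X\cap\theta\in\theta$ such that the transitive collapse $N$ of $X$ satisfies the $\sigma$-approximation property with respect to $\HH{\vartheta'}$ (hence with respect to $\HH{\vartheta}$, and $N\subseteq\HH{\vartheta}$ since $N$ is transitive of size less than $\theta$). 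So $N$ is simply the full collapse of $X$; one then sets $\kappa=\pi(\vartheta)$, $M=\HH{\kappa}^N$ and $\map{j=\pi^{-1}\restriction M}{M}{\HH{\vartheta}}$, and all clauses of Definition \ref{definition:InteralAPSupercompact} hold at once. In other words, the guessing-model statement \emph{is} the statement that the collapse has the $\sigma$-approximation property; the only idea you are missing is to take the guessing model at a sufficiently higher level so that $N$ comes for free and $M$ is carved out of it as $\HH{\kappa}^N$, rather than the other way around. Without this step your reverse direction is not a proof but a reduction to an unproved claim.
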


\begin{proof}
 First, assume that  $\ISP(\theta,\lambda)$ holds for all cardinals $\lambda\geq\theta$. 
Fix some regular cardinal $\vartheta>\theta$, $x\in\HH{\vartheta}$ and a cardinal $\vartheta^\prime>\betrag{\HH{\vartheta}}$. 
 %
By {\cite[Proposition 3.2]{MR2838054}}, the fact that $\ISP(\theta,\betrag{\HH{\vartheta^\prime}})$ holds implies that there is a stationary subset of $\Poti{\HH{\vartheta^\prime}}{\theta}$ consisting of  elementary submodels $X$ of $\HH{\vartheta^\prime}$ of cardinality less than $\theta$ with $X\cap\theta\in\theta$ and the property that, if $N$ denotes the transitive collapse of $X$, then the pair $(N,\HH{\vartheta^\prime})$ satisfies the $\sigma$-approximation property. 
Pick such a submodel $X$ with $\theta,\vartheta,x\in X$. Let $\map{\pi}{X}{N}$ denote the corresponding collapsing map and set $\kappa=\pi(\vartheta)<\theta$.  
Define  $M=\HH{\kappa}^N$ and $\map{j=\pi^{{-}1}\restriction M}{M}{\HH{\vartheta}}$.
Then $j$ is a small embedding for $\theta$ with $x\in\ran{j}$ and  $N$ is a transitive model of $\ZFC^-$ with $N\subseteq\HH{\vartheta}$, $\kappa<\theta$ is a cardinal in $N$ and $M=\HH{\kappa}^N$. 
   Since the above constructions ensure that the pair $(N,\HH{\vartheta})$ has the $\sigma$-approximation property, we  can conclude that $\theta$ is internally AP supercompact with respect to $x$.

 In the other direction, assume that $\theta$ is internally AP supercompact. Fix a set $x$, a cardinal $\lambda\geq\theta$ and a $\Poti{\lambda}{\theta}$-list $\vec{d}=\seq{d_a}{a\in\Poti{\lambda}{\theta}}$. Pick a small embedding $\map{j}{M}{\HH{\vartheta}}$ for $\theta$ and a model $N$ that witnesses that $\theta$ is internally AP supercompact with respect to $\langle\vec{d},x\rangle$. Then there are $N$-cardinals $\delta<\kappa$ with $M=\HH{\kappa}^N$ and $j(\delta)=\lambda$.  
 If $j^{{-}1}[d_{j[\delta]}]\in N$, then $j^{{-}1}[d_{j[\delta]}]\in\POT{\delta}^N\subseteq\HH{\kappa}^N=M$. This shows that $j$, $\delta$ and $N$ witness that $\theta$ is internally AP $\lambda$-ineffable with respect to $\vec{d}$ and $x$. 
 In this situation, Lemma \ref{lemma:InternalIneffableISP} allows us to conclude that $\ISP(\theta,\lambda)$ holds for all cardinals $\lambda\geq\theta$. 
\end{proof}

In combination with {\cite[Theorem 4.8]{MR2838054}}, the above proposition shows that $\PFA$ implies that $\omega_2$ is internally AP supercompact and hence possesses all internal large cardinal properties discussed in this paper.

The proof of the following corollary combines the above observations to provide a characterization of supercompactness that is based on its internal version.

\begin{corollary}\label{corollary:CharSupercompactISP}
 The following statements are equivalent for every inaccessible cardinal $\theta$: 
 \begin{enumerate}[leftmargin=0.7cm]
  \item $\theta$ is a supercompact cardinal. 

  \item $\mathbbm{1}_{\PPP_\theta}\Vdash\anf{\textit{$\omega_2$ is internally AP supercompact}}$. 

  \item $\mathbbm{1}_{\PPP_\theta}\Vdash\anf{\textit{$\ISP(\omega_2,\lambda)$ holds for all cardinals $\lambda\geq\omega_2$}}$. 
 \end{enumerate}
\end{corollary}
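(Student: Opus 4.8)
The plan is to derive the corollary by assembling three facts that are already available to us: the classical theorem of \cite{MR0327518} that an uncountable cardinal $\theta$ is supercompact if and only if it is $\lambda$-ineffable for every cardinal $\lambda\geq\theta$; Proposition \ref{proposition:InternalSupercompactISP}, which identifies internal AP supercompactness with the conjunction of $\ISP(\theta,\lambda)$ over all $\lambda\geq\theta$; and Theorem \ref{ineffabletheorem}, which links $\lambda$-ineffability of $\theta$ in $\VV$ to $\ISP(\omega_2,\check\lambda)$ holding in the $\PPP_\theta$-extension, under the cardinal arithmetic hypothesis $\lambda=\lambda^{{<}\theta}$. Throughout I will use that, since $\theta$ is inaccessible, Lemma \ref{chain_condition} and Theorem \ref{aleph2} guarantee that $\PPP_\theta$ satisfies the $\theta$-chain condition and forces $\theta=\omega_2$; in particular, the cardinals $\geq\omega_2$ of any $\PPP_\theta$-generic extension $\VV[G]$ are exactly the $\VV$-cardinals $\geq\theta$.

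The equivalence of (ii) and (iii) is immediate: Proposition \ref{proposition:InternalSupercompactISP} is a theorem of $\ZFC$ and hence holds in $\VV[G]$, where $\omega_2=\theta$; thus $\mathbbm{1}_{\PPP_\theta}$ forces that $\omega_2$ is internally AP supercompact precisely when it forces $\ISP(\omega_2,\lambda)$ for all cardinals $\lambda\geq\omega_2$. For the implication (iii)$\Rightarrow$(i), I would fix an arbitrary $\VV$-cardinal $\lambda\geq\theta$ with $\lambda=\lambda^{{<}\theta}$; since such $\lambda$ remain cardinals $\geq\omega_2$ in $\VV[G]$, assumption (iii) gives $\mathbbm{1}_{\PPP_\theta}\Vdash\ISP(\omega_2,\check\lambda)$, and Theorem \ref{ineffabletheorem} then yields that $\theta$ is $\lambda$-ineffable in $\VV$. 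As the cardinals satisfying $\lambda=\lambda^{{<}\theta}$ are cofinal in the cardinals above $\theta$, the standard downward monotonicity of $\lambda$-ineffability (obtained by the usual projection of a $\Poti{\lambda'}{\theta}$-list to a $\Poti{\lambda}{\theta}$-list) shows that $\theta$ is $\lambda$-ineffable for every $\lambda\geq\theta$, whence $\theta$ is supercompact by \cite{MR0327518}.

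For (i)$\Rightarrow$(ii), by the already established equivalence of (ii) and (iii) it suffices to show $\mathbbm{1}_{\PPP_\theta}\Vdash$\anf{$\ISP(\omega_2,\lambda)$ holds for all cardinals $\lambda\geq\omega_2$}. Working in $\VV[G]$, I would fix a cardinal $\lambda\geq\omega_2$, which is then a $\VV$-cardinal $\geq\theta$, and choose a $\VV$-cardinal $\lambda'\geq\lambda$ with $\lambda'=(\lambda')^{{<}\theta}$. Supercompactness of $\theta$ makes it $\lambda'$-ineffable in $\VV$, so Theorem \ref{ineffabletheorem} gives $\VV[G]\models\ISP(\omega_2,\lambda')$, and the analogous downward monotonicity of $\ISP(\omega_2,\cdot)$, proved by projecting a slender $\Poti{\lambda'}{\omega_2}$-list to a slender $\Poti{\lambda}{\omega_2}$-list and intersecting the resulting ineffable branch with $\lambda$, yields $\ISP(\omega_2,\lambda)$ in $\VV[G]$. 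Since $\lambda$ was arbitrary, (ii) follows.

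The only genuine friction in this argument is the cardinal arithmetic hypothesis $\lambda=\lambda^{{<}\theta}$ built into Theorem \ref{ineffabletheorem}: it forces us to pass from $\lambda$-ineffability, respectively $\ISP$, at a cofinal class of well-behaved $\lambda$ to the full quantification over all $\lambda\geq\theta$. I therefore expect the main point to verify carefully to be the two downward monotonicity statements, for $\lambda$-ineffability and for $\ISP$, together with the bookkeeping that distinguishes $\VV$-cardinals from $\VV[G]$-cardinals; both monotonicity facts are routine $\Poti{\lambda}{\theta}$-projection arguments, but they must be invoked in the correct model. No forcing-theoretic input beyond Theorem \ref{ineffabletheorem}, Proposition \ref{proposition:InternalSupercompactISP}, and the preservation properties of $\PPP_\theta$ recorded above is needed.
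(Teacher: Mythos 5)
Your proposal is correct and follows essentially the same route as the paper: the equivalence of (ii) and (iii) via Proposition \ref{proposition:InternalSupercompactISP}, and the equivalence of (i) and (iii) by combining the Magidor characterization of supercompactness through $\lambda$-ineffability with Theorem \ref{ineffabletheorem}, the cofinality of the class of $\lambda$ with $\lambda=\lambda^{{<}\theta}$, and the downward monotonicity of $\ISP$ (and of $\lambda$-ineffability). The paper states this in two sentences; you have merely spelled out the same monotonicity and cardinal-bookkeeping steps in more detail.
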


\begin{proof}
 Since the results of \cite{MR0327518} show that a cardinal $\theta$ is supercompact if and only if it is $\lambda$-ineffable for all $\lambda\geq\theta$, the equivalence between (i) and (iii) follows directly from Theorem \ref{ineffabletheorem}, the fact that there is a proper class of cardinals $\lambda$ satisfying $\lambda=\lambda^{{<}\theta}$ and the fact that $\ISP(\theta,\lambda_1)$ implies $\ISP(\theta,\lambda_0)$ for all cardinals $\theta\leq\lambda_0\leq\lambda_1$. 
 The equivalence between (ii)  and (iii) is given by Proposition \ref{proposition:InternalSupercompactISP}. 
\end{proof}

 As mentioned in the proof of Corollary \ref{corollary:InaccIneffableInternalIneffable}, an inaccessible cardinal $\theta$ is supercompact if and only if $\ISP(\theta,\lambda)$ holds for all cardinals $\lambda\geq\theta$. 
In combination with Proposition \ref{proposition:InternalSupercompactISP}, this directly shows that the characterization of supercompactness provided by the above corollary is strong.


\section[Supercompact cardinals]{$\gamma$-supercompact cardinals}\label{section:supercompact}

In this section and the next, we show that it is possible to use Neeman's pure side condition forcing to characterize levels of supercompactness, almost huge, and super almost huge cardinals. Since either no small embedding characterizations for these properties are known, or the existing small embedding characterizations are not suitable for our purposes, our characterizations instead make use of the classical concept of \emph{generic elementary embeddings}. 
In the following, we will provide an alternative characterization of supercompactness. Afterwards, we will use ideas from these proofs to characterize even stronger large cardinal properties. 
The following lemma lies at the heart of these results. Its proof heavily relies on the concepts and results presented in  {\cite[Section 6]{MR2838054}}. Remember that, given transitive classes $M\subseteq N$ and $\theta\in M$ a cardinal in $N$, the pair $(M,N)$ satisfies the \emph{$\theta$-cover property} if for every $A\in N$ with $A\subseteq M$ and $\betrag{A}^N<\theta$, there exists $B\in M$ with $A\subseteq B$ and $\betrag{B}^M<\theta$.

\begin{lemma}\label{lemma:FilterInGroundModel}
 Let $\VV[G]$ be a generic extension of the ground model $\VV$, let $\VV[G,H]$ be a generic extension of $\VV[G]$ and let $\map{j}{\VV[G]}{M}$ be an elementary embedding definable in $\VV[G,H]$ with critical point $\theta$. Assume that the following statements hold: 
 \begin{enumerate}[leftmargin=0.7cm]
  \item $\theta$ is an inaccessible cardinal in $\VV$. 

  \item The pair $(\VV,\VV[G])$ satisfies the $\sigma$-approximation and the $\theta$-cover property. 

  \item The pair $(\VV[G],\VV[G,H])$ satisfies the $\sigma$-approximation property. 
 \end{enumerate}
In this situation, if $\theta\leq\gamma<j(\theta)$ is an ordinal with $j[\gamma]\in M$, then  $j[\gamma]\in j(\Poti{\gamma}{\theta}^\VV)$, and the set $$\calU ~ = ~ \Set{A\in\POT{\Poti{\gamma}{\theta}}^\VV}{j[\gamma]\in j(A)}$$ is an element of $\VV$. 
\end{lemma}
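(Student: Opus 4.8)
The plan is to realise $\calU$ as a seed filter derived from $j$ and then to push it down into $\VV$ by two applications of the $\sigma$-approximation property. Since $\VV[G]$ and $\VV[G,H]$ are set-generic extensions, the ground models $\VV$ and $\VV[G]$ are definable in them (by the ground model definability theorem of Laver and Woodin), so $j(\VV)$ is a well-defined inner model of $M$ and, by the elementarity of $j$ together with hypothesis (ii), the pair $(j(\VV),M)$ satisfies the $\sigma$-approximation property and the $j(\theta)$-cover property. Throughout I write $S=\Poti{\gamma}{\theta}^{\VV}$, and note that $\calU\subseteq\POT{S}^{\VV}\in\VV$ is definable in $\VV[G,H]$ but a priori lives only there.

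First I would prove that $j[\gamma]\in j(S)$. By elementarity $j(S)=\Poti{j(\gamma)}{j(\theta)}^{j(\VV)}$, and since $\theta$ is inaccessible in $\VV$, the ordinal $j(\theta)$ is inaccessible in $j(\VV)$; as $\gamma<j(\theta)$, every element of $\POT{j(\gamma)}^{j(\VV)}$ has $j(\VV)$-cardinality below $j(\theta)$, so it suffices to show $j[\gamma]\in j(\VV)$. For this I would apply the $\sigma$-approximation property of $(j(\VV),M)$ to the set $j[\gamma]\in M$: since $j[\gamma]\subseteq j(\gamma)\in j(\VV)$, I only need that $j[\gamma]\cap a\in j(\VV)$ for every $a\in\Poti{j(\gamma)}{\omega_1}^{j(\VV)}$. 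This trace computation is the technical heart of the argument and is where the $\theta$-cover property enters, through the machinery of \cite[Section 6]{MR2838054}. I expect it to be the main obstacle, precisely because the $\sigma$-approximation property gives no information about the countably many ordinals involved until covering has been used to capture $j[\gamma]$ by a suitable object of $j(\VV)$.

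The descent of $\calU$ into $\VV$ I would carry out by a uniform computation of its countable traces. Fix a countable family $\vec A=\seq{A_n}{n<\omega}$ with each $A_n\in\POT{S}^{\VV}$. Using the $\theta$-cover property of $(\VV,\VV[G])$, I engulf $\Set{A_n}{n<\omega}$ in a single set $\hat\calA\in\VV$ with $\hat\calA\subseteq\POT{S}^{\VV}$ and $\betrag{\hat\calA}^{\VV}<\theta$. In $\VV$, define $\map{g}{S}{\POT{\hat\calA}}$ by $g(s)=\Set{A\in\hat\calA}{s\in A}$ and set $R=\ran{g}$. Since $\theta$ is inaccessible, $\betrag{R}^{\VV}\le(2^{\betrag{\hat\calA}})^{\VV}<\theta=\crit{j}$, and therefore $j(R)=j[R]$. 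As $j[\gamma]\in j(S)=\dom{j(g)}$, the element $j(g)(j[\gamma])=\Set{B\in j(\hat\calA)}{j[\gamma]\in B}$ of $j(R)$ is of the form $j(g(s))$ for some $s\in S$. Unwinding this identity, and using that $j$ is injective and that $\betrag{g(s)}^{\VV}<\crit{j}$, one obtains that for every $A\in\hat\calA$, and in particular for every $A_n$, one has $A\in\calU$ if and only if $s\in A$. Hence $\calU\cap\Set{A_n}{n<\omega}=\Set{A_n}{s\in A_n}$ is computed from $s\in\VV$ and $\vec A$.

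Finally I would assemble the two steps. Reading the previous computation with $\vec A\in\VV[G]$ shows that $\calU\cap y\in\VV[G]$ for every $y\in\Poti{\POT{S}^{\VV}}{\omega_1}^{\VV[G]}$; since $\calU\subseteq\POT{S}^{\VV}\in\VV[G]$, the $\sigma$-approximation property of $(\VV[G],\VV[G,H])$ yields $\calU\in\VV[G]$. Reading it again with $\vec A\in\VV$ shows that $\calU\cap x\in\VV$ for every $x\in\Poti{\POT{S}^{\VV}}{\omega_1}^{\VV}$; since now $\calU\in\VV[G]$ and $\calU\subseteq\POT{S}^{\VV}\in\VV$, the $\sigma$-approximation property of $(\VV,\VV[G])$ yields $\calU\in\VV$, as desired. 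The only genuinely hard point in this scheme is the seed-capturing fact $j[\gamma]\in j(\VV)$ of the second paragraph; everything else reduces, via inaccessibility and covering, to the elementary observation that $j$ adds no new points to the image of a set of size below its critical point.
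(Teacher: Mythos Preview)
Your descent argument for $\calU$ (the third and fourth paragraphs) is essentially the same idea as the paper's: cover a countable family by a set $\hat\calA\in\VV$ of size ${<}\theta$, note that the induced ``type'' function $g:S\to\POT{\hat\calA}$ has range of size ${<}\theta=\crit j$, and read off the trace of $\calU$ from a single $s\in S$ with $j(g(s))=j(g)(j[\gamma])$. That part is fine.

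The genuine gap is the second paragraph. You propose to obtain $j[\gamma]\in j(S)$ by applying the $\sigma$-approximation property of $(j(\VV),M)$, which reduces to showing $j[\gamma]\cap a\in j(\VV)$ for every $a\in\Poti{j(\gamma)}{\omega_1}^{j(\VV)}$; you then defer this as ``the main obstacle''. I do not see how your suggested route closes. The set $a$ is an arbitrary countable subset of $j(\gamma)$ lying in $j(\VV)$; it need not be in $\ran j$, and the $j(\theta)$-cover property only gives you a superset of $j[\gamma]$ in $j(\VV)$, not a way to compute which elements of $a$ lie in $j[\gamma]$. The real $\Set{n<\omega}{f(n)\in j[\gamma]}$ (for an enumeration $f\in j(\VV)$ of $a$) lives only in $M$, and $\sigma$-approximation gives no leverage on single reals. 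Note also that your descent to $\VV[G]$ already presupposes $j[\gamma]\in j(S)$ (you evaluate $j(g)$ at $j[\gamma]$ for $g$ defined on $S$), so the whole argument hinges on this unproved step.

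The paper does not argue via $(j(\VV),M)$ at all. It first proves $\calU\in\VV[G]$ using only the automatic fact $j[\gamma]\in j(\Poti{\gamma}{\theta}^{\VV[G]})$ (regularity of $\theta$ in $\VV[G]$ suffices), running your type-function trick with domain $\Poti{\gamma}{\theta}^{\VV[G]}$ instead of $S$. For $j[\gamma]\in j(S)$ it argues by contradiction: if $j[\gamma]\notin j(S)$, then by $\sigma$-approximation of $(\VV,\VV[G])$ there is a function $a\colon\Poti{\gamma}{\theta}^{\VV[G]}\to\Poti{\gamma}{\omega_1}^{\VV}$ with $a(x)\cap x\notin\VV$ for $x\notin\VV$. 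Setting $d(x)=a(x)\cap x$ and $D=\Set{\alpha<\gamma}{j(\alpha)\in j(d)(j[\gamma])}$, one shows (by the same covering trick, now over $(\VV[G],\VV[G,H])$) that $D\in\VV[G]$ and that $U=\Set{x}{d(x)=D\cap x}$ is unbounded in $\Poti{\gamma}{\theta}^{\VV[G]}$. One then threads an increasing $\omega_1$-sequence through $U$, alternating with $\VV$-covers; since the values $d(x)$ are countable, the sequence stabilises at some stage $\alpha_*$, and one computes $d(x_{\alpha_*})=a(x_{\alpha_*+1})\cap y_{\alpha_*}\in\VV$, contradicting the choice of $a$. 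Only after this does the paper run your type-function argument (now legitimately on $S$) to get $\calU\in\VV$.
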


\begin{proof}
 The above assumptions imply that $\omega_1^\VV=\omega_1^{\VV[G]}=\omega_1^{\VV[G,H]}$, and hence $\theta$ is an uncountable regular cardinal greater than $\omega_1$ in $\VV[G]$.

 \begin{claim*}
  $\calU\in\VV[G]$. 
 \end{claim*}
 
 \begin{proof}[Proof of the Claim]
  Assume, towards a contradiction, that the set $\calU$ is not an element of $\VV[G]$. Then there is $u\in\Poti{\POT{\Poti{\gamma}{\theta}}^\VV}{\omega_1}^{\VV[G]}$ with $\calU\cap u\notin\VV[G]$. Define $$\Map{d}{\Poti{\gamma}{\theta}^{\VV[G]}}{\POT{u}^{\VV[G]}}{x}{\Set{A\in u}{x\in A}}.$$ By our assumptions, there is $c\in\Poti{\POT{\Poti{\gamma}{\theta}}}{\theta}^\VV$ with $u\subseteq c$. In the following, let $\map{a}{\Poti{\gamma}{\theta}^{\VV[G]}}{\POT{c}^\VV}$ denote the unique function with $a(x)=\Set{A\in c}{x\in A}$ for all $x\in\dom{a}\cap\VV$ and $a(x)=\emptyset$ for all $x\in\dom{a}\setminus\VV$. Since $d(x)=\emptyset$ for all $x\in\Poti{\gamma}{\theta}^{\VV[G]}\setminus\VV$, we then have $d(x)=a(x)\cap u$ for all $x\in \Poti{\gamma}{\theta}^{\VV[G]}$ and $$\ran{d} ~ = ~ \Set{a(x)\cap u}{x\in \Poti{\gamma}{\theta}^{\VV[G]}} ~ \subseteq ~ \Set{u\cap y}{y\in\POT{c}^\VV}.$$ Since $\theta$ is inaccessible in $\VV$, this implies that $\ran{d}$ has cardinality less than $\theta$ in $\VV[G]$ and there 
is a bijection $\map{b}{\mu}{\ran{d}}$ in $\VV[G]$ for some $\mu<\theta$. In this situation, we have $j[\gamma]\in j(\Poti{\gamma}{\theta}^{\VV[G]})$, and elementarity yields an $\alpha<\mu$ with $j(b)(\alpha)=j(d)(j[\gamma])$. But then $$j[(b(\alpha))] ~ = ~ j(b)(\alpha) ~ = ~ j(d)(j[\gamma]) ~ = ~ \Set{j(A)}{A\in u, ~ j[\gamma]\in j(A)} ~ = ~ j[(\calU\cap u)],$$ and this implies that $\calU\cap u= b(\alpha)\in\VV[G]$, a contradiction. 
 \end{proof}

 \begin{claim*}
  $j[\gamma]\in j(\Poti{\gamma}{\theta}^\VV)$. 
 \end{claim*}
 
 \begin{proof}[Proof of the Claim]
  Assume, towards a contradiction, that the set $j[\gamma]$ is not an element of $j(\Poti{\gamma}{\theta}^\VV)$.   By our assumptions on $\VV$ and $\VV[G]$, there is a function $\map{a}{\Poti{\gamma}{\theta}^{\VV[G]}}{\Poti{\gamma}{\omega_1}^\VV}$ in $\VV[G]$ with $a(x)=\emptyset$ for all $x\in\dom{a}\cap\VV$ and $a(x)\cap x\notin\VV$ for all $x\in\dom{a}\setminus\VV$. Define $$\Map{d}{\Poti{\gamma}{\theta}^{\VV[G]}}{\Poti{\gamma}{\omega_1}^{\VV[G]}}{x}{a(x)\cap x},$$ and set $D=\Set{\alpha<\gamma}{j(\alpha)\in j(d)(j[\gamma])}$. Then our assumption and elementarity imply that $D\neq\emptyset$.

  \begin{subclaim*}
   $D\in\VV[G]$. 
  \end{subclaim*}
  
  \begin{proof}[Proof of the Subclaim]
   Assume, towards a contradiction, that $D$ is not an element of $\VV[G]$. Then there is $u\in\Poti{\gamma}{\omega_1}^{\VV[G]}$ with $D\cap u\notin\VV[G]$. Define $$R ~ = ~ \Set{d(x)\cap u}{u\subseteq x\in\Poti{\gamma}{\theta}^{\VV[G]}},$$ and fix $c\in\Poti{\gamma}{\theta}^\VV$ with $u\subseteq c$. Then $$R ~ = ~ \Set{a(x)\cap c\cap u}{u\subseteq x\in\Poti{\gamma}{\theta}^{\VV[G]}} ~ \subseteq ~ \Set{u\cap y}{y\in\Poti{c}{\omega_1}^\VV},$$ and, since $\theta$ is inaccessible in $\VV$, there is a bijection $\map{b}{\mu}{R}$ in $\VV[G]$ with $\mu<\theta$.

 We now have $j(d)(j[\gamma])\cap j(u)\in j(R)$, because $j(u)=j[u]\subseteq j[\gamma]\in j(\Poti{\gamma}{\theta}^{\VV[G]})$. Hence there is an $\alpha<\mu$ with $$j[(b(\alpha))] ~ = ~ j(b)(\alpha) ~ = ~ j(d)(j[\gamma])\cap j(u) ~ = ~ j[(D\cap u)],$$ and this implies that $D\cap u=b(\alpha)\in\VV[G]$, a contradiction.  
  \end{proof}

  Define $U=\Set{x\in\Poti{\gamma}{\theta}^{\VV[G]}}{d(x)=D\cap x}\in\VV[G]$.

  \begin{subclaim*}
   In $\VV[G]$, the set $U$ is unbounded in $\Poti{\gamma}{\theta}$. 
  \end{subclaim*}
  
   \begin{proof}[Proof of the Subclaim]
    We have $j(d)(j[\gamma])=j[D]=j(D)\cap j[\gamma]$, and this shows that $j[\gamma]\in j(U)$. Now, if $x\in\Poti{\gamma}{\theta}^{\VV[G]}$, then $j(x)=j[x]\subseteq j[\gamma]\in j(U)$, and hence elementarity yields a $y\in U$ with $x\subseteq y$. 
  \end{proof}

  Now, work in $\VV[G]$ and use our assumptions together with the last claim to construct a sequence $\seq{x_\alpha}{\alpha\le\omega_1}$ of elements of $U$ and a sequence $\seq{y_\alpha}{\alpha\le\omega_1}$ of elements of $\Poti{\gamma}{\theta}^\VV$, such that $d(x_0)\neq\emptyset$, and such that $$\bigcup\Set{y_{\bar{\alpha}}}{\bar{\alpha}<\alpha} ~ \subseteq ~ x_\alpha ~ \subseteq ~  y_\alpha$$ for all $\alpha\le\omega_1$.  Then we have $$d(x_{\bar{\alpha}}) ~ = ~ D\cap x_{\bar{\alpha}} ~ \subseteq ~ D\cap x_\alpha ~ = ~ d(x_\alpha)$$ for all $\bar{\alpha}\leq\alpha\le\omega_1$. Since $d(x_{\omega_1})$ is a countable set, this implies that there is an $\alpha_*<\omega_1$ with $d(x_{\alpha_*})=d(x_\alpha)$ for all $\alpha_*\leq\alpha\le\omega_1$. Then 
  \begin{equation*}
   \begin{split}
     d(x_{\alpha_*}) ~  & = ~ d(x_{\alpha_*+1})\cap x_{\alpha_*} ~ \subseteq  ~ a(x_{\alpha_*+1})\cap y_{\alpha_*}   \\
 & \subseteq ~ a(x_{\alpha_*+1})\cap x_{\alpha_*+1} ~ = ~ d(x_{\alpha_*+1}) ~ = ~ d(x_{\alpha_*})
   \end{split}
  \end{equation*}
  and therefore $\emptyset\neq d(x_{\alpha_*})=a(x_{\alpha_*+1}) \cap y_{\alpha_*}\in\VV$, a contradiction. 
 \end{proof}

 Assume, towards a contradiction, that $\calU$ is not an element of $\VV$. Since $\calU\in\VV[G]$, this implies that there is a $u\in\Poti{\POT{\Poti{\gamma}{\theta}}}{\omega_1}^\VV$ with $\calU\cap u\notin\VV$. Define $$\Map{d}{\Poti{\gamma}{\theta}^\VV}{\POT{u}^\VV}{x}{\Set{A\in u}{x\in A}}.$$ Since $\theta$ is inaccessible in $\VV$, we can find a bijection $\map{b}{\mu}{\ran{d}}$ in $\VV$ with $\mu<\theta$. By the above claim, we have $j[\gamma]\in j(\Poti{\gamma}{\theta}^\VV)$ and hence there is an $\alpha<\mu$ with $j(d)(j[\gamma])=j(b)(\alpha)$. But then $$j[(b(\alpha))] ~ = ~ j(b)(\alpha) ~ = ~ j(d)(j[\gamma]) ~ = ~ \Set{j(A)}{A\in u, ~ j[\gamma]\in j(A)} ~ = ~ j[(\calU\cap u)],$$ and this implies that $\calU\cap u=b(\alpha)\in\VV$, a contradiction.  
\end{proof}

We now study typical situations in which the assumptions of Lemma \ref{lemma:FilterInGroundModel} are satisfied.

\begin{definition}\label{definition:GenGammaSupercompact}
  Given an uncountable regular cardinal $\theta$ and an ordinal $\gamma\geq\theta$, we say that a partial order \emph{$\PPP$ witnesses that $\theta$ is generically $\gamma$-supercompact} if there is a $\PPP$-name $\dot{\calU}$ such that $\dot{\calU}^G$ is a fine, $\VV$-normal, $\VV$-${<}\theta$-complete ultrafilter on $\POT{\Poti{\gamma}{\theta}}^\VV$  in $\VV[G]$, with the property that the corresponding ultrapower $\Ult{\VV}{\dot{\calU}^G}$ is well-founded whenever $G$ is $\PPP$-generic over $\VV$. 
\end{definition}

\begin{proposition}\label{proposition:GenericElementaryEmbeddingFromGenericSupercompact}
  Let $\PPP$ be a partial order witnessing that an uncountable regular cardinal $\theta$ is generically $\gamma$-supercompact, and let $\dot{\calU}$ be the corresponding $\PPP$-name. If $G$ is $\PPP$-generic over $\VV$, and $\map{j}{\VV}{\Ult{\VV}{\dot{\calU}^G}}$ is the corresponding ultrapower embedding defined in $\VV[G]$, then $j$ has critical point $\theta$, $j(\theta)>\gamma$, and $j[\gamma]\in\Ult{\VV}{\dot{\calU}^G}$.   
\end{proposition}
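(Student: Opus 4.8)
The plan is to establish all three conclusions through the standard ultrapower analysis of a normal fine ultrafilter, carried out in $\VV[G]$ for the external $\VV$-ultrafilter $\calU=\dot{\calU}^G$ on $\POT{\Poti{\gamma}{\theta}}^\VV$. The embedding $\map{j}{\VV}{\Ult{\VV}{\calU}}$ sends $a$ to the class $[c_a]$ of the constant function with value $a$, and I would begin by recording that the theorem of {\L}o\'s holds for this ultrapower: for every formula $\varphi$ and all functions $f_1,\dots,f_k\in\VV$ with domain $\Poti{\gamma}{\theta}^\VV$, the set $\Set{x}{\VV\models\varphi(f_1(x),\dots,f_k(x))}$ lies in $\VV$ and is therefore measured by $\calU$, which is exactly what the usual induction requires (the existential step using a well-ordering of $\VV$ to choose witnesses). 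Since $\Ult{\VV}{\calU}$ is well-founded by hypothesis, I identify it with its transitive collapse.

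For the critical point, I would first use $\VV$-${<}\theta$-completeness: given $\mu<\theta$ and an ordinal $[f]<j(\mu)$ of the ultrapower, {\L}o\'s's theorem gives $\Set{x}{f(x)<\mu}\in\calU$, and partitioning this set into the fewer than $\theta$ pieces $\Set{x}{f(x)=\xi}$ for $\xi<\mu$ forces one of them into $\calU$ by completeness, so $[f]=j(\xi)$ for some $\xi<\mu$. Hence the ordinals below $j(\mu)$ are exactly $\Set{j(\xi)}{\xi<\mu}$, a set of order type $\mu$, so $j(\mu)=\mu$. To see that $\theta$ is moved, I would use fineness together with the regularity of $\theta$: the function $g(x)=\sup(x\cap\theta)$ takes values below $\theta$ (as $\betrag{x}<\theta$), so $[g]<j(\theta)$, while fineness gives $\Set{x}{\alpha\in x}\subseteq\Set{x}{g(x)\geq\alpha}\in\calU$ for each $\alpha<\theta$, whence $[g]\geq j(\alpha)=\alpha$ for all $\alpha<\theta$ and thus $\theta\leq[g]<j(\theta)$. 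Together this yields $\crit{j}=\theta$ and $j(\theta)>\theta$.

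The remaining two conclusions both flow from the identity $[\id]=j[\gamma]$, where $\id(x)=x$. Fineness gives, for each $\alpha<\gamma$, that $\Set{x}{\alpha\in x}\in\calU$, i.e.\ $j(\alpha)\in_\calU[\id]$, so $j[\gamma]$ is contained in the extension of $[\id]$; conversely, if $[f]\in_\calU[\id]$, then $f(x)\in x\subseteq\gamma$ on a $\calU$-large set, and $\VV$-normality makes $f$ equal to a constant $\alpha$ there, where necessarily $\alpha<\gamma$ because $\alpha\in x\subseteq\gamma$, so $[f]=j(\alpha)\in j[\gamma]$. Thus the extension of $[\id]$ is exactly $j[\gamma]$, and after collapse $[\id]=j[\gamma]$, giving $j[\gamma]\in\Ult{\VV}{\calU}$. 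Finally, since $\betrag{\id(x)}=\betrag{x}<\theta$ everywhere, {\L}o\'s's theorem gives $\Ult{\VV}{\calU}\models\betrag{[\id]}<j(\theta)$; but $[\id]=j[\gamma]$ has order type $\gamma$ (absolutely), so $\betrag{\gamma}<j(\theta)$ holds in the ultrapower, and as $j(\theta)$ is a cardinal there, being the image of the regular cardinal $\theta$, this forces $\gamma<j(\theta)$, i.e.\ $j(\theta)>\gamma$.

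I expect the only genuine subtlety, and hence the step to state most carefully, to be the justification that {\L}o\'s's theorem and the representation $[\id]=j[\gamma]$ remain valid for an ultrafilter that lives outside $\VV$ and measures only the $\VV$-subsets of $\Poti{\gamma}{\theta}^\VV$: every partition, regressive function, and witness choice invoked above must be taken in $\VV$, which is precisely what the prefixes ``$\VV$-normal'' and ``$\VV$-${<}\theta$-complete'' in Definition \ref{definition:GenGammaSupercompact} are designed to supply. Once this is in place, each step is a routine verification.
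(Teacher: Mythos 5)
Your proof is correct and follows essentially the same route as the paper's: $\VV$-${<}\theta$-completeness yields $j\restriction\theta=\id_\theta$, fineness and $\VV$-normality yield $[\id]=j[\gamma]\in\Ult{\VV}{\dot{\calU}^G}$, and the fact that $\otp{a}<\theta$ for every $a\in\Poti{\gamma}{\theta}^\VV$ yields $j(\theta)>\gamma$ (the paper computes $\gamma=[a\mapsto\otp{a}]<[a\mapsto\theta]=j(\theta)$ directly instead of passing through cardinalities in the ultrapower, but this is the same idea). The care you take in justifying {\L}o\'s's theorem for an external $\VV$-ultrafilter measuring only $\VV$-sets is exactly the point the paper's three-line proof leaves implicit.
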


\begin{proof}
 The  $\VV$-${<}\theta$-completeness of $\dot{\calU}^G$ yields $j\restriction\theta=\id_\theta$. The fineness and $\VV$-normality of $\dot{\calU}^G$ imply that $j[\gamma]=[\id_{\Poti{\gamma}{\theta}^\VV}]_{\dot{\calU}^G}\in\Ult{\VV}{\dot{\calU}^G}$, and moreover 
 \begin{equation*}
  \theta \le \gamma  ~ = ~ [a\mapsto\otp{a}]_{\dot{\calU}^G} ~ < ~ [a\mapsto \theta]_{\dot{\calU}^G}=j(\theta). \qedhere
 \end{equation*}
\end{proof}

The following results yield strong characterizations of measurable and of supercompact cardinals through Neeman's pure side condition forcing.

\begin{lemma}\label{theorem:GenGammaSupercompactAtInaccessbles}
  The following statements are equivalent for every inaccessible cardinal $\theta$ and every ordinal $\gamma\geq\theta$: 
 \begin{enumerate}[leftmargin=0.7cm]
  \item $\theta$ is a $\gamma$-supercompact cardinal. 

  \item There is a partial order with the $\sigma$-approximation property witnessing that $\theta$ is generically $\gamma$-supercompact. 
 \end{enumerate}
\end{lemma}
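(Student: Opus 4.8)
The plan is to prove the two implications separately; (i)$\Rightarrow$(ii) is essentially immediate, whereas (ii)$\Rightarrow$(i) carries the content. For (i)$\Rightarrow$(ii) I would take $\PPP$ to be the trivial partial order. Since $\theta$ is $\gamma$-supercompact, fix in $\VV$ a fine, normal, ${<}\theta$-complete ultrafilter $\calU$ on $\POT{\Poti{\gamma}{\theta}}$ and let $\dot\calU$ name $\check\calU$. As forcing with $\PPP$ gives $\VV[G]=\VV$, the interpretation $\dot\calU^G=\calU$ is such a measure in $\VV[G]$ with well-founded ultrapower, so $\PPP$ witnesses that $\theta$ is generically $\gamma$-supercompact, and the pair $(\VV,\VV)$ trivially has the $\sigma$-approximation property. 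The forcing $\PPP_\theta$ relevant to the rest of the paper works equally well here; the only extra point one would have to check for that witness is that the ultrapower of $\VV$ by the ground-model measure remains well-founded in the extension.

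For (ii)$\Rightarrow$(i), let $\PPP$ have the $\sigma$-approximation property and witness generic $\gamma$-supercompactness via $\dot\calU$, let $G$ be $\PPP$-generic over $\VV$, and let $\map{j}{\VV}{M=\Ult{\VV}{\dot\calU^G}}$ be the ultrapower embedding computed in $\VV[G]$. By Proposition \ref{proposition:GenericElementaryEmbeddingFromGenericSupercompact}, $\crit{j}=\theta$, $j(\theta)>\gamma$, and $j[\gamma]=[\id_{\Poti{\gamma}{\theta}^\VV}]_{\dot\calU^G}\in M$. I would set $$\calU=\Set{A\in\POT{\Poti{\gamma}{\theta}}^\VV}{j[\gamma]\in j(A)},$$ and observe that, by the standard computation of the measure derived from an ultrapower, $\calU$ coincides with $\dot\calU^G$; in particular $\calU\in\VV[G]$, and since $j[\gamma]$ is represented by the identity we get $j[\gamma]\in j(\Poti{\gamma}{\theta}^\VV)$ for free. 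The crux is to push $\calU$ down into $\VV$: once $\calU\in\VV$ is known, the statements that $\calU$ is a fine, normal, and ${<}\theta$-complete ultrafilter on $\Poti{\gamma}{\theta}$ each concern only $\calU$ and objects of $\VV$, hold in $\VV[G]$ by the definition of generic $\gamma$-supercompactness, and are therefore absolute to $\VV$; in $\VV$ these properties are exactly a witness that $\theta$ is $\gamma$-supercompact.

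To obtain $\calU\in\VV$ I would run the concluding step of the proof of Lemma \ref{lemma:FilterInGroundModel}. Assuming $\calU\notin\VV$, the $\sigma$-approximation property of $(\VV,\VV[G])$ yields a $\VV$-countable $u\in\Poti{\POT{\Poti{\gamma}{\theta}}}{\omega_1}^\VV$ with $\calU\cap u\notin\VV$; letting $\map{d}{\Poti{\gamma}{\theta}^\VV}{\POT{u}}$ send $x$ to $\Set{A\in u}{x\in A}$, the inaccessibility of $\theta$ provides a bijection $\map{b}{\mu}{\ran{d}}$ in $\VV$ with $\mu<\theta$, and $j[\gamma]\in j(\Poti{\gamma}{\theta}^\VV)$ then gives an $\alpha<\mu$ with $j[b(\alpha)]=j(d)(j[\gamma])=j[\calU\cap u]$, so that $\calU\cap u=b(\alpha)\in\VV$, a contradiction. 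The main obstacle, and my reason for not invoking Lemma \ref{lemma:FilterInGroundModel} as a black box, is that its hypotheses require the $\theta$-cover property of $(\VV,\VV[G])$, which a forcing with merely the $\sigma$-approximation property need not satisfy. What makes the argument go through regardless is that the two intermediate claims in the proof of that lemma---that $\calU\in\VV[G]$ and that $j[\gamma]\in j(\Poti{\gamma}{\theta}^\VV)$, which are precisely where the cover property is consumed---are automatic in the present setting, since $\calU$ is the generic ultrafilter itself and $j[\gamma]$ is represented by the identity; the final descent step then uses nothing beyond the $\sigma$-approximation property and the inaccessibility of $\theta$.
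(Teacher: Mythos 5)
Your proof is correct and follows the paper's argument essentially verbatim: the forward direction uses the trivial forcing, and the reverse direction derives $\calU=\Set{A\in\POT{\Poti{\gamma}{\theta}}^\VV}{j[\gamma]\in j(A)}$ from the generic ultrapower embedding and pulls it into $\VV$ by exactly the counting argument ($\sigma$-approximation plus inaccessibility of $\theta$) that concludes the proof of Lemma \ref{lemma:FilterInGroundModel}. The only divergence is that you unfold that lemma's proof rather than citing it, out of concern for its $\theta$-cover hypothesis; this worry is unnecessary, since the paper applies the lemma with the first extension taken to be trivial (its ``$\VV[G]$'' is our $\VV$), so the $\theta$-cover property is demanded only of the pair $(\VV,\VV)$ and holds vacuously, while the $\PPP$-extension plays the role of $\VV[G,H]$, for which only the $\sigma$-approximation property is required.
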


\begin{proof}
  If (i) holds, then the trivial partial order clearly witnesses that $\theta$ is generically $\gamma$-supercompact. 
 In order to verify the reverse direction, let $\PPP$ be a partial order with the $\sigma$-approximation property that witnesses $\theta$ to be generically $\gamma$-supercompact, let $H$ be $\PPP$-generic over $\VV$, and let $\map{j}{\VV}{M}$ be the elementary embedding definable in $\VV[H]$ that is provided by an application of Proposition \ref{proposition:GenericElementaryEmbeddingFromGenericSupercompact}. In this situation, an application of Lemma \ref{lemma:FilterInGroundModel} with $\VV=\VV[G]$ shows that the set $\calU=\Set{A\in\POT{\Poti{\gamma}{\theta}}^\VV}{j[\gamma]\in j(A)}$ is an element of $\VV$ and it is easy to see that $\calU$ is a fine, ${<}\theta$-complete, normal ultrafilter on $\POT{\Poti{\gamma}{\theta}}$ in $\VV$. 
  Hence, $\calU$ witnesses that $\theta$ is $\gamma$-supercompact in $\VV$. 
\end{proof}

%
%
%

 The following result shows how $\gamma$-supercompactness can be characterized  through Neeman's pure side condition forcing. Note that in particular, this theorem yields a strong characterization of measurability and yet another strong characterization of supercompactness.

\begin{theorem}\label{theorem:NeemanGammaSupercompact}
 The following statements are equivalent for every inaccessible cardinal $\theta$ and every ordinal $\gamma\geq\theta$: 
 \begin{enumerate}[leftmargin=0.7cm]
  \item $\theta$ is a $\gamma$-supercompact cardinal. 

  \item $\mathbbm{1}_{\PPP_\theta}\Vdash$\anf{There is a partial order with the $\sigma$-approximation property that witnesses $\omega_2$ to be generically $\check{\gamma}$-supercompact}.
 \end{enumerate}
\end{theorem}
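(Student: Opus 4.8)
The plan is to prove the two implications separately, using the lifting techniques from Section \ref{section:Inaccessibles} for the forward direction and Lemma \ref{lemma:FilterInGroundModel} for the reverse direction.

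For the implication from (ii) to (i), I would fix a $\PPP_\theta$-generic filter $G$ over $\VV$. By Theorem \ref{aleph2}, we have $\theta=\omega_2^{\VV[G]}$, and by assumption there is a partial order $\QQQ\in\VV[G]$ with the $\sigma$-approximation property witnessing $\theta$ to be generically $\gamma$-supercompact over $\VV[G]$. Let $H$ be $\QQQ$-generic over $\VV[G]$ and let $\map{j}{\VV[G]}{M}$ be the ultrapower embedding provided by Proposition \ref{proposition:GenericElementaryEmbeddingFromGenericSupercompact}, which is definable in $\VV[G][H]$, has critical point $\theta$, and satisfies $j(\theta)>\gamma$ and $j[\gamma]\in M$. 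I would then apply Lemma \ref{lemma:FilterInGroundModel} with the true ground model $\VV$ in the role of $\VV$, the model $\VV[G]$ in the role of $\VV[G]$, and the model $\VV[G][H]$ in the role of $\VV[G,H]$. Its three hypotheses hold: $\theta$ is inaccessible in $\VV$ by assumption; the pair $(\VV,\VV[G])$ has the $\sigma$-approximation property by Corollary \ref{corollary:SideConditionForcingApproximationProperty} and the $\theta$-cover property because Lemma \ref{chain_condition} shows that $\PPP_\theta$ satisfies the $\theta$-chain condition; and the pair $(\VV[G],\VV[G][H])$ has the $\sigma$-approximation property since $\QQQ$ does. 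The conclusion of Lemma \ref{lemma:FilterInGroundModel} then yields that $\calU=\Set{A\in\POT{\Poti{\gamma}{\theta}}^\VV}{j[\gamma]\in j(A)}$ is an element of $\VV$, and, exactly as in the proof of Lemma \ref{theorem:GenGammaSupercompactAtInaccessbles}, this set is a fine, ${<}\theta$-complete, normal ultrafilter on $\POT{\Poti{\gamma}{\theta}}$ witnessing that $\theta$ is $\gamma$-supercompact in $\VV$.

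For the implication from (i) to (ii), I would fix a normal fine ultrafilter on $\POT{\Poti{\gamma}{\theta}}$ witnessing $\gamma$-supercompactness and let $\map{j}{\VV}{M}$ be the induced ultrapower embedding, so that $\crit{j}=\theta$, $j(\theta)>\gamma$, $\HH{\theta}^M=\HH{\theta}$, $j\restriction\HH{\theta}=\id$ and ${}^\gamma M\subseteq M$. Since $\PPP_\theta$ is uniformly definable, we have $j(\PPP_\theta)=\PPP_{j(\theta)}^M$, and the arguments of Proposition \ref{proposition:complete subforcing} show, inside $M$, that $\HH{\theta}\in\calT_{j(\theta)}^M$ and that $\PPP_\theta$ is a complete suborder of $\PPP_{j(\theta)}^M$. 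Now let $G$ be $\PPP_\theta$-generic over $\VV$, and let $\QQQ=(\dot{\QQQ}^{\HH{\theta}}_{j(\theta)})^G$ be the quotient of $j(\PPP_\theta)$ by $G$, computed in $M[G]$ via Lemma \ref{lemma:forcing_equivalent} applied inside $M$; this is the forcing I claim witnesses (ii). Forcing with $\QQQ$ over $\VV[G]$ and combining the result with $G$ produces a $j(\PPP_\theta)$-generic filter $K$ over $M$ with $K\cap\PPP_\theta=G$, and since $j[G]=G\subseteq K$, the embedding $j$ lifts to an elementary embedding $\map{j^+}{\VV[G]}{M[K]}$ definable in $\VV[G][H]$. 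As $j^+$ has critical point $\theta=\omega_2^{\VV[G]}$, satisfies $j^+(\theta)=j(\theta)>\gamma$ and $j^+[\gamma]=j[\gamma]\in M[K]$, a standard computation as in the proofs of Proposition \ref{proposition:GenericElementaryEmbeddingFromGenericSupercompact} and Lemma \ref{theorem:GenGammaSupercompactAtInaccessbles} shows that $\Set{A\in\POT{\Poti{\gamma}{\theta}}^{\VV[G]}}{j^+[\gamma]\in j^+(A)}$ is a fine, $\VV[G]$-normal, $\VV[G]$-${<}\theta$-complete ultrafilter with well-founded ultrapower (which embeds into $M[K]$); encoding it by a $\QQQ$-name yields the required witness.

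The main obstacle is to show that $\QQQ$ has the $\sigma$-approximation property \emph{in $\VV[G]$}. Corollary \ref{corollary:sigma_approximation} only provides this in $M[G]$, and since $\VV[G][H]\supsetneq M[K]$ this cannot be transferred by a naive absoluteness argument. Instead, I would verify the hypothesis of Lemma \ref{lemma:SigmaApproximation} directly in $\VV[G]$. Fixing a sufficiently large regular cardinal $\vartheta$, let $U$ be the set of all countable elementary submodels $X$ of $\HH{\vartheta}^{\VV[G]}$ containing $\QQQ$ and $\HH{\theta}$ as elements and satisfying $X\cap\HH{\theta}\in\bigcup G$. For such $X$, the countable closure of $M$ implies $N:=X\cap\HH{j(\theta)}^M\in\calS_{j(\theta)}^M$, and then $N$ lies in the stationary set $\hat{\calS}$ of Lemma \ref{strproper} as computed in $M$. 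That lemma, together with Lemma \ref{coolone}, shows that the conditions of $\QQQ$ having $N$ as a node are strong master conditions for $N$, witnessed by the intersection-closure reductions. As these reductions and the compatibility relation of $\QQQ$ are computed identically in $M[G]$ and in $\VV[G]$, strong properness of $\QQQ$ for $N$ is absolute, and since $\QQQ\cap X=\QQQ\cap N$ this yields strong properness of $\QQQ$ for $X$ over $\VV[G]$. Finally, the structure of $\bigcup G$ analysed in the proof of Theorem \ref{aleph2}, namely that the small nodes in $\bigcup G$ form a continuous $\subsetneq$-chain below each transitive node of $\bigcup G$, allows one to build, above any given countable elementary submodel of $\HH{\vartheta}^{\VV[G]}$, a member of $U$, so that $U$ is unbounded in $\Poti{\HH{\vartheta}^{\VV[G]}}{\omega_1}$. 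Lemma \ref{lemma:SigmaApproximation} then gives the $\sigma$-approximation property of $\QQQ$ in $\VV[G]$, completing the argument. The delicate point throughout is precisely this absoluteness of strong properness between $M[G]$ and the larger model $\VV[G]$, on which the whole forward direction hinges.
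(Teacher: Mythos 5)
Your reverse direction is exactly the paper's argument and is fine. The forward direction has the right skeleton --- force with the quotient $\dot{\QQQ}^{\HH{\theta}}_{j(\theta)}$ of $j(\PPP_\theta)$ and lift $j$ --- and you correctly isolate the one delicate point, namely that the quotient must have the $\sigma$-approximation property in $\VV[G]$ rather than merely in $M[G]$. But your resolution of that point has a genuine gap. You take $U$ to consist of countable elementary submodels $X$ of $\HH{\vartheta}^{\VV[G]}$ with $X\cap\HH{\theta}\in\bigcup G$ and claim that the countable closure of $M$ forces $N=X\cap\HH{j(\theta)}^M$ to lie in $\calS_{j(\theta)}^M$. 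This fails: ${}^{\gamma}M\subseteq M$ holds in $\VV$, not in $\VV[G]$, and $\PPP_\theta$ adds reals (its quotients have the $\sigma$-approximation property, which forces this), hence adds new countable subsets of $\HH{j(\theta)}^M$. For a typical $X\in U$ as you define it, the set $X\cap\HH{j(\theta)}^M$ is not even an element of $\VV$, let alone of $\calS_{j(\theta)}^M$, so Lemma \ref{strproper} does not apply to it and your absoluteness-of-strong-properness argument never gets started. The requirement $X\cap\HH{\theta}\in\bigcup G$ controls only the part of $X$ below $\HH{\theta}$, not the part between $\HH{\theta}$ and $\HH{j(\theta)}^M$.

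The paper's fix is both simpler and precisely what your argument is missing: $K=\HH{j(\theta)}^M$, $\calS=\calS_{j(\theta)}^M$ and $\calT=\calT_{j(\theta)}^M$ are sets in $\VV$, the pair $(\calS,\calT)$ is appropriate for the suitable set $K$, and $\PPP_{K,\calS,\calT}=j(\PPP_\theta)$ is a partial order in $\VV$ with $\PPP_{K,\calS,\calT}\cap\HH{\theta}=\PPP_\theta$. Hence Corollary \ref{corollary:sigma_approximation} applies with the true ground model $\VV$ once one checks its only nontrivial hypothesis, that $\calS$ is stationary in $\POT{K}$ \emph{in $\VV$}. That is where the countable closure of $M$ is actually used, in $\VV$ where it holds: given $\map{f}{[K]^{{<}\omega}}{K}$ in $\VV$, one builds a chain $\seq{X_n}{n<\omega}$ of countable elementary submodels of $K$ with $f[[X_n]^{{<}\omega}]\subseteq X_{n+1}$; each $X_n$ and the union lie in $M$ by countable closure, so $\bigcup\Set{X_n}{n<\omega}\in C_f\cap\calS$. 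With that stationarity in hand, Lemma \ref{strproper} together with Lemma \ref{lemma:SigmaApproximation} (packaged as Corollary \ref{corollary:sigma_approximation}) yields the $\sigma$-approximation property of $(\dot{\QQQ}^{\HH{\theta}}_{K,\calS,\calT})^G$ directly in $\VV[G]$, with no transfer from $M[G]$ required. The remainder of your lifting argument then goes through as in the paper.
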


\begin{proof}
 First, assume that (i) holds, and let $\map{j}{\VV}{M}$ be an elementary embedding witnessing the $\gamma$-supercompactness of $\theta$. Set $K=\HH{j(\theta)}^M$, $\calS=\calS_{j(\theta)}^M$ and $\calT=\calT_{j(\theta)}^M$. Then $\PPP_{K,\calS,\calT}=\PPP_{j(\theta)}^M=j(\PPP_\theta)$.

 \begin{claim*}
  The set $K$ is suitable and the pair $(\calS,\calT)$ is appropriate for $K$. 
 \end{claim*}

 \begin{proof}[Proof of the Claim]
  Since $\omega_1^M=\omega_1<\theta<j(\theta)$, elementarity directly yields the above statements. 
 \end{proof}

 Note that the closure properties of $M$ imply that $\HH{\theta}\in M$, $\PPP_\theta=\PPP_\theta^M$ and $\HH{\theta}\in\calT$. Moreover, Lemma \ref{trans_in_generic} and elementarity imply that $\suborder{\PPP_{K,\calS,\calT}}{\langle\HH{\theta}\rangle}$ is dense in $\PPP_{K,\calS,\calT}$.  Define $\dot{\QQQ}=\dot{\QQQ}^{\HH{\theta}}_{K,\calS,\calT}$.  Then Lemma \ref{chain_condition} and the closure properties of $M$ imply that $\dot{\QQQ}=(\dot{\QQQ}_{j(\theta)}^{\HH{\theta}})^M$. Let $G$ be $\PPP_\theta$-generic over $\VV$.

 \begin{claim*}
  The partial order $\dot{\QQQ}^G$ has the $\sigma$-approximation property in $\VV[G]$. 
 \end{claim*}

 \begin{proof}[Proof of the Claim]
  By Corollary \ref{corollary:sigma_approximation}, it suffices to show that $\calS$ is a stationary subset of $\POT{K}$ in $\VV$. Work in $\VV$ and fix a function $\map{f}{[K]^{{<}\omega}}{K}$. Then the closure properties of $M$ imply that $M$ contains a sequence $\seq{X_n}{n<\omega}$ of countable elementary substructures of $K$ with the property that $f[[X_n]^{{<}\omega}]\subseteq X_{n+1}$ for all $n<\omega$. But then $\bigcup\Set{X_n}{n<\omega}\in C_f\cap\calS\neq\emptyset$. 
 \end{proof}

 If $H$ is $\dot{\QQQ}^G$-generic over $\VV[G]$ and $F$ is the filter on $\PPP_{K,\calS,\calT}$ induced by the embedding $D^{\HH{\theta}}_{K,\calS,\calT}$ and the filter $G*H$, then $j\restriction\PPP_\theta=\id_{\PPP_\theta}$ implies that  $j[G]\subseteq F$ and hence there is an embedding $\map{j_{G,H}}{\VV[G]}{M[F]}$ that extends $j$ and is definable in $\VV[G,H]$. Let $\dot{\calU}$ denote the canonical $\dot{\QQQ}^G$-name in $\VV[G]$ with the property that whenever $H$ is $\dot{\QQQ}^G$-generic over $\VV[G]$, then $$\dot{\calU}^H ~ = ~ \Set{A\in\POT{\Poti{\gamma}{\theta}}^{\VV[G]}}{j[\gamma]\in j_{G,H}(A)}$$ and therefore standard arguments show that $\dot{\calU}^H$ is a fine, $\VV[G]$-normal, $\VV[G]$-${<}\theta$-complete ultrafilter on $\POT{\Poti{\gamma}{\theta}}^{\VV[G]}$ with the property that $\Ult{\VV[G]}{\dot U^H}$ is well-founded. This allows us to conclude that (ii) holds.

 Now, assume that (ii) holds and let $G$ be $\PPP_\theta$-generic over $\VV$. In $\VV[G]$, there is a partial order $\QQQ$ with the $\sigma$-approximation property that witnesses that $\theta$ is generically ${\gamma}$-supercompact. Let $H$ be $\QQQ$-generic over $\VV[G]$. Then Proposition \ref{proposition:GenericElementaryEmbeddingFromGenericSupercompact} yields an elementary embedding $\map{j}{\VV[G]}{M}$ definable in $\VV[G,H]$ with critical point $\theta$, $j(\theta)>\gamma$, and $j[\gamma]\in M$. In this situation, Corollary \ref{corollary:sigma_approximation} and Lemma \ref{chain_condition} show that the assumptions of Lemma \ref{lemma:FilterInGroundModel} are satisfied, and therefore $j[\gamma]\in j(\Poti{\gamma}{\theta}^\VV)$ and $\calU=\Set{A\in\POT{\Poti{\gamma}{\theta}}^\VV}{j[\gamma]\in j(A)}\in\VV$. Since it is easy to see that $\calU$ is a fine, ${<}\theta$-complete, normal ultrafilter on $\POT{\Poti{\gamma}{\theta}}$ in $\VV$, 
 it follows that $\theta$ is $\gamma$-supercompact in $\VV$, as desired. 
\end{proof}


%
%
 
%


\section{Almost huge and super almost huge cardinals}

In this section, we want to show that we can use Neeman's pure side condition forcing for the characterization of some of the strongest large cardinals, that is we want to provide strong characterizations of \emph{almost huge} and of \emph{super almost huge} cardinals through Neeman's pure side condition forcing, the proofs of which strongly rely on the proofs from Section \ref{section:supercompact}. 
Remember that a cardinal $\theta$ is \emph{almost huge} if there is an elementary embedding $\map{j}{\VV}{M}$ with $\crit{j}=\theta$ and ${}^{{<}j(\theta)} M\subseteq M$. If such an embedding $j$ exists, then we say that \emph{$\theta$ is almost huge with target $j(\theta)$}. Our characterization of almost hugeness will rely on a generic large cardinal concept for almost hugeness. The following lemma provides us with an adaption of Lemma \ref{lemma:FilterInGroundModel} to the setting of almost huge cardinals.

\begin{lemma}\label{lemma:MainLemmaGenAlmostHuge}
 Let $\VV[G]$ be a generic extension of the ground model $\VV$, let $\VV[G,H]$ be a generic extension of $\VV[G]$, let $\theta$ be an uncountable regular cardinal in $\VV[G]$ and let $\lambda>\theta$ be an uncountable regular cardinal in $\VV[G,H]$. Assume that the following statements hold: 
 \begin{enumerate}[leftmargin=0.7cm]
  \item $\theta$ and $\lambda$ are inaccessible cardinals in $\VV$.  

  \item The pair $(\VV,\VV[G])$ satisfies the $\sigma$-approximation and the $\theta$-cover property. 

  \item The pair $(\VV[G],\VV[G,H])$ satisfies the $\sigma$-approximation property. 

  \item There is an elementary embedding $\map{j}{\VV[G]}{M}$ definable in $\VV[G,H]$ with the property that $\crit{j}=\theta$, $j(\theta)=\lambda$ and $j[\gamma]\in M$ for all $\gamma<\lambda$.  
 \end{enumerate}
 Then $\theta$ is almost huge with target $\lambda$ in $\VV$. 
\end{lemma}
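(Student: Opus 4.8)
The plan is to build, inside $\VV$, an almost huge embedding with target $\lambda$ by extracting a coherent tower of supercompactness measures from $j$ and taking its direct limit. First, for each ordinal $\gamma$ with $\theta\le\gamma<\lambda$, I would apply Lemma \ref{lemma:FilterInGroundModel} (whose hypotheses are exactly (i)--(iii) together with (iv), noting that $j(\theta)=\lambda$ and that $j[\gamma]\in M$) to obtain that $\calU_\gamma=\Set{A\in\POT{\Poti{\gamma}{\theta}}^\VV}{j[\gamma]\in j(A)}$ is an element of $\VV$; exactly as in the proof of Lemma \ref{theorem:GenGammaSupercompactAtInaccessbles}, each $\calU_\gamma$ is a fine, normal, ${<}\theta$-complete ultrafilter on $\POT{\Poti{\gamma}{\theta}}^\VV$ in $\VV$. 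The sequence $\seq{\calU_\gamma}{\theta\le\gamma<\lambda}$ is coherent: for $\gamma<\delta<\lambda$ and $A\subseteq\Poti{\gamma}{\theta}$ in $\VV$, one computes $A\in\calU_\gamma$ iff $j[\gamma]\in j(A)$ iff $j[\delta]\cap j(\gamma)\in j(A)$ (using $j[\delta]\cap j(\gamma)=j[\gamma]$) iff $\Set{s}{s\cap\gamma\in A}\in\calU_\delta$.

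Working in $\VV$, I would then form the direct limit $\map{k}{\VV}{N}$ of the ultrapowers $M_\gamma=\Ult{\VV}{\calU_\gamma}$ along the coherence maps, writing $\map{j_\gamma}{\VV}{M_\gamma}$ for the ultrapower embeddings and $\map{\ell_\gamma}{M_\gamma}{N}$ for the direct-limit maps, so that $k=\ell_\gamma\circ j_\gamma$. The single embedding $j$ furnishes factor maps $\map{k_\gamma}{M_\gamma}{M}$, $[f]_{\calU_\gamma}\mapsto j(f)(j[\gamma])$, satisfying $k_\gamma\circ j_\gamma=j{\restriction}\VV$; these cohere and induce $\map{k_\infty}{N}{M}$ with $k_\infty\circ k=j{\restriction}\VV$ and $k_\infty\circ\ell_\gamma=k_\gamma$. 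This factor map is the main engine of the argument. Since $M$ is transitive, $N$ is well-founded, and I identify it with its transitive collapse. Because $k_\infty$ is order-preserving we get $k(\theta)\le k_\infty(k(\theta))=j(\theta)=\lambda$, while each $\ell_\gamma$ satisfies $\gamma\le\ell_\gamma(\gamma)<\ell_\gamma(j_\gamma(\theta))=k(\theta)$ (as $\gamma=[a\mapsto\otp{a}]_{\calU_\gamma}<j_\gamma(\theta)$ in $M_\gamma$), so $k(\theta)\ge\lambda$; hence $k(\theta)=\lambda$. The same sandwiching via $k_\infty$ applied to $\xi<\theta$ gives $k(\xi)=\xi$, so $\crit{k}=\theta$. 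Finally, since $k_\infty$ is injective and $k_\infty(s_\gamma)=j[\gamma]=\Set{k_\infty(k(\xi))}{\xi<\gamma}$ for the seed $s_\gamma:=\ell_\gamma([\id]_{\calU_\gamma})$, it follows that $s_\gamma=k[\gamma]$.

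It remains to verify the closure property ${}^{{<}\lambda}N\subseteq N$, which I expect to be the main obstacle. Given a sequence $\seq{x_\xi}{\xi<\eta}\in\VV$ with $\eta<\lambda$ and each $x_\xi\in N$, I would first represent all entries at a common level: writing $x_\xi=\ell_{\gamma_\xi}([f_\xi]_{\calU_{\gamma_\xi}})$ and setting $\delta=\sup_{\xi<\eta}\gamma_\xi<\lambda$ (here the regularity of $\lambda$ enters), coherence lets me push each $f_\xi$ to a function $\hat f_\xi$ on $\Poti{\delta}{\theta}$ with $x_\xi=\ell_\delta([\hat f_\xi]_{\calU_\delta})$. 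Then the function $H$ on $\Poti{\delta}{\theta}$ given by $H(s)=\seq{\hat f_\xi(s)}{\xi\in s\cap\eta}$ lies in $\VV$, and a {\L}o\'s and seed computation shows that $w:=\ell_\delta([H]_{\calU_\delta})$ is, in $N$, the function with domain $s_\delta\cap k(\eta)=k[\eta]$ sending $k(\xi)$ to $x_\xi$ (using fineness to guarantee $\xi\in s$ for $\calU_\delta$-almost all $s$). Since $k[\eta]=s_\delta\cap k(\eta)\in N$ and $k$ is order-preserving, its increasing enumeration $k{\restriction}\eta$ belongs to $N$, whence $\seq{x_\xi}{\xi<\eta}=w\circ(k{\restriction}\eta)\in N$. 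This establishes ${}^{{<}\lambda}N\subseteq N$, so $k$ witnesses that $\theta$ is almost huge with target $\lambda$ in $\VV$. The delicate points, where the inaccessibility of $\lambda$ in $\VV$ and the regularity hypotheses are used, are precisely ensuring $\delta<\lambda$ and that the small images such as $k[\eta]$ are captured inside $N$.
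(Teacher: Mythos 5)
Your construction of the individual measures $\calU_\gamma$ is correct, and your by-hand direct-limit argument essentially re-proves the standard tower criterion for almost hugeness that the paper instead cites as {\cite[Theorem 24.11]{MR1994835}} (with your factor map $k_\infty$ pinning down $k(\theta)=\lambda$ in place of the paper's claim about functions $\map{f}{\Poti{\gamma}{\theta}}{\theta}$). However, there is a genuine gap at the point where you say ``working in $\VV$, I would then form the direct limit.'' Lemma \ref{lemma:FilterInGroundModel} gives you that each individual $\calU_\gamma$ lies in $\VV$, but it does not give you that the tower $\UUU=\Set{\calU_\gamma}{\theta\leq\gamma<\lambda}$ is itself an element of $\VV$: a set all of whose members lie in $\VV$ need not lie in $\VV$ (a Cohen real is a set of ground-model natural numbers), and a priori the assignment $\gamma\mapsto\calU_\gamma$ is only definable in $\VV[G,H]$, since it is defined from $j$. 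Without the tower in $\VV$ you cannot form the directed system there, and the resulting limit embedding $k$ is a class of $\VV[G,H]$ rather than of $\VV$, so it does not witness almost hugeness \emph{in $\VV$} --- which is the entire content of the lemma.

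The paper closes exactly this gap with a dedicated claim showing $\UUU\in\VV$, and the argument uses the coherence you already established together with the remaining hypotheses. If $\UUU\notin\VV[G]$, the $\sigma$-approximation property for the pair $(\VV[G],\VV[G,H])$ yields a set $u\in\VV[G]$, countable in $\VV[G]$, with $\UUU\cap u\notin\VV[G]$; but since $\lambda$ is regular and uncountable in $\VV[G,H]$, the indices of the members of $\UUU\cap u$ are bounded by some $\delta<\lambda$, and every $\calU_\gamma$ with $\gamma<\delta$ is computable from the single ultrafilter $\calU_\delta\in\VV$ via the projection $\calU_\gamma=\Set{\Set{a\cap\gamma}{a\in A}}{A\in\calU_\delta}$, so $\UUU\cap u\in\VV[G]$ after all, a contradiction. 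Repeating the argument for the pair $(\VV,\VV[G])$ (now using that $\UUU\subseteq\VV$) gives $\UUU\in\VV$. You should insert this step before forming the direct limit; the remainder of your argument --- the sandwiching via $k_\infty$, the seed computation giving $s_\gamma=k[\gamma]$, and the ${<}\lambda$-closure verification --- then goes through.
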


\begin{proof}
  Given $\theta\leq\gamma<\lambda$, define $$\calU_\gamma ~ = ~ \Set{A\in\POT{\Poti{\gamma}{\theta}}^\VV}{j[\gamma]\in j(A)} ~ \in ~ \VV[G,H].$$ Then $\calU_\gamma=\Set{\Set{a\cap\gamma}{a\in A}}{A\in\calU_\delta}$ for all $\theta\leq\gamma\leq\delta<\lambda$. Moreover, we can apply Lemma \ref{lemma:FilterInGroundModel} to conclude that  for every $\theta\leq\gamma<\lambda$, we have $j[\gamma]\in j(\Poti{\gamma}{\theta}^\VV)$, and  $\calU_\gamma$ is an element of $\VV$. Define $$\UUU ~ = ~ \Set{\calU_\gamma}{\theta\leq\gamma<\lambda} ~ \in ~ \VV[G,H].$$

 \begin{claim*}
  $\UUU\in\VV$. 
 \end{claim*}

 \begin{proof}[Proof of the Claim]
   First, assume, towards a contradiction, that $\UUU\notin\VV[G]$. Then our assumptions imply that there is $u\in\VV[G]$ that is countable in $\VV[G]$ with the property that $\UUU\cap u\notin\VV[G]$. Since $\lambda$ is regular and uncountable in $\VV[G,H]$, we can find $\theta\leq\delta<\lambda$ with $$\UUU\cap u ~ = ~ \Set{\calU_\gamma}{\gamma<\delta}\cap u ~ = ~ \Set{\Set{\Set{a\cap\gamma}{a\in A}}{A\in\calU_\delta}}{\gamma<\delta}\cap u.$$ But then  $\calU_\delta\in\VV\subseteq\VV[G]$ implies that $\UUU\cap u\in\VV[G]$, a contradiction.

   Since we already know that $\UUU\subseteq\VV$, we can use the same argument to show that the set $\UUU$ is an element of $\VV$. 
 \end{proof}

 \begin{claim*}
  If $\theta\leq\gamma<\lambda$ and $f\in\left({}^{\Poti{\gamma}{\theta}}\theta\right)^\VV$ with $\Set{a\in\Poti{\gamma}{\theta}}{\otp{a}\leq f(a)}\in\calU_\gamma$, then there is $\gamma\leq\delta<\lambda$ with $\Set{a\in\Poti{\delta}{\theta}}{f(a\cap\gamma)=\otp{a}}\in\calU_\delta$. 
 \end{claim*}
 
 \begin{proof}[Proof of the Claim]
  Since  $j[\gamma]\in j(\Poti{\gamma}{\theta}^\VV)=\dom{j(f)}$, there is a $\delta<\lambda=j(\theta)$ with $\delta=j(f)(j[\gamma])$. Then, we have  $\gamma=\otp{j[\gamma]}\leq\delta<\lambda$, and $$j(f)(j(\gamma)\cap j[\delta]) ~ = ~ j(f)(j[\gamma]) ~ = ~ \delta ~ = ~ \otp{j[\delta]}.$$  This shows that $\Set{a\in\Poti{\delta}{\theta}}{f(a\cap\gamma)=\otp{a}}\in\calU_\delta$. 
 \end{proof}

For every $\theta\leq\gamma<\lambda$,  $j[\gamma]\in j(\Poti{\gamma}{\theta}^\VV)$ implies that $\calU_\gamma$ is a fine, normal, $\theta$-complete filter on $\POT{\Poti{\gamma}{\theta}}$ in $\VV$. Let $M_\gamma=\Ult{\VV}{\calU_\gamma}$ denote the corresponding ultrapower and let $\map{j_\gamma}{\VV}{M_\gamma}$ denote the induced ultrapower embedding. Given $\theta\leq\gamma\leq\delta<\lambda$, we have $\calU_\gamma=\Set{\Set{a\cap\gamma}{a\in A}}{A\in\calU_\delta}$, and the map $$\Map{k_{\gamma,\delta}}{M_\gamma}{M_\delta}{[f]_{\calU_\gamma}}{[a\mapsto f(a\cap\gamma)]_{\calU_\delta}}$$ is an elementary embedding with $j_\delta=k_{\gamma,\delta}\circ j_\gamma$.

 Now, work in $\VV$, and fix $\theta\leq\gamma<\lambda$ and $\theta\leq\xi<j_\gamma(\theta)$. Then $\xi=[f]_{\calU_\gamma}$ for some function $\map{f}{\Poti{\gamma}{\theta}}{\theta}$, and therefore $\Set{a\in\Poti{\gamma}{\theta}}{\otp{a}\leq f(a)}\in\calU_\gamma$. In this situation, the last claim yields an ordinal $\gamma\leq\delta<\lambda$ with the property that $\Set{a\in\Poti{\delta}{\theta}}{f(a\cap\gamma)=\otp{a}}\in\calU_\delta$, and this implies that $$k_{\gamma,\delta}(\xi) ~ = ~ k_{\gamma,\delta}([f]_{\calU_\gamma}) ~ = ~ [a\mapsto f(a\cap\gamma)]_{\calU_\delta} ~ = ~ [a\mapsto\otp{a}]_{\calU_\delta} ~ = ~ \delta.$$

Since $\lambda$ is inaccessible in $\VV$, the above computation allow us to apply {\cite[Theorem 24.11]{MR1994835}} to conclude that $\theta$ is almost huge with target $\lambda$ in $\VV$. 
\end{proof}

Analogous to the previous section, we will now discuss the typical situation in which the assumptions of the previous lemma are satisfied.

\begin{definition}\label{definition:GenAlmostHuge}
 Given an uncountable regular cardinal $\theta$ and an inaccessible cardinal $\lambda>\theta$, we say that a partial order \emph{$\PPP$ witnesses that $\theta$ is generically almost huge with target $\lambda$} if the following statements hold: 
 \begin{enumerate}[leftmargin=0.7cm]
  \item Forcing with $\PPP$ preserves the regularity of $\lambda$. 

  \item There is a sequence $\seq{\dot{\calU}_\gamma}{\theta\leq\gamma<\lambda}$ of $\PPP$-names such that the following statements hold in $\VV[G]$ whenever $G$ is $\PPP$-generic over $\VV$:  
  \begin{enumerate}[leftmargin=0.7cm]
   \item If $\theta\leq\gamma<\lambda$, then $\dot{\calU}^G$ is a fine, $\VV$-normal, $\VV$-${<}\theta$-complete filter  on $\POT{\Poti{\gamma}{\theta}}^\VV$ with the property that the corresponding ultrapower $\Ult{\VV}{\dot{\calU}^G}$ is well-founded. 
  
   \item If $\theta\leq\gamma\leq\delta<\lambda$, then $\dot{\calU}^G_\gamma = \Set{\Set{a\cap\gamma}{a\in A}}{A\in\dot{\calU}_\delta^G}$. 

  \item If $\theta\leq\gamma<\lambda$ and $f\in({}^{\Poti{\gamma}{\theta}}\theta)^\VV$, then there is $\gamma\leq\delta<\lambda$ with $$\Set{a\in\Poti{\delta}{\theta}^\VV}{f(a\cap\gamma)\leq\otp{a}} ~ \in ~ \dot{\calU}^G_\delta.$$
  \end{enumerate}
 \end{enumerate}
\end{definition}

The name of the property defined above is justified by the following proposition and by {\cite[Lemma 3]{MR819932}} stating that, in the setting of that proposition, $j[\gamma]\in M$ implies  $\POT{\gamma}^\VV\in M$ for all $\gamma<\theta$.

\begin{proposition}\label{proposition:GenAlmostHugeEmbedding}
 Given an uncountable regular cardinal $\theta$ and an inaccessible cardinal $\lambda>\theta$, if a partial order $\PPP$ witnesses that $\theta$ is generically almost huge with target $\lambda$ and $G$ is $\PPP$-generic over $\VV$, then there is an elementary embedding $\map{j}{\VV}{M}$ definable in $\VV[G]$ with $\crit{j}=\theta$, $j(\theta)=\lambda$ and $j[\gamma]\in M$ for all $\gamma<\lambda$. 
\end{proposition}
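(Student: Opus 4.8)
The plan is to realize $j$ as the direct limit, computed in $\VV[G]$, of the ultrapowers of $\VV$ by the ultrafilters $\dot{\calU}_\gamma^G$; this is the generic analogue of the classical characterization of almost hugeness through coherent sequences of normal fine ultrafilters, as in \cite[Theorem 24.11]{MR1994835}, which was already invoked in Lemma \ref{lemma:MainLemmaGenAlmostHuge}. Fix a $\PPP$-generic filter $G$ over $\VV$ and write $\calU_\gamma=\dot{\calU}_\gamma^G$ for $\theta\le\gamma<\lambda$. By clause (a) of Definition \ref{definition:GenAlmostHuge}, each ultrapower $M_\gamma=\Ult{\VV}{\calU_\gamma}$ is well-founded, so I may identify it with its transitive collapse and let $\map{j_\gamma}{\VV}{M_\gamma}$ denote the corresponding ultrapower embedding. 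As in Proposition \ref{proposition:GenericElementaryEmbeddingFromGenericSupercompact}, the $\VV$-${<}\theta$-completeness, fineness and $\VV$-normality of $\calU_\gamma$ yield $\crit{j_\gamma}=\theta$, $j_\gamma(\theta)>\gamma$, and that the seed $s_\gamma=[\id_{\Poti{\gamma}{\theta}}]_{\calU_\gamma}=j_\gamma[\gamma]$ is an element of $M_\gamma$ with $\otp{s_\gamma}=\gamma$.

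Using clause (b), I would define for $\theta\le\gamma\le\delta<\lambda$ the transition maps $\Map{k_{\gamma,\delta}}{M_\gamma}{M_\delta}{[f]_{\calU_\gamma}}{[a\mapsto f(a\cap\gamma)]_{\calU_\delta}}$ exactly as in the proof of Lemma \ref{lemma:MainLemmaGenAlmostHuge}; coherence says precisely that $\calU_\gamma$ is the push-forward of $\calU_\delta$ under $a\mapsto a\cap\gamma$, so the $k_{\gamma,\delta}$ are well-defined elementary embeddings satisfying $j_\delta=k_{\gamma,\delta}\circ j_\gamma$ and $k_{\delta,\varepsilon}\circ k_{\gamma,\delta}=k_{\gamma,\varepsilon}$. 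I then form the direct limit $M=\varinjlim_{\gamma<\lambda}(M_\gamma,k_{\gamma,\delta})$ with limit maps $k_{\gamma,\infty}$, and set $j=k_{\gamma,\infty}\circ j_\gamma$, which is independent of $\gamma$ and definable in $\VV[G]$. The well-foundedness of $M$ follows from clause (i) of Definition \ref{definition:GenAlmostHuge}: since forcing with $\PPP$ preserves the regularity of $\lambda$, the index set $[\theta,\lambda)$ has uncountable cofinality in $\VV[G]$, so any infinite descending $\in$-sequence in $M$ would have all of its countably many coordinates captured in a single $M_\delta$, contradicting the well-foundedness of $M_\delta$. I then identify $M$ with its transitive collapse.

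It remains to verify the three properties. Since $k_{\gamma,\delta}\restriction\theta=\id$ (as $k_{\gamma,\delta}\circ j_\gamma=j_\delta$ and both fix $\theta$ pointwise) and $j_\gamma\restriction\theta=\id$, we get $j\restriction\theta=\id$, while $j(\theta)>\theta$, so $\crit{j}=\theta$. For $j(\theta)=\lambda$ I would first record that, because $\calU_\gamma$ is the push-forward of $\calU_\delta$, $$k_{\gamma,\delta}(\gamma)=[a\mapsto\otp{a\cap\gamma}]_{\calU_\delta}=[b\mapsto\otp b]_{\calU_\gamma}=\gamma,$$ so $k_{\gamma,\infty}(\gamma)=\gamma$ for every $\gamma\in[\theta,\lambda)$; as $j(\theta)=k_{\gamma,\infty}(j_\gamma(\theta))>k_{\gamma,\infty}(\gamma)=\gamma$, this gives $j(\theta)\ge\lambda$. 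The reverse inequality is exactly where clause (c) enters: given $\eta<j(\theta)$, I write $\eta=k_{\gamma,\infty}(\bar\eta)$ for some index $\gamma$ and some $\bar\eta<j_\gamma(\theta)$ (possible since $M$ is the direct limit) and represent $\bar\eta=[f]_{\calU_\gamma}$ with $f\in({}^{\Poti{\gamma}{\theta}}\theta)^\VV$; clause (c) then yields $\delta\in[\gamma,\lambda)$ with $k_{\gamma,\delta}(\bar\eta)=[a\mapsto f(a\cap\gamma)]_{\calU_\delta}\le[a\mapsto\otp a]_{\calU_\delta}=\delta$, whence $\eta\le k_{\delta,\infty}(\delta)=\delta<\lambda$. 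Thus $j(\theta)=\lambda$.

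Finally, for $j[\gamma]\in M$ I would show that $k_{\gamma,\infty}$ is continuous at the seed, i.e.\ $j[\gamma]=k_{\gamma,\infty}[s_\gamma]=k_{\gamma,\infty}(s_\gamma)\in M$. The key computation, valid from normality, fineness and coherence alone, is that at each finite transition $$k_{\gamma,\delta}(s_\gamma)=[a\mapsto a\cap\gamma]_{\calU_\delta}=j_\delta[\gamma]=k_{\gamma,\delta}[s_\gamma],$$ since a function with $g(a)\in a\cap\gamma$ almost everywhere is almost everywhere constant with value below $\gamma$. Passing to the limit, any $\zeta\in k_{\gamma,\infty}(s_\gamma)$ is of the form $k_{\delta,\infty}(\bar\zeta)$ with $\bar\zeta\in k_{\gamma,\delta}(s_\gamma)=k_{\gamma,\delta}[s_\gamma]$, so $\bar\zeta=k_{\gamma,\delta}(x)$ for some $x\in s_\gamma$ and $\zeta=k_{\gamma,\infty}(x)\in j[\gamma]$; this yields $k_{\gamma,\infty}(s_\gamma)\subseteq k_{\gamma,\infty}[s_\gamma]$, and the reverse inclusion is immediate. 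I expect the main obstacle to be the verification of $j(\theta)=\lambda$ — in particular the upper bound extracted from condition (c) — together with the bookkeeping needed to keep all ultrafilters concentrating on $\POT{\Poti{\gamma}{\theta}}^\VV$ and the entire construction definable inside $\VV[G]$; this amounts to a careful relativization of the direct-limit analysis underlying \cite[Theorem 24.11]{MR1994835}.
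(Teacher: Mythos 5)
Your proposal is correct and follows essentially the same route as the paper: form the generic ultrapowers $M_\gamma=\Ult{\VV}{\dot{\calU}_\gamma^G}$, use the coherence clause to define the transition maps $k_{\gamma,\delta}$, take the direct limit (well-founded because $\lambda$ stays regular, hence of uncountable cofinality, in $\VV[G]$), derive $j[\gamma]\in M$ from $\VV$-normality via $k_{\gamma,\delta}(j_\gamma[\gamma])=[a\mapsto a\cap\gamma]_{\calU_\delta}=j_\delta[\gamma]$, and bound $j(\theta)$ above by $\lambda$ using clause (c) of Definition \ref{definition:GenAlmostHuge}. The only cosmetic remark is that your displayed justification of $k_{\gamma,\delta}(\gamma)=\gamma$ equates classes living in different ultrapowers; it is cleaner to deduce it, as the paper does, from $k_{\gamma,\delta}(j_\gamma[\gamma])=j_\delta[\gamma]$ together with the absoluteness of order types.
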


\begin{proof}
 Let $\seq{\dot{\calU}_\gamma}{\gamma\leq\gamma<\lambda}$ be the corresponding sequence of $\PPP$-names and let  $G$ be $\PPP$-generic over $\VV$. Given $\theta\leq\gamma<\lambda$, let $M_\gamma=\Ult{\VV}{\dot{\calU}^G_\gamma}$ denote the corresponding generic ultrapower and let $\map{j_\gamma}{\VV}{M_\gamma}$ denote the corresponding elementary embedding. Then Proposition \ref{proposition:GenericElementaryEmbeddingFromGenericSupercompact} shows that $j_\gamma$ has critical point $\theta$ and $j_\gamma[\gamma]\in M_\gamma$ for all $\theta\leq\gamma<\lambda$. Moreover, if $\theta\leq\gamma\leq\delta<\lambda$, then the function $$\Map{k_{\gamma,\delta}}{M_\gamma}{M_\delta}{[f]_{\calU_\gamma}}{[a\mapsto f(a\cap\gamma)]_{\calU_\delta}}$$ is an elementary embedding with $j_\delta=k_{\gamma,\delta}\circ j_\gamma$. In addition, it is easy to see that $k_{\gamma,\epsilon}=k_{\delta,\epsilon}\circ k_{\gamma,\delta}$ holds for all $\theta\leq\gamma\leq\delta\leq\epsilon<\lambda$. Since $\lambda$ has uncountable cofinality in $\VV[G]$, the corresponding limit $$\langle M, ~ \seq{\map{k_\gamma}{M_\gamma}{M}}{\theta\leq\gamma<\lambda}\rangle$$ of the resulting directed system $$\langle\seq{M_\gamma}{\theta\leq\gamma<\lambda}, ~ \seq{\map{k_{\gamma,\delta}}{M_\gamma}{M_\delta}}{\theta\leq\gamma\leq\delta<\lambda}\rangle$$ is well-founded, and we can identify $M$ with its transitive collapse. If $\map{j}{\VV}{M}$ is the unique map with $j=k_\gamma\circ j_\gamma$ for all $\theta\leq\gamma<\lambda$, then the above remarks  directly imply that $j$ is an elementary embedding with critical point $\theta$.

 Now, fix $\theta\leq\gamma<\lambda$. If $\alpha<\gamma$, then $j_\gamma(\alpha)\in j_\gamma[\gamma]$, and therefore $j(\alpha)\in k_\gamma(j_\gamma[\gamma])$. In the other direction, pick $\beta\in k_\gamma(j_\gamma[\gamma])$. Then we can find  $\gamma\leq\delta<\lambda$ and $\beta_0\in k_{\gamma,\delta}(j_\gamma[\gamma])=[a\mapsto a\cap\gamma]_{\dot{\calU}^G_\delta}$ with $\beta=k_\delta(\beta_0)$. In this situation, $\VV$-normality implies that there is an $\alpha<\gamma$ with $\beta_0=j_\delta(\alpha)$ and hence $\beta=j(\alpha)$. In combination, these arguments show that $j[\gamma]=k_\gamma(j_\gamma[\gamma])\in M$ for all $\gamma<\lambda$. But this also implies that $\gamma=\otp{j[\gamma]} =k_\gamma(\otp{j_\gamma[\gamma]})=k_\gamma(\gamma)$ holds for all $\theta\leq\gamma<\lambda$.

 Finally, fix $\beta<j(\theta)$. Then there is a $\theta\leq\gamma<\lambda$ and a function  $f\in({}^{\Poti{\gamma}{\theta}}\theta)^\VV$ such that  $\beta=k_\gamma([f]_{\calU_\gamma})$. By Definition \ref{definition:GenAlmostHuge}, we can find an ordinal $\gamma\leq\delta<\lambda$ with $\Set{a\in\Poti{\delta}{\theta}^\VV}{f(a\cap\gamma)\leq\otp{a}}  \in\dot{\calU}^G_\delta$. This implies that $k_{\gamma,\delta}([f]_{\dot{\calU}^G_\gamma})\leq \delta$ and hence $\beta\leq k_\delta(\delta)=\delta$. This shows that $j(\theta)\leq\lambda$. Since we obviously also have $j(\theta)\geq\lambda$, we can conclude that $j(\theta)=\lambda$. 
\end{proof}

The next result shows that the characterization of almost hugeness presented below is strong.

\begin{lemma} 
 The following statements are equivalent for all inaccessible cardinals $\theta<\lambda$:
  \begin{enumerate}[leftmargin=0.7cm]
   \item  $\theta$ is almost huge with target $\lambda$. 

  \item There is a partial order with the $\sigma$-approximation property witnessing that $\theta$ is generically almost huge with target $\lambda$.  
  \end{enumerate}
\end{lemma}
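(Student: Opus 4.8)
The plan is to follow exactly the strategy of the proof of Lemma~\ref{theorem:GenGammaSupercompactAtInaccessbles}, now using Proposition~\ref{proposition:GenAlmostHugeEmbedding} and Lemma~\ref{lemma:MainLemmaGenAlmostHuge} as the almost huge analogues of Proposition~\ref{proposition:GenericElementaryEmbeddingFromGenericSupercompact} and Lemma~\ref{lemma:FilterInGroundModel}. Once those two results are granted, both implications reduce to short pieces of bookkeeping.

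For the direction from (i) to (ii), I would fix an embedding $\map{j}{\VV}{M}$ witnessing that $\theta$ is almost huge with target $\lambda$ and show that the \emph{trivial} partial order witnesses generic almost hugeness with target $\lambda$, via the constant names for the genuinely derived measures $$\calU_\gamma ~ = ~ \Set{A\in\POT{\Poti{\gamma}{\theta}}}{j[\gamma]\in j(A)}$$ for $\theta\le\gamma<\lambda$. Since ${}^{{<}\lambda}M\subseteq M$, each $j[\gamma]$ is an element of $M$ of order type $\gamma<\lambda=j(\theta)$, so $j[\gamma]\in j(\Poti{\gamma}{\theta})$, and each $\calU_\gamma$ is a genuine fine, normal, ${<}\theta$-complete ultrafilter in $\VV$ whose ultrapower is well-founded (via the factor map into the transitive model $M$); this yields clause (a) of Definition~\ref{definition:GenAlmostHuge}, where, under trivial forcing, $\VV$-normality and $\VV$-completeness are just normality and completeness. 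The coherence clause (b) is the standard projection identity for derived normal measures, and clause (c) follows by setting $\delta=\max\{\gamma,j(f)(j[\gamma])\}<\lambda$ and using $j[\delta]\cap j(\gamma)=j[\gamma]$ to see that $j(f)(j[\gamma])\le\otp{j[\delta]}=\delta$, exactly as in the claims inside the proof of Lemma~\ref{lemma:MainLemmaGenAlmostHuge}. Forcing with the trivial order preserves the regularity of $\lambda$, so clause (i) of Definition~\ref{definition:GenAlmostHuge} holds as well.

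For the direction from (ii) to (i), I would fix a partial order $\PPP$ with the $\sigma$-approximation property witnessing that $\theta$ is generically almost huge with target $\lambda$, let $G$ be $\PPP$-generic over $\VV$, and apply Proposition~\ref{proposition:GenAlmostHugeEmbedding} to obtain an elementary embedding $\map{j}{\VV}{M}$ definable in $\VV[G]$ with $\crit{j}=\theta$, $j(\theta)=\lambda$ and $j[\gamma]\in M$ for all $\gamma<\lambda$. I would then invoke Lemma~\ref{lemma:MainLemmaGenAlmostHuge} with its intermediate extension taken to be trivial, i.e.\ identifying its ``$\VV[G]$'' with our $\VV$ and its ``$\VV[G,H]$'' with our $\VV[G]$, just as Lemma~\ref{lemma:FilterInGroundModel} is applied ``with $\VV=\VV[G]$'' in the proof of Lemma~\ref{theorem:GenGammaSupercompactAtInaccessbles}. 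The hypotheses are immediate: $\theta$ and $\lambda$ are inaccessible in $\VV$ by assumption; the pair $(\VV,\VV)$ trivially satisfies the $\sigma$-approximation and $\theta$-cover properties; the pair $(\VV,\VV[G])$ satisfies the $\sigma$-approximation property by the choice of $\PPP$; the embedding $j$ is as required; and $\lambda$ is regular, hence uncountable, in $\VV[G]$ because clause (i) of Definition~\ref{definition:GenAlmostHuge} guarantees that $\PPP$ preserves its regularity. The conclusion of Lemma~\ref{lemma:MainLemmaGenAlmostHuge} is then precisely that $\theta$ is almost huge with target $\lambda$ in $\VV$.

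The genuine difficulty lives upstream, in Lemma~\ref{lemma:MainLemmaGenAlmostHuge} and in Kanamori's ultrapower-limit characterization of almost hugeness from \cite[Theorem 24.11]{MR1994835} on which it rests: one must extract a coherent tower $\seq{\calU_\gamma}{\theta\le\gamma<\lambda}$ of ground-model measures from a single generic embedding and check that its directed limit reconstitutes an almost huge embedding with the correct target. Granting those, the only points requiring care at this stage are the (routine but easy to misstate) verification of clause (c) of Definition~\ref{definition:GenAlmostHuge} in the forward direction and the correct identification of the trivial intermediate forcing when applying Lemma~\ref{lemma:MainLemmaGenAlmostHuge} in the reverse direction.
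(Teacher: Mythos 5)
Your proposal is correct and follows essentially the same route as the paper: the forward direction takes the trivial partial order with the tower of measures derived from the almost huge embedding (the paper simply cites \cite[Theorem 24.11]{MR1994835} for this, where you verify the clauses of Definition \ref{definition:GenAlmostHuge} directly), and the reverse direction combines Proposition \ref{proposition:GenAlmostHugeEmbedding} with Lemma \ref{lemma:MainLemmaGenAlmostHuge} applied with $\VV=\VV[G]$, exactly as in the paper. Your explicit check of clause (c) via $\delta=\max\{\gamma,j(f)(j[\gamma])\}$ is a correct filling-in of a detail the paper leaves to the citation.
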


\begin{proof}
  If $\theta$ is almost huge with target $\lambda$, then the trivial partial order witnesses that $\theta$ is generically almost huge with target $\lambda$ by {\cite[Theorem 24.11]{MR1994835}}. In order to verify the reverse direction, let $\PPP$ be a partial order with the $\sigma$-approximation property that witnesses that $\theta$ is generically almost huge with target $\lambda$, let $H$ be $\PPP$-generic over $\VV$ and let $\map{j}{\VV}{M}$ be the elementary embedding definable in $\VV[H]$ that is provided by an application of Proposition \ref{proposition:GenAlmostHugeEmbedding}. Then, an application of Lemma \ref{lemma:MainLemmaGenAlmostHuge} with $\VV=\VV[G]$ shows that $\theta$ is almost huge with target $\lambda$ in $\VV$.  
\end{proof}

The following theorem contains our characterization of almost hugeness through Neeman's pure side condition forcing.

\begin{theorem}\label{theorem:NeemanAlmostHuge}
 The following statements are equivalent for every inaccessible cardinal $\theta$: 
 \begin{enumerate}[leftmargin=0.7cm]
  \item $\theta$ is an almost huge cardinal. 

  \item $\mathbbm{1}_{\PPP_\theta}\Vdash$\anf{There is an inaccessible cardinal $\lambda$ and a partial order $\PPP$ with the $\sigma$-approximation property that witnesses that $\omega_2$ is generically almost huge with target $\lambda$}.
 \end{enumerate}
\end{theorem}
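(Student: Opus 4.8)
The plan is to prove Theorem \ref{theorem:NeemanAlmostHuge} following exactly the template of Theorem \ref{theorem:NeemanGammaSupercompact}, with the directed-system machinery of Lemma \ref{lemma:MainLemmaGenAlmostHuge} and Proposition \ref{proposition:GenAlmostHugeEmbedding} replacing the single-ultrafilter argument. For the forward direction, assume $\theta$ is almost huge and fix an embedding $\map{j}{\VV}{M}$ with $\crit{j}=\theta$, $\lambda=j(\theta)$ and ${}^{{<}\lambda}M\subseteq M$. As in the proof of Theorem \ref{theorem:NeemanGammaSupercompact}, I would set $K=\HH{\lambda}^M$, $\calS=\calS_\lambda^M$, $\calT=\calT_\lambda^M$, so that $\PPP_{K,\calS,\calT}=\PPP_\lambda^M=j(\PPP_\theta)$; the closure of $M$ gives that $K$ is suitable, $(\calS,\calT)$ is appropriate, $\HH{\theta}\in\calT$, and $\PPP_\theta=\PPP_\theta^M$. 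By Lemma \ref{trans_in_generic} and elementarity, $\suborder{\PPP_{K,\calS,\calT}}{\langle\HH{\theta}\rangle}$ is dense, so setting $\dot{\QQQ}=\dot{\QQQ}^{\HH{\theta}}_{K,\calS,\calT}=(\dot{\QQQ}^{\HH{\theta}}_\lambda)^M$ and letting $G$ be $\PPP_\theta$-generic, the same stationarity argument (via Corollary \ref{corollary:sigma_approximation}) shows $\dot{\QQQ}^G$ has the $\sigma$-approximation property in $\VV[G]$.

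The key new point is that in $\VV[G]$, the forcing $\dot{\QQQ}^G$ must witness generic almost hugeness \emph{with target} $\lambda=\omega_2^M$. Here I expect the main obstacle: I need to verify that $\lambda$ is inaccessible in $\VV[G]$ and that its regularity is preserved by $\dot{\QQQ}^G$. In $\VV$, $\lambda$ is inaccessible since $j(\theta)$ is inaccessible in $M$ and $M$ is sufficiently closed; forcing with $\PPP_\theta$ (of size $\theta<\lambda$) and then with the quotient $\dot{\QQQ}^G$ (which factors as a tail of a side-condition forcing below $\lambda$) should preserve its regularity, using the $\sigma$-approximation and chain-condition properties established in Section \ref{section:Neeman}. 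For the names $\seq{\dot{\calU}_\gamma}{\theta\leq\gamma<\lambda}$, I would argue as follows: given $\dot{\QQQ}^G$-generic $H$ over $\VV[G]$, the induced generic filter $F$ on $\PPP_{K,\calS,\calT}$ (via $D^{\HH{\theta}}_{K,\calS,\calT}$ applied to $G*H$) satisfies $j[G]\subseteq F$ because $j\restriction\PPP_\theta=\id_{\PPP_\theta}$, yielding a lift $\map{j_{G,H}}{\VV[G]}{M[F]}$ definable in $\VV[G,H]$ extending $j$, with $\crit{j_{G,H}}=\theta$, $j_{G,H}(\theta)=\lambda$ and $j_{G,H}[\gamma]\in M[F]$ for each $\gamma<\lambda$. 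Then I define $\dot{\calU}_\gamma$ to be the canonical name whose realization is $\Set{A\in\POT{\Poti{\gamma}{\theta}}^{\VV[G]}}{j_{G,H}[\gamma]\in j_{G,H}(A)}$, and routinely check clauses (i)--(iii) of Definition \ref{definition:GenAlmostHuge} directly from the properties of $j_{G,H}$, exactly mirroring the single-$\gamma$ computations in the proof of Theorem \ref{theorem:NeemanGammaSupercompact} together with the coherence and seed computations already carried out in Proposition \ref{proposition:GenAlmostHugeEmbedding}.

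For the reverse direction, assume (ii) and let $G$ be $\PPP_\theta$-generic. In $\VV[G]$ there is an inaccessible $\lambda$ and a partial order $\QQQ$ with the $\sigma$-approximation property witnessing that $\omega_2=\theta$ is generically almost huge with target $\lambda$. Taking $H$ to be $\QQQ$-generic over $\VV[G]$, Proposition \ref{proposition:GenAlmostHugeEmbedding} supplies an elementary $\map{j}{\VV[G]}{M}$ definable in $\VV[G,H]$ with $\crit{j}=\theta$, $j(\theta)=\lambda$, and $j[\gamma]\in M$ for all $\gamma<\lambda$. The crucial check is that the hypotheses of Lemma \ref{lemma:MainLemmaGenAlmostHuge} hold with the ground model playing the role of $\VV$: clause (i) holds since $\theta$ is inaccessible in $\VV$ by assumption and $\lambda$ is inaccessible in $\VV$ because it is inaccessible in $\VV[G]$ and $\PPP_\theta$ has size $\theta<\lambda$ and satisfies the $\theta$-chain condition (Lemma \ref{chain_condition}); clauses (ii) and (iii) follow from Corollary \ref{corollary:SideConditionForcingApproximationProperty}, the $\theta$-cover property of $(\VV,\VV[G])$ (a consequence of properness and the chain condition), and the $\sigma$-approximation property of $\QQQ$. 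Lemma \ref{lemma:MainLemmaGenAlmostHuge} then concludes directly that $\theta$ is almost huge with target $\lambda$ in $\VV$, and in particular almost huge, as desired.
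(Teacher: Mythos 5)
Your proposal is correct and follows essentially the same route as the paper's proof: lift the almost huge embedding along the quotient $\dot{\QQQ}^{\HH{\theta}}_\lambda$ to define the names $\dot{\calU}_\gamma$ from the seeds $j[\gamma]$ in the forward direction, and combine Proposition \ref{proposition:GenAlmostHugeEmbedding} with Lemma \ref{lemma:MainLemmaGenAlmostHuge} in the reverse direction. The only cosmetic difference is that you set up the forward direction via $\PPP_{K,\calS,\calT}$ with $K=\HH{j(\theta)}^M$ as in Theorem \ref{theorem:NeemanGammaSupercompact}, while the paper exploits ${}^{{<}\lambda}M\subseteq M$ to note $\HH{\lambda}^M=\HH{\lambda}$ and work directly with $\PPP_\lambda$ computed in $\VV$ (which also makes the inaccessibility of $\lambda$ and the $\lambda$-chain condition immediate); the two presentations coincide here.
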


\begin{proof}
  First, assume that (i) holds, and let the almost hugeness of $\theta$ be witnessed by the embedding $\map{j}{\VV}{M}$. Then $\lambda=j(\kappa)$ is an inaccessible cardinal,  $\HH{\lambda}\subseteq M$, $\PPP_\theta=\PPP_\theta^M$, $j(\PPP_\theta)=\PPP_\lambda$ and $\HH{\theta}\in\calT_\lambda$.  Set $\dot{\QQQ}=\dot{\QQQ}^{\HH{\theta}}_\lambda$ and let $G$ be $\PPP_\theta$-generic over $\VV$. Then $\lambda$ is inaccessible in $\VV[G]$ and Corollary \ref{corollary:sigma_approximation} implies that $\dot{\QQQ}^G$ has the $\sigma$-approximation property in $\VV[G]$. Now, if $H$ is $\dot{\QQQ}^G$-generic over $\VV[G]$ and $F$ is the filter on $\PPP_\lambda$ induced by the embedding $D^{\HH{\theta}}_\lambda$ and the filter $G*H$, then $j[G]=G\subseteq F$ and there is an embedding $\map{j_{G,H}}{\VV[G]}{M[F]}$ that extends $j$ and is definable in $\VV[G,H]$. Given $\theta\leq\gamma<\lambda$, let $\dot{\calU}_\gamma$ be the canonical $\dot\QQQ^G$-name in $\VV[G]$ such that $$\dot{\calU}_\gamma^H ~ = ~ \Set{A\in\POT{\Poti{\gamma}{\theta}}^{\VV[G]}}{j[\gamma]\in j_{G,H}(A)}$$ holds whenever $H$ is $\dot{\QQQ}^G$-generic over $\VV[G]$. Then forcing with $\dot{\QQQ}^G$ over $\VV[G]$ preserves the regularity of $\lambda$ and, as in the proof of Lemma \ref{lemma:MainLemmaGenAlmostHuge}, we can also show the sequence $\seq{\dot{\calU}_\gamma}{\theta\leq\gamma<\lambda}$ of $\dot{\QQQ}^G$-names satisfies the statements listed in Item (ii) of Definition \ref{definition:GenAlmostHuge} in $\VV[G]$. In particular, $\dot{\QQQ}^G$ witnesses $\theta$ to be generically almost huge with target $\lambda$ in $\VV[G]$.

 In the other direction, assume that (ii) holds and let $G$ be $\PPP_\theta$-generic over $\VV$. In $\VV[G]$, there is an inaccessible cardinal $\lambda>\theta$ and a partial order $\QQQ$ with the $\sigma$-approximation property that witnesses that $\theta$ is generically almost huge with target $\lambda$. Let $H$ be $\QQQ$-generic over $\VV[G]$. An application of Proposition \ref{proposition:GenAlmostHugeEmbedding} shows that there is an elementary embedding $\map{j}{\VV[G]}{M}$ definable in $\VV[G,H]$ with $\crit{j}=\theta$, $j(\theta)=\lambda$ and $j[\gamma]\in M$ for all $\gamma<\lambda$. Since Corollary \ref{corollary:sigma_approximation} and Lemma \ref{chain_condition} show that the assumptions of Lemma \ref{lemma:MainLemmaGenAlmostHuge} are satisfied, it follows by Lemma \ref{lemma:MainLemmaGenAlmostHuge} that $\theta$ is almost huge with target $\lambda$ in $\VV$.  
\end{proof}

The arguments contained in the above proofs also allow us to prove the analogous results for \emph{super almost huge cardinals} (see, for example, \cite{MR1779746} and \cite{MR3486170}), i.e.\ cardinals $\theta$ with the property that for every $\gamma>\theta$, there is an inaccessible cardinal $\lambda>\gamma$ such that $\theta$ is almost huge with target $\lambda$.

\begin{corollary}
  The following statements are equivalent for every inaccessible cardinal $\theta$: 
  \begin{enumerate}[leftmargin=0.7cm]
   \item $\theta$ is a super almost huge cardinal. 

   \item For every $\gamma>\theta$, there is an inaccessible cardinal $\lambda>\gamma$ and a partial order $\PPP$ with the $\sigma$-approximation property witnessing that $\theta$ is generically almost huge with target $\lambda$.  

   \item $\mathbbm{1}_{\PPP_\theta}\Vdash$\anf{For every ordinal   $\gamma$, there is an inaccessible cardinal $\lambda>\gamma$ and a partial order $\PPP$ with the $\sigma$-approximation property witnessing that $\omega_2$ is generically almost huge with target $\lambda$}. \qed 
  \end{enumerate}
\end{corollary}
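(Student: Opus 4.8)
The plan is to obtain this corollary by quantifying the arguments behind Theorem~\ref{theorem:NeemanAlmostHuge} and the unlabelled lemma preceding it over all possible targets $\lambda$. Observe first that super almost hugeness of $\theta$ is exactly the assertion that for every $\gamma>\theta$ there is an inaccessible $\lambda>\gamma$ for which $\theta$ is almost huge with target $\lambda$, while (ii) asserts the same with ``almost huge with target $\lambda$'' replaced by ``witnessed to be generically almost huge with target $\lambda$ by a partial order with the $\sigma$-approximation property''. Since the lemma preceding Theorem~\ref{theorem:NeemanAlmostHuge} establishes the equivalence of these two properties for each fixed pair of inaccessible cardinals $\theta<\lambda$, the equivalence of (i) and (ii) will follow at once by invoking that lemma separately for each witnessing $\lambda$.

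For the implication from (i) to (iii), I would fix a $\PPP_\theta$-generic filter $G$ and an arbitrary ordinal $\gamma$; as forcing adds no ordinals, $\gamma\in\VV$, and super almost hugeness lets me choose in $\VV$ an inaccessible cardinal $\lambda>\max\{\gamma,\theta\}$ together with an embedding $\map{j}{\VV}{M}$ witnessing that $\theta$ is almost huge with target $\lambda$. Running verbatim the forward direction of the proof of Theorem~\ref{theorem:NeemanAlmostHuge} for this particular $\lambda$ produces, in $\VV[G]$, the tail forcing $\dot{\QQQ}^G=(\dot{\QQQ}^{\HH{\theta}}_\lambda)^G$, which has the $\sigma$-approximation property by Corollary~\ref{corollary:sigma_approximation} and witnesses that $\omega_2=\theta$ is generically almost huge with target $\lambda$. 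Since $\PPP_\theta$ has cardinality $\theta<\lambda$, the cardinal $\lambda$ remains inaccessible in $\VV[G]$, and as $\lambda>\gamma$ this yields the instance of (iii) for $\gamma$.

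Conversely, to derive (i) from (iii), I would fix a $\PPP_\theta$-generic $G$ and an arbitrary $\gamma>\theta$ in $\VV$. Working in $\VV[G]$, apply (iii) to $\gamma$ to obtain an inaccessible $\lambda>\gamma$ and a partial order $\QQQ$ with the $\sigma$-approximation property witnessing that $\omega_2$ is generically almost huge with target $\lambda$; here $\lambda$ is also inaccessible in $\VV$ by downward absoluteness of regularity and of the strong limit property. Passing to a $\QQQ$-generic $H$ over $\VV[G]$, Proposition~\ref{proposition:GenAlmostHugeEmbedding} yields an elementary embedding $\map{j}{\VV[G]}{M}$ definable in $\VV[G,H]$ with $\crit{j}=\theta$, $j(\theta)=\lambda$ and $j[\bar\gamma]\in M$ for all $\bar\gamma<\lambda$. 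The pair $(\VV,\VV[G])$ satisfies the $\sigma$-approximation property by Corollary~\ref{corollary:SideConditionForcingApproximationProperty} and the $\theta$-cover property by the $\theta$-chain condition of Lemma~\ref{chain_condition}, while $(\VV[G],\VV[G,H])$ satisfies the $\sigma$-approximation property since $\QQQ$ does; hence Lemma~\ref{lemma:MainLemmaGenAlmostHuge} applies and shows that $\theta$ is almost huge with target $\lambda$ in $\VV$. As $\gamma>\theta$ was arbitrary, this gives that $\theta$ is super almost huge.

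The only point requiring genuine care, rather than a direct transcription of the earlier proofs, is the bookkeeping of the target $\lambda$ across the quantifier over $\gamma$: one must ensure that each chosen $\lambda$ survives as an inaccessible cardinal in the relevant model (handled by the cardinality and chain condition of $\PPP_\theta$ together with the downward absoluteness of inaccessibility), and that the equivalences of Theorem~\ref{theorem:NeemanAlmostHuge} and of the preceding lemma are genuinely uniform in $\lambda$. The latter holds because in both the tail-forcing construction and the reflection argument via Lemma~\ref{lemma:MainLemmaGenAlmostHuge} the cardinal $\lambda$ enters only as a parameter, so no step is sensitive to the particular choice of target beyond its inaccessibility and its position above $\gamma$.
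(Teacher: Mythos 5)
Your proposal is correct and is exactly the argument the paper intends: the corollary is stated with its proof omitted precisely because, as you observe, one only needs to run the preceding lemma and the proof of Theorem \ref{theorem:NeemanAlmostHuge} separately for each target $\lambda$ chosen above a given $\gamma$, checking that $\lambda$ remains inaccessible in the relevant models. Your additional bookkeeping (downward absoluteness of inaccessibility, preservation of $\lambda$ by the small forcing $\PPP_\theta$, and uniformity of the construction in the parameter $\lambda$) is just what makes that quantification legitimate.
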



\section{Concluding remarks and open questions}

The characterizations provided in the first half of this paper only rely on a short list of structural properties of Neeman's pure side condition forcing: the uniform definability of $\PPP_\theta$ as a subset of $\HH{\theta}$ and the consequences of Corollary \ref{corollary:Proper}, Corollary \ref{corollary:sigma_approximation} and Lemma \ref{chain_condition}.  Using the presentation of results of Mitchell in {\cite[Section 5]{MR2959668}}, one can directly see that the partial order constructed in Mitchell's classical proof of the consistency of the tree property at $\omega_2$ in \cite{MR0313057} is an example of a forcing that satisfies all of the relevant properties. Therefore, it is also possible to use partial orders of this form to characterize inaccessibility, Mahloness, $\Pi^m_n$-indescribability, subtlety, $\lambda$-ineffability and supercompactness.

The fact that quotients of forcing notions of the form $\PPP_\theta$ satisfy the $\sigma$-approxi\-ma\-tion property is central for almost all large cardinal characterizations presented in this paper. Note that this property implies that these quotients add new real numbers, and that this causes the Continuum Hypothesis to fail in the final forcing extension. In addition, if we want to use some sequence of collapse forcing notions in order to characterize inaccessibility as in Theorem \ref{theorem:CharInaccessible}, then these collapses have to force failures of the $\GCH$ below the relevant cardinals. This shows that, in order to obtain large cardinal characterizations based on forcing notions whose quotients do not add new reals, one has to work with different combinatorial principles. Since Proposition \ref{proposition:ColDoesNotWork} shows that the canonical collapse forcing with this quotient behaviour, the \emph{L{\'e}vy Collapse $\Col{\kappa}{{<}\theta}$}, is not suitable for the type of large cardinal characterization as in Definition \ref{definition:Characterization}, it is then natural to consider the two-step iteration $\Add{\omega}{1}*\Col{\kappa}{{<}\theta}$ that first adds a Cohen real and then collapses some cardinal $\theta$ to become the successor of a regular uncountable cardinal $\kappa$. Using results of \cite{MR2063629}, showing that forcings of this form satisfy the $\sigma$-approximation and cover property, it is possible to modify the characterizations obtained in the early sections of this paper in order to characterize inaccessibility, Mahloness and weak compactness with the help of the sequence $\seq{\CCC_\theta}{\theta\in\Card}$, if we let $\CCC_\theta=\Add{\omega}{1}*\Col{\omega_1}{{<}\theta}$. In these modifications, we replace statements about the non-existence of certain trees by statements claiming that these trees contain \emph{Cantor subtrees}, i.e.\ that there is an embedding $\map{\iota}{{}^{{\leq}\omega}2}{\TTT}$ of the full binary tree ${}^{{\leq}\omega}2$ of height $\omega+1$ into the given tree $\TTT$, that satisfies $\length{\iota(s)}{\TTT}=\sup_{n<\omega}\length{\iota(s\restriction n)}{\TTT}$ and $\length{\iota(s\restriction n)}{\TTT}=\length{\iota(t\restriction n)}{\TTT}$ for all $s,t\in{}^{\omega}2$ and $n<\omega$. Using results from \cite{MR631563} and ideas contained in the proof of  {\cite[Theorem 7.2]{MR3430247}}, it is then possible to obtain the following  characterizations: 
 \begin{itemize}[leftmargin=0.7cm]
  \item An infinite cardinal $\theta$ is inaccessible if and only if $\CCC_\theta$ forces $\theta$ to become $\omega_2$ and every tree of height $\omega_1$ with $\aleph_2$-many cofinal branches to contain a Cantor subtree. 

  \item An inaccessible cardinal $\theta$ is a Mahlo cardinal if and only if $\CCC_\theta$ forces all special $\omega_2$-Aronszajn trees to contain a Cantor subtree. 

  \item An inaccessible cardinal $\theta$ is weakly compact if and only if $\CCC_\theta$ forces all $\omega_2$-Aronszajn trees to contain a Cantor subtree.
 \end{itemize}
 In addition, it is also possible to use {\cite[Theorem 10]{MR2063629}} and arguments from the proof of Lemma \ref{lemma:FilterInGroundModel} to prove analogues of the results of the previous two sections for the sequence $\seq{\CCC_\theta}{\theta\in\Card}$: 
 \begin{itemize}[leftmargin=0.7cm]
  \item An inaccessible cardinal $\theta$ is $\lambda$-supercompact for some cardinal $\lambda\geq\theta$ if and only if in every $\CCC_\theta$-generic extension, there is a $\sigma$-closed partial order witnessing that $\omega_2$ is generically $\lambda$-supercompact. 

  \item An inaccessible cardinal $\theta$ is almost huge with target $\lambda>\theta$ if and only if in every $\CCC_\theta$-generic extension, there is a $\sigma$-closed partial order witnessing that $\omega_2$ is generically almost huge with target $\lambda$.  
 \end{itemize}
It follows directly that the large cardinal characterizations obtained in this way are all strong. The details of these results will be presented in the forthcoming \cite{internal}. Note that the above arguments provide no analogues for the results of Section \ref{section:Indescribable} (except for the case of weakly compact cardinals) and of Section \ref{section:ineffable}. We do not know which combinatorial principles could  replace the ones used in these sections in order to allow characterizations of the corresponding large cardinal properties through the sequence $\seq{\CCC_\theta}{\theta\in\Card}$. These observations motivate the following question:

\begin{question}
 Does the sequence $\seq{\CCC_\theta}{\theta\in\Card}$ characterize $\Pi^m_n$-indescribability, subtlety or $\lambda$-ineffability? 
\end{question}

Proposition \ref{proposition:ColDoesNotWork} shows that the L{\'e}vy Collapse is not suitable for large cardinal characterizations in the sense of Definition \ref{definition:Characterization}, by showing that it cannot characterize inaccessibility in this way. 
However, we do not know whether it could be used to characterize stronger large cardinal properties if we restrict the desired provable equivalences to inaccessible cardinals. In particular, we cannot answer the following sample question:

\begin{question}
 Is there a parameter-free formula $\varphi(v)$ in the language of set theory with the property that $$\ZFC\vdash\textit{$\forall\theta$ inaccessible} ~ [\textit{$\theta$ is weakly compact} ~ \longleftrightarrow ~ \mathbbm{1}_{\Col{\omega_1}{{<}\theta}}\Vdash\varphi(\check\theta)\hspace{0.9pt}] ~ ?$$
\end{question}

In the remainder of this section, we present some arguments suggesting that if it is possible to characterize stronger large cardinal properties of inaccessible cardinals using forcings of the form $\Col{\omega_1}{{<}\theta}$, then the combinatorial principles to be used in these equivalences are not as canonical as the ones that appear in the above characterization through Neeman's pure side condition forcing. The proof of the following result is based upon a classical construction of Kunen from \cite{MR495118}.

\begin{theorem}\label{theorem:ConsNoCantorSubtreeNotWC}
 If $\theta$ is a weakly compact cardinal, then the following statements hold in a regularity preserving forcing extension $\VV[G]$ of the ground model $\VV$: 
 \begin{enumerate}[leftmargin=0.7cm]
  \item $\theta$ is an inaccessible cardinal that is not weakly compact. 

  \item $\mathbbm{1}_{\Col{\omega_1}{{<}\theta}}\Vdash\anf{\textit{Every $\theta$-Aronszajn tree contains a Cantor  subtree}}$. 
 \end{enumerate}
\end{theorem}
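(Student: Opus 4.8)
The plan is to first reduce statement (ii) to a statement purely about $\VV[G]$, and only then to realize $\VV[G]$ by a $<\theta$-closed forcing that destroys weak compactness. The crucial point for the reduction is that $\Col{\omega_1}{{<}\theta}$ is $\sigma$-closed, hence adds no new $\omega$-sequences of ordinals and (by a standard argument) satisfies the $\sigma$-approximation property. I would use these facts twice. First, if $T$ is a $\theta$-Aronszajn tree in a $\Col{\omega_1}{{<}\theta}$-generic extension $\VV[G][H]$, coded as a subset of $\theta$, then $T\cap x\in\VV[G]$ for every $x\in\Poti{\theta}{\omega_1}^{\VV[G]}$, since $T\cap x$ is a subset of a set that is countable in $\VV[G]$ and no new such subsets are added; the $\sigma$-approximation property then gives $T\in\VV[G]$, and a level/cofinality count shows $T$ is already $\theta$-Aronszajn in $\VV[G]$. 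Second, for a fixed $T\in\VV[G]$ the property of containing a Cantor subtree is absolute between $\VV[G]$ and $\VV[G][H]$: the domain tree ${}^{{\leq}\omega}2$, the admissible level sequences, and the branches cofinal through the stem are all countable objects, hence unchanged, so the only additional datum, a choice of a node above each such branch at the common limit level, can already be found in $\VV[G]$. Together these reduce (ii) to the assertion that in $\VV[G]$ \emph{every} $\theta$-Aronszajn tree contains a Cantor subtree.

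\emph{The forcing and (i).} Following Kunen's construction in \cite{MR495118}, I would let $\PPP$ be the $<\theta$-closed forcing whose conditions are normal trees of successor height below $\theta$ ordered by end-extension, take $G$ to be $\PPP$-generic over $\VV$, and set $S=\bigcup G$. Since $\theta$ is inaccessible, $\PPP$ has a dense subset in $\HH{\theta}$ and satisfies $\betrag{\PPP}\le\theta$, so it is $<\theta$-closed (adding no new bounded subsets of $\theta$, and thus preserving cardinals, cofinalities and the inaccessibility of $\theta$) and $\theta^+$-cc; this makes the extension regularity preserving and keeps $\theta$ inaccessible. As the generic tree $S$ witnesses the failure of the tree property, $\theta$ is not weakly compact in $\VV[G]$, giving (i). By a routine density argument I would also arrange that $S$ splits everywhere and is full at every limit level of cofinality $\omega$, so that $S$ itself visibly contains Cantor subtrees.

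\emph{Fatness of all trees.} It remains to show that in $\VV[G]$ every $\theta$-Aronszajn tree $T$ contains a Cantor subtree. The key structural fact is that, since $\PPP$ is $<\theta$-closed, $\HH{\theta}^{\VV[G]}=\HH{\theta}^{\VV}$, so every proper initial segment $T{\restriction}\alpha$ with $\alpha<\theta$ already lies in $\VV$. Because a Cantor subtree occupies only the $\omega+1$ levels below a single limit level of countable cofinality, it suffices to produce one $\alpha<\theta$ for which the ground-model tree $T{\restriction}\alpha$ contains one. I would obtain this by a reflection argument anchored in the weak compactness of $\theta$ in $\VV$: capturing $\PPP$ and a name $\dot T$ inside the domain of an embedding $\map{j}{M}{N}$ with $\crit{j}=\theta$ and ${}^{{<}\theta}M\subseteq M$, and reflecting the low-complexity (essentially $\Sigma^1_1$ over $\HH{\theta}$) assertion that $T$ contains a Cantor subtree down to a level of cofinality $\omega$, where the fullness of the generic tree is used to realize the required perfect family of branches.

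\emph{The main obstacle} is precisely this last reflection. One cannot argue by lifting $j$ and reading off a cofinal branch of $T$ at level $\theta$: such a branch has cofinality $\omega_2$ rather than $\omega$, and in any case no master condition for $\PPP$ need exist — indeed, the non-liftability of weak compactness embeddings through $\PPP$ is exactly what makes $\theta$ fail to be weakly compact in $\VV[G]$. The argument must therefore reflect the Cantor-subtree property itself at a level of countable cofinality, combining the weak compactness of $\theta$ in the ground model with the genericity and fullness of Kunen's tree, rather than relying on branch-lifting. Verifying that this reflection genuinely yields a fat initial segment of an \emph{arbitrary} $\theta$-Aronszajn tree in $\VV[G]$ is the technical heart of the proof, and the step I expect to require the most care.
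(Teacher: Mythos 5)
There are two genuine gaps. The first is your reduction of (ii) to a statement about $\VV[G]$ alone: you assert that $\Col{\omega_1}{{<}\theta}$ satisfies the $\sigma$-approximation property ``by a standard argument'' from $\sigma$-closure, but this is false. A $\sigma$-closed forcing adds no new countable sets of ordinals, yet it can --- and the L\'evy collapse does --- add new subsets of ground-model sets all of whose countable pieces lie in the ground model: the graph of the generic collapsing function is such a set. This is precisely why the paper, in its concluding section, passes to the two-step iteration $\Add{\omega}{1}*\Col{\omega_1}{{<}\theta}$ when it needs the $\sigma$-approximation property for a collapse. Consequently the reduction fails: the collapse genuinely adds new $\theta$-Aronszajn trees, so establishing the Cantor-subtree property for the Aronszajn trees of $\VV[G]$ would not yield (ii). (The absoluteness of ``contains a Cantor subtree'' for a \emph{fixed} tree, via $\sigma$-closure, is fine and is in fact used in the last step of the paper's proof, but it does not repair the reduction.)

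The second gap is the step you yourself flag as the technical heart: you propose to \emph{reflect} the assertion that an arbitrary $\theta$-Aronszajn tree contains a Cantor subtree using a weak compactness embedding, but that assertion is exactly what has to be proved, and $\theta$ is no longer weakly compact in $\VV[G]$, so there is no embedding there to reflect with. The paper's mechanism is different and is the crux of Kunen's construction, which your outline omits: the conditions of $\PPP$ are \emph{homogeneous} $\sigma$-closed normal trees, the generic tree $\SSS$ is $\theta$-Souslin (killing weak compactness), and $\PPP*\dot{\SSS}$ has a dense ${<}\theta$-closed subset forcing equivalent to $\Add{\theta}{1}$, so after a Silver preparation, forcing with $\SSS$ itself \emph{resurrects} weak compactness. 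Given a $\theta$-Aronszajn tree $\TTT$ in the collapse extension $\VV[G,K]$, one forces further with $\SSS$: either $\SSS$ (a $\sigma$-closed forcing) adds a cofinal branch to $\TTT$, which yields a Cantor subtree by a standard argument, or $\TTT$ remains Aronszajn in $\VV[G,H,K]$, where the weak compactness of $\theta$ in $\VV[G,H]$ and the results of \cite{MR631563} supply one; in either case the subtree is pulled back into $\VV[G,K]$ using the $\sigma$-distributivity of $\CCC\times\SSS$ over $\VV[G]$. Without this resurrection-and-pullback step I do not see how your plan can be completed. A smaller point: $\PPP$ is not ${<}\theta$-closed as you state --- descending sequences of uncountable cofinality need not have lower bounds unless the homogeneity of conditions is used to keep enough branches alive; the paper only gets strategic closure of $\PPP$ and closure of a dense subset of $\PPP*\dot{\SSS}$.
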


\begin{proof}
 By classical results of Silver, we may assume that $$\mathbbm{1}_{\Add{\theta}{1}}\Vdash\anf{\textit{$\check{\theta}$ is weakly compact}}.$$ Given $D\subseteq\theta$, let $\pi_D$ denote the unique automorphism of the tree ${}^{{<}\theta}2$ with the property that $$\pi_D(t)(\alpha) ~ = ~ t(\alpha) ~ \Longleftrightarrow ~ \alpha\notin D$$ holds for all $t\in{}^{{<}\theta}2$ and $\alpha\in\dom{t}$. Moreover, given $s,t\in{}^{{<}\theta}2$, we set $$\Delta(s,t) ~ = ~ \Set{\alpha\in\dom{s}\cap\dom{t}}{s(\alpha)\neq t(\alpha)}.$$ Note that $\pi_{\Delta(s,t)}(s)=t$ holds for all $s,t\in{}^{{<}\theta}2$ with $\dom{s}=\dom{t}$.

 Define $\PPP$ to be the partial order whose conditions are either $\emptyset$, or normal, $\sigma$-closed subtrees $S$ of ${}^{{<}\theta}2$ of cardinality less than $\theta$ and height $\alpha_S+1<\theta$,\footnote{In this situation, normality means that if $s\in S$ with $\dom{s}\in\alpha_S$, then $s^\frown\langle i\rangle\in S$ for all $i<2$, and there is a $t\in S(\alpha_S)$ with $s\subseteq t$.} with the additional property that for all $s,t\in S$ with $\dom{s}=\dom{t}$, the map  $\pi_{\Delta(s,t)}\restriction S$ is an automorphism of $S$. Let $\PPP$ be ordered by reverse end-extension. If $G$ is $\PPP$-generic over $\VV$, then $\bigcup\bigcup G$ is a subtree of ${}^{{<}\theta}2$. Let $\dot{\SSS}$ be the canonical $\PPP$-name for the forcing notion corresponding to the tree $\bigcup\bigcup G$, and let $$D ~ = ~ \Set{\langle S,\check{s}\rangle\in \PPP*\dot{\SSS}}{S\in\PPP, ~ s\in S(\alpha_S)}.$$ Then it is easy to see that $D$ is dense in $\PPP*\dot{\SSS}$. 

  \begin{claim*}
   Let $\lambda<\theta$, and let $\seq{S_\gamma}{\gamma<\lambda}$ be a descending sequence in $\PPP$. Define $\alpha=\sup_{\gamma<\lambda}\alpha_{S_\gamma}$,  $S=\bigcup\Set{S_\gamma}{\gamma<\lambda}$ and $[S] = \Set{t\in{}^\alpha 2}{\forall\gamma<\lambda ~ t\restriction\alpha_{S_\gamma}\in S_\gamma}$. 
 \begin{enumerate}[leftmargin=0.7cm]
  \item[(a)] If $\cof{\lambda}=\omega$, then $[S]\neq\emptyset$ and $S\cup[S]$ is the unique condition $T$ in $\PPP$ with $\alpha_T=\alpha$ and $T\leq_\PPP S_\gamma$ for all $\gamma<\lambda$. 

  \item[(b)] If $\cof{\lambda}>\omega$ and $[S]\neq\emptyset$, then $S\cup [S]$ is a condition in $\PPP$ below $S_\gamma$ for all $\gamma<\lambda$. 
  
  \item[(c)] If $\cof{\lambda}>\omega$, $G$ is the subgroup of the group of all automorphisms of ${}^{{<}\theta}2$ that is generated by the set $\Set{\pi_{\Delta(s,t)}}{s,t\in S, ~ \dom{s}=\dom{t}}$, $u\in[S]$ and $B=\Set{\pi(u)}{\pi\in G}$, then $S\cup B$ is a condition in $\PPP$ below $S_\gamma$ for all $\gamma<\lambda$.  
 \end{enumerate}
 In particular, the dense suborder $D$ of $\PPP*\dot{\SSS}$ is ${<}\theta$-closed, $\PPP*\dot{\SSS}$ is forcing equivalent to $\Add{\theta}{1}$, and forcing with $\PPP$ preserves the inaccessibility of $\theta$.   
  \end{claim*}

 By the above claim, there is a winning strategy $\Sigma$ for player Even in the game $G_\theta(\PPP)$ of length $\theta$ associated to the partial order $\PPP$ (see {\cite[Definition 5.14]{MR2768691}}), with the property that whenever $\seq{S_\gamma}{\gamma<\theta}$ is a run of $G_\theta(\PPP)$ in which player Even played according to $\Sigma$, then the following statements hold: 
 \begin{enumerate}[leftmargin=0.7cm]
  \item[(1)] There is a sequence $\seq{t_\gamma}{\gamma<\theta}$ of elements of ${}^{{<}\theta}2$ with the property that $\seq{\langle S_{2\cdot\gamma},\check{t}_\gamma\rangle}{\gamma<\theta}$ is a strictly descending sequence of conditions in $D$. 

  \item[(2)] The set $\Set{\alpha_{S_{2\cdot\gamma}}}{\gamma<\theta}$ is a club in $\theta$. 

  \item[(3)] If $\lambda\in\Lim\cap\theta$ and $S=\bigcup\Set{S_\gamma}{\gamma<\lambda}$, then $S_\lambda=S\cup[S]$. 
 \end{enumerate}
In particular, $\Sigma$ witnesses that $\PPP$ is $\theta$-strategically closed.

\begin{claim*}
  $\mathbbm{1}_\PPP\Vdash\anf{\textit{$\dot{\SSS}$ is a $\sigma$-closed $\check{\theta}$-Souslin tree}}$.  
\end{claim*}

 \begin{proof}[Proof of the Claim]
  It is immediate that $\dot{\SSS}$ is forced to be a tree of height $\theta$ whose levels all have cardinality less than $\theta$, and that the tree $\dot{\SSS}$ is forced to be  $\sigma$-closed. It remains to show that its antichains have size less than $\theta$.

Therefore, let $S_*$ be a condition in $\PPP$, let $\dot{A}\in\VV$ be a $\PPP$-name for a maximal antichain in $\dot{\SSS}$, and let  $\dot{C}\in\VV$ be the induced $\PPP$-name for the club of all ordinals less than $\theta$ with the property that the intersection of $\dot{A}$ with the corresponding initial segment of $\dot{\SSS}$ is a maximal antichain in this initial segment. Then there is a run $\seq{S_\gamma}{\gamma<\theta}$ of $G_\theta(\PPP)$ in which player Even played according to $\Sigma$, $S_1\leq_\PPP S_*$, and there exist sequences $\seq{\beta_\gamma}{\gamma<\theta}$ and $\seq{A_\gamma}{\gamma<\theta}$ with the properties that $\alpha_{S_{2\cdot\gamma+1}}>\beta_\gamma$ and  $$S_{2\cdot\gamma+1}\Vdash_\PPP\anf{\check{\beta}_\gamma =\min(\dot{C}\setminus\check{\alpha}_{S_{2\cdot\gamma}})  ~ \wedge ~ \check{A}_\gamma=\dot{A}\cap {}^{{<}\check{\beta}_\gamma}2}$$ for all $\gamma<\theta$. Since $C=\Set{\alpha_{S_{2\cdot\gamma}}}{\gamma<\theta}$ is a club in $\theta$, we can find an inaccessible cardinal $\eta<\theta$ with $\eta=\alpha_{S_\eta}$ and $\betrag{S_\gamma}<\eta$ for all $\gamma<\eta$. Set $A=\bigcup\Set{A_\gamma}{\gamma<\eta}$ and $S=\bigcup\Set{S_\gamma}{\gamma<\eta}$. Then we have  $$S_\eta\Vdash\anf{\check{\eta}\in\dot{C}  ~ \wedge ~ \check{A}=\dot{A}\cap{}^{{<}\check{\eta}}2 
 ~ \wedge ~ \check{S}=\dot{\SSS}\cap{}^{{<}\check{\eta}}2 }.$$ Hence $S$ is a normal tree of cardinality and height $\eta$, and $A$ is a maximal antichain in $S$. Fix an enumeration $\seq{\pi_\gamma}{\gamma<\eta}$ of the subgroup of the group of all automorphisms of  ${}^{{<}\theta}2$ generated by all automorphisms of the form $\pi_{\Delta(s,t)}$ with $s,t\in S$ and $\dom{s}=\dom{t}$. Since $S\cap{}^\gamma 2=[S\cap{}^{{<}\gamma}2]$ holds for all $\gamma\in C\cap\eta$, we can now inductively construct a continuous increasing sequence $\seq{s_\gamma}{\gamma<\eta}$ of elements of $S$ with the property that for every $\gamma<\eta$, we have $\dom{s_\gamma}\in C$, and there is a $t_\gamma\in A$ with $\pi_\gamma^{{-}1}(t_\gamma)\subseteq s_{\gamma+1}$. Set $s=\bigcup\Set{s_\gamma}{\gamma<\eta}\in[S]$,  $B=\Set{\pi_\gamma(s)}{\gamma<\eta}$ and $T=S\cup[B]$. By the above claim, $T$ is a condition in $\PPP$ below $S_*$. By the construction of $s$, for every $u\in B$, there is a $t\in A$ with $t\subseteq u$. Hence $T\Vdash_\PPP\anf{\dot{A}=\check{A}}$. 
 \end{proof}

Let $G$ be $\PPP$-generic over $\VV$, set $\SSS=\dot{\SSS}^G$, and let $H$ be $\SSS$-generic over $\VV[G]$. Then the above computations ensure that $\theta$ is weakly compact in $\VV[G,H]$.  
Set $\CCC=\Col{\omega_1}{{<}\theta}^{\VV[G,H]}$, and let $K$ be $\CCC$-generic over $\VV[G,H]$. Since the partial order $\SSS$ is ${<}\theta$-distributive in $\VV[G]$, we have $\CCC=\Col{\omega_1}{{<}\theta}^{\VV[G]}$, and $\VV[G,H,K]$ is a $(\CCC\times\SSS)$-generic extension of $\VV[G]$. Moreover, $\CCC$ is a $\sigma$-closed, $\theta$-Knaster partial order in $\VV[G]$, and therefore $\SSS$ remains a $\sigma$-closed $\theta$-Souslin tree in $\VV[G,K]$. But this shows that the partial order $\CCC\times\SSS$ is $\sigma$-distributive in $\VV[G]$.

Let $\TTT$ be a $\theta$-Aronszajn tree in $\VV[G,K]$. First, assume that $\TTT$ has a cofinal branch in $\VV[G,H,K]$. Then, in $\VV[G,K]$, there is a $\sigma$-closed forcing that adds a cofinal branch through $\TTT$, and therefore standard arguments show that $\TTT$ contains a Cantor subtree in $\VV[G,H,K]$. In the other case, assume that $\TTT$ is a $\theta$-Aronszajn tree in $\VV[G,H,K]$. Since $\theta$ is weakly compact in $\VV[G,H]$, results from \cite{MR631563} show that $\TTT$ contains a Cantor subtree in $\VV[G,H,K]$. Let $\map{\iota}{{}^{{\leq}\omega}2}{\TTT}$  be an embedding in $\VV[G,H,K]$ witnessing this. Since the above remarks show that $({}^\omega\VV[G,K])^{\VV[G,H,K]}\subseteq\VV[G,K]$, the map $\iota\restriction({}^{{<}\omega}2)$ is an element of $\VV[G,K]$. Pick $\alpha<\theta$ with $\iota[{}^\omega 2]\subseteq\TTT(\alpha)$. Given $x\in({}^\omega 2)^{\VV[G,K]}$, we then know that there is an element $t$ of $\TTT(\alpha)$ with $\iota(x\restriction n)\leq_\TTT t$ for all $n<\omega$. This allows us to conclude that, in $\VV[G,K]$, there is an embedding from ${}^{{\leq}\omega}2$ into $\TTT$ that extends $\iota\restriction({}^{{<}\omega}2)$ and witnesses that $\TTT$ contains a Cantor subtree.  
\end{proof}

Note that, in combination with {\cite[Theorem 3.9]{MR2013395}}, the above proof shows that the existence of a weakly compact cardinal is equiconsistent with the existence of a non-weakly compact  inaccessible cardinal $\theta$ with the property that every $\theta$-Aronszajn tree contains a Cantor subtree. In contrast, the proof of the following result shows that the corresponding statement for special Aronszajn trees has much larger consistency strength. In particular, it shows that the inconsistency of certain large cardinal properties strengthening measurability would imply that the Mahloness of inaccessible cardinals can be characterized by partial orders of the form $\Col{\omega_1}{{<}\theta}$ in a canonical way.

\begin{theorem}\label{theorem:LowerBoundConsNonMahlo}
 Let $\theta$ be an inaccessible cardinal with property that one of the following statements holds:
 \begin{enumerate}[leftmargin=0.7cm]
  \item Every special $\theta$-Aronszajn tree contains a Cantor subtree. 

  \item $\mathbbm{1}_{\Col{\omega_1}{{<}\theta}}\Vdash\anf{\textit{Every special $\omega_2$-Arosnzajn tree contains a Cantor subtree}}$. 
 \end{enumerate}

 If $\theta$ is not a Mahlo cardinal, then there is an inner model that contains a stationary limit of measurable cardinals of uncountable Mitchell order. 
\end{theorem}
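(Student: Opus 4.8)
The plan is to argue by contraposition: assuming that there is \emph{no} inner model containing a stationary limit of measurable cardinals of uncountable Mitchell order, I will produce a special $\theta$-Aronszajn tree (under hypothesis (i)) or a special $\omega_2$-Aronszajn tree in the $\Col{\omega_1}{{<}\theta}$-extension (under hypothesis (ii)) that contains no Cantor subtree, contradicting the respective hypothesis. The starting point is Todor{\v{c}}evi{\'c}'s theorem ({\cite[Theorem 6.1.4]{MR2355670}}): since $\theta$ is inaccessible but not Mahlo, special $\theta$-Aronszajn trees exist, so the hypotheses are non-vacuous assertions about \emph{all} of them. Unwinding non-Mahloness, I would first fix a club $C\subseteq\theta$ consisting of singular strong limit cardinals, which is possible precisely because the regular cardinals below $\theta$ fail to be stationary in $\theta$.

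Second, I would invoke core model theory at the relevant level. Under the assumption that no inner model with a stationary limit of measurables of uncountable Mitchell order exists, the corresponding core model $K$ is available and enjoys weak covering, so that $V$-singular cardinals of uncountable cofinality remain singular in $K$ and $K$ computes their successors correctly. The decisive input is that, below this inner-model threshold, Jensen's principle $\square_\kappa$ holds in $K$ for every $K$-singular cardinal $\kappa$, by the analysis of square sequences in extender models due to Jensen, Schimmerling and Zeman. Shrinking $C$ if necessary, I obtain a club of cardinals $\kappa$ that are singular in $K$ and carry coherent $\square_\kappa$-sequences that, by covering, are usable in $V$.

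Third comes the combinatorial construction. From the coherent square sequences along $C$ one assembles, by the standard square-to-tree construction, a coherent special $\theta$-Aronszajn tree $\TTT$ in the sense of Definition \ref{definition:SpecialTree}: the regressive specializing map is read off from the threads of the square sequence, and coherence guarantees that $\TTT$ is $\theta$-Aronszajn and special. The essential point is that the rigidity imposed by the square sequence prevents $\TTT$ from containing a Cantor subtree: such a subtree would furnish, below a single level, a perfect set of pairwise eventually-distinct cofinal branch approximations, whereas the coherence of the threads forces the branches realized below any fixed level to form a scattered, non-perfect family. For case (ii), I would work directly inside the $\Col{\omega_1}{{<}\theta}$-extension: as this collapse does not reach the inner-model threshold, $K$ remains the core model there, $\square^K$ persists, and $\theta$ has become $\omega_2$, so the same construction yields a special $\omega_2$-Aronszajn tree without a Cantor subtree in the extension. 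In either case this contradicts the hypothesis, so the desired inner model must exist.

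The main obstacle is twofold, and both parts lie in the third step. First, one must verify carefully that the square-based tree genuinely avoids Cantor subtrees; this is the real combinatorial content and requires extracting from the coherence of the $\square$-sequence a precise obstruction to perfectly splitting families of branches. In case (ii) this is especially delicate, since — as the construction in Theorem \ref{theorem:ConsNoCantorSubtreeNotWC} shows — collapses such as $\Col{\omega_1}{{<}\theta}$ can \emph{create} Cantor subtrees in Aronszajn trees, so one must ensure the obstruction survives in the presence of the newly added reals by running the argument intrinsically in the extension rather than by preservation from $V$. Second, the calibration of the exact large-cardinal threshold is subtle: it is precisely the core model theory of square in extender models that pins the failure of stationarily-many instances of $\square_\kappa$ below a non-Mahlo inaccessible to a stationary limit of measurables of uncountable Mitchell order, and matching the combinatorial requirement to exactly this level is where the deep inner-model input is indispensable.
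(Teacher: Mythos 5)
Your overall strategy---contraposition, extracting square sequences from the anti-inner-model hypothesis, building a tree from walks, and arguing it has no Cantor subtree---matches the paper's in outline, but two of your steps have genuine gaps. The first is that the inner-model input you request is the wrong one. You ask for $\square_\kappa$-sequences at the individual $K$-singular cardinals $\kappa$ in a club. Each such sequence yields at best a special $\kappa^+$-Aronszajn tree; a family of them, one for each $\kappa$ in a club, does not cohere into any object of height $\theta$, and there is no ``standard square-to-tree construction'' assembling local $\square_\kappa$'s into a $\theta$-Aronszajn tree. What is actually needed, and what the paper extracts from the proof of {\cite[Theorem 1]{MR2467218}}, is the \emph{global} square principle up to $\theta$: a single sequence $\seq{B_\alpha}{\textit{$\alpha\in\Lim\cap\theta$ singular}}$ with the coherence condition $B_\beta=B_\alpha\cap\beta$ for $\beta\in\Lim(B_\alpha)$. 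This is then interpolated along the club $D$ of singular strong limit cardinals (this is where non-Mahloness enters) to produce a $\square(\theta)$-sequence $\vec{C}$, and it is walks along $\vec{C}$---not along local $\square_\kappa$'s---that produce the height-$\theta$ tree $\TTT(\rho_0^{\vec{C}})$. The consistency threshold in the statement is calibrated to global square, not to $\square_\kappa$ at singulars.

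The second gap is the step you yourself flag as the main obstacle: that the resulting tree contains no Cantor subtree. You neither prove this nor reduce it to a quotable result, and your heuristic (``the branches realized below any fixed level form a scattered, non-perfect family'') is not an argument. The paper's route is to verify that $\vec{C}$ is a \emph{special} $\square(\theta)$-sequence in the sense of {\cite[Definition 7.2.11]{MR2355670}}, by exhibiting a regressive map on $\TTT\restriction D$ witnessing that $D$ is non-stationary with respect to $\TTT$ and invoking the equivalence from \cite{MR929410}, and then to apply {\cite[Theorem 3.14]{MR2013395}}, which produces from a special $\square(\theta)$-sequence a special $\theta$-Aronszajn tree without Cantor subtrees. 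Without identifying this intermediate notion and the cited result, the combinatorial core of the proof is missing. Finally, for case (ii) you propose to rerun the whole construction inside the collapse extension and worry about ``newly added reals''; but $\Col{\omega_1}{{<}\theta}$ is $\sigma$-closed and adds no reals, and the paper simply shows that (ii) implies (i) by pulling a Cantor subtree of a ground-model tree back to $\VV$, exactly as at the end of the proof of Theorem \ref{theorem:ConsNoCantorSubtreeNotWC}---a much shorter route that avoids any question of core-model preservation under the collapse.
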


\begin{proof}
 Fix a closed and unbounded subset $D$ of $\theta$ that consists of singular strong limit cardinals and assume that the above conclusion fails. Then, the proof of {\cite[Theorem 1]{MR2467218}} shows that \emph{Jensen's $\square$-principle} holds up to $\theta$, i.e.\ there is a sequence $\seq{B_\alpha}{\textit{$\alpha\in\Lim\cap\theta$ singular}}$ such that for all singular limit ordinals $\alpha<\theta$, the set $B_\alpha$ is a closed and unbounded subset of $\alpha$ of order-type less than $\alpha$, and, if $\beta\in\Lim(B_\alpha)$, then $\cof{\beta}<\beta$ and $C_\beta=C_\alpha\cap\beta$. Then, we may pick a sequence $\vec{C}=\seq{C_\alpha}{\alpha\in\Lim\cap\theta}$ satisfying the following statements for all $\alpha\in\Lim\cap\theta$: 
  \begin{enumerate}[leftmargin=0.7cm]
   \item[(i)] If $\alpha\in\Lim(D)$ and $B_\alpha\cap D$ is unbounded in $\alpha$, then $C_\alpha=B_\alpha\cap D$.  
   
   \item[(ii)] If $\alpha\in\Lim(D)$ and $\max(B_\alpha\cap D)<\alpha$, then $C_\alpha$ is an unbounded subset of $\alpha$ of order-type $\omega$ with $\min(C_\alpha)>\max(B_\alpha\cap D)$. 
   
   \item[(iii)] If $\max(D\cap\alpha)<\alpha$, then $C_\alpha=(\max(D\cap\alpha),\alpha)$. 
  \end{enumerate}
It is easy to check that $\vec{C}$ is a $\square(\theta)$-sequence (see {\cite[Definition 7.1.1]{MR2355670}}).

 \begin{claim*}
  $\vec{C}$ is a special $\square(\theta)$-sequence (see {\cite[Definition 7.2.11]{MR2355670}}). 
 \end{claim*}
 
 \begin{proof}[Proof of the Claim]
  Given $\alpha\leq\beta<\theta$, let $\map{\rho_0^{\vec{C}}(\alpha,\beta)}{\beta}{{}^{{<}\omega}\theta}$ denote the \emph{full code of the walk from $\beta$ to $\alpha$ through $\vec{C}$}, as defined in {\cite[Section 7.1]{MR2355670}}. Let $\TTT=\TTT(\rho_0^{\vec{C}})$ be the tree of all functions of the form $\rho_0^{\vec{C}}(\hspace{1.7pt} \cdot \hspace{1.7pt},\beta)\restriction\alpha$ with $\alpha\leq\beta<\theta$. Then, the results of {\cite[Section 7.1]{MR2355670}} show that $\TTT$ is a $\theta$-Aronszajn tree. Fix a bijection $\map{b}{\theta}{{}^{{<}\omega}\theta}$ with $b[\kappa]={}^{{<}\omega}\kappa$ for every cardinal $\kappa\leq\theta$. Now, fix $\alpha\leq\beta<\theta$ with $\alpha\in D$, and let $\langle\gamma_0,\ldots,\gamma_n\rangle$ denote the walk from $\beta$ to $\alpha$ through $\vec{C}$. If $C_{\gamma_{n-1}}\cap\alpha$ is unbounded in $\alpha$, then the above definitions ensure that $\gamma_{n-1}\in\Lim$, $\cof{\gamma_{n-1}}<\gamma_{n-1}$, $\alpha\in\Lim(B_{\gamma_{n-1}})$, and therefore $\otp{C_{\gamma_{n-1}}\cap\alpha}\leq\otp{B_{\gamma_{n-1}}\cap\alpha}=\otp{B_\alpha}<\alpha$. This shows that we always have $\otp{C_{\gamma_{n-1}}\cap\alpha}<\alpha$, and hence there is an $\varepsilon<\alpha$ with $b(\varepsilon)=\rho_0^{\vec{C}}(\alpha,\beta)$. Define $r(\rho_0^{\vec{C}}(\hspace{1.7pt} \cdot \hspace{1.7pt},\beta)\restriction\alpha)=\rho_0^{\vec{C}}(\hspace{1.7pt} \cdot \hspace{1.7pt},\beta)\restriction\varepsilon$. Then, the proof of {\cite[Theorem 6.1.4]{MR2355670}} shows that the resulting regressive function $\map{r}{\TTT\restriction D}{\TTT}$ witnesses that the set $D$ is non-stationary with respect to $\TTT$. Since $D$ is a club in $\theta$, this implies that the tree $\TTT$ is special and, by the results of \cite{MR929410}, this conclusion is equivalent to the statement of the claim.   
 \end{proof}

 The above claim now allows us to use  {\cite[Theorem 3.14]{MR2013395}} to conclude that there is a special $\theta$-Aronszajn tree $\TTT$ without Cantor subtrees and therefore (i) fails. Since the partial order $\Col{\omega_1}{{<}\theta}$ is $\sigma$-closed, we may argue as in the last part of the proof of Theorem \ref{theorem:ConsNoCantorSubtreeNotWC} to show  that (ii) implies (i) and therefore the above assumption also implies a failure of (ii).  
\end{proof}

 The next proposition shows that examples of inaccessible non-Mahlo cardinals satisfying Statement (i) in Theorem \ref{theorem:LowerBoundConsNonMahlo} can be obtained using  supercompactness.

\begin{proposition}
 Let $\kappa<\theta$ be uncountable regular cardinals. If $\kappa$ is $\theta$-supercompact, then the following statements hold:
 \begin{enumerate}[leftmargin=0.7cm]  
  \item Every $\theta$-Aronszajn tree contains a Cantor subtree. 
  
    \item $\mathbbm{1}_{\Col{\omega_1}{{<}\kappa}}\Vdash\anf{\textit{Every $\check{\theta}$-Aronszajn tree contains a Cantor subtree}}$. 
 \end{enumerate}
\end{proposition}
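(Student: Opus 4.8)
The plan is to combine the two mechanisms for producing Cantor subtrees that already appear in the proof of Theorem \ref{theorem:ConsNoCantorSubtreeNotWC} with the elementary embeddings supplied by $\theta$-supercompactness. For (i), fix a $\theta$-Aronszajn tree $\TTT$, which we may take to have underlying set $\theta$, and fix a normal fine ultrafilter on $\Poti{\theta}{\kappa}$ witnessing the $\theta$-supercompactness of $\kappa$, with ultrapower embedding $\map{j}{\VV}{M}$, so that $\crit{j}=\kappa$, $j(\kappa)>\theta$, $j[\theta]\in M$ and ${}^{\theta}M\subseteq M$. Since \anf{\emph{$x$ contains a Cantor subtree}} is expressible by a single first-order formula $\psi(x)$ (asserting the existence of a map $\map{\iota}{{}^{{\le}\omega}2}{x}$ satisfying the stated order- and level-conditions), elementarity reduces the whole problem to showing that $M\models\psi(j(\TTT))$, i.e.\ that the $j(\theta)$-Aronszajn tree $j(\TTT)$ contains a Cantor subtree in $M$.

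First I would record the configuration inside $M$. Put $\delta=\sup(j[\theta])$; as $j(\theta)$ is regular in $M$ and $|j[\theta]|=\theta<j(\theta)$, we have $\delta<j(\theta)$, so $j(\TTT)$ has a node $t$ on level $\delta$. Writing $t_\alpha$ for the predecessor of $t$ on level $j(\alpha)$, the sequence $\seq{t_\alpha}{\alpha<\theta}$ is an element of $M$ (it is read off along $j[\theta]\in M$) and forms a cofinal branch of the level-preserving subtree $\bar\TTT$ of $j(\TTT)$ supported on the levels in $j[\theta]$. The copy $j[\TTT]$ of $\TTT$ sits inside $\bar\TTT$ as a level-preserving Aronszajn subtree, and if $t_\alpha\in\ran{j}$ held for cofinally many $\alpha$ then the corresponding $j$-preimages would be $<_\TTT$-increasing and yield a cofinal branch of $\TTT$, contradicting that $\TTT$ is Aronszajn. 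Hence $t$ determines a branch of $\bar\TTT$ escaping the Aronszajn copy $j[\TTT]$ on a tail.

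The Cantor subtree of $j(\TTT)$ is then produced by the dichotomy used in Theorem \ref{theorem:ConsNoCantorSubtreeNotWC}. If some $\sigma$-closed forcing adds a new cofinal branch to $j(\TTT)$ in $M$ — the case relevant to non-special trees — then the splitting argument from that proof (building a perfect tree of conditions deciding a name for the branch differently and using $\sigma$-closure to realize all $2^{\aleph_0}$ branches at a common level of countable cofinality) yields a Cantor subtree of $j(\TTT)$ inside $M$. If no branch can be added — in particular for special trees, where a specializing function forbids any cofinal branch in any height-preserving outer model — then I would use the $\theta$-closure of $M$, which guarantees that $M$ contains every $\VV$-branch of $\TTT$ of length $<\theta$, together with the escaping branch at $t$: exactly as in the weakly compact case treated in \cite{MR631563}, the configuration around $t$ reflects to a level of countable cofinality carrying a perfect, level-uniform family of realized branches, i.e.\ a Cantor subtree. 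Transporting this along the order-isomorphism $j\restriction\theta$ and applying elementarity returns the Cantor subtree of $\TTT$ to $\VV$. For (ii) I would lift $j$ through the collapse: $\Col{\omega_1}{{<}\kappa}$ is $\sigma$-closed and contained in $\HH{\kappa}$, hence fixed pointwise by $j$, so for $\Col{\omega_1}{{<}\kappa}$-generic $G$ we get $j[G]=G$ and $j(\Col{\omega_1}{{<}\kappa})=\Col{\omega_1}{{<}j(\kappa)}^M$, which factors as $\Col{\omega_1}{{<}\kappa}$ followed by the $\sigma$-closed tail $\Col{\omega_1}{[\kappa,j(\kappa))}^M$; forcing the tail over $\VV[G]$ and lifting yields $\map{j^+}{\VV[G]}{M[G^{+}]}$ in a $\sigma$-closed, real-preserving extension, after which the first-order reduction and the construction above run verbatim with $\VV[G]$, $\TTT\in\VV[G]$ and $j^+$ in place of $\VV$, $\TTT$ and $j$.

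The hard part will be the branchless (in particular special) case of the in-$M$ construction: unlike the branch-adding case there is no forcing available to drive the splitting, so one must extract a perfect, level-uniform family of realized branches directly from an Aronszajn tree that provably never acquires a cofinal branch. This is precisely the content I would import from \cite{MR631563}, and the decisive use of $\theta$-supercompactness is that the $\theta$-closed ultrapower together with the seed $j[\theta]$ provides the reflection making that extraction possible at a level of countable cofinality; verifying that the required closure and the seed survive the lifting of (ii) is the remaining technical point.
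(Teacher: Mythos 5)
Your setup---the supercompactness embedding, the node $t$ on level $\sup(j[\theta])$ of $j(\TTT)$, and the lifting of $j$ through $\Col{\omega_1}{{<}\kappa}\ast\Col{\omega_1}{[\kappa,j(\kappa))}$ for (ii)---matches the paper's, but the way you actually produce the Cantor subtree has a genuine gap, namely the second case of your dichotomy. You concede that for a tree to which no $\sigma$-closed forcing can add a cofinal branch (in particular a special one) you would have to extract a perfect, level-uniform family of realized branches directly, importing this from \cite{MR631563}. That import is not available here: the result of \cite{MR631563} invoked in the proof of Theorem \ref{theorem:ConsNoCantorSubtreeNotWC} requires the height of the tree to be weakly compact, whereas in the present proposition $\theta$ is merely an uncountable regular cardinal above $\kappa$ (e.g.\ $\theta=\kappa^{+}$); and Theorem \ref{theorem:LowerBoundConsNonMahlo} of this very paper shows that special Aronszajn trees without Cantor subtrees can exist. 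So the ``branchless'' case cannot be disposed of by reflection alone---one must show that it never arises.

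That is exactly what the paper's proof does, and the point you are missing is that the branch induced by $t$ need not be cofinal in $\TTT$. Working in $\VV[G,H]$ with the lifted embedding $j_*$, one sets $b=\Set{s\in\TTT}{j_*(s)\leq_{j_*(\TTT)}t}$, a downward-closed chain of some order type $\lambda\leq\theta$. After normalizing $\TTT$ (each node has at most two immediate successors, limit nodes are determined by their predecessor sets), one proves $b\notin\VV[G]$: otherwise $b$ would be the predecessor set of a node $u\in\TTT(\lambda)$, and $j_*(u)\leq_{j_*(\TTT)}t$ would force $u\in b$, a contradiction. Since the tail collapse is $\sigma$-closed, $\lambda$ then has uncountable cofinality in $\VV[G]$. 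Hence the $\sigma$-closed tail forcing \emph{always} adds a new branch of uncountable cofinality through $\TTT$ (of length possibly strictly below $\theta$, so a specializing function is no obstruction), and the standard splitting argument---your first case---always applies. This also explains why your plan for (i), reducing by elementarity to finding a Cantor subtree of $j(\TTT)$ inside $M$, cannot work as stated: without any forcing the pullback chain $\Set{s\in\TTT}{j(s)\leq_{j(\TTT)}t}$ lies in $M\subseteq\VV$ and is not new, so no branch-adding mechanism is available there. The paper instead proves (ii) first and derives (i) from it, pulling the Cantor subtree back to $\VV$ through the $\sigma$-distributive collapse exactly as at the end of the proof of Theorem \ref{theorem:ConsNoCantorSubtreeNotWC}.
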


\begin{proof}
 Fix an elementary embedding $\map{j}{\VV}{M}$ with $\crit{j}=\kappa$, $j(\kappa)>\theta$ and ${}^\theta M\subseteq M$. Set $\nu=\sup(j[\theta])<j(\theta)$. 
 Let $G$ be $\Col{\omega_1}{{<}\kappa}$-generic over $\VV$, let $H$ be $\Col{\omega_1}{[\kappa,j(\kappa))}$-generic over $\VV[G]$ and let $\map{j_*}{\VV[G]}{M[G,H]}$ denote the canonical lifting of $j$.

 Fix a  $\theta$-Aronszajn tree $\TTT$ in $\VV[G]$. By standard arguments, we may, without loss of generality, assume that every node in $\TTT$ has at most two direct successors, and that all elements of the limit levels of $\TTT$ are uniqueley determined by their sets of predecessors. Pick a node $t\in j_*(\TTT)(\nu)$, and define $b=\Set{s\in\TTT}{j_*(s)\leq_{j(\TTT)}t}\in\VV[G,H]$. Then $b$ is a branch through $\TTT$, and the above assumptions on $\TTT$ imply that $b$ does not have a maximal element. Set $\lambda=\otp{b,\leq_\TTT}\leq\theta$.

 \begin{claim*}
  $b\notin\VV[G]$. 
 \end{claim*}
 
 \begin{proof}[Proof of the Claim]
  Assume, towards a contradiction, that $b\in\VV[G]$. Since $\TTT$ is a $\theta$-Aronszajn tree, we know that $\lambda\in\Lim\cap\theta$. This implies that $t$ extends every element of the branch $j_*(b)$ through the tree $j_*(\TTT)$, and therefore $j_*(b)$ is equal to the set of all predecessors of some node in the level $j_*(\TTT)(j(\lambda))$. By elementarity, there is a node $u$ in $\TTT(\lambda)$ with the property that $b$ consists of all predecessors of $u$ in $\TTT$. But then, the above assumptions on $\TTT$ imply that $j_*(u)\leq_{j(\TTT)}t$, and hence that $u\in b$, a contradiction.
 \end{proof}
 
 Since $b\not\in\VV[G]$ and $\Col{\omega_1}{[\kappa,j(\kappa))}$ is $\sigma$-closed in $\VV[G]$, we thus know that $\cof{\lambda}^{\VV[G]}>\omega$. 
 This shows that, in $\VV[G]$, there is a $\sigma$-closed notion of forcing that adds a new  branch of uncountable cofinality through $\TTT$. In this situation, standard arguments show that $\TTT$ contains a Cantor subtree in $\VV[G]$. These computations show that (ii) holds and, by applying the arguments used in the last part of the proof of Theorem \ref{theorem:ConsNoCantorSubtreeNotWC}, we  know that this also yields (i). 
\end{proof}

The above arguments leave open the possibility that Statement (ii) in Theorem \ref{theorem:LowerBoundConsNonMahlo} provably fails for inaccessible non-Mahlo cardinals, and therefore motivate the following question, asking whether the Mahloness of inaccessible cardinals can be characterized by the existence of Cantor subtrees of special Aronszajn trees in collapse extensions.

\begin{question}
 Is the existence of an inaccessible non-Mahlo cardinal $\theta$ with $$\mathbbm{1}_{\Col{\omega_1}{{<}\theta}}\Vdash\anf{\textit{Every special $\omega_2$-Aronszajn tree contains a Cantor subtree}}$$ consistent with the axioms of $\ZFC$?
 \end{question}

\bibliographystyle{plain}
\bibliography{references}

\end{document}